\documentclass[11pt]{article}

\usepackage[a4paper,margin=1in]{geometry}
\usepackage[T1]{fontenc}
\usepackage[utf8]{inputenc}
\usepackage{lmodern}
\usepackage{microtype}
\usepackage{amsmath,amssymb,amsfonts,amsthm,mathtools}
\usepackage{mathrsfs}
\usepackage{enumitem}
\usepackage{booktabs}
\usepackage{tocloft}
\usepackage[colorlinks=true,linkcolor=blue,citecolor=blue,urlcolor=blue]{hyperref}
\usepackage{aliascnt}
\usepackage[capitalize,nameinlink]{cleveref}

\numberwithin{equation}{section}

\newtheorem{theorem}{Theorem}[section]

\newaliascnt{proposition}{theorem}
\newtheorem{proposition}[proposition]{Proposition}
\aliascntresetthe{proposition}

\newaliascnt{lemma}{theorem}
\newtheorem{lemma}[lemma]{Lemma}
\aliascntresetthe{lemma}

\newaliascnt{corollary}{theorem}
\newtheorem{corollary}[corollary]{Corollary}
\aliascntresetthe{corollary}

\theoremstyle{definition}
\newaliascnt{definition}{theorem}
\newtheorem{definition}[definition]{Definition}
\aliascntresetthe{definition}

\newaliascnt{assumption}{theorem}
\newtheorem{assumption}[assumption]{Assumption}
\aliascntresetthe{assumption}

\newaliascnt{convention}{theorem}
\newtheorem{convention}[convention]{Convention}
\aliascntresetthe{convention}

\newaliascnt{problem}{theorem}
\newtheorem{problem}[problem]{Problem}
\aliascntresetthe{problem}

\newaliascnt{example}{theorem}

\aliascntresetthe{example}

\theoremstyle{remark}
\newaliascnt{remark}{theorem}
\newtheorem{remark}[remark]{Remark}
\aliascntresetthe{remark}

\newaliascnt{warning}{theorem}

\aliascntresetthe{warning}

\crefname{assumption}{Assumption}{Assumptions}
\Crefname{assumption}{Assumption}{Assumptions}
\crefname{definition}{Definition}{Definitions}
\crefname{convention}{Convention}{Conventions}
\Crefname{convention}{Convention}{Conventions}
\Crefname{definition}{Definition}{Definitions}
\crefname{lemma}{Lemma}{Lemmas}
\Crefname{lemma}{Lemma}{Lemmas}
\crefname{proposition}{Proposition}{Propositions}
\Crefname{proposition}{Proposition}{Propositions}
\crefname{corollary}{Corollary}{Corollaries}
\Crefname{corollary}{Corollary}{Corollaries}
\crefname{theorem}{Theorem}{Theorems}
\Crefname{theorem}{Theorem}{Theorems}
\crefname{warning}{Warning}{Warnings}
\Crefname{warning}{Warning}{Warnings}
\crefname{problem}{Problem}{Problems}
\Crefname{problem}{Problem}{Problems}

\DeclareMathOperator*{\esssup}{ess\,sup}
\DeclareMathOperator{\dist}{dist}
\DeclareMathOperator{\Range}{Range}

\DeclareMathOperator{\Tr}{Tr}

\newcommand{\R}{\mathbb R}
\newcommand{\T}{\mathbb T}

\newcommand{\eps}{\varepsilon}

\newcommand{\CKN}{\mathrm{CKN}}

\newcommand{\loc}{\mathrm{loc}}
\newcommand{\cl}{\mathrm{cl}}

\newcommand{\Err}{\mathsf{Err}}

\newcommand{\Rep}{\mathsf{Rep}}
\newcommand{\Prof}{\mathsf{P}}
\newcommand{\Dist}{\operatorname{dist}}
\newcommand{\Image}{\operatorname{Im}}
\newcommand{\dx}{\,dx}

\newcommand{\dxdt}{\,dx\,dt}
\newcommand{\nabh}{\nabla_h}
\newcommand{\divh}{\nabla_h\cdot}
\newcommand{\norm}[2]{\left\|#1\right\|_{#2}}

\newcommand{\pair}[2]{\left\langle #1,#2\right\rangle}
\newcommand{\calC}{\mathcal C}
\newcommand{\calD}{\mathcal D}
\newcommand{\calE}{\mathcal E}
\newcommand{\calF}{\mathcal F}

\newcommand{\calO}{\mathcal O}

\newcommand{\calU}{\mathcal U}
\newcommand{\calY}{\mathcal Y}
\newcommand{\Q}{Q}

\title{Finite-Window Singularity Audits and Local-to-Clean Defect Transfer for Navier--Stokes}
\author{Runlong Yu\\
The University of Alabama, Tuscaloosa, AL, USA\\
\texttt{ryu5@ua.edu}}
\date{}

\begin{document}
\maketitle

\begin{abstract}
We develop a finite-window audit and transfer framework for the local regularity problem of the three-dimensional incompressible Navier--Stokes equations.  The paper has two logically separate components.  First, a finite-scale critical ledger estimate shows that a persistent non-CKN branch cannot survive without cumulative untaxed critical supply or accumulated localization leakage.  Second, a local-to-clean transfer theorem shows how a clean finite-window anti-phantom gap gives a localized coercive estimate for Navier--Stokes defect packages, provided three structural inputs hold: quotient-lifting stability, componentwise detection comparison, and a normalized residual budget. The technical contribution is an explicit bookkeeping of the residuals that can obstruct this transfer.  Pressure tails, cutoff leakage, truncation loss, nonlinear cutoff mismatch, reproduction drift, gauge mismatch, and profit discrepancy are separated as finite-window ledger entries and then assembled into a concrete absorption criterion.  The main transfer theorem is algebraic once these entries satisfy the stated normalized bounds; the PDE-facing work left open is precisely to prove the corresponding component compatibility estimates against the localized quotient distance.  Thus the article is a conditional reduction. Its conclusion is structural: any surviving obstruction must either defeat the stated compatibility/growth controls or appear as an NS-realizable, cleaned, scale-critical, combined-invisible, profitable, reproducible defect cascade.
\end{abstract}

\medskip
\noindent\textbf{Keywords.} Navier--Stokes equations; suitable weak solutions;
Caffarelli--Kohn--Nirenberg theory; finite-scale regularity; pressure
decomposition; defect cascade; observability; anti-phantom; local-to-clean
transfer; scale-critical recurrence.

\medskip
\noindent\textbf{2020 Mathematics Subject Classification.} 35Q30, 35B65, 35B45, 76D05.

\tableofcontents

\section{Introduction}\label{sec:introduction}

The local regularity problem for the three-dimensional incompressible
Navier--Stokes equations is usually phrased in terms of scale-critical
quantities.  The weak-solution background goes back to Leray--Hopf theory,
while the modern local partial-regularity viewpoint begins with Scheffer and
Caffarelli--Kohn--Nirenberg and was refined in later proofs and expositions
\cite{Leray1934,Hopf1951,Scheffer1976,Scheffer1977,CKN1982,Lin1998,SereginLectureNotes}.
If a suitable weak solution enters a Caffarelli--Kohn--Nirenberg smallness
regime at some scale, then local regularity follows.  Critical-norm,
pressure-sensitive, and quantitative regularity perspectives provide further
context for the finite-scale formulation used here
\cite{SohrWahl1986,SereginSverak2002,ESS2003,BarkerPrange2021,AlbrittonBarkerPrange2023}.
Thus a possible singular point must avoid that decay basin through an infinite
sequence of parabolic windows.  This paper asks a more structural question:
\begin{quote}
\emph{If a non-CKN branch persists across scales, what precise mechanism pays for
that persistence, hides from the available finite-window observations, reproduces
from scale to scale, and remains realizable by the Navier--Stokes equation?}
\end{quote}

The answer is formulated as a finite-window singularity audit coupled to a
local-to-clean transfer theorem.  The audit side shows that persistent critical
badness cannot survive for free: along a finite chain of windows it must generate
untaxed critical supply or accumulated leakage.  The transfer side then asks
whether a clean finite-window anti-phantom gap can be transported back to a
localized Navier--Stokes package without hiding pressure tails, cutoff leakage,
truncation error, reproduction drift, gauge mismatch, or profit discrepancy.

The result is not a proof of Navier--Stokes regularity and not a construction of
a singular solution.  Its contribution is a theorem-level reduction: a surviving
singular branch must either defeat finite-window observability and residual
absorption, or organize itself as an NS-realizable, cleaned, scale-critical,
combined-invisible, profitable, reproducible defect cascade.

\subsection{Main local-to-clean theorem, informally}

The central theorem of the paper is the local-to-clean transfer theorem proved in
\Cref{sec:main-reduction}.  Let \(\mathfrak D\in\calD^{\loc}_\Lambda\) be a
localized finite-window package and let \(\Theta_\Lambda\mathfrak D\) be its clean
charted image.  Suppose that the clean model has a quotient anti-phantom gap
\[
    \|\calF^{\cl}_\Lambda(d)\|_{\cl}
    \ge
    c^{\cl}_\Lambda
    \Dist_{\cl}(d,\Image G^{\cl}_\Lambda).
\]
Assume also that clean quotient distance lifts back to localized quotient
distance and that the total residual ledger has the normalized form
\[
    \Err_\Lambda(\mathfrak D)
    \le
    \eta_\Lambda
    \Dist_{\loc}(\mathfrak D,\Image G^{\loc}_\Lambda)
    +
    \Delta_\Lambda,
    \qquad
    \eta_\Lambda<c^{\cl}_\Lambda(1-\varepsilon_G).
\]
Then
\[
\begin{aligned}
    &\|O^{\loc}_\Lambda \mathfrak D\|
    +C_E\|\calE^{\loc}_\Lambda(\mathfrak D)\|
    +C_R\Rep^{\loc}_\Lambda(\mathfrak D)
    +[\Prof^{\loc}_\Lambda(\mathfrak D)]_+  \\
    &\qquad\ge
    c^{\loc}_\Lambda
    \Dist_{\loc}(\mathfrak D,\Image G^{\loc}_\Lambda)
    -
    \Delta'_\Lambda,
\end{aligned}
\]
where \(c^{\loc}_\Lambda=c^{\cl}_\Lambda(1-\varepsilon_G)-\eta_\Lambda>0\).  In
words, a non-gauge localized defect cannot remain invisible after transfer
unless it is paid for by a displayed residual, a reproduction failure, a positive
ledger-profit term, or an error budget.

\subsection{What is proved, assumed, and left open}

The paper is organized to keep the mathematical status of each layer explicit.
The following table is the intended dependency map.
\begin{center}
\renewcommand{\arraystretch}{1.18}
\begin{tabular}{p{0.24\textwidth}p{0.32\textwidth}p{0.34\textwidth}}
\toprule
\textbf{Layer} & \textbf{What is proved here} & \textbf{What remains an input or target} \\
\midrule
Finite-scale ledger &
The local energy inequality, interpolation, and pressure decay imply a finite-scale survival alternative: persistent reservoir badness forces cumulative supply or leakage. &
Uniform taxation or observable depletion of all critical supply along moving windows. \\
\addlinespace
Finite-window audit &
NS-derived packages satisfy coarse momentum, pressure compatibility, Reynolds positivity, and an energy-flux identity; fixed-window invisibility is a finite-dimensional kernel question. &
Exclusion of nonzero combined-invisible directions in the closure of cleaned NS-realizable packages, with moving-window constants controlled. \\
\addlinespace
Local-to-clean transfer &
A clean anti-phantom gap transfers to a localized coercive estimate once quotient lifting, detection comparison, and residual normalization hold. &
Concrete Navier--Stokes estimates proving those three structural inputs uniformly enough for a singular branch. \\
\addlinespace
Residual absorption &
Pressure, localization, truncation, nonlinear, reproduction, gauge, and profit discrepancies are decomposed into explicit finite-window budgets. &
Component compatibility bounds of the form
\(\Err_\bullet\le \eta_\bullet \Dist_{\loc}+\Delta_\bullet\), with total coefficient below the clean gap margin. \\
\bottomrule
\end{tabular}
\end{center}

This separation is part of the statement of the paper.  The finite-scale ledger
is the unconditional PDE input.  The main local-to-clean theorem is an algebraic
transfer result under explicit structural hypotheses.  The normalized absorption
problem identifies the first genuine remaining blocker.

\subsection{Main innovations}

The paper has four connected innovations.
\begin{enumerate}[label=\textbf{I\arabic*.},leftmargin=3.2em]
\item \textbf{Finite-scale payment ledger.}  Persistent reservoir badness is
converted into a quantitative alternative: cumulative untaxed supply or leakage
must appear along any long non-CKN branch.
\item \textbf{NS-realizable defect audit.}  A dangerous obstruction is not an
arbitrary kernel vector.  It must satisfy coarse momentum, pressure compatibility,
Reynolds positivity, and the energy-flux identity inherited from suitable weak
solutions.
\item \textbf{Local-to-clean transfer.}  A clean anti-phantom gap is transferred
to a localized finite-window model under explicit quotient-lifting, component
comparison, and normalized error-budget assumptions.
\item \textbf{Residual-budget closure map.}  Pressure, localization, truncation,
nonlinear, reproduction, gauge, and profit errors are placed into a single
concrete ledger.  The final obstruction is isolated as a normalized absorption
problem against the localized quotient distance.
\end{enumerate}

\subsection{Boundary of the paper}

The paper deliberately does not claim a new proof of Navier--Stokes regularity.
It proves a finite-scale ledger estimate and an algebraic local-to-clean transfer
theorem, then records the exact compatibility estimates that would be needed to
turn this finite-window mechanism into a regularity argument.  Conversely, any
counterexample mechanism surviving this framework would have to satisfy the same
compatibility, cleaning, observability, profitability, and reproduction tests.

\subsection{Organization}

\Cref{sec:prelim-audit,sec:payment,sec:hiding,sec:realization,sec:reproduction}
formulate the singularity audit: payment, hiding, realization, and reproduction.
\Cref{sec:audit-to-transfer} explains the transition from the audit language to
the local-to-clean transfer problem.  \Cref{sec:lct-preliminaries,sec:localized-packages,sec:chart,sec:detection-error,sec:main-reduction}
develop the transfer theorem and its residual budgets.  \Cref{sec:absorption-audit}
assembles the concrete absorption criterion, and \Cref{sec:final-synthesis}
records the resulting conditional synthesis and closure targets.

\section{Suitable weak solutions, CKN coordinates, and bad branches}\label{sec:prelim-audit}

For a space-time point $z_0=(x_0,t_0)$, write
\[
Q_r(z_0)=B_r(x_0)\times(t_0-r^2,t_0).
\]
When $z_0=(0,0)$, write $Q_r=Q_r(0,0)$.

A pair $(u,p)$ is a suitable weak solution if
\[
u\in L^\infty_tL^2_x\cap L^2_tH^1_x,\qquad p\in L^{3/2},
\]
solves (NS) distributionally, is divergence-free, and satisfies the local energy inequality: for every nonnegative $\phi\in C_c^\infty$,
\begin{align}
\int |u(x,t)|^2\phi(x,t)\dx+2\int_{-\infty}^t\int |\nabla u|^2\phi\dx\,ds
&\le \int_{-\infty}^t\int |u|^2(\partial_s\phi+\Delta\phi)\dx\,ds\notag\\
&\quad+\int_{-\infty}^t\int (|u|^2+2p)u\cdot\nabla\phi\dx\,ds.
\label{eq:local-energy}
\end{align}

The scale-critical quantities are
\begin{align*}
A(z_0,r)&=r^{-1}\esssup_{t_0-r^2<t<t_0}\int_{B_r(x_0)}|u(x,t)|^2\dx,\\
E(z_0,r)&=r^{-1}\int_{Q_r(z_0)}|\nabla u|^2\dxdt,\\
C(z_0,r)&=r^{-2}\int_{Q_r(z_0)}|u|^3\dxdt,\\
D(z_0,r)&=r^{-2}\int_{Q_r(z_0)}|p-(p)_{B_r(x_0)}(t)|^{3/2}\dxdt.
\end{align*}
We also set
\[
\Psi(z_0,r)=C(z_0,r)+D(z_0,r),\qquad
\Phi(z_0,r)=A(z_0,r)+E(z_0,r)+C(z_0,r)+D(z_0,r).
\]

The CKN criterion gives a universal $\eps_{\CKN}>0$ such that
\[
\Psi(z_0,r)\le \eps_{\CKN}
\quad\Longrightarrow\quad
u \text{ is regular in a smaller cylinder.}
\]
Therefore a non-CKN branch is a sequence of scales $r_k\downarrow0$ for which $\Psi(z_0,r_k)>\eps_{\CKN}$ for all relevant $k$.

\begin{definition}[Admissible scale-window chain]
Fix $0<\theta<1$.  A sequence
\[
r_k=\theta^k r_0,
\qquad
Q_k=Q_{r_k}(z_k),
\]
is called an admissible scale-window chain if $Q_{k+1}\subset Q_k$ and the centers have controlled parabolic drift:
\[
|x_{k+1}-x_k|\le \eta r_k,
\qquad
|t_{k+1}-t_k|\le \eta r_k^2.
\]
\end{definition}

The reservoir badness used by the ledger is
\[
B_k=A_k+C_k+D_k,
\]
where $A_k,C_k,D_k$ denote the scale-critical quantities on $Q_k$.  The dissipation $E_k$ is not placed in the reservoir; it appears mainly as a tax.

\section{Payment: the finite-scale critical ledger}\label{sec:payment}

This section records the first answer to the audit question.  If badness persists, it must be paid for.  The payment appears as untaxed critical supply or localization leakage.

Choose cutoffs $\phi_k\in C_c^\infty(Q_k)$ satisfying
\[
0\le\phi_k\le1,
\qquad
\phi_k\equiv1\text{ on }Q_{k+1},
\]
and
\[
|\nabla\phi_k|\le C_\theta r_k^{-1},
\qquad
|\partial_t\phi_k|+|\Delta\phi_k|\le C_\theta r_k^{-2}.
\]
Define transition ledger coordinates
\begin{align*}
\Phi_k^{\mathrm{flux}}&=r_k^{-1}\int_{Q_k}|u|^2|u\cdot\nabla\phi_k|\dxdt,\\
\Pi_k^{\mathrm{press}}&=r_k^{-1}\int_{Q_k}|p-(p)_{B_{r_k}(x_k)}(t)|\,|u\cdot\nabla\phi_k|\dxdt,\\
\Lambda_k&=r_k^{-1}\int_{Q_k}|u|^2(|\partial_t\phi_k|+|\Delta\phi_k|)\dxdt.
\end{align*}
The first is nonlinear flux supply, the second is pressure-transport supply, and the third is cutoff/window leakage.

\begin{lemma}[Local energy ledger]
\label[lemma]{lem:local-energy-ledger}
For every suitable weak solution and every transition $Q_k\to Q_{k+1}$,
\[
A_{k+1}+2E_{k+1}
\le
\theta^{-1}\bigl(\Lambda_k+\Phi_k^{\mathrm{flux}}+2\Pi_k^{\mathrm{press}}\bigr).
\]
\end{lemma}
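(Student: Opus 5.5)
The plan is to test the local energy inequality \eqref{eq:local-energy} against the cutoff $\phi=\phi_k$, evaluated at times $t\in(t_{k+1}-r_{k+1}^2,t_{k+1})$. Since $\phi_k\in C_c^\infty(Q_k)$ vanishes near the bottom of $Q_k$, the integrals over $(-\infty,t)$ are really integrals over $(t_k-r_k^2,t)\cap\operatorname{supp}\phi_k$. This gives
\[
\int |u(x,t)|^2\phi_k\dx+2\int_{-\infty}^t\!\!\int|\nabla u|^2\phi_k
\le \int_{Q_k}|u|^2|\partial_s\phi_k+\Delta\phi_k| + \int_{Q_k}|u|^2|u\cdot\nabla\phi_k| + 2\int_{-\infty}^t\!\!\int p\,u\cdot\nabla\phi_k .
\]

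The key step is to replace $p$ by $p-(p)_{B_{r_k}(x_k)}(s)$ in the last term. This is legitimate because, for each fixed $s$, $\int (p)_{B_{r_k}}(s)\,u\cdot\nabla\phi_k\dx = (p)_{B_{r_k}}(s)\int \nabla\cdot(u\phi_k)\dx = 0$. Here we use $\nabla\cdot u=0$ and that $\phi_k(\cdot,s)$ is compactly supported in $B_{r_k}(x_k)$. This is the one place where care is needed. The local energy inequality is stated with $p$ itself, so we need the spatial mean subtraction to be justified at the regularity $u\in L^2_tH^1_x$, $p\in L^{3/2}$. Formally it is just integration by parts in $x$ at a.e.\ $s$, and the product $p\,u\cdot\nabla\phi_k$ is integrable by H\"older ($L^{3/2}\times L^3$, with $u\in L^{10/3}\subset L^3_{\loc}$). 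After this substitution the right-hand side is bounded by
\[
r_k\bigl(\Lambda_k+\Phi_k^{\mathrm{flux}}+2\Pi_k^{\mathrm{press}}\bigr),
\]
using $|\partial_s\phi_k+\Delta\phi_k|\le|\partial_s\phi_k|+|\Delta\phi_k|$ and the definitions of the ledger coordinates, which carry the prefactor $r_k^{-1}$.

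For the left side we use $\phi_k\equiv1$ on $Q_{k+1}$. This gives $\int_{B_{r_{k+1}}(x_{k+1})}|u(x,t)|^2\dx\le\int|u(x,t)|^2\phi_k\dx$ for $t\in(t_{k+1}-r_{k+1}^2,t_{k+1})$. Letting $t\uparrow t_{k+1}$ in the dissipation term gives
\[
\int_{Q_{k+1}}|\nabla u|^2\le\int_{-\infty}^{t_{k+1}}\!\!\int|\nabla u|^2\phi_k .
\]
Take the essential supremum over $t$ for the first term, and for the second use the inequality at $t$ close to $t_{k+1}$ together with monotone convergence. Adding the two bounds (each is controlled by the same right-hand side, or more carefully by the full left side, which dominates both) yields
\[
r_{k+1}\bigl(A_{k+1}+2E_{k+1}\bigr)\le r_k\bigl(\Lambda_k+\Phi_k^{\mathrm{flux}}+2\Pi_k^{\mathrm{press}}\bigr).
\]

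There is a subtlety: the esssup and the dissipation integral are taken at different times. Since both terms on the left of \eqref{eq:local-energy} are nonnegative and the right side only grows with $t$, bounding each separately and summing would cost a factor $2$. To avoid this, fix the supremizing time $t$ for $A_{k+1}$, bound the dissipation at the final time $t_{k+1}$, and then use $\sup_t\bigl(a(t)+b(t)\bigr)\ge$ (the value at the final time). If an extra factor $2$ arises it must be absorbed; I expect the intended constant is obtained by noting $\sup_t(\cdot)$ of the full left side (energy plus cumulative dissipation up to $t$) controls both. Granting this, dividing by $r_{k+1}=\theta r_k$ gives the claimed factor $\theta^{-1}$. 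The main obstacle is therefore this bookkeeping of the esssup versus the dissipation, together with the rigorous pressure-mean subtraction; the remaining steps are routine.
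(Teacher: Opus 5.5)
\textbf{Same route as the paper, but the step you grant is false.} Your approach matches the paper's proof: test the local energy inequality with $\phi_k$, subtract $(p)_{B_{r_k}(x_k)}(s)$ using $\int u\cdot\nabla\phi_k\,dx=0$, use $\phi_k\equiv1$ on $Q_{k+1}$, and divide by $r_{k+1}=\theta r_k$. The mean subtraction is in fact rigorous as you wrote it. For a.e.\ $s$, $u(\cdot,s)\in H^1$ is divergence-free and $\phi_k(\cdot,s)$ is compactly supported in $B_{r_k}(x_k)$, and the products are integrable by H\"older, as you note.

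\textbf{Where the constant fails.} Write $a(t)=\int|u(x,t)|^2\phi_k\,dx$, $b(t)=2\int_{-\infty}^t\int|\nabla u|^2\phi_k$ and $L=\Lambda_k+\Phi_k^{\mathrm{flux}}+2\Pi_k^{\mathrm{press}}$. The energy inequality gives $a(t)+b(t)\le r_kL$ for a.e.\ $t$. However, $\sup_t(a+b)$ does not dominate $\esssup_t a+\sup_t b$. For instance, take $a$ decreasing from $M$ to $0$ with $b=M-a$; this is the profile of pure dissipation after the localized energy has been fed in by the time ramp of $\phi_k$. Then $\sup_t(a+b)=M$ while $\esssup_t a+\sup_t b=2M$. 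Your proposed repair compares with $a+b$ at the final time. That produces $a(t_{k+1})$, not $\esssup_t a$, so it does not recover the missing term. What the argument actually proves is
\[
\max\{A_{k+1},2E_{k+1}\}\le\theta^{-1}L,
\qquad\text{hence}\qquad
A_{k+1}+2E_{k+1}\le 2\theta^{-1}L .
\]

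\textbf{This is a defect of the stated constant, not of your route.} The paper's own proof passes over the same point without comment. It takes the essential supremum over terminal times for $A_{k+1}$ and the spacetime part for $E_{k+1}$ from the same inequality, then adds them. The honest fix is to state the lemma with $2\theta^{-1}$ in place of $\theta^{-1}$, unless you have an additional argument. This only doubles the $\theta^{-1}$ coefficients in $\mathrm{Sup}^{\mathrm{full}}_k$ and $\mathrm{Leak}^{\mathrm{full}}_k$. The full ledger inequality and the survival alternative remain unchanged in form, and the interpolation closure already absorbs $\theta$-dependent constants.
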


\begin{proof}
Apply the local energy inequality \eqref{eq:local-energy} with the cutoff $\phi_k$ in the suitable-weak-solution sense of the CKN framework \cite{CKN1982,SereginLectureNotes}, first with a standard time cutoff that approaches the characteristic function of the terminal time interval and then pass to the usual almost-everywhere time representatives.  Taking the essential supremum over terminal times in the smaller cylinder gives the $A_{k+1}$ term, while the spacetime part gives $E_{k+1}$.  Since $\phi_k\equiv1$ on $Q_{k+1}$, the left-hand side controls the smaller-scale energy and dissipation.  The right-hand side separates into the cutoff leakage term, the nonlinear flux term, and the pressure-transport term.  The spatial average of the pressure may be subtracted because $\nabla\cdot u=0$ and, for each time, $\int u\cdot\nabla\phi_k\,dx=0$ after the standard compact-support approximation.
\end{proof}

The velocity and pressure reservoirs are closed by the standard interpolation and pressure-decay estimates:
\begin{align}
C_{k+1}&\le C_{I,\theta}\Bigl((\Phi_k^{\mathrm{flux}}+2\Pi_k^{\mathrm{press}})^{3/2}+\Lambda_k^{3/2}\Bigr),
\label{eq:cubic-closure}\\
D_{k+1}&\le C_P\theta D_k+C_P\theta^{-2}C_k.
\label{eq:pressure-decay}
\end{align}
The first follows by applying the scale-invariant interpolation inequality
\[
    C_{k+1}\lesssim A_{k+1}^{3/4}E_{k+1}^{3/4}+A_{k+1}^{3/2}
\]
to \Cref{lem:local-energy-ledger} and absorbing constants depending only on the fixed scale ratio \(\theta\).  The second is the usual local Calderon--Zygmund plus harmonic-pressure decomposition \cite{SohrWahl1986,SereginSverak2002,SereginLectureNotes}.

Fix $0<\lambda<1$ and choose $\theta$ so that
\[
\delta_D=(1-\lambda)-C_P\theta>0.
\]
Define
\begin{align*}
\mathrm{Sup}^{\mathrm{full}}_k
&=\theta^{-1}(\Phi_k^{\mathrm{flux}}+2\Pi_k^{\mathrm{press}})
+C_{I,\theta}(\Phi_k^{\mathrm{flux}}+2\Pi_k^{\mathrm{press}})^{3/2}
+C_P\theta^{-2}C_k,\\
\mathrm{Tax}^{\mathrm{full}}_k
&=2E_{k+1}+(1-\lambda)A_k+(1-\lambda)C_k+\delta_DD_k,\\
\mathrm{Leak}^{\mathrm{full}}_k
&=\theta^{-1}\Lambda_k+C_{I,\theta}\Lambda_k^{3/2}.
\end{align*}

\begin{theorem}[Full critical ledger inequality]
\label[theorem]{thm:full-critical-ledger-inequality}
Along every admissible scale-window chain,
\begin{equation}\label{eq:full-ledger}
B_{k+1}-(1-\lambda)B_k
\le
\mathrm{Sup}^{\mathrm{full}}_k-
\mathrm{Tax}^{\mathrm{full}}_k+
\mathrm{Leak}^{\mathrm{full}}_k.
\end{equation}
\end{theorem}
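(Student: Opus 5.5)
The inequality \eqref{eq:full-ledger} should follow by summing the three one-step bounds already on record: \Cref{lem:local-energy-ledger} for $A_{k+1}$, the cubic closure \eqref{eq:cubic-closure} for $C_{k+1}$, and the pressure decay \eqref{eq:pressure-decay} for $D_{k+1}$. After summing, the terms are regrouped according to the definitions of $\mathrm{Sup}^{\mathrm{full}}_k$, $\mathrm{Tax}^{\mathrm{full}}_k$ and $\mathrm{Leak}^{\mathrm{full}}_k$. No new PDE estimate is needed; the content is bookkeeping, plus a check that the choice of $\delta_D$ makes the $D_k$ coefficients match exactly.

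\textbf{Step 1.} Rewrite \Cref{lem:local-energy-ledger} with the dissipation moved to the right:
\[
A_{k+1}\le \theta^{-1}(\Phi_k^{\mathrm{flux}}+2\Pi_k^{\mathrm{press}})+\theta^{-1}\Lambda_k-2E_{k+1}.
\]

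\textbf{Step 2.} Take \eqref{eq:cubic-closure} and \eqref{eq:pressure-decay} as stated. Adding all three bounds gives
\[
B_{k+1}\le \theta^{-1}(\Phi_k^{\mathrm{flux}}+2\Pi_k^{\mathrm{press}})+C_{I,\theta}(\Phi_k^{\mathrm{flux}}+2\Pi_k^{\mathrm{press}})^{3/2}+C_P\theta^{-2}C_k-2E_{k+1}+\theta^{-1}\Lambda_k+C_{I,\theta}\Lambda_k^{3/2}+C_P\theta D_k .
\]
The first three terms are exactly $\mathrm{Sup}^{\mathrm{full}}_k$. The two $\Lambda_k$ terms are exactly $\mathrm{Leak}^{\mathrm{full}}_k$.

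\textbf{Step 3.} Subtract $(1-\lambda)B_k=(1-\lambda)(A_k+C_k+D_k)$ from both sides. On the right, $-2E_{k+1}-(1-\lambda)A_k-(1-\lambda)C_k$ is already part of $-\mathrm{Tax}^{\mathrm{full}}_k$. The only remaining term is
\[
C_P\theta D_k-(1-\lambda)D_k=-\delta_D D_k,
\]
by the definition $\delta_D=(1-\lambda)-C_P\theta$. This is precisely the last tax entry, so the identity closes with equality in the bookkeeping and \eqref{eq:full-ledger} follows. The hypothesis $\delta_D>0$ is not needed for the inequality itself; it only ensures the pressure tax has a favourable sign, which matters later in the paper.

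\textbf{Main obstacle.} The bookkeeping is exact, so the only delicate point lies in the inputs, chiefly the cubic closure \eqref{eq:cubic-closure}. There, the interpolation bound $A_{k+1}^{3/4}E_{k+1}^{3/4}+A_{k+1}^{3/2}$ must be controlled by powers of the Lemma's right-hand side. Since $A_{k+1}$ and $E_{k+1}$ are each bounded by $\theta^{-1}(\Lambda_k+\Phi_k^{\mathrm{flux}}+2\Pi_k^{\mathrm{press}})$, we get
\[
C_{k+1}\lesssim_\theta (\Lambda_k+\Phi_k^{\mathrm{flux}}+2\Pi_k^{\mathrm{press}})^{3/2}.
\]
The elementary inequality $(a+b)^{3/2}\le 2^{1/2}(a^{3/2}+b^{3/2})$ then splits this into the supply and leakage pieces, with the factor absorbed into $C_{I,\theta}$. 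Along an admissible chain, the $\theta$-dependent constants are uniform in $k$ because the cutoffs $\phi_k$ are rescalings with the stated bounds. I would take this, together with the cited pressure decomposition for \eqref{eq:pressure-decay}, as given.
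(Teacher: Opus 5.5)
Your proposal is correct and is essentially the paper's proof: add \Cref{lem:local-energy-ledger}, \eqref{eq:cubic-closure} and \eqref{eq:pressure-decay}, subtract $(1-\lambda)B_k$, and regroup the terms into supply, tax and leakage. Your explicit check that the $D_k$ coefficients close via $\delta_D=(1-\lambda)-C_P\theta$ is right, and so is your remark that $\delta_D>0$ is not needed for the inequality itself.
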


\begin{proof}
Add the energy transition estimate, the interpolation bound \eqref{eq:cubic-closure}, and the pressure decay estimate \eqref{eq:pressure-decay}.  Subtract $(1-\lambda)(A_k+C_k+D_k)$.  The terms are exactly those grouped into supply, tax, and leakage.
\end{proof}

\begin{theorem}[Finite-scale survival alternative]
\label[theorem]{thm:finite-scale-survival-alternative}
Assume that
\[
B_k\ge\eps,
\qquad 0\le k\le N-1.
\]
Then
\begin{equation}\label{eq:survival-alt}
\sum_{k=0}^{N-1}\bigl(\mathrm{Sup}^{\mathrm{full}}_k-
\mathrm{Tax}^{\mathrm{full}}_k\bigr)_+
\ge
\lambda\eps N-B_0-
\sum_{k=0}^{N-1}\mathrm{Leak}^{\mathrm{full}}_k.
\end{equation}
\end{theorem}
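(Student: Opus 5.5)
Sum the full critical ledger inequality \eqref{eq:full-ledger} of \Cref{thm:full-critical-ledger-inequality} over $k=0,\dots,N-1$ and telescope. A purely algebraic argument suffices, since all PDE content is already contained in \eqref{eq:full-ledger}.

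\textbf{Step 1 (rewrite the increment).} For each $k$,
\[
B_{k+1}-(1-\lambda)B_k=(B_{k+1}-B_k)+\lambda B_k .
\]
So \eqref{eq:full-ledger} reads
\[
\lambda B_k \le B_k-B_{k+1}+\mathrm{Sup}^{\mathrm{full}}_k-\mathrm{Tax}^{\mathrm{full}}_k+\mathrm{Leak}^{\mathrm{full}}_k .
\]

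\textbf{Step 2 (use persistence).} The hypothesis $B_k\ge\eps$ for $0\le k\le N-1$ gives $\lambda B_k\ge\lambda\eps$, because $\lambda>0$.

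\textbf{Step 3 (bound by the positive part).} Use
\[
\mathrm{Sup}^{\mathrm{full}}_k-\mathrm{Tax}^{\mathrm{full}}_k\le\bigl(\mathrm{Sup}^{\mathrm{full}}_k-\mathrm{Tax}^{\mathrm{full}}_k\bigr)_+ .
\]

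\textbf{Step 4 (sum and telescope).} Summing over $k=0,\dots,N-1$, the differences $B_k-B_{k+1}$ telescope to $B_0-B_N$. This yields
\[
\lambda\eps N\le B_0-B_N+\sum_{k=0}^{N-1}\bigl(\mathrm{Sup}^{\mathrm{full}}_k-\mathrm{Tax}^{\mathrm{full}}_k\bigr)_+ +\sum_{k=0}^{N-1}\mathrm{Leak}^{\mathrm{full}}_k .
\]

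\textbf{Step 5 (drop $B_N$).} Since $B_N=A_N+C_N+D_N\ge0$, we may discard $-B_N$. Rearranging then gives \eqref{eq:survival-alt}.

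The only point needing care is the index range. The ledger inequality is applied for $k\le N-1$, so $B_N$ appears and we need $B_N$ finite and nonnegative. This holds because $(u,p)$ is a suitable weak solution: $u\in L^\infty_tL^2_x\cap L^2_tH^1_x$ and $p\in L^{3/2}$ make $A_N,C_N,D_N$ finite.

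We also need every term of \eqref{eq:full-ledger} to be finite, so that the rearrangements are legitimate rather than formal manipulations of $\infty-\infty$. This is the mild obstacle. It follows from the same integrability of $u$ and $p$ on each $Q_k$, together with the bounded cutoffs $\phi_k$, which make $\Lambda_k$, $\Phi_k^{\mathrm{flux}}$ and $\Pi_k^{\mathrm{press}}$ finite.

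No further analytic input is required.
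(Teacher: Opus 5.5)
Your proof is correct and follows the paper's argument. The paper sums $M_k=B_{k+1}-(1-\lambda)B_k$ to get $B_N-B_0+\lambda\sum_k B_k\ge -B_0+\lambda\eps N$, bounds the same sum from above by \eqref{eq:full-ledger}, and passes to positive parts; this is your telescoping with $B_N\ge0$ discarded, and your finiteness remarks add care that the paper leaves implicit.
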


\begin{proof}
Let $M_k=B_{k+1}-(1-\lambda)B_k$.  Summing gives
\[
\sum_{k=0}^{N-1}M_k=B_N-B_0+\lambda\sum_{k=0}^{N-1}B_k
\ge -B_0+\lambda\eps N.
\]
On the other hand, \eqref{eq:full-ledger} implies
\[
\sum_{k=0}^{N-1}M_k
\le
\sum_{k=0}^{N-1}\bigl(\mathrm{Sup}^{\mathrm{full}}_k-
\mathrm{Tax}^{\mathrm{full}}_k\bigr)
+\sum_{k=0}^{N-1}\mathrm{Leak}^{\mathrm{full}}_k.
\]
Since $X\le X_+$ for every real number $X$, \eqref{eq:survival-alt} follows.
\end{proof}

The meaning of \eqref{eq:survival-alt} is the first central conclusion:
\[
\boxed{\text{long survival of critical badness forces positive average untaxed supply or leakage.}}
\]
In particular, if leakage has vanishing average along an infinite branch, then a persistent bad orbit requires positive-density untaxed supply.

\section{Hiding: NS-realizable defect packages and combined observability}\label{sec:hiding}

The ledger tells us that badness must be paid for.  The next question is how a paid mechanism can remain hidden.  The answer requires passing from raw scale quantities to equation-generated defect packages.

\subsection{Dyadic rescaling and coarse graining}

At a point $z_0=(0,0)$, let $r_n=2^{-n}$ and define the rescaled fields
\[
u^{(n)}(x,t)=r_nu(r_nx,r_n^2t),
\qquad
p^{(n)}(x,t)=r_n^2p(r_nx,r_n^2t).
\]
The CKN quantities are invariant under this scaling.

Let $S_\ell$ be an interior parabolic mollifier at scale $\ell$.  Define
\[
U_{n,\ell}=S_\ell u^{(n)},
\qquad
P_{n,\ell}=S_\ell p^{(n)},
\]
and the Reynolds defect
\[
R_{n,\ell}=S_\ell(u^{(n)}\otimes u^{(n)})-U_{n,\ell}\otimes U_{n,\ell}.
\]
Because $S_\ell$ is an averaging operator, $R_{n,\ell}$ is nonnegative as a quadratic form:
\[
R_{n,\ell}\ge0.
\]
The unresolved energy density is
\[
\kappa_{n,\ell}=\frac12\Tr R_{n,\ell}\ge0.
\]

The coarse-grained fields satisfy the exact identity
\begin{equation}\label{eq:coarse-momentum}
\partial_tU-\Delta U+\nabla\cdot(U\otimes U)+\nabla P=-\nabla\cdot R,
\qquad
\nabla\cdot U=0.
\end{equation}
Taking divergence gives pressure compatibility:
\begin{equation}\label{eq:pressure-compat}
-\Delta P=\partial_i\partial_j(U_iU_j+R_{ij}).
\end{equation}
The active pressure $P^{\mathrm{act}}$ is the chosen local solution generated by the source in \eqref{eq:pressure-compat}; the retained harmonic pressure is
\[
P^{\mathrm{har}}=P-P^{\mathrm{act}},
\qquad \Delta P^{\mathrm{har}}=0
\]
in the local pressure window.  Only functions of time are pure pressure gauge.  Spatially harmonic pressure is not discarded; it still contributes to momentum and pressure work.

The coarse-grained flux is
\[
\Pi_{n,\ell}=-R_{n,\ell}:\nabla U_{n,\ell}.
\]
Dotting \eqref{eq:coarse-momentum} with $U$ gives
\begin{align}
\partial_t\frac12|U|^2-\Delta\frac12|U|^2+|\nabla U|^2
+\nabla\cdot\left[\left(\frac12|U|^2+P\right)U+RU\right]
=-\Pi.
\label{eq:coarse-energy}
\end{align}
Thus the package
\[
D_{n,\ell}=(U_{n,\ell},P_{n,\ell};P^{\mathrm{act}}_{n,\ell},P^{\mathrm{har}}_{n,\ell},R_{n,\ell},\Pi_{n,\ell})
\]
is not an arbitrary finite-dimensional defect.  It is \emph{NS-realizable}: it comes from an actual suitable weak solution and satisfies \eqref{eq:coarse-momentum}, \eqref{eq:pressure-compat}, $R\ge0$, and \eqref{eq:coarse-energy}.  The coarse-grained flux and local energy-transfer viewpoint is closely related to the Constantin--E--Titi commutator method, Eyink's local energy-transfer formulation, and the Duchon--Robert defect-measure perspective
\cite{ConstantinETiti1994,Eyink1994,DuchonRobert2000}.

\subsection{Combined observation channels}

A finite observation window is a tuple
\[
W=(n,\ell,\Lambda,\chi,s_*),
\]
where $\Lambda$ is a finite-dimensional frequency or test window, $\chi$ is a cutoff, and $s_*$ is a selected time.  Let $Y_W$ denote the cleaned finite-dimensional constrained defect space obtained from the projected variables after imposing the finite-window versions of momentum, pressure compatibility, and flux identity, and after quotienting only exact gauge directions.

The combined observation map has four channels:
\[
\calO^{\mathrm{comb}}_W=(\calO^P_W,\calO^F_W,\calO^E_W,\calO^T_W).
\]
Here
\begin{itemize}[leftmargin=2em]
\item $\calO^P_W$ observes the active pressure source;
\item $\calO^F_W$ observes the linearized interscale flux;
\item $\calO^E_W$ observes the positive energy/dissipation channel;
\item $\calO^T_W$ observes selected-time trace directions, equivalently through the adjoint map $A_W^*$.
\end{itemize}
The hierarchy is
\[
\text{active pressure}\ \Rightarrow\ \text{flux}\ \Rightarrow\ \text{energy}\ \Rightarrow\ \text{adjoint trace}.
\]

\begin{definition}[Combined-invisible direction]
A nonzero $d\in Y_W$ is called combined-invisible in the window $W$ if
\[
\calO^P_Wd=\calO^F_Wd=\calO^E_Wd=\calO^T_Wd=0.
\]
It is called a true finite-window phantom if, in addition, it lies in the closure of cleaned projections of NS-realizable packages.
\end{definition}

This distinction is essential.  A formal finite-dimensional kernel is not automatically a Navier--Stokes obstruction.  It becomes dangerous only after intersection with the range of actual NS-derived packages.

\begin{proposition}[Fixed-window observability criterion]
For a fixed finite window $W$, the following are equivalent:
\begin{enumerate}[label=(\roman*)]
\item there is no nonzero combined-invisible direction in $Y_W$;
\item there exists a finite constant $M_W$ such that
\[
\norm{d}{Y_W}\le M_W\Bigl(
orm{\calO^P_Wd}{}+
orm{\calO^F_Wd}{}+
orm{\calO^E_Wd}{}+
orm{\calO^T_Wd}{}\Bigr)
\]
for all $d\in Y_W$.
\end{enumerate}
\end{proposition}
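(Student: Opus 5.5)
The argument is finite-dimensional linear algebra applied to the combined map
\[
\calO^{\mathrm{comb}}_W d=(\calO^P_Wd,\calO^F_Wd,\calO^E_Wd,\calO^T_Wd).
\]
We use two standing facts implicit in the setup. First, $Y_W$ is a finite-dimensional normed space; it is the cleaned, gauge-quotiented constrained defect space, so it is a linear subspace of a finite projection modulo a linear gauge subspace. Second, each channel $\calO^\bullet_W$ is a linear map from $Y_W$ into a finite-dimensional normed target. For the energy channel this means we use its linearized form, so that ``combined-invisible'' is a linear kernel condition. The key observation is that
\[
\ker\calO^{\mathrm{comb}}_W=\ker\calO^P_W\cap\ker\calO^F_W\cap\ker\calO^E_W\cap\ker\calO^T_W ,
\]
and the nonzero elements of this intersection are exactly the combined-invisible directions.

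\emph{(ii)$\Rightarrow$(i).} Suppose $d\in Y_W$ is combined-invisible. Then the right-hand side of (ii) vanishes, so $\norm{d}{Y_W}\le 0$. Hence $d=0$, which contradicts the requirement that a combined-invisible direction be nonzero.

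\emph{(i)$\Rightarrow$(ii).} Define
\[
N(d)=\norm{\calO^P_Wd}{}+\norm{\calO^F_Wd}{}+\norm{\calO^E_Wd}{}+\norm{\calO^T_Wd}{} .
\]
Each summand is a composition of a linear map with a norm, so $N$ is a seminorm on $Y_W$. Hypothesis (i) says $\ker\calO^{\mathrm{comb}}_W=\{0\}$, and therefore $N$ is a norm. All norms on a finite-dimensional space are equivalent, so there is $M_W<\infty$ with $\norm{d}{Y_W}\le M_W N(d)$.

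For a concrete constant, consider the unit sphere of $Y_W$, which is compact because $Y_W$ is finite-dimensional. The map $N$ is continuous there, since linear maps between finite-dimensional spaces are bounded. By (i), $N$ is strictly positive on the sphere, so it attains a minimum $m_W>0$. Take $M_W=m_W^{-1}$ and extend the inequality to all of $Y_W$ by homogeneity.

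The only point needing care is the first standing fact together with the linearity of the channels. On the quotient by exact gauge directions, the norm on $Y_W$ must be the quotient norm, which is a genuine norm because the gauge subspace is closed, as is every subspace in finite dimensions. Each $\calO^\bullet_W$ must also be well defined on the quotient, meaning it annihilates pure gauge (functions of time in the pressure). The definition of $Y_W$ as the space obtained ``after quotienting only exact gauge directions'' is what guarantees this. Once these points are granted, both implications are routine. The constant $M_W$ is not quantitative and may blow up as the window $W$ moves, which is why the paper keeps moving-window control of these constants as a separate target.
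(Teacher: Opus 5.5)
Your proof is correct and follows the paper's argument. The paper proves (ii)$\Rightarrow$(i) by evaluating the estimate on an invisible direction, and (i)$\Rightarrow$(ii) by the positive minimum of the summed observation norms on the compact unit sphere of the finite-dimensional space $Y_W$. You additionally make explicit that the four channels are linear (hence continuous and homogeneous), which the paper's one-line proof uses tacitly when extending the sphere estimate to all of $Y_W$.
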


\begin{proof}
This is the compactness of the unit sphere in the finite-dimensional space $Y_W$.  If a nonzero invisible direction exists, the estimate fails.  If not, the sum of observation norms is a strictly positive continuous function on the unit sphere, hence has a positive minimum.
\end{proof}

The moving-window problem is the question of whether the constants $M_W$ can be controlled along a sequence of windows $W_n$ selected from a possible singular branch.  Non-effective growth of these constants is itself an obstruction.

\section{Realization: excluding formal phantoms}\label{sec:realization}

The third audit test is realizability.  Many algebraic phantoms are artifacts of the chosen finite-dimensional model.  They are not dangerous unless they can be produced by the Navier--Stokes equation.

\begin{definition}[NS-realizable finite-window direction]
A finite-window direction $d\in Y_W$ is NS-realizable if there exists a sequence of suitable weak solutions, dyadic scales, coarse-graining parameters, and cleaning choices whose projected normalized packages converge to $d$ in $Y_W$.
\end{definition}

The NS-realizability filter has three parts:
\begin{enumerate}[label=(\roman*)]
\item the direction must satisfy the constrained coarse momentum equation;
\item its pressure must be compatible with the active source and retained harmonic pressure;
\item its stress and flux must come from a nonnegative Reynolds covariance and the identity $\Pi=-R:\nabla U$.
\end{enumerate}

This is why the defect package is stronger than a formal envelope.  A formal left singular vector of an observation map might be invisible, but it may fail pressure compatibility, positivity, local energy balance, or vertical momentum compatibility.  Such a vector is a model phantom, not an NS obstruction.

\section{Reproduction: from invisible directions to cascade mechanisms}\label{sec:reproduction}

A single-scale invisible direction is still not enough.  A singularity is a scale-recursive phenomenon.  The relevant object is therefore a mechanism that reproduces.

\begin{definition}[PRV mechanism]
A finite-window mechanism is called profitable, reproducible, and verified, or PRV, if it has the following properties:
\begin{enumerate}[label=(\roman*)]
\item \textbf{Verified:} the projected package satisfies the finite-window Navier--Stokes residual constraints up to an error below the window tolerance;
\item \textbf{Cleaned:} it is not a pressure gauge, cutoff artifact, harmonic leakage artifact, or finite-projection tail;
\item \textbf{Invisible:} the combined observation strength is below the window tolerance;
\item \textbf{Profitable:} its ledger profit $\mathrm{Sup}-\mathrm{Tax}$ has positive averaged contribution after leakage is removed;
\item \textbf{Reproducible:} the package transfers from one scale to the next with small reproduction residual.
\end{enumerate}
\end{definition}

The key point is that a serious bad mechanism cannot be merely static.  It must be dynamically profitable and repeatable.  In this sense, the singularity problem becomes a recurrence problem in a scale-critical state space.

\section{The one-component test and harmonic pressure}

The one-component degeneration problem is a special but important audit test.  It is connected to one-component and one-entry regularity criteria for the three-dimensional Navier--Stokes equations
\cite{KukavicaZiane2006,CaoTiti2011,CheminZhang2016,CheminZhangZhang2017,HanLeiLiZhao2019,KangNguyen2023}.  Suppose
\[
\Phi(1)=A(1)+E(1)+C(1)+D(1)\le M
\]
and
\[
C_3(1)=\int_{Q_1}|u_3|^3\dxdt
\]
is small.  The expected limiting class has
\[
V=(V_h,0),\qquad \divh V_h=0,
\]
and solves the strict two-and-a-half-dimensional system
\[
\partial_tV_h-\Delta V_h+(V_h\cdot\nabh)V_h+\nabh Q=0,
\qquad
\partial_3Q=0.
\]
This is not the usual two-dimensional Navier--Stokes system, because $V_h$ may still depend on $x_3$ and the full Laplacian remains present.

The pressure topology is the decisive point.  One should not demand $p\to Q$ strongly.  The correct local comparison is
\[
p\approx Q+h,
\qquad
\Delta h(\cdot,t)=0.
\]
Thus the natural excess is measured in the harmonic-pressure quotient:
\[
X^{\mathrm{har}}_\rho(u,p;M)=
\inf_{(V,Q)}\inf_{h:\Delta h=0}
\left[\rho^{-2}\int_{Q_\rho}|u-V|^3\dxdt+
\rho^{-2}\int_{Q_\rho}|p-Q-h|^{3/2}\dxdt\right].
\]
The quotient is forced by local pressure decomposition: the Calderon--Zygmund part is compact under strong $L^3$ velocity convergence, while the local harmonic pressure can oscillate in time without being controlled by the same local formula \cite{SohrWahl1986,SereginLectureNotes,YuOneComponent2026}.

\begin{theorem}[Finite-scale one-component regularity; cited input from~\cite{YuOneComponent2026}]
\label[theorem]{thm:finite-scale-one-component-regularity}
For every $M\ge1$, there exist $\eps_*(M)>0$ and $\rho_*(M)>0$ such that every suitable weak solution in $Q_1$ satisfying
\[
\Phi(1)\le M,
\qquad
C_3(1)\le\eps_*(M),
\]
is regular in $Q_{\rho_*(M)}$.
\end{theorem}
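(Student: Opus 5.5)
The proof goes by contradiction and compactness, with the harmonic-pressure quotient $X^{\mathrm{har}}_\rho$ as the measure of closeness to the limiting class. The conclusion then follows from an $\varepsilon$-regularity theorem in that quotient.

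\textbf{Step 1 (contradiction setup).} Fix $M\ge1$ and suppose the statement fails. Then there are suitable weak solutions $(u_j,p_j)$ in $Q_1$ with $\Phi_j(1)\le M$ and $C_3^{(j)}(1)\to0$, and none of them is regular in $Q_{\rho}$ for a $\rho$ to be fixed below.

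\textbf{Step 2 (compactness).} The uniform bound on $A+E+C+D$, together with the local energy inequality \eqref{eq:local-energy}, gives Aubin--Lions compactness. After passing to a subsequence, $u_j\to V$ strongly in $L^3(Q_{3/4})$.

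The pressure needs more care. Split $p_j=p_j^{\mathrm{CZ}}+h_j$ with $\Delta h_j(\cdot,t)=0$. The Calder\'on--Zygmund part converges strongly in $L^{3/2}$, since $u_j\otimes u_j$ converges strongly in $L^{3/2}$. The harmonic part is only bounded in $L^{3/2}$, with interior smoothness in $x$. I will not try to make it converge in time; this is exactly why the quotient $X^{\mathrm{har}}$ is used.

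Because $u_{j,3}\to0$ in $L^3$, the limit has the form $V=(V_h,0)$ with $\divh V_h=0$. Passing to the limit in the momentum equation modulo harmonic pressure, the third component forces $\partial_3 Q=0$ after the harmonic part is absorbed. So $(V,Q)$ solves the strict $2\tfrac12$-dimensional system and lies in the limiting class. Consequently $X^{\mathrm{har}}_{3/4}(u_j,p_j;M)\to0$.

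\textbf{Step 3 (regularity of the limiting class).} I need uniform scale-critical decay for this class. The horizontal system has an $x_3$-independent pressure and full diffusion. Testing with $|V_h|^{q}V_h$-type multipliers, or using the 2D-like vorticity structure slice by slice, should give local $L^\infty$ bounds depending only on $M$. It then follows that $\rho^{-2}\int_{Q_\rho}|V|^3 \lesssim_M \rho^{3}$ for small $\rho$.

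\textbf{Step 4 ($\varepsilon$-regularity in the quotient).} Choose $\rho$ so that the limiting contribution lies below $\eps_{\CKN}/4$ at scale $\rho$. Then bound $C_j(\rho)$ by the $L^3$ distance from $u_j$ to $V$ plus the contribution of $V$ itself. The term $D_j(\rho)$ has an uncontrolled harmonic part $h_j$. This is handled by harmonic decay: the oscillation of $h_j$ at scale $\rho$ satisfies $\rho^{-2}\int_{Q_\rho}|h_j-(h_j)_{B_\rho}|^{3/2}\lesssim \rho^{3/2}M$, so it can be made small by choosing $\rho$ small rather than by convergence. With these bounds $\Psi_j(\rho)\le\eps_{\CKN}$ for large $j$, and the CKN criterion gives regularity in $Q_{\rho/2}$. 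This contradicts Step 1, with $\rho_*(M)=\rho/2$.

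\textbf{Main obstacle.} The hard part is Step 3 combined with the limit passage in Step 2. Uniform regularity of the strict $2\tfrac12$-dimensional system, whose solutions may depend on $x_3$, is not classical, and the pressure identification $\partial_3Q=0$ holds only modulo harmonic functions that oscillate in time. I would first try to establish a weak-strong local energy inequality for $(V,Q)$ together with the horizontal enstrophy structure. If that fails, I would import the result directly from~\cite{YuOneComponent2026}, as the statement itself indicates.
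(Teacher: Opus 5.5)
The paper does not prove this statement. It imports it from \cite{YuOneComponent2026}, so your fallback of ``importing the result'' is exactly the paper's move, and as a proof it is circular. Judged on its own, your outline has a genuine gap at Step~3, and that step is the whole theorem. Steps~1, 2 and~4 are standard CKN compactness bookkeeping, so everything rests on regularity of the limiting class, which you assert (``should give'') rather than prove.

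Neither suggested route works as stated.
\begin{itemize}
\item Testing with $|V_h|^qV_h\phi$ produces $\int Q\,V_h\cdot\nabla_h(|V_h|^q)\,\phi$ and $\int Q\,|V_h|^qV_h\cdot\nabla_h\phi$. The condition $\partial_3Q=0$ does not remove them, since $\nabla_h\cdot(|V_h|^qV_h)=V_h\cdot\nabla_h|V_h|^q\neq0$. With $Q$ only in $L^{3/2}$ they are uncontrolled; this is precisely the obstruction to local $L^q$ bounds for three-dimensional Navier--Stokes.
\item A slice-by-slice two-dimensional argument fails because no slice $\{x_3=\mathrm{const}\}$ solves 2D Navier--Stokes: $\partial_3^2V_h$ couples the slices and the pressure is shared across them.
\item The vertical vorticity $\omega_3=\partial_1V_2-\partial_2V_1$ does satisfy the pressure-free equation $\partial_t\omega_3-\Delta\omega_3+V_h\cdot\nabla_h\omega_3=0$. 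But it gives no control of $W=\partial_3V_h$, which is genuinely stretched: differentiating in $x_3$ and using $\partial_3Q=0$ gives $\partial_tW-\Delta W+(V_h\cdot\nabla_h)W+(W\cdot\nabla_h)V_h=0$.
\end{itemize}
A real proof has to exploit these two pressure-free structures, each step justified at the level of suitable weak solutions. One possible chain: local boundedness of $\omega_3$ for the divergence-free drift $V$, then a Gronwall-type local energy estimate for $W$, then a local Serrin-type criterion. Alternatively, use that the limit is a suitable weak solution of the full 3D system with $V_3\equiv0$, and supply an interior one-component regularity criterion. Neither is in the proposal.

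Two further points.
\begin{itemize}
\item \textbf{Quantifier order.} As written the quantifiers are circular. $\rho$ is fixed in Step~1 but chosen in Step~4 from the limit $V$, which depends on the sequence built for that $\rho$. This only works if Step~3 gives bounds uniform in $M$. You can avoid needing uniformity: negate with $\eps_*=\rho_*=1/j$, so $u_j$ is not regular in $Q_{1/j}$. If the limit is bounded near the origin, pick $r_0$ (depending on $V$) so that three quantities are small: the cubic quantity $C_V(r_0)$ of the limit; the scale-$r_0$ oscillation of the limiting Calder\'on--Zygmund pressure; and the harmonic contribution, which your interior decay bound already controls. The Calder\'on--Zygmund term, which you omit from $D_j(\rho)$, needs a further near/far splitting because it is generated by $V\otimes V$ on the whole support of the cutoff. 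Then $\Psi_j(r_0)\le\eps_{\CKN}$ for large $j$, so $u_j$ is regular in $Q_{r_0/2}\supset Q_{1/j}$, a contradiction. This reduces the theorem to \emph{qualitative} regularity of the limiting class.
\item \textbf{The pressure is not the obstacle.} The mean-normalized $p_j$ are bounded in $L^{3/2}$, so along a subsequence $p_j\rightharpoonup Q$ weakly. The third momentum equation then gives $\partial_3Q=0$ exactly, not merely modulo harmonic functions. Also $p_ju_j\rightharpoonup QV$ (weak times strong), so the local energy inequality passes to the limit. The quotient $X^{\mathrm{har}}$ is needed only to measure strong closeness to the limiting class, not for this contradiction argument.
\end{itemize}
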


This theorem is used here as an external one-component regularity input, not as a result proved in the present article.  It gives a local answer to the audit question.  If the vertical component is too small, the solution cannot sustain a fully three-dimensional bad recurrence at the unit scale.  If one seeks a quantitative logarithmic or power rate, the remaining obstruction is not compactness itself but the strict-shadow selection, Schur visibility, and vertical-duality problem; see~\cite{YuStrict2026,YuSchur2026}.

\subsection{Vertical duality as an anti-phantom principle}

In the strict-shadow reduction, finite-window trace-cost duality has the following form.  Let
\[
A:H\to Y
\]
be the active trace-defect map between finite-dimensional Hilbert spaces.  For an active residual $g\in Y$, define
\[
\mathrm{Cost}_{\mathrm{tr}}(g)=\inf\{\norm{\xi}{H}^2:A\xi=-g\}.
\]
Then
\[
\mathrm{Cost}_{\mathrm{tr}}(g)^{1/2}
=
\sup_{A^*y\ne0}\frac{|\pair{g}{y}|}{\norm{A^*y}{H}},
\]
with value $+\infty$ if $g\notin\Range A$.

Thus the active residual is removable at small trace cost precisely when
\[
|\pair{g}{y}|\le r\norm{A^*y}{H}
\qquad\text{for every }y\in Y^*.
\]
The vertical-duality target says that residuals produced by the full Navier--Stokes vertical momentum equation do not excite strict phantom quotient directions.  This is the one-component version of the broader anti-phantom problem.

\section{Old observables, no-rate branches, and Schur visibility}

A second diagnostic result is negative.  The standard old observable package--scale quantities, harmonic-pressure quotient distance, covariance stress, unresolved variance, good-time trace functionals, finite-window residuals, and trace cost--does not by itself force a logarithmic or power selected-trace rate.  The strict-shadow and Schur-visibility terminology used in this diagnostic layer follows the related one-component modules \cite{YuStrict2026,YuSchur2026}.

In an abstract old trace skeleton, one can construct branches with arbitrarily slow selected-trace convergence while all fixed finite-window residuals vanish.  This is not a Navier--Stokes counterexample.  It is an insufficiency theorem: old observables alone do not prove the desired quantitative selection estimate.

The obstruction separation is as follows:
\begin{enumerate}[label=(\roman*)]
\item parabolic trace-drop removes elementary high-frequency escape, provided drop times intersect admissible good times;
\item fixed-window analyticity removes ordinary finite-dimensional arbitrary-slow behavior through Lojasiewicz-type control;
\item the surviving obstruction is an all-order finite-mode flat branch with potentially non-summable finite-stage constants;
\item the Schur trace-projectability problem asks whether this residual branch is a true NS obstruction or only a strict-shadow phantom;
\item relaxed vertical-pressure visibility and vertical duality are the NS-specific mechanisms expected to eliminate the surviving phantom.
\end{enumerate}

This module explains why the audit needs the realization and anti-phantom filters.  Formal invisibility is not enough; the hidden object must be NS-realizable and must remain invisible after all compatibility relations among pressure, flux, energy, and trace are imposed.

\section{Conceptual singularity-audit alternative}\label{sec:conceptual-audit}

We first state the audit conclusion in a conceptual form.  This result explains
what a persistent non-CKN branch would have to become if leakage, visible supply,
and moving-window growth losses were controlled.  It is not the main algebraic
transfer theorem; the fully quantified local-to-clean estimate is proved later in
\Cref{sec:main-reduction}.  The present alternative is aligned with the
defect-cascade reduction viewpoint developed in~\cite{YuInvisible2026}; the
one-component strict-shadow and Schur-visibility modules are cited separately
in~\cite{YuStrict2026,YuSchur2026}.

\begin{assumption}[Dyadic defect extraction]
\label[assumption]{ass:dyadic-defect-extraction}
Every persistent non-CKN branch with controlled scale-critical size admits, after passing to a moving sequence of windows $W_n$, normalized cleaned NS-realizable finite-window defect directions $d_n\in Y_{W_n}$.
\end{assumption}

\begin{assumption}[Observable depletion]
\label[assumption]{ass:observable-depletion}
There is a depletion functional $\mathcal B_n\ge0$ and an observation strength $\mathfrak O_n(d_n)$ such that whenever $d_n$ is not close to the combined-invisible NS-realizable kernel,
\[
\mathcal B_n-\mathcal B_{n+1}
\ge c\,\omega_n\mathfrak O_n(d_n)-e_n,
\]
where the weights $\omega_n$ are not summably lost and $\sum e_n<\infty$ along controlled branches.
\end{assumption}

\begin{assumption}[Moving-window growth control]
\label[assumption]{ass:moving-window-growth-control}
The observability constants and cleaning constants associated with $W_n$ grow slowly enough to be absorbed by the scale gains in the finite-window argument.
\end{assumption}

\begin{assumption}[Anti-phantom or NS-realizability exclusion]
\label[assumption]{ass:anti-phantom-exclusion}
There is no nonzero cleaned NS-realizable defect cascade which is simultaneously scale-critical, combined-invisible, profitable, and reproducible along the selected moving windows.
\end{assumption}

\begin{theorem}[Conditional singularity-audit alternative]
\label[theorem]{thm:singularity-audit-alternative}
Let $(u,p)$ be a suitable weak solution near $z_0$.  Suppose a dyadic branch based at $z_0$ remains outside the CKN smallness regime for all sufficiently small scales.  Assume \Cref{ass:dyadic-defect-extraction,ass:observable-depletion,ass:moving-window-growth-control}.  Assume also that the depletion budget in \Cref{ass:observable-depletion} is bounded from below along the controlled branch, and that any moving-window subsequence whose visible observation strength tends to zero and whose reproduction residual is within the selected tolerance admits a cleaned NS-realizable cascade limit.  Then at least one of the following alternatives holds:
\begin{enumerate}[label=(\alph*)]
\item accumulated leakage is non-negligible along the branch;
\item the branch generates positive average untaxed critical supply which is not depleted by the available taxes;
\item moving-window observability is non-effective, in the sense that the finite-window constants grow too fast to close the argument;
\item there exists an NS-realizable, cleaned, scale-critical, combined-invisible, profitable and reproducible defect cascade;
\item in the one-component degeneration regime, the remaining obstruction is a vertical-duality or anti-phantom failure for the active residual quotient.
\end{enumerate}
Consequently, if leakage is negligible, visible supply is uniformly taxed or observably depleted, moving-window growth is controlled, and NS-realizable combined-invisible PRV cascades are excluded as in \Cref{ass:anti-phantom-exclusion}, then the branch must enter a CKN scale and the point is regular.
\end{theorem}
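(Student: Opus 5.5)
The plan is to argue by contradiction: suppose none of the alternatives (a)--(e) holds, and show that the dyadic branch is then forced into the CKN regime. Let $\Psi(z_0,r_k)>\eps_{\CKN}$ for all large $k$. Since $B_k\ge C_k$, the lower bound can be transferred to the reservoir, possibly after using \eqref{eq:pressure-decay} to handle the case where $D$ is large and $C$ small. This gives $B_k\ge\eps$ for a fixed $\eps>0$ along the whole branch. Applying \Cref{thm:finite-scale-survival-alternative} on the windows $0\le k\le N-1$ and dividing by $N$ yields
\[
\frac1N\sum_{k=0}^{N-1}\bigl(\mathrm{Sup}^{\mathrm{full}}_k-\mathrm{Tax}^{\mathrm{full}}_k\bigr)_+
\ge \lambda\eps-\frac{B_0}{N}-\frac1N\sum_{k=0}^{N-1}\mathrm{Leak}^{\mathrm{full}}_k .
\]
The negation of (a) means the average leakage tends to zero. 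Hence the untaxed supply has positive lower density $\ge\lambda\eps/2$ for large $N$.

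Next we use the depletion structure to eliminate supply that is visible. Negating (b) means that this positive-density supply is depleted by the available taxes in the sense of \Cref{ass:observable-depletion}. By \Cref{ass:dyadic-defect-extraction}, the profitable windows carry normalized cleaned NS-realizable directions $d_n\in Y_{W_n}$. If infinitely many of these $d_n$ stayed a fixed distance from the combined-invisible kernel, then \Cref{ass:observable-depletion} would give
\[
\sum_n\bigl(\mathcal B_n-\mathcal B_{n+1}\bigr)\ge c\sum_n\omega_n\mathfrak O_n(d_n)-\sum_n e_n .
\]
Negating (c) lets us use \Cref{ass:moving-window-growth-control} together with the fixed-window observability criterion. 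This bounds $\mathfrak O_n(d_n)$ below uniformly, up to constants absorbed by the scale gains. Because the weights $\omega_n$ are not summably lost, the right-hand side diverges. The left-hand side telescopes and is bounded by the assumed lower bound on the depletion budget, which is a contradiction. Therefore the visible observation strength $\mathfrak O_n(d_n)$ must tend to zero along the profitable subsequence.

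On that subsequence the reproduction residual is within tolerance, since profitability is persistent from scale to scale. The hypothesis of the theorem then supplies a cleaned NS-realizable cascade limit. By construction this limit is scale-critical, because of the normalization and $B_k\ge\eps$. It is combined-invisible because the observation strength tends to zero. It is profitable because of the positive-density supply, and reproducible by the tolerance condition. So it is a PRV cascade, which is alternative (d). In the special case where $C_3$ degenerates, one cannot simply cite \Cref{thm:finite-scale-one-component-regularity}. In that regime the cascade limit lies in the strict-shadow class, and its invisibility is by the trace-cost duality exactly a failure of the vertical-duality inequality; this is (e). Negating (d) and (e) is precisely \Cref{ass:anti-phantom-exclusion}, so the contradiction shows that some $\Psi(z_0,r_k)\le\eps_{\CKN}$, and the CKN criterion gives regularity. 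That is the ``consequently'' clause.

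I expect the main obstacle to be the passage from ``$\mathfrak O_n(d_n)\to0$ on profitable windows'' to a genuine limiting cascade that keeps all five PRV properties. Normalization must survive in the limit, so that the limit is nonzero, and profit must not be lost through leakage along the way. I would handle this by taking the limits of the profitable windows and of the directions $d_n$ along the same diagonal subsequence. This works because the assumed cascade-limit hypothesis is stated for exactly such subsequences. A second, more routine point is getting $B_k\ge\eps$ from $\Psi>\eps_{\CKN}$. Here I would use a two-scale argument with \eqref{eq:pressure-decay}: if $C_k$ stays small while $D_k$ is large, then $D$ decays geometrically, which contradicts non-CKN persistence.
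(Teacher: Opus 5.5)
Your proposal is essentially the paper's own conditional exclusion argument: the non-CKN branch gives a reservoir lower bound, and the survival alternative forces untaxed supply or leakage. Observable depletion, with a lower-bounded budget and summable $e_n$, then rules out non-summable visible observation unless the moving-window constants degenerate, and the near-kernel extraction hypothesis produces the PRV cascade in (d), with (e) recorded separately for the one-component channel. Two side remarks should be tidied: $B_k=A_k+C_k+D_k\ge C_k+D_k=\Psi_k>\eps_{\CKN}$ gives the lower bound at once, so the pressure-decay detour is unnecessary, and persistence of profit does not by itself put the reproduction residual within tolerance (the paper credits reproducibility to the scale-transfer residual instead, though it does not justify that step in detail either).
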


\begin{proof}
The argument is a conditional exclusion proof.  If the branch remains outside the CKN basin, then the reservoir badness $B_k$ stays bounded below along the selected subchain.  By \Cref{thm:finite-scale-survival-alternative}, cumulative untaxed supply or leakage must occur.  If leakage is non-negligible, alternative (a) holds.  If the remaining positive average supply is not taxed or depleted, then alternative (b) holds.  Otherwise the visible part of the supply produces observation strength in the pressure, flux, energy, or trace channels.  \Cref{ass:observable-depletion}, together with the lower bound on the depletion budget and the summability of the errors $e_n$, forbids a non-summable amount of visible observation along a controlled branch.  Hence, unless the moving-window constants lose effectiveness as in alternative (c), one may pass to a subsequence whose normalized packages approach the combined-invisible NS-realizable kernel.  The near-kernel extraction hypothesis in the theorem statement then produces a cleaned scale-critical cascade; the scale-transfer residual gives reproducibility, and the positive average supply inherited from the ledger gives profitability.  This is alternative (d).  In the one-component degeneration channel, failure of the corresponding vertical-duality closure is recorded separately as alternative (e).  If all alternatives (a)--(e) are excluded, the assumed non-CKN branch cannot persist, so a CKN scale occurs.
\end{proof}

This theorem gives the promised answer:
\[
\boxed{\begin{gathered}
\text{a potential singularity is either paid by untaxed supply/leakage}\\
\text{or becomes an NS-realizable invisible recurrence.}
\end{gathered}}
\]

\section{What remains open}

The reduction turns the broad question ``Can a singularity occur?'' into a shorter list of theorem targets.

\begin{problem}[Uniform taxation or observable depletion]
Show that every NS-realizable critical supply is uniformly taxed, or else produces pressure/flux/energy/trace activity which depletes a finite local budget, up to summable leakage.
\end{problem}

\begin{problem}[Finite-window NS-realizability exclusion]
In each controlled finite window, prove that the combined-invisible kernel contains no nonzero direction lying in the closure of NS-realizable cleaned defect packages.
\end{problem}

\begin{problem}[Vertical-duality and anti-phantom closure]
For one-component strict-shadow degeneration and for the more general PFET quotient, prove that active residuals generated by the Navier--Stokes equation lie in the trace-defect range with small trace cost.  Equivalently, prove the dual estimate
\[
|\pair{g}{y}|\le r\norm{A^*y}{}
\]
for all reduced active dual directions.
\end{problem}

\begin{problem}[Moving-window growth control]
Control the growth of finite-window observability, cleaning, pressure-transfer, and trace-cost constants along scale-selected windows strongly enough to prevent non-summable escape.
\end{problem}

\begin{problem}[Critical recurrence diagnostics]
Develop diagnostics for the three remaining sustaining mechanisms: vortex-stretching production, active pressure work, and coarse-grained interscale flux.  Show either that one of these must be visibly depleting, or that a sign-coherent hidden recurrence can be constructed.
\end{problem}

\section{The singularity-audit conclusion}

The main conclusion is not that singularities have been ruled out.  It is that a singularity, if it exists within this framework, must be far more organized than a generic energy or pressure concentration.  It must pay through untaxed critical supply or leakage; it must hide from active pressure, flux, energy, and trace observations; it must be realized by the exact structures of the Navier--Stokes equation; and it must reproduce across scales as a profitable recurrence.

Thus the problem has been transformed from
\[
\text{does a Navier--Stokes singularity exist?}
\]
into the more structured question
\[
\text{does there exist an NS-realizable, cleaned, combined-invisible, profitable critical recurrence?}
\]
A positive regularity route must prove that this final object is impossible.  A counterexample route must construct it while preserving pressure compatibility, Reynolds positivity, local energy inequality, and scale-to-scale reproduction.  Either direction would be highly informative.  The value of the audit framework is that it identifies exactly what remains to be proved or constructed.

\section{From singularity audit to local-to-clean transfer}\label{sec:audit-to-transfer}

The audit part of the paper identifies the last dangerous object: a cleaned,
NS-realizable, profitable, reproducible, combined-invisible defect cascade.  To
make this statement usable, one must compare two finite-window descriptions.
The localized description is the one produced directly from Navier--Stokes: it
contains cutoffs, harmonic pressure tails, projection tails, recentring choices,
and moving-window leakage.  The clean description is a finite model in which a
quotient anti-phantom gap can be tested without those local artifacts.

The rest of the paper proves the following transfer principle at the finite-window
level:
\[
\boxed{
\begin{gathered}
\text{clean quotient gap}
+\text{ quotient lifting}
+\text{ detection comparison}
+\text{ residual absorption} \\
\Longrightarrow
\text{localized non-gauge defects are quantitatively detected.}
\end{gathered}}
\]
Thus the first half of the paper answers \emph{what} a surviving obstruction must
look like, while the second half proves \emph{how} a clean anti-phantom gap would
be transported back to localized Navier--Stokes packages.  The proof is kept
finite-window throughout: no scale-uniform conclusion is claimed unless the
corresponding compatibility and growth controls are explicitly assumed.

\section{Local-to-clean finite-window preliminaries}\label{sec:lct-preliminaries-full}\label{sec:lct-preliminaries}

\subsection{Navier--Stokes scaling and local cylinders}

Let \(z_0=(x_0,t_0)\) and let
\[
    \Q_r(z_0)=B_r(x_0)\times(t_0-r^2,t_0).
\]
For a dyadic radius \(r_k=2^{-k}r_0\), define the rescaled fields
\[
    u^{(k)}(y,s)=r_k u(x_0+r_k y,t_0+r_k^2s),
    \qquad
    p^{(k)}(y,s)=r_k^2 p(x_0+r_k y,t_0+r_k^2s).
\]
The standard scale-critical quantities are invariant under this normalization.
This manuscript uses the normalization only as a finite-scale bookkeeping
device; no regularity conclusion is inferred from non-smallness alone.

\subsection{Suitable weak solutions}

Throughout, \((u,p)\) is assumed to be a suitable weak solution on the relevant
local cylinder.  Thus \(u\) is divergence-free, solves the Navier--Stokes
equations distributionally, belongs to the local energy class, and satisfies
the local energy inequality.  The pressure is understood modulo additive
functions of time, which are harmless after subtracting spatial means.

\subsection{Localized and clean defect spaces}

The localized defect space is denoted by \(\calD_\Lambda^{\loc}\).  A typical
element has the form
\[
    \mathfrak D_\Lambda
    =
    (U_k,P_k,R_k,\Phi_k,\Pi_k,L_k,s_k)_{k\in\Lambda_{\rm sc}},
\]
where \(U_k\) is the projected velocity coordinate, \(P_k\) is the projected
pressure coordinate, \(R_k\) is a Reynolds-stress or covariance coordinate,
\(\Phi_k,\Pi_k,L_k\) are flux, pressure-transport, and leakage coordinates,
and \(s_k\) is a slack vector recording finite-scale ledger inequalities.

The clean defect space is denoted by \(\calD_\Lambda^{\cl}\).  It is a finite
periodic model space in which cutoff, boundary, and harmonic-pressure artifacts
have either been removed or explicitly represented as clean gauge directions.

\subsection{Detection maps}

The localized detection map is
\[
    \calF_\Lambda^{\loc}(\mathfrak D)
    =
    \left(
    O_\Lambda^{\loc}\mathfrak D,\,
    \calE_\Lambda^{\loc}(\mathfrak D),\,
    \Rep_\Lambda^{\loc}(\mathfrak D),\,
    [\Prof_\Lambda^{\loc}(\mathfrak D)]_+
    \right).
\]
The four components measure observability, finite-window Navier--Stokes
compatibility, scale reproduction, and positive ledger profit.  The clean
detection map \(\calF_\Lambda^{\cl}\) is the corresponding map on
\(\calD_\Lambda^{\cl}\).
Throughout the manuscript the detection norms are the weighted sum norms
\[
    \|\calF_\Lambda^{\bullet}(d)\|_{\bullet}
    =
    \|O_\Lambda^{\bullet}d\|
    +
    C_E\|\calE_\Lambda^{\bullet}(d)\|
    +
    C_R\Rep_\Lambda^{\bullet}(d)
    +
    [\Prof_\Lambda^{\bullet}(d)]_+,
    \qquad
    \bullet\in\{\loc,\cl\}.
\]

\subsection{Roadmap for the transfer proof}\label{subsec:transfer-roadmap}

The transfer proof has three moving parts.  First, \Cref{sec:localized-packages}
constructs and estimates the localized residual budgets.  Second,
\Cref{sec:chart} introduces the local-to-clean chart and the quotient-lifting
condition.  Third, \Cref{sec:detection-error,sec:main-reduction} combine the
component comparisons with the clean gap.  The following table records where the
main entries of the residual ledger are introduced.
\begin{center}
\renewcommand{\arraystretch}{1.12}
\begin{tabular}{p{0.26\textwidth}p{0.36\textwidth}p{0.28\textwidth}}
\toprule
\textbf{Residual entry} & \textbf{What it measures} & \textbf{Main location} \\
\midrule
Pressure budget & harmonic tails, cutoff--Riesz commutators, active-source mismatch, projection, mean, and periodization residuals & \Cref{subsec:pressure-transfer}--\Cref{subsec:pressure-periodization} \\
Localization budget & energy, flux, pressure-transport, and momentum errors produced by the cutoff & \Cref{subsec:energy-flux-localization}--\Cref{subsec:localization-budget} \\
Truncation and nonlinear budget & finite-window truncation loss, cutoff-flux mismatch, and chosen nonlinear remainders & \Cref{subsec:truncation-leakage,subsec:nonlinear-cutoff} \\
Reproduction, gauge, and profit budget & chart-commutation drift, gauge mismatch, and scalar profit discrepancy & \Cref{subsec:reproduction-drift,sec:chart,subsec:profit-comparison} \\
\bottomrule
\end{tabular}
\end{center}

\section{Localized Defect Packages}\label{sec:localized-packages}

\subsection{Finite-window package realization}

\begin{definition}[Localized package datum]
A localized package datum consists of a finite dyadic scale window
\(\Lambda_{\rm sc}\), localized projection spaces for velocity, pressure, and
stress, cutoff functions, residual target spaces, cleaning spaces, observation
spaces, reproduction drift spaces, and profit functionals.
\end{definition}

\begin{assumption}[Localized defect-package realization]
\label[assumption]{ass:localized-package-realization}
For every suitable weak solution in the chosen local cylinder, the localized
package datum produces a finite-dimensional element
\[
    \mathfrak D_\Lambda^{\loc}
    \in
    \calD_\Lambda^{\loc}
\]
whose coordinates are defined from the rescaled fields, the localized pressure
splitting, and the finite-scale ledger variables.
\end{assumption}

\begin{remark}
This assumption is a construction hypothesis, not a smallness statement.  It
does not assert that the package is close to a clean periodic package.
\end{remark}

\subsection{Pressure splitting and leakage}

For each localized scale, the pressure is decomposed schematically as
\[
    p^{(k)}=p_k^{\rm act}+p_k^{\rm harm},
\]
where \(p_k^{\rm act}\) is generated by a localized Calderon--Zygmund pressure
solve and \(p_k^{\rm harm}\) is harmonic on the core.  The pressure error
budget contains, at a minimum, harmonic tails and cutoff--Riesz commutators:
\[
    \Err_{\rm prs}
    =
    \Err_{\rm harm}
    +
    \Err_{\rm comm}.
\]

\subsection{Pressure-transfer datum}\label{subsec:pressure-transfer}

\begin{definition}[Pressure-transfer datum]
\label{def:pressure-transfer-datum}
A pressure-transfer datum for a finite window \(\Lambda\) assigns to each
localized scale \(k\in\Lambda_{\rm sc}\) the following objects:
\begin{enumerate}[label=(\roman*),leftmargin=2em]
    \item a smooth pressure cutoff \(\eta_k\) and a core cylinder
    \(Q_{{\rm core},k}\) on which \(\eta_k\equiv1\);
    \item a normalization of pressure representatives, for instance spatial
    mean zero on the core at each time;
    \item a clean pressure source coordinate \(F_{k}^{\cl}\) and a clean
    pressure projection \(\mathsf P_{{\rm prs},k}^{\cl}\);
    \item a finite-dimensional harmonic gauge space
    \(\mathcal H_{\Lambda,k}^{M}\) on the core and a bounded projection
    \[
        \Pi_{{\rm harm},M}:H_{\Lambda,k}\to\mathcal H_{\Lambda,k}^{M};
    \]
    \item a pressure observation norm \(\|\cdot\|_{Y_{{\rm prs},k}}\), a
    pressure source norm \(\|\cdot\|_{X_{{\rm src},k}}\), a harmonic-tail
    seminorm \(\|\cdot\|_{H_{\Lambda,k}}\), and a commutator seminorm
    \(\|\cdot\|_{P_{\Lambda,k}}\).
\end{enumerate}
All of these choices are part of the datum.  In particular, the pressure
observation norm and the harmonic projection are not canonical, and no
smallness or scale-uniform estimate is assumed in the definition.
\end{definition}

\begin{lemma}[Pressure mismatch decomposition]
\label{lem:pressure-mismatch-decomposition}
Fix a pressure-transfer datum.  Let
\[
    f_{k,ij}=u_i^{(k)}u_j^{(k)},
    \qquad
    F_{k,ij}^{\cl}=U_{k,i}U_{k,j}+R_{k,ij},
\]
and suppose that the normalized localized pressure satisfies on the core
\[
    p^{(k)}
    =
    R_iR_j(\eta_k f_{k,ij})
    +
    p_k^{\rm harm},
\]
where \(p_k^{\rm harm}\) is harmonic on \(Q_{{\rm core},k}\).  Define the clean
active pressure coordinate by
\[
    p_k^{\cl}
    :=
    \mathsf P_{{\rm prs},k}^{\cl}
    R_iR_j(F_{k,ij}^{\cl}).
\]
Then on the core,
\[
\begin{aligned}
    p^{(k)}-p_k^{\cl}
    &=
    p_k^{\rm harm}
    +
    \Bigl[
    R_iR_j(\eta_k f_{k,ij})
    -
    R_iR_j(f_{k,ij})
    \Bigr]  \\
    &\quad+
    R_iR_j(f_{k,ij}-F_{k,ij}^{\cl})
    +
    \Bigl[
    R_iR_j(F_{k,ij}^{\cl})
    -
    \mathsf P_{{\rm prs},k}^{\cl}R_iR_j(F_{k,ij}^{\cl})
    \Bigr].
\end{aligned}
\]
The four terms are, respectively, the harmonic pressure tail, the cutoff--Riesz
commutator contribution, the active source residual, and the clean active
projection residual.
\end{lemma}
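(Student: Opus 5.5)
This identity is purely algebraic: a telescoping add-and-subtract, with no analytic estimate involved. I will start from the assumed core representation
\[
    p^{(k)}=R_iR_j(\eta_k f_{k,ij})+p_k^{\rm harm}
\]
and subtract the definition \(p_k^{\cl}=\mathsf P_{{\rm prs},k}^{\cl}R_iR_j(F_{k,ij}^{\cl})\). This gives
\[
    p^{(k)}-p_k^{\cl}
    =p_k^{\rm harm}
    +R_iR_j(\eta_k f_{k,ij})
    -\mathsf P_{{\rm prs},k}^{\cl}R_iR_j(F_{k,ij}^{\cl}).
\]
Next I insert the two intermediate quantities \(R_iR_j(f_{k,ij})\) and \(R_iR_j(F_{k,ij}^{\cl})\), each once with a plus sign and once with a minus sign. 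The remainder then splits into three consecutive differences:
\begin{itemize}
    \item \([R_iR_j(\eta_kf)-R_iR_j(f)]\), the cutoff--Riesz commutator;
    \item \([R_iR_j(f)-R_iR_j(F^{\cl})]\), the active source residual;
    \item \([R_iR_j(F^{\cl})-\mathsf P^{\cl}R_iR_j(F^{\cl})]\), the clean projection residual.
\end{itemize}
I then use linearity of \(R_iR_j\) (summed over \(i,j\)) to rewrite the middle difference as \(R_iR_j(f_{k,ij}-F^{\cl}_{k,ij})\). This is exactly the displayed four-term formula.

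The only real care needed is to make sure each term is well defined, so that the insertion is legitimate. The point is that \(R_iR_j(f_{k,ij})\) and \(R_iR_j(F^{\cl}_{k,ij})\) are applied to sources that need not be compactly supported or globally integrable. I would handle this by reading every identity on the core \(Q_{{\rm core},k}\), at each fixed time. The Riesz transforms are interpreted in whatever functional setting the pressure-transfer datum (Definition~\ref{def:pressure-transfer-datum}) fixes, for instance on the periodized or clean model space, modulo functions of time. The normalization in item (ii) of that definition removes the additive-constant ambiguity.

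If some term is only defined modulo constants, I will state the identity modulo spatially constant functions of time; these are pure gauge per the preliminaries. This well-definedness bookkeeping, rather than the algebra, is the only potential obstacle, and I would flag it as an interpretive convention rather than prove anything new.
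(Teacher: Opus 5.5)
Your proposal is correct and is essentially the paper's proof: subtract the definition of $p_k^{\cl}$ from the assumed core splitting, insert $\pm R_iR_j(f_{k,ij})$ and $\pm R_iR_j(F^{\cl}_{k,ij})$, and use linearity of $R_iR_j$ to write the middle difference as $R_iR_j(f_{k,ij}-F^{\cl}_{k,ij})$. The paper simply treats the intermediate Riesz terms as supplied by the datum, so your well-definedness remarks are extra caution; do not fall back to an identity modulo functions of time, because the lemma asserts an exact equality on the core.
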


\begin{proof}
Starting from the pressure splitting, add and subtract
\[
    R_iR_j(f_{k,ij})
    \qquad\text{and}\qquad
    R_iR_j(F_{k,ij}^{\cl})
\]
between \(R_iR_j(\eta_k f_{k,ij})\) and
\(\mathsf P_{{\rm prs},k}^{\cl}R_iR_j(F_{k,ij}^{\cl})\).  This gives the
displayed identity.  The only PDE input used here is the stated active/harmonic
pressure decomposition on the core; the rest is algebraic bookkeeping.
\end{proof}

\begin{convention}[Default harmonic pressure gauge]
\label{conv:default-harmonic-pressure-gauge}
The default harmonic pressure gauge is the \(L^2(B_{1/2})\)-orthogonal
projection onto the finite-dimensional space of harmonic polynomials of degree
at most \(M\), restricted to \(B_{1/2}\).  More precisely, set
\[
    \mathcal H_M(B_{1/2})
    :=
    \{q|_{B_{1/2}}:\ q \text{ is a harmonic polynomial on }\R^3,
    \ \deg q\le M\}.
\]
The projected harmonic modes in \(\mathcal H_M(B_{1/2})\) are the default
finite harmonic pressure gauge directions.  The residual
\((I-\Pi_{{\rm harm},M})p^{\rm harm}\) is not discarded; it remains the
harmonic pressure-tail error.
\end{convention}

\begin{lemma}[Finite-dimensional harmonic gauge space]
\label{lem:finite-dimensional-harmonic-gauge-space}
For every integer \(M\ge0\), the space \(\mathcal H_M(B_{1/2})\) in
\Cref{conv:default-harmonic-pressure-gauge} is a finite-dimensional subspace
of \(L^2(B_{1/2})\).  In particular, it is closed in \(L^2(B_{1/2})\).
\end{lemma}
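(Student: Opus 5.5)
The plan is to show that \(\mathcal H_M(B_{1/2})\) is the image of a finite-dimensional vector space under a linear map, and then invoke the standard fact that finite-dimensional subspaces of a normed space are closed.

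First, consider the space \(\mathcal P_M\) of all real polynomials on \(\R^3\) of degree at most \(M\). It is spanned by the monomials \(x^\alpha\) with \(|\alpha|\le M\), so
\[
\dim\mathcal P_M=\binom{M+3}{3}<\infty .
\]
The harmonic polynomials of degree at most \(M\) form the kernel of the linear map \(\Delta:\mathcal P_M\to\mathcal P_{M-2}\). For \(M\le 1\) every element of \(\mathcal P_M\) is harmonic, so the kernel is all of \(\mathcal P_M\). In either case it is a linear subspace of \(\mathcal P_M\), hence finite-dimensional.

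Second, restriction \(q\mapsto q|_{B_{1/2}}\) is a linear map from polynomials into \(L^2(B_{1/2})\). It is well defined because polynomials are bounded on the bounded ball. Its image is exactly \(\mathcal H_M(B_{1/2})\), and a linear image of a finite-dimensional space is finite-dimensional. The map is in fact injective, since a polynomial vanishing a.e. on an open set is zero, but this is not needed for finite-dimensionality.

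Third, closedness follows from the standard fact that every finite-dimensional subspace \(V\) of a normed space is closed. The argument is to fix a basis \(e_1,\dots,e_m\) of \(V\). The coordinate map \(\R^m\to V\) is a linear bijection, and all norms on \(\R^m\) are equivalent, so \(V\) is complete in the inherited norm. A complete subspace of a metric space is closed, so any \(L^2\)-limit of a sequence in \(V\) is Cauchy in \(V\) and converges to an element of \(V\).

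None of these steps is a genuine obstacle. The only point needing any care is the norm-equivalence and completeness step. If one wants to avoid citing it, one can instead run a compactness argument on the unit sphere of \(\R^m\), showing that \(\|\sum_i c_i e_i\|_{L^2}\ge c|c|\) for some \(c>0\), which gives completeness directly. Closedness is what makes the orthogonal projection \(\Pi_{\mathrm{harm},M}\) of \Cref{conv:default-harmonic-pressure-gauge} well defined.
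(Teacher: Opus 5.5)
Your proof is correct and matches the paper's argument: \(\mathcal P_M(\R^3)\) is finite-dimensional, the harmonic polynomials form the kernel of \(\Delta:\mathcal P_M\to\mathcal P_{M-2}\), restriction to \(B_{1/2}\) yields a finite-dimensional subspace of \(L^2(B_{1/2})\), and finite-dimensional subspaces are closed. The only differences are that you add the dimension count and the completeness argument behind the closedness fact, which the paper simply cites.
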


\begin{proof}
Let \(\mathcal P_M(\R^3)\) denote the vector space of polynomials in three
variables of degree at most \(M\).  This space is finite-dimensional.  The
harmonic polynomials of degree at most \(M\) form the kernel of the linear map
\[
    \Delta:\mathcal P_M(\R^3)\to \mathcal P_{M-2}(\R^3),
\]
with the evident interpretation when \(M<2\).  Hence they form a
finite-dimensional subspace of \(\mathcal P_M(\R^3)\).  Restricting such
polynomials to \(B_{1/2}\) gives a finite-dimensional subspace of
\(L^2(B_{1/2})\).  Every finite-dimensional subspace of a Hilbert space is
closed.
\end{proof}

\begin{definition}[Harmonic pressure projection]
\label{def:harmonic-pressure-projection}
Let
\[
    \Pi_{{\rm harm},M}:L^2(B_{1/2})\to\mathcal H_M(B_{1/2})
\]
be the \(L^2(B_{1/2})\)-orthogonal projection onto
\(\mathcal H_M(B_{1/2})\).
\end{definition}

\begin{lemma}[Boundedness and best approximation of the harmonic projection]
\label{lem:harmonic-projection-best-approximation}
The map \(\Pi_{{\rm harm},M}\) is a bounded linear projection on
\(L^2(B_{1/2})\) with
\[
    \|\Pi_{{\rm harm},M}h\|_{L^2(B_{1/2})}
    \le
    \|h\|_{L^2(B_{1/2})}.
\]
Moreover, for every \(h\in L^2(B_{1/2})\),
\[
    \|(I-\Pi_{{\rm harm},M})h\|_{L^2(B_{1/2})}
    =
    \inf_{q\in\mathcal H_M(B_{1/2})}
    \|h-q\|_{L^2(B_{1/2})}.
\]
\end{lemma}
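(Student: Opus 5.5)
The argument is the standard Hilbert-space projection theorem applied to the closed subspace supplied by \Cref{lem:finite-dimensional-harmonic-gauge-space}. Write \(H=L^2(B_{1/2})\) and \(V=\mathcal H_M(B_{1/2})\). Since \(V\) is finite-dimensional, we may fix an \(L^2(B_{1/2})\)-orthonormal basis \(q_1,\dots,q_N\) of \(V\), obtained by Gram--Schmidt. The orthogonal projection of \Cref{def:harmonic-pressure-projection} is then given explicitly by
\[
    \Pi_{{\rm harm},M}h=\sum_{j=1}^N \langle h,q_j\rangle_{L^2(B_{1/2})}\,q_j .
\]
This formula is linear in \(h\). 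It also satisfies \(\Pi_{{\rm harm},M}q=q\) for \(q\in V\), hence \(\Pi_{{\rm harm},M}^2=\Pi_{{\rm harm},M}\), so it is a projection onto \(V\).

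The key structural fact is the orthogonality relation
\[
    h-\Pi_{{\rm harm},M}h\perp V .
\]
To verify it, pair \(h-\Pi_{{\rm harm},M}h\) with each \(q_j\): we get \(\langle h,q_j\rangle-\langle h,q_j\rangle=0\). Pythagoras then gives
\[
    \|h\|^2=\|\Pi_{{\rm harm},M}h\|^2+\|(I-\Pi_{{\rm harm},M})h\|^2 .
\]
This identity immediately yields the operator-norm bound \(\|\Pi_{{\rm harm},M}h\|\le\|h\|\), and in particular boundedness.

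For the best-approximation identity, take an arbitrary \(q\in V\) and decompose
\[
    h-q=(h-\Pi_{{\rm harm},M}h)+(\Pi_{{\rm harm},M}h-q).
\]
The second summand lies in \(V\), while the first is orthogonal to \(V\). Pythagoras therefore gives
\[
    \|h-q\|^2=\|(I-\Pi_{{\rm harm},M})h\|^2+\|\Pi_{{\rm harm},M}h-q\|^2\ge\|(I-\Pi_{{\rm harm},M})h\|^2 .
\]
Equality holds at \(q=\Pi_{{\rm harm},M}h\in V\), so the infimum over \(q\in V\) is attained and equals \(\|(I-\Pi_{{\rm harm},M})h\|\).

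There is no real obstacle here. The only point needing care is that the orthogonal projection is well defined, which requires \(V\) to be closed. That closedness is exactly what \Cref{lem:finite-dimensional-harmonic-gauge-space} provides, and finite-dimensionality moreover lets us avoid invoking the general projection theorem by using the explicit basis formula above.
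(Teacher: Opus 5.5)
Your proof is correct and follows essentially the same route as the paper: orthogonality of the residual \((I-\Pi_{{\rm harm},M})h\) to \(\mathcal H_M(B_{1/2})\) plus Pythagoras, with the infimum attained at \(q=\Pi_{{\rm harm},M}h\). The only difference is cosmetic: you write the projection explicitly through an orthonormal basis, whereas the paper invokes the Hilbert projection theorem and the standard fact that orthogonal projections have norm at most one.
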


\begin{proof}
Since \(\mathcal H_M(B_{1/2})\) is a closed subspace of the Hilbert space
\(L^2(B_{1/2})\), the Hilbert projection theorem gives a unique orthogonal
projection \(\Pi_{{\rm harm},M}\).  Orthogonal projections are linear,
idempotent, and have operator norm at most one, which gives the displayed
boundedness estimate.

For the best-approximation identity, decompose
\[
    h
    =
    \Pi_{{\rm harm},M}h
    +
    (I-\Pi_{{\rm harm},M})h,
\]
where the residual is orthogonal to \(\mathcal H_M(B_{1/2})\).  For any
\(q\in\mathcal H_M(B_{1/2})\), the Pythagorean identity gives
\[
    \|h-q\|_{L^2(B_{1/2})}^2
    =
    \|(I-\Pi_{{\rm harm},M})h\|_{L^2(B_{1/2})}^2
    +
    \|\Pi_{{\rm harm},M}h-q\|_{L^2(B_{1/2})}^2.
\]
The infimum is attained at \(q=\Pi_{{\rm harm},M}h\), proving the identity.
\end{proof}

\begin{lemma}[Default harmonic-gauge pressure splitting]
\label{lem:default-harmonic-gauge-pressure-splitting}
Let \(p^{\rm harm}\in L^2(B_{1/2})\) be a harmonic pressure tail on the
normalized core.  Under \Cref{conv:default-harmonic-pressure-gauge},
\[
    p^{\rm harm}
    =
    \Pi_{{\rm harm},M}p^{\rm harm}
    +
    (I-\Pi_{{\rm harm},M})p^{\rm harm}.
\]
The projected component is the default finite harmonic gauge part, and the
residual tail is measured by
\[
    \Err_{\rm harm-tail}^{(M)}
    :=
    \|(I-\Pi_{{\rm harm},M})p^{\rm harm}\|_{L^2(B_{1/2})}.
\]
It satisfies the best-approximation formula
\[
    \Err_{\rm harm-tail}^{(M)}
    =
    \inf_{q\in\mathcal H_M(B_{1/2})}
    \|p^{\rm harm}-q\|_{L^2(B_{1/2})}.
\]
\end{lemma}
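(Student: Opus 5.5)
The plan is to obtain the statement directly from \Cref{lem:finite-dimensional-harmonic-gauge-space}, \Cref{def:harmonic-pressure-projection}, and \Cref{lem:harmonic-projection-best-approximation}; no PDE input is needed.

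\textbf{Step 1: well-definedness.} Because \(p^{\rm harm}\in L^2(B_{1/2})\), it lies in the domain of \(\Pi_{{\rm harm},M}\). That operator is well defined, since \(\mathcal H_M(B_{1/2})\) is a closed (indeed finite-dimensional) subspace by \Cref{lem:finite-dimensional-harmonic-gauge-space}. Consequently \(\Pi_{{\rm harm},M}p^{\rm harm}\in\mathcal H_M(B_{1/2})\), and \((I-\Pi_{{\rm harm},M})p^{\rm harm}\in L^2(B_{1/2})\).

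\textbf{Step 2: the splitting.} The displayed decomposition is the trivial identity \(h=\Pi h+(I-\Pi)h\) applied to \(h=p^{\rm harm}\). Calling the first summand the ``default finite harmonic gauge part'' is simply \Cref{conv:default-harmonic-pressure-gauge}. I would also record that the two summands are \(L^2(B_{1/2})\)-orthogonal, which follows from the Hilbert projection theorem. Orthogonality is not required for the statement itself, but it makes the decomposition canonical.

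\textbf{Step 3: the best-approximation formula.} The formula for \(\Err^{(M)}_{\rm harm-tail}\) is exactly the second assertion of \Cref{lem:harmonic-projection-best-approximation} with \(h=p^{\rm harm}\). I would add that the infimum is attained uniquely at \(q=\Pi_{{\rm harm},M}p^{\rm harm}\).

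\textbf{Main point to check.} The only step that needs real attention is harmonicity: the hypothesis that \(p^{\rm harm}\) is harmonic plays no role in the argument beyond placing it in \(L^2(B_{1/2})\). I would note this explicitly, so that the quantity \(\Err^{(M)}_{\rm harm-tail}\) is understood as a bookkeeping definition, not an estimate. Any decay in \(M\), for instance from interior analyticity of harmonic functions, is a separate matter that is not claimed here.
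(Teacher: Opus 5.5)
Your proposal is correct and matches the paper's proof: the splitting is the identity \(I=\Pi_{{\rm harm},M}+(I-\Pi_{{\rm harm},M})\) on \(L^2(B_{1/2})\), the gauge labeling is \Cref{conv:default-harmonic-pressure-gauge}, and the best-approximation formula is \Cref{lem:harmonic-projection-best-approximation} with \(h=p^{\rm harm}\). Your extra remarks are accurate additions not spelled out in the paper's proof (though they agree with its status remark): orthogonality, uniqueness of the minimizer, and the fact that harmonicity plays no role beyond \(L^2\) membership.
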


\begin{proof}
The splitting is the identity
\[
    I=\Pi_{{\rm harm},M}+(I-\Pi_{{\rm harm},M})
\]
on \(L^2(B_{1/2})\).  The projected component belongs to
\(\mathcal H_M(B_{1/2})\) by definition of the projection, and
\Cref{conv:default-harmonic-pressure-gauge} declares precisely these modes to
be the default finite harmonic gauge directions.  The residual is retained as
the harmonic pressure-tail error.  The best-approximation formula follows from
\Cref{lem:harmonic-projection-best-approximation} with
\(h=p^{\rm harm}\).
\end{proof}

\begin{remark}[Status of harmonic pressure cleaning]
\Cref{lem:default-harmonic-gauge-pressure-splitting} proves only the
finite-dimensional \(L^2\)-orthogonal splitting and its best-approximation
property.  It does not prove that
\(\Err_{\rm harm-tail}^{(M)}\) is small as \(M\) is fixed, does not prove any
scale-uniform decay, and does not show that the chosen harmonic gauge is
compatible with every pressure observation channel.  If a harmonic pressure
direction represents a physical observable rather than a gauge direction, the
observation map must keep track of that direction.
\end{remark}

\begin{convention}[Default pressure observation norm]
\label{conv:default-pressure-observation-norm}
For the normalized pressure module on a finite time interval \(I\), the default
pressure observation norm is
\[
    \|q\|_{Y_{\rm prs}}
    :=
    \|q\|_{L^{3/2}(I;L^{3/2}(B_{1/2}))}.
\]
The harmonic residual norm is
\[
    \|r\|_{H_{\rm harm}}
    :=
    \|r\|_{L^2(I;L^2(B_{1/2}))}.
\]
\end{convention}

\begin{lemma}[Finite-measure pressure observation embedding]
\label{lem:finite-measure-pressure-observation-embedding}
Let \(I\subset\R\) be a finite time interval.  For every
\[
    r\in L^2(I;L^2(B_{1/2})),
\]
one has
\[
    \|r\|_{L^{3/2}(I;L^{3/2}(B_{1/2}))}
    \le
    |I|^{1/6}|B_{1/2}|^{1/6}
    \|r\|_{L^2(I;L^2(B_{1/2}))}.
\]
\end{lemma}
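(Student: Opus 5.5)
The estimate is Hölder's inequality on the finite-measure set \(I\times B_{1/2}\), applied with exponents chosen so that \(|r|^{3/2}\) is integrated against the constant function \(1\). Write \(\Omega=I\times B_{1/2}\), so that \(|\Omega|=|I|\,|B_{1/2}|\). The mixed norm \(L^{3/2}(I;L^{3/2}(B_{1/2}))\) has equal time and space exponents, so it coincides with the plain space-time norm \(L^{3/2}(\Omega)\) by Fubini. Likewise \(L^2(I;L^2(B_{1/2}))=L^2(\Omega)\). The claim therefore reduces to the scalar inequality
\[
\|r\|_{L^{3/2}(\Omega)}\le |\Omega|^{1/6}\|r\|_{L^2(\Omega)},
\]
because \(|\Omega|^{1/6}=|I|^{1/6}|B_{1/2}|^{1/6}\).

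To prove it, apply Hölder to \(\int_\Omega |r|^{3/2}\cdot 1\) with the conjugate pair \(p=4/3\) and \(p'=4\). Since \(|r|^{(3/2)(4/3)}=|r|^2\), this gives
\[
\int_\Omega |r|^{3/2}\le \Bigl(\int_\Omega |r|^2\Bigr)^{3/4}|\Omega|^{1/4}.
\]
Raising both sides to the power \(2/3\) yields \(\|r\|_{L^{3/2}(\Omega)}\le |\Omega|^{1/6}\|r\|_{L^2(\Omega)}\), since \((1/4)(2/3)=1/6\). As a check, the exponent agrees with the general rule \(1/q-1/p=2/3-1/2=1/6\).

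The only step needing care is the identification of the mixed-norm spaces with the space-time Lebesgue spaces. This holds because \(r\) is measurable on \(\Omega\) and the inner and outer exponents are equal, so Tonelli applies to \(|r|^{3/2}\) and to \(|r|^2\). There is no real obstacle. Finiteness of \(|I|\) and of \(|B_{1/2}|\) is exactly what makes the constant finite, and this is consistent with the paper's remark that the pressure-observation comparison is a finite-measure statement with no scale-uniform content.
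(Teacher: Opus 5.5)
Your proof is correct and is essentially the paper's argument: Hölder's inequality on a finite-measure set, giving the constant \(|I|^{1/6}|B_{1/2}|^{1/6}\). The only difference is that you use Tonelli to identify the equal-exponent mixed norms with space-time norms and apply Hölder once on \(I\times B_{1/2}\). The paper instead applies Hölder twice, first in space at almost every time and then in time, which avoids the identification step and would also work if the time and space exponents differed.
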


\begin{proof}
For almost every \(t\in I\), Holder's inequality on the finite-measure set
\(B_{1/2}\) gives
\[
    \|r(t,\cdot)\|_{L^{3/2}(B_{1/2})}
    \le
    |B_{1/2}|^{1/6}
    \|r(t,\cdot)\|_{L^2(B_{1/2})}.
\]
Taking the \(L^{3/2}(I)\) norm and applying Holder's inequality on the
finite-measure set \(I\) gives
\[
\begin{aligned}
    \|r\|_{L^{3/2}(I;L^{3/2}(B_{1/2}))}
    &\le
    |B_{1/2}|^{1/6}
    \|\,\|r(t,\cdot)\|_{L^2(B_{1/2})}\,\|_{L^{3/2}(I)} \\
    &\le
    |B_{1/2}|^{1/6}|I|^{1/6}
    \|\,\|r(t,\cdot)\|_{L^2(B_{1/2})}\,\|_{L^2(I)}.
\end{aligned}
\]
This is the claimed estimate.
\end{proof}

\begin{lemma}[Pressure observation control of the harmonic residual]
\label{lem:harmonic-residual-pressure-observation-control}
Let \(p^{\rm harm}\in L^2(I;L^2(B_{1/2}))\), and define the harmonic residual
\[
    r_M
    :=
    (I-\Pi_{{\rm harm},M})p^{\rm harm},
\]
where \(\Pi_{{\rm harm},M}\) acts on the spatial variable for almost every
\(t\).  Then
\[
    \|r_M\|_{Y_{\rm prs}}
    \le
    |I|^{1/6}|B_{1/2}|^{1/6}
    \|r_M\|_{H_{\rm harm}}.
\]
Consequently the \(L^2\)-harmonic tail residual contributes to the pressure
error budget in the default pressure observation norm through the explicit
quantity
\[
    \Err_{{\rm harm}\to{\rm prs}}^{(M)}
    :=
    |I|^{1/6}|B_{1/2}|^{1/6}
    \|(I-\Pi_{{\rm harm},M})p^{\rm harm}\|_{L^2(I;L^2(B_{1/2}))}.
\]
\end{lemma}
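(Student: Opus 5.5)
The plan is to reduce the statement to the finite-measure embedding of \Cref{lem:finite-measure-pressure-observation-embedding}, applied to the residual \(r_M\). The first step is to check that \(r_M\) lies in \(L^2(I;L^2(B_{1/2}))\) and is measurable in time.

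For almost every \(t\in I\), the slice \(p^{\rm harm}(t,\cdot)\) belongs to \(L^2(B_{1/2})\). By \Cref{lem:harmonic-projection-best-approximation}, the map \(I-\Pi_{{\rm harm},M}\) is an orthogonal projection, so it has operator norm at most one. This gives the pointwise bound
\[
    \|r_M(t,\cdot)\|_{L^2(B_{1/2})}\le\|p^{\rm harm}(t,\cdot)\|_{L^2(B_{1/2})}.
\]
Squaring and integrating in \(t\) then shows \(r_M\in L^2(I;L^2(B_{1/2}))\).

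Measurability in \(t\) is the only point needing care. By \Cref{lem:finite-dimensional-harmonic-gauge-space}, the range \(\mathcal H_M(B_{1/2})\) is finite-dimensional. Choose an \(L^2(B_{1/2})\)-orthonormal basis \(q_1,\dots,q_N\) of it. Then
\[
    \Pi_{{\rm harm},M}p^{\rm harm}(t)=\sum_{j=1}^{N}\Bigl(\int_{B_{1/2}}p^{\rm harm}(t,x)q_j(x)\,dx\Bigr)q_j .
\]
Each coefficient is a measurable function of \(t\) by Fubini, so \(r_M\) is strongly measurable as an \(L^2(B_{1/2})\)-valued map. I expect this to be the main, though mild, obstacle; the finite-dimensionality of the gauge space settles it directly.

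With membership established, I apply \Cref{lem:finite-measure-pressure-observation-embedding} with \(r=r_M\). By \Cref{conv:default-pressure-observation-norm}, its left-hand side is \(\|r_M\|_{Y_{\rm prs}}\) and its right-hand norm is \(\|r_M\|_{H_{\rm harm}}\). This gives exactly the displayed inequality. The "consequently" clause is then a definition: \(\Err_{{\rm harm}\to{\rm prs}}^{(M)}\) is the right-hand side of that inequality, so the \(Y_{\rm prs}\)-size of the harmonic tail is bounded by this explicit quantity in the pressure budget.
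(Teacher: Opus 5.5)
Your proposal is correct and follows the paper's proof: apply \Cref{lem:finite-measure-pressure-observation-embedding} to \(r=r_M\), identify the two norms via \Cref{conv:default-pressure-observation-norm}, and read the final clause as a definition. You also verify that \(r_M\) lies in \(L^2(I;L^2(B_{1/2}))\) and is strongly measurable, using a finite orthonormal basis of \(\mathcal H_M(B_{1/2})\); the paper leaves this implicit, and your argument for it is correct.
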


\begin{proof}
By \Cref{lem:finite-measure-pressure-observation-embedding}, applied to
\(r=r_M\),
\[
    \|r_M\|_{L^{3/2}(I;L^{3/2}(B_{1/2}))}
    \le
    |I|^{1/6}|B_{1/2}|^{1/6}
    \|r_M\|_{L^2(I;L^2(B_{1/2}))}.
\]
Using \Cref{conv:default-pressure-observation-norm} identifies the left-hand
side with \(\|r_M\|_{Y_{\rm prs}}\) and the right-hand norm with
\(\|r_M\|_{H_{\rm harm}}\).  The displayed pressure-error contribution is
therefore just the measured upper bound for the harmonic residual in the
default pressure observation norm.
\end{proof}

\begin{remark}[Status of the observation embedding]
\Cref{lem:harmonic-residual-pressure-observation-control} uses only finite
measure.  It does not show that the harmonic residual is small, does not prove
decay as \(M\to\infty\), and does not provide a scale-uniform constant under
dyadic rescaling.
\end{remark}

\begin{lemma}[Fixed-geometry harmonic polynomial approximation]
\label{lem:fixed-geometry-harmonic-polynomial-approximation}
Let \(M\ge0\), and let \(h\in L^2(B_{3/4})\) be harmonic in \(B_{3/4}\).  Then
\[
    \|(I-\Pi_{{\rm harm},M})h\|_{L^2(B_{1/2})}
    \le
    \left(\frac23\right)^{M+1}
    \|h\|_{L^2(B_{3/4})}.
\]
Consequently, if \(p^{\rm harm}\in L^2(I;L^2(B_{3/4}))\) and
\(p^{\rm harm}(t,\cdot)\) is harmonic in \(B_{3/4}\) for almost every \(t\in I\),
then
\[
    \|(I-\Pi_{{\rm harm},M})p^{\rm harm}\|_{L^2(I;L^2(B_{1/2}))}
    \le
    \left(\frac23\right)^{M+1}
    \|p^{\rm harm}\|_{L^2(I;L^2(B_{3/4}))}.
\]
\end{lemma}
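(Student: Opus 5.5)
The plan is to expand \(h\) in solid spherical harmonics and compare the \(L^2\) weights of each homogeneous degree on the two balls \(B_{1/2}\) and \(B_{3/4}\). The only facts needed are classical: harmonic functions are smooth, and solid harmonics of different degrees are orthogonal on spheres.

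\emph{Step 1 (expansion).} Since \(h\) is harmonic in \(B_{3/4}\), it is real-analytic there. For every \(\rho<3/4\) it has an expansion
\[
h(r\omega)=\sum_{l\ge0} r^l Y_l(\omega),
\]
with \(Y_l\) a spherical harmonic of degree \(l\). This series converges uniformly on \(\overline{B_\rho}\); it is the Taylor series grouped by homogeneous degree, and each group \(r^lY_l\) is a homogeneous harmonic polynomial. Spherical harmonics of distinct degrees are \(L^2(S^2)\)-orthogonal, so for \(\rho<3/4\) polar coordinates give
\[
\|h\|_{L^2(B_\rho)}^2=\sum_{l}\frac{\rho^{2l+3}}{2l+3}\|Y_l\|_{L^2(S^2)}^2 .
\]
Letting \(\rho\uparrow 3/4\) and using monotone convergence on both sides yields the same identity at \(\rho=3/4\), with finite left-hand side.

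\emph{Step 2 (identify the projection).} The space \(\mathcal H_M(B_{1/2})\) is spanned by the solid harmonics \(r^lY_l\) with \(l\le M\): every harmonic polynomial of degree at most \(M\) splits into homogeneous harmonic pieces. The same radial computation shows that solid harmonics of different degrees are orthogonal in \(L^2(B_{1/2})\). Hence the tail \(\sum_{l>M}r^lY_l\) is orthogonal to \(\mathcal H_M(B_{1/2})\). Because the partial sum \(\sum_{l\le M}r^lY_l\) lies in \(\mathcal H_M(B_{1/2})\), it equals \(\Pi_{{\rm harm},M}h\); alternatively one can invoke the best-approximation statement of \Cref{lem:harmonic-projection-best-approximation}. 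Therefore
\[
\|(I-\Pi_{{\rm harm},M})h\|_{L^2(B_{1/2})}^2=\sum_{l>M}\frac{(1/2)^{2l+3}}{2l+3}\|Y_l\|^2 .
\]

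\emph{Step 3 (compare).} Termwise, \((1/2)^{2l+3}=(2/3)^{2l+3}(3/4)^{2l+3}\). For \(l\ge M+1\) the factor satisfies \((2/3)^{2l+3}\le(2/3)^{2M+5}\le(2/3)^{2(M+1)}\). Summing over \(l>M\) and comparing with the Step 1 identity at \(\rho=3/4\) gives the squared estimate; taking square roots gives the spatial bound.

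\emph{Step 4 (time-dependent version).} For almost every \(t\in I\), apply the spatial bound to \(p^{\rm harm}(t,\cdot)\). Then square it, integrate over \(I\), and take square roots. Measurability in \(t\) is automatic, because \(\Pi_{{\rm harm},M}\) is a fixed bounded operator on \(L^2(B_{1/2})\).

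The only delicate point is Step 1. The hypothesis gives \(h\in L^2(B_{3/4})\) but no information on the boundary sphere. This is why the expansion is used only on compact subballs and the \(B_{3/4}\) norm is recovered by the monotone limit \(\rho\uparrow3/4\). The rest is orthogonality bookkeeping.
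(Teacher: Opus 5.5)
Your proof is correct and follows essentially the same route as the paper's: the homogeneous (solid) spherical-harmonic expansion, orthogonality of the degree-\(\ell\) pieces, and the radial scaling factor \((2/3)^{2\ell+3}\) on the tail \(\ell>M\). The differences are small refinements. You identify \(\Pi_{{\rm harm},M}h\) exactly as the degree-\(\le M\) partial sum, where the paper only bounds the residual via best approximation. Your monotone-convergence passage \(\rho\uparrow 3/4\) also makes explicit the step \(\sum_{\ell}\|h_\ell\|_{L^2(B_{3/4})}^2\le\|h\|_{L^2(B_{3/4})}^2\), which the paper uses while asserting the expansion only for \(\rho<3/4\).
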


\begin{proof}
We first prove the spatial estimate.  By the standard spherical-harmonic
expansion for \(L^2\)-harmonic functions on a ball,
\[
    h(x)=\sum_{\ell=0}^{\infty} h_\ell(x)
\]
in \(L^2(B_\rho)\) for every \(\rho<3/4\), where each \(h_\ell\) is a
homogeneous harmonic polynomial of degree \(\ell\).  The summands are
orthogonal in \(L^2(B_\rho)\), and homogeneity gives, for
\(0<\rho\le 3/4\),
\[
    \|h_\ell\|_{L^2(B_\rho)}^2
    =
    \left(\frac{\rho}{3/4}\right)^{2\ell+3}
    \|h_\ell\|_{L^2(B_{3/4})}^2 .
\]
Set
\[
    T_M h:=\sum_{\ell=0}^{M} h_\ell .
\]
Then \(T_M h\in\mathcal H_M(B_{1/2})\).  Since \(\Pi_{{\rm harm},M}\) is the
\(L^2(B_{1/2})\)-orthogonal projection onto \(\mathcal H_M(B_{1/2})\),
\[
    \|(I-\Pi_{{\rm harm},M})h\|_{L^2(B_{1/2})}
    \le
    \|h-T_Mh\|_{L^2(B_{1/2})}.
\]
Using orthogonality and the preceding scaling identity with \(\rho=1/2\),
\[
\begin{aligned}
    \|h-T_Mh\|_{L^2(B_{1/2})}^2
    &=
    \sum_{\ell>M}\|h_\ell\|_{L^2(B_{1/2})}^2  \\
    &=
    \sum_{\ell>M}
    \left(\frac23\right)^{2\ell+3}
    \|h_\ell\|_{L^2(B_{3/4})}^2 \\
    &\le
    \left(\frac23\right)^{2M+2}
    \sum_{\ell>M}\|h_\ell\|_{L^2(B_{3/4})}^2 \\
    &\le
    \left(\frac23\right)^{2M+2}
    \|h\|_{L^2(B_{3/4})}^2 .
\end{aligned}
\]
Taking square roots proves the spatial estimate.  Applying this estimate for
almost every \(t\in I\) to \(h=p^{\rm harm}(t,\cdot)\) and integrating in time
gives the stated \(L^2(I;L^2)\) bound.
\end{proof}

\begin{corollary}[Pressure observation bound for the harmonic approximation tail]
\label{cor:pressure-observation-harmonic-approximation-tail}
Under the hypotheses of
\Cref{lem:fixed-geometry-harmonic-polynomial-approximation},
\[
    \|(I-\Pi_{{\rm harm},M})p^{\rm harm}\|_{Y_{\rm prs}}
    \le
    |I|^{1/6}|B_{1/2}|^{1/6}
    \left(\frac23\right)^{M+1}
    \|p^{\rm harm}\|_{L^2(I;L^2(B_{3/4}))}.
\]
Thus, in the fixed normalized geometry, the harmonic-tail contribution to
\(\Err_{\rm prs}\) is controlled by the interior \(L^2\)-size of the harmonic
pressure tail on \(B_{3/4}\), with the explicit factor shown above.
\end{corollary}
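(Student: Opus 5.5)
The corollary follows by chaining two results already proved: \Cref{lem:harmonic-residual-pressure-observation-control}, which converts the pressure observation norm into the harmonic residual norm, and \Cref{lem:fixed-geometry-harmonic-polynomial-approximation}, which gives geometric decay of the harmonic approximation tail.

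First, I set \(r_M:=(I-\Pi_{{\rm harm},M})p^{\rm harm}\), with the projection acting in the spatial variable for almost every \(t\in I\). Since \(p^{\rm harm}\in L^2(I;L^2(B_{3/4}))\), its restriction lies in \(L^2(I;L^2(B_{1/2}))\). Hence \Cref{lem:harmonic-residual-pressure-observation-control} applies. Together with \Cref{conv:default-pressure-observation-norm}, it gives
\[
    \|r_M\|_{Y_{\rm prs}}
    \le
    |I|^{1/6}|B_{1/2}|^{1/6}\,
    \|r_M\|_{L^2(I;L^2(B_{1/2}))}.
\]

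Second, I bound the right-hand side using the time-integrated estimate of \Cref{lem:fixed-geometry-harmonic-polynomial-approximation}:
\[
    \|r_M\|_{L^2(I;L^2(B_{1/2}))}\le (2/3)^{M+1}\|p^{\rm harm}\|_{L^2(I;L^2(B_{3/4}))}.
\]
That lemma requires \(p^{\rm harm}(t,\cdot)\) to be harmonic in \(B_{3/4}\) for a.e. \(t\), which is exactly the standing hypothesis. Substituting this into the first inequality yields the claimed bound. The concluding sentence about the harmonic-tail contribution to \(\Err_{\rm prs}\) is then just a restatement, via the identification in \Cref{lem:harmonic-residual-pressure-observation-control}.

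There is essentially no analytic obstacle. The only point to check is measurability in time: the map \(t\mapsto\Pi_{{\rm harm},M}p^{\rm harm}(t,\cdot)\) must be a bona fide element of \(L^2(I;L^2(B_{1/2}))\), so that \(r_M\) is well defined as a space-time function. This holds because \(\Pi_{{\rm harm},M}\) is a fixed bounded linear operator on \(L^2(B_{1/2})\) (\Cref{lem:harmonic-projection-best-approximation}). Composing a strongly measurable function with such an operator preserves strong measurability, and the pointwise bound \(\|\Pi_{{\rm harm},M}h\|\le\|h\|\) preserves square integrability.
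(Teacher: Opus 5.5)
Your proposal is correct and matches the paper's proof, which simply combines \Cref{lem:harmonic-residual-pressure-observation-control} with the $L^2(I;L^2)$ estimate of \Cref{lem:fixed-geometry-harmonic-polynomial-approximation}. Your remark on the strong measurability of $t\mapsto \Pi_{{\rm harm},M}p^{\rm harm}(t,\cdot)$ is a valid extra detail that the paper leaves implicit.
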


\begin{proof}
Combine
\Cref{lem:harmonic-residual-pressure-observation-control} with the
\(L^2(I;L^2)\) estimate in
\Cref{lem:fixed-geometry-harmonic-polynomial-approximation}.
\end{proof}

\begin{remark}[Status of the harmonic approximation estimate]
\Cref{lem:fixed-geometry-harmonic-polynomial-approximation} and
\Cref{cor:pressure-observation-harmonic-approximation-tail} are fixed-geometry
statements on the nested balls \(B_{1/2}\Subset B_{3/4}\).  The estimate uses
interior harmonicity on \(B_{3/4}\) and the default
\(L^2(B_{1/2})\)-projection.  It is not a scale-uniform pressure estimate and
does not control any non-harmonic pressure component, cutoff leakage, or
active-source projection residual.
\end{remark}

\begin{lemma}[Cutoff--Riesz commutator harmonicity]
\label{lem:cutoff-riesz-commutator-harmonicity}
Let \(\eta\in C^\infty\) satisfy \(\eta\equiv1\) on a neighborhood of a spatial
core \(B_{\rm core}\).  Let \(f_{ij}\in L^{3/2}_{\rm loc}\), and assume that
\((1-\eta)f_{ij}\) is supported away from \(B_{\rm core}\).  Define
\[
    \mathcal C_\eta(f)
    :=
    R_iR_j(\eta f_{ij})
    -
    \eta R_iR_j(f_{ij}).
\]
Then \(\mathcal C_\eta(f)\) is spatially harmonic on \(B_{\rm core}\) in the
distributional sense.
\end{lemma}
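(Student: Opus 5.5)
The plan is to rewrite the commutator so that each piece is visibly harmonic (or zero) on the core, using only the symbol identity for the double Riesz transform. First fix the functional setting. For almost every time \(t\), work with the spatial slice. Assume, as is implicit in the statement, that \(f_{ij}(t,\cdot)\) and \(\eta f_{ij}(t,\cdot)\) lie in a class on which \(R_iR_j\) is defined as a Fourier multiplier, for instance \(L^{3/2}(\R^3)\) or tempered distributions whose Fourier transform is locally integrable near the origin. In that class \(R_iR_j\) has symbol \(-\xi_i\xi_j/|\xi|^2\), with the sign fixed by the paper's convention. Multiplying this symbol by \(-|\xi|^2\) gives the basic identity
\[
    \Delta R_iR_j g=\pm\,\partial_i\partial_j g
\]
in \(\mathcal S'\), hence in \(\mathcal D'\).

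Next, split the commutator algebraically as
\[
    \mathcal C_\eta(f)
    =
    -R_iR_j\bigl((1-\eta)f_{ij}\bigr)
    +(1-\eta)R_iR_j(f_{ij}).
\]
This holds because \(R_iR_j(\eta f)=R_iR_j f-R_iR_j((1-\eta)f)\) by linearity. The second term vanishes identically on the open neighborhood of \(B_{\rm core}\) where \(\eta\equiv1\). Multiplying the distribution \(R_iR_jf\) by the smooth function \(1-\eta\) is legitimate, and \(1-\eta\) has support disjoint from that neighborhood. So the second term contributes nothing to \(\Delta\mathcal C_\eta(f)\) on \(B_{\rm core}\).

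For the first term, the identity above gives
\[
    \Delta R_iR_j\bigl((1-\eta)f_{ij}\bigr)=\pm\,\partial_i\partial_j\bigl((1-\eta)f_{ij}\bigr).
\]
The right-hand side is a distribution supported in \(\operatorname{supp}((1-\eta)f_{ij})\), which by hypothesis stays away from \(B_{\rm core}\). Hence it vanishes when tested against any \(\varphi\in C_c^\infty(B_{\rm core})\). Summing over \(i,j\) gives \(\Delta\mathcal C_\eta(f)=0\) in \(\mathcal D'(B_{\rm core})\) for a.e. \(t\), which is distributional spatial harmonicity. By Weyl's lemma the commutator then even agrees with a smooth harmonic function on the core. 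If a space-time statement is wanted, test against \(\varphi(x)\psi(t)\) and integrate the slice identity in \(t\).

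The only delicate step is the first one. \(L^{3/2}_{\rm loc}\) alone does not make \(R_iR_jf\) meaningful, so I would state explicitly the global integrability under which the Riesz transforms in the paper's pressure splitting are defined (e.g. \(f\in L^{3/2}(\R^3)\) slice-wise, as for the localized Calderon--Zygmund solve). I would then check that the multiplier identity commutes with the decomposition. Once that is settled, the rest is a support argument.
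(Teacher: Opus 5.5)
Your proposal is correct and is essentially the paper's argument: on the core $\eta R_iR_j(f_{ij})$ coincides with $R_iR_j(f_{ij})$, so $\mathcal C_\eta(f)=-R_iR_j((1-\eta)f_{ij})$ there, and the identity $-\Delta R_iR_j=\partial_i\partial_j$, combined with the disjointness of $\operatorname{supp}((1-\eta)f_{ij})$ from any $\varphi\in C_c^\infty(B_{\rm core})$, gives $\Delta\mathcal C_\eta(f)=0$. Your explicit symbol computation and your remark that $f_{ij}\in L^{3/2}_{\rm loc}$ alone does not define $R_iR_jf_{ij}$ (so some global integrability, e.g.\ slice-wise $L^{3/2}(\mathbb R^3)$, must be assumed) only make explicit what the paper's proof leaves implicit.
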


\begin{proof}
On \(B_{\rm core}\), \(\eta=1\), hence
\[
    \mathcal C_\eta(f)
    =
    R_iR_j(\eta f_{ij})
    -
    R_iR_j(f_{ij})
    =
    -R_iR_j((1-\eta)f_{ij}).
\]
The distribution \((1-\eta)f_{ij}\) is supported away from \(B_{\rm core}\).
Therefore, for every test function \(\varphi\in C_c^\infty(B_{\rm core})\),
\[
    \langle -\Delta \mathcal C_\eta(f),\varphi\rangle
    =
    -\langle \partial_i\partial_j((1-\eta)f_{ij}),\varphi\rangle
    =
    0,
\]
because the supports of \((1-\eta)f_{ij}\) and \(\varphi\) are disjoint.  Thus
\(\Delta \mathcal C_\eta(f)=0\) on \(B_{\rm core}\) distributionally.
\end{proof}

\begin{remark}[What the commutator lemma does not prove]
The lemma proves harmonicity on the separated core, not smallness in any
pressure norm.  Norm bounds for \(\mathcal C_\eta(f)\) require additional
information about the pressure observation norm, the cutoff separation, and the
source norm.  If the clean pressure solve is periodized rather than whole-space,
an additional periodization residual must be included in the pressure budget.
\end{remark}

\begin{definition}[Normalized local pressure geometry]
\label{def:normalized-local-pressure-geometry}
Let
\[
    B_{1/2}\subset B_{3/4}\subset B_1\subset\R^3,
    \qquad
    A_{3/4,1}:=B_1\setminus \overline{B}_{3/4}.
\]
A normalized pressure cutoff is a function
\(\eta\in C_c^\infty(B_1)\) such that \(0\le\eta\le1\) and
\(\eta\equiv1\) on \(B_{3/4}\).  For a tensor source \(f=(f_{ij})\), define
\[
    \mathcal C_\eta(f)
    :=
    R_iR_j(\eta f_{ij})-\eta R_iR_j(f_{ij}),
\]
with summation over \(i,j\).
\end{definition}

\begin{lemma}[Normalized cutoff--Riesz commutator estimate]
\label{lem:normalized-cutoff-riesz-commutator-estimate}
Let \(I\subset\R\) be a finite time interval, let \(\eta\) be a normalized
pressure cutoff, and let \(f=(f_{ij})\) be such that the commutator in
\Cref{def:normalized-local-pressure-geometry} is defined distributionally.
Set
\[
    g_{ij}:=(1-\eta)f_{ij}.
\]
Assume that \(g_{ij}\) is supported in \(A_{3/4,1}\) for almost every
\(t\in I\), and that
\[
    g\in L^{3/2}\bigl(I;L^{3/2}(A_{3/4,1})\bigr).
\]
Then \(\mathcal C_\eta(f)\) is spatially harmonic on \(B_{1/2}\) in the
distributional sense for almost every \(t\in I\), and
\[
\begin{aligned}
    \|\mathcal C_\eta(f)\|_{L^{3/2}(I;L^{3/2}(B_{1/2}))}
    &\le
    C_{\rm nlp}
    \sum_{i,j=1}^3
    \|g_{ij}\|_{L^{3/2}(I;L^{3/2}(A_{3/4,1}))}  \\
    &=
    C_{\rm nlp}
    \sum_{i,j=1}^3
    \|(1-\eta)f_{ij}\|_{L^{3/2}(I;L^{3/2}(A_{3/4,1}))}.
\end{aligned}
\]
Here \(C_{\rm nlp}<\infty\) depends only on the fixed normalized geometry and
the Calderon--Zygmund kernel, not on \(f\).  The estimate is fixed-scale; no
smallness or scale-uniformity is asserted.
\end{lemma}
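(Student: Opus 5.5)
On the inner ball \(B_{1/2}\) the cutoff equals one, so I would first rewrite the commutator there exactly as in the proof of \Cref{lem:cutoff-riesz-commutator-harmonicity}:
\[
    \mathcal C_\eta(f)=R_iR_j(\eta f_{ij})-R_iR_j(f_{ij})=-R_iR_j(g_{ij})
    \quad\text{on } B_{1/2}.
\]
Harmonicity on \(B_{1/2}\) for almost every \(t\) then follows directly from that lemma. Here \(B_{\rm core}=B_{1/2}\) and \(\operatorname{supp} g(t)\subset \overline{A_{3/4,1}}\), which stays at distance at least \(1/4\) from \(B_{1/2}\).

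The norm bound comes from the off-diagonal kernel representation. The operator \(R_iR_j\) is a Calderón--Zygmund operator with kernel
\[
    K_{ij}(z)=c\,\frac{\delta_{ij}|z|^2-3z_iz_j}{|z|^5}
\]
plus a multiple of \(\delta_{ij}\delta_0\). The delta part acts only at the support of \(g\), so it vanishes on \(B_{1/2}\). For \(x\in B_{1/2}\) and \(y\in A_{3/4,1}\) we have \(|x-y|\ge 1/4\), which gives \(|K_{ij}(x-y)|\le C\cdot 4^3\). The distributional definition of \(R_iR_j g_{ij}\) agrees with the absolutely convergent integral \(\int K_{ij}(x-y)g_{ij}(y)\,dy\) for \(x\in B_{1/2}\). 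To justify this, I would test against \(\varphi\in C_c^\infty(B_{1/2})\) and use the disjoint supports: the pairing \(\langle R_iR_j g,\varphi\rangle=\langle g, R_iR_j\varphi\rangle\), and \(R_iR_j\varphi\) is given by the kernel integral away from \(\operatorname{supp}\varphi\). This identification is the one step needing some care, and it is standard.

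Hölder's inequality on the bounded set \(A_{3/4,1}\) then gives, for almost every \(t\),
\[
    \sup_{x\in B_{1/2}}|R_iR_jg_{ij}(x,t)|\le C\,\|g_{ij}(t)\|_{L^1(A_{3/4,1})}\le C\,|A_{3/4,1}|^{1/3}\|g_{ij}(t)\|_{L^{3/2}(A_{3/4,1})}.
\]
Taking the \(L^{3/2}(B_{1/2})\) norm in \(x\) contributes a factor \(|B_{1/2}|^{2/3}\). Taking the \(L^{3/2}(I)\) norm in time and summing over \(i,j\) yields the stated estimate, with \(C_{\rm nlp}=C\,4^3|A_{3/4,1}|^{1/3}|B_{1/2}|^{2/3}\). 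This constant depends only on the geometry and the kernel.

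Measurability in \(t\) of the resulting function is routine, since it is a linear integral operator with a bounded kernel applied to \(g\in L^{3/2}(I;L^{3/2})\).
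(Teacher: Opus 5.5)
Your proposal is correct and follows essentially the same route as the paper. You reduce the commutator on \(B_{1/2}\) to \(-R_iR_j(g_{ij})\), use the \(1/4\)-separation of \(B_{1/2}\) from \(A_{3/4,1}\) to bound the kernel uniformly, and chain the pointwise \(L^1\) bound with H\"older on the annulus and the \(L^{3/2}(B_{1/2})\) and \(L^{3/2}(I)\) norms, which gives the same constant \(|B_{1/2}|^{2/3}|A_{3/4,1}|^{1/3}M_K\) and the same disjoint-support harmonicity argument, with your duality identification of \(R_iR_jg_{ij}\) with the absolutely convergent kernel integral being a slightly more careful version of a step the paper only asserts.
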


\begin{proof}
On \(B_{1/2}\), we have \(\eta=1\).  Hence, in distributions on
\(B_{1/2}\),
\[
    \mathcal C_\eta(f)
    =
    R_iR_j(\eta f_{ij})-R_iR_j(f_{ij})
    =
    -R_iR_j(g_{ij}).
\]
Since \(g_{ij}\) is supported in \(A_{3/4,1}\), its spatial support is
separated from \(B_{1/2}\).  Therefore the principal-value singularity of the
Riesz kernel is never encountered on \(B_{1/2}\).  Writing \(K_{ij}\) for the
whole-space kernel of \(R_iR_j\) away from the origin, we have, for
\(x\in B_{1/2}\),
\[
    \mathcal C_\eta(f)(x,t)
    =
    -\sum_{i,j=1}^3\int_{A_{3/4,1}}K_{ij}(x-y)g_{ij}(y,t)\,dy .
\]
The separation gives
\[
    d_0:=\operatorname{dist}(B_{1/2},A_{3/4,1})=\frac14,
\]
and hence
\[
    M_K:=\max_{1\le i,j\le3}
    \sup_{\substack{x\in B_{1/2}\\ y\in A_{3/4,1}}}
    |K_{ij}(x-y)|<\infty .
\]
It follows that, for almost every \(t\in I\),
\[
    |\mathcal C_\eta(f)(x,t)|
    \le
    M_K\sum_{i,j=1}^3
    \|g_{ij}(\cdot,t)\|_{L^1(A_{3/4,1})}.
\]
Taking the \(L^{3/2}(B_{1/2})\) norm and using Holder's inequality on the
annulus,
\[
\begin{aligned}
    \|\mathcal C_\eta(f)(\cdot,t)\|_{L^{3/2}(B_{1/2})}
    &\le
    |B_{1/2}|^{2/3}M_K
    \sum_{i,j=1}^3
    \|g_{ij}(\cdot,t)\|_{L^1(A_{3/4,1})} \\
    &\le
    |B_{1/2}|^{2/3}|A_{3/4,1}|^{1/3}M_K
    \sum_{i,j=1}^3
    \|g_{ij}(\cdot,t)\|_{L^{3/2}(A_{3/4,1})}.
\end{aligned}
\]
Taking the \(L^{3/2}(I)\) norm gives the stated bound with
\[
    C_{\rm nlp}
    =
    |B_{1/2}|^{2/3}|A_{3/4,1}|^{1/3}M_K .
\]

It remains to record harmonicity.  Let
\(\varphi\in C_c^\infty(B_{1/2})\).  Since
\(\operatorname{supp}g_{ij}(\cdot,t)\cap\operatorname{supp}\varphi=\emptyset\),
\[
    \langle -\Delta\mathcal C_\eta(f)(\cdot,t),\varphi\rangle
    =
    -\langle \partial_i\partial_j g_{ij}(\cdot,t),\varphi\rangle
    =
    -\langle g_{ij}(\cdot,t),\partial_i\partial_j\varphi\rangle
    =
    0 .
\]
Thus \(\Delta\mathcal C_\eta(f)=0\) on \(B_{1/2}\) distributionally for
almost every \(t\).  The proof uses only the fixed normalized separation and
the annular \(L^{3/2}\) source norm.
\end{proof}

\begin{lemma}[Normalized active pressure source residual]
\label{lem:normalized-active-pressure-source-residual}
Work in the normalized geometry of
\Cref{def:normalized-local-pressure-geometry}, and let \(I\subset\R\) be a
finite time interval.  Let \(f=(f_{ij})\) and \(F^{\cl}=(F_{ij}^{\cl})\) be
tensor sources and set
\[
    G_{ij}:=f_{ij}-F_{ij}^{\cl}.
\]
Assume that the zero extension of \(G\) is supported in \(B_1\) and belongs to
\[
    L^{3/2}\bigl(I;L^{3/2}(\R^3)\bigr).
\]
Then
\[
    \|R_iR_jG_{ij}\|_{L^{3/2}(I;L^{3/2}(B_{1/2}))}
    \le
    C_{\rm CZ}
    \sum_{i,j=1}^3
    \|G_{ij}\|_{L^{3/2}(I;L^{3/2}(B_1))}.
\]
If the clean active pressure \(R_iR_jF_{ij}^{\cl}\) and the clean pressure
projection \(\mathsf P_{{\rm prs},N}^{\cl}\) are fixed, define
\[
    \Err_{\rm active}
    :=
    C_{\rm CZ}
    \sum_{i,j=1}^3
    \|f_{ij}-F_{ij}^{\cl}\|_{L^{3/2}(I;L^{3/2}(B_1))}
\]
and
\[
    \Err_{\rm proj}
    :=
    \Err_{{\rm proj},N}^{\cl}\bigl(R_iR_j(F_{ij}^{\cl})\bigr).
\]
Then the active-source and clean-projection mismatch is bounded by
\[
\begin{aligned}
    &\|R_iR_j(f_{ij}-F_{ij}^{\cl})\|_{L^{3/2}(I;L^{3/2}(B_{1/2}))} \\
    &\quad+
    \bigl\|
    R_iR_j(F_{ij}^{\cl})
    -
    \mathsf P_{{\rm prs},N}^{\cl}R_iR_j(F_{ij}^{\cl})
    \bigr\|_{L^{3/2}(I;L^{3/2}(B_{1/2}))}
    \le
    \Err_{\rm active}
    +
    \Err_{\rm proj}.
\end{aligned}
\]
\end{lemma}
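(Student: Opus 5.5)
The first estimate follows from the $L^{3/2}(\R^3)$-boundedness of the whole-space Riesz transforms, applied for almost every $t$ and then integrated in time. The second estimate is the triangle inequality combined with the definition of the projection error. I would argue as follows.

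\emph{Step 1.} Fix $t\in I$ for which $G(\cdot,t)\in L^{3/2}(\R^3)$; this holds for almost every $t$ by Fubini. Since $G(\cdot,t)$ is the zero extension of a function supported in $B_1$, each $R_iR_jG_{ij}(\cdot,t)$ is a well-defined whole-space Calder\'on--Zygmund image. The Calder\'on--Zygmund theorem for the Riesz transforms at exponent $3/2$ gives
\[
    \|R_iR_jG_{ij}(\cdot,t)\|_{L^{3/2}(B_{1/2})}
    \le
    \|R_iR_jG_{ij}(\cdot,t)\|_{L^{3/2}(\R^3)}
    \le
    A_{3/2}^2\|G_{ij}(\cdot,t)\|_{L^{3/2}(\R^3)},
\]
where $A_{3/2}$ is the operator norm of a single Riesz transform on $L^{3/2}(\R^3)$. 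Summing over $i,j$ with the triangle inequality, and using $\|G_{ij}(\cdot,t)\|_{L^{3/2}(\R^3)}=\|G_{ij}(\cdot,t)\|_{L^{3/2}(B_1)}$ from the support assumption, gives the spatial bound with $C_{\rm CZ}=A_{3/2}^2$.

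\emph{Step 2.} Take the $L^{3/2}(I)$ norm in $t$. Minkowski's inequality in $L^{3/2}(I)$ moves the finite sum over $i,j$ outside, which yields the first displayed estimate. I need to make sure that $t\mapsto R_iR_jG_{ij}(\cdot,t)$ is measurable into $L^{3/2}$. This holds because $R_iR_j$ is a bounded linear operator applied pointwise in time to a Bochner-measurable function.

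\emph{Step 3.} For the combined mismatch bound, the first summand is bounded by $\Err_{\rm active}$ by Steps 1--2 with $G=f-F^{\cl}$. The second summand, the clean projection residual of $R_iR_j(F^{\cl}_{ij})$ measured in $L^{3/2}(I;L^{3/2}(B_{1/2}))$, is exactly the quantity that $\Err^{\cl}_{{\rm proj},N}(R_iR_jF^{\cl}_{ij})$ denotes. It is therefore bounded by $\Err_{\rm proj}$ by definition, provided that quantity is understood to measure the projection residual in at least this norm; I would state this convention explicitly. Adding the two bounds gives the claim.

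The only point needing care is the first one: the Riesz transforms must act on the zero-extended whole-space source, not on a truncated or periodized one. The hypothesis on $G$ is exactly what allows whole-space $L^{3/2}$ boundedness to be applied. No other obstacle arises, because the statement makes no claim of smallness or scale-uniformity.
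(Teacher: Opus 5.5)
Your proposal is correct and follows the paper's proof: whole-space $L^{3/2}$ boundedness of $R_iR_j$ at almost every time, restriction to $B_{1/2}$, the support of $G$ in $B_1$, and then the $L^{3/2}(I)$ norm, with the combined bound obtained by adding the definition of $\Err_{\rm proj}$. The convention you wanted to state is exactly the paper's own definition, $\Err^{\cl}_{{\rm proj},N}(q)=\|q-\mathsf P^{\cl}_{{\rm prs},N}q\|_{Y_{\rm prs}}$ with $Y_{\rm prs}=L^{3/2}(I;L^{3/2}(B_{1/2}))$, which the paper states in a later subsection; so the second summand equals $\Err_{\rm proj}$ identically rather than merely being bounded by it.
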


\begin{proof}
For \(1<p<\infty\), the Calderon--Zygmund operator \(R_iR_j\) is bounded on
\(L^p(\R^3)\).  Applying this with \(p=3/2\) at almost every time gives
\[
    \|R_iR_jG_{ij}(\cdot,t)\|_{L^{3/2}(\R^3)}
    \le
    C_{\rm CZ}
    \sum_{i,j=1}^3
    \|G_{ij}(\cdot,t)\|_{L^{3/2}(\R^3)}.
\]
Restricting the left-hand side to \(B_{1/2}\), using the support of \(G\) in
\(B_1\), and then taking the \(L^{3/2}(I)\) norm proves the first estimate.
The second displayed inequality follows by adding this estimate to the
definition of \(\Err_{\rm proj}\).  The projection residual is not estimated
away; it remains an explicit pressure error.
\end{proof}

\begin{remark}[Status of the active source estimate]
\Cref{lem:normalized-active-pressure-source-residual} proves a fixed-scale
Calderon--Zygmund estimate for the active pressure source residual in the
normalized model.  It does not prove that the clean projection residual is
small, and it does not give scale-uniform constants for an arbitrary dyadic
window.
\end{remark}

\subsection{Clean pressure projection residual}

\begin{convention}[Clean pressure projection model]
\label{conv:clean-pressure-projection-model}
On a finite time interval \(I\), let
\[
    Y_{\rm prs}=L^{3/2}(I;L^{3/2}(B_{1/2}))
\]
be the default pressure observation space from
\Cref{conv:default-pressure-observation-norm}.  For a finite integer
\(N\ge0\), fix a finite-dimensional clean pressure observation space
\[
    V_{{\rm prs},N}^{\cl}\subset Y_{\rm prs}
\]
and a linear projection
\[
    \mathsf P_{{\rm prs},N}^{\cl}:Y_{\rm prs}\to V_{{\rm prs},N}^{\cl},
    \qquad
    (\mathsf P_{{\rm prs},N}^{\cl})^2=\mathsf P_{{\rm prs},N}^{\cl}.
\]
The projection is part of the clean pressure model.  It removes only the
finite-dimensional clean pressure modes in \(V_{{\rm prs},N}^{\cl}\); the
remaining component is retained as an observable projection residual.
\end{convention}

\begin{lemma}[Bounded clean pressure projection]
\label{lem:bounded-clean-pressure-projection}
For every finite-dimensional subspace
\(V_{{\rm prs},N}^{\cl}\subset Y_{\rm prs}\), there exists a bounded linear
projection
\[
    \mathsf P_{{\rm prs},N}^{\cl}:Y_{\rm prs}\to V_{{\rm prs},N}^{\cl}.
\]
In particular, after fixing such a projection, there is a finite constant
\(C_{{\rm proj},N}<\infty\) such that
\[
    \|\mathsf P_{{\rm prs},N}^{\cl}q\|_{Y_{\rm prs}}
    \le
    C_{{\rm proj},N}\|q\|_{Y_{\rm prs}}
    \qquad
    \text{for all }q\in Y_{\rm prs}.
\]
\end{lemma}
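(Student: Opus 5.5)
The plan is to build the projection from a finite basis and dual functionals via Hahn--Banach; no Hilbert structure is needed. The only subtlety is that \(Y_{\rm prs}=L^{3/2}(I;L^{3/2}(B_{1/2}))\) is a Banach space but not a Hilbert space. So, unlike \Cref{lem:harmonic-projection-best-approximation}, we cannot appeal to an orthogonal projection.

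Let \(n=\dim V_{{\rm prs},N}^{\cl}\). The case \(n=0\) is trivial (take the zero map), so assume \(n\ge1\) and fix a basis \(v_1,\dots,v_n\) of \(V_{{\rm prs},N}^{\cl}\). Consider the coordinate functionals \(\ell_k\) on \(V_{{\rm prs},N}^{\cl}\), determined by \(\ell_k(v_m)=\delta_{km}\). Every linear functional on a finite-dimensional normed space is continuous, since all norms there are equivalent. Hence each \(\ell_k\) is bounded with respect to the restriction of \(\|\cdot\|_{Y_{\rm prs}}\).

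By the Hahn--Banach theorem, each \(\ell_k\) extends to a bounded linear functional \(L_k\in Y_{\rm prs}^*\) with \(\|L_k\|=\|\ell_k\|\). We then define
\[
    \mathsf P_{{\rm prs},N}^{\cl}q:=\sum_{k=1}^n L_k(q)\,v_k .
\]
This map is linear and takes values in \(V_{{\rm prs},N}^{\cl}\). On the basis we have \(\mathsf P_{{\rm prs},N}^{\cl}v_m=\sum_k\delta_{km}v_k=v_m\), so the map is the identity on \(V_{{\rm prs},N}^{\cl}\). Since its range lies in \(V_{{\rm prs},N}^{\cl}\), it follows that \((\mathsf P_{{\rm prs},N}^{\cl})^2=\mathsf P_{{\rm prs},N}^{\cl}\).

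For boundedness, the triangle inequality gives
\[
    \|\mathsf P_{{\rm prs},N}^{\cl}q\|_{Y_{\rm prs}}
    \le
    \Bigl(\sum_{k=1}^n\|L_k\|\,\|v_k\|_{Y_{\rm prs}}\Bigr)\|q\|_{Y_{\rm prs}},
\]
so we may take \(C_{{\rm proj},N}:=\sum_k\|\ell_k\|\,\|v_k\|_{Y_{\rm prs}}<\infty\). If a dimension-only constant is wanted, one could instead use an Auerbach basis or the Kadec--Snobar bound \(C_{{\rm proj},N}\le\sqrt n\), but this is not required by the statement.

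The only step that needs any care is the justification of Hahn--Banach in this setting. \(Y_{\rm prs}\) is a normed space, indeed a Banach space as a Bochner--Lebesgue space, so the extension theorem applies directly. Finally, we note that, as with the earlier status remarks, nothing is claimed about the size of \(C_{{\rm proj},N}\) uniformly in \(N\) or across scales. The constant depends on the chosen subspace and on the chosen extensions.
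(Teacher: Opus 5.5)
Your proposal is correct and follows essentially the same route as the paper: fix a basis of \(V_{{\rm prs},N}^{\cl}\), extend the coordinate functionals to \(Y_{\rm prs}\) by Hahn--Banach, define the projection as \(\sum_a \ell_a(q)\psi_a\), check that it is the identity on the subspace, and bound it by \(\sum_a\|\ell_a\|\,\|\psi_a\|\). Your additional remarks are correct refinements that the statement does not require: the trivial case \(n=0\), the use of norm-preserving extensions, and the optional Kadec--Snobar bound \(\sqrt n\).
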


\begin{proof}
Because \(V_{{\rm prs},N}^{\cl}\) is finite-dimensional, choose a basis
\(\psi_1,\ldots,\psi_d\) of \(V_{{\rm prs},N}^{\cl}\).  Let
\(\ell_a^V\), \(1\le a\le d\), be the coordinate functionals on
\(V_{{\rm prs},N}^{\cl}\), so that
\[
    v=\sum_{a=1}^{d}\ell_a^V(v)\psi_a
    \qquad
    \text{for }v\in V_{{\rm prs},N}^{\cl}.
\]
Each \(\ell_a^V\) is continuous on the finite-dimensional normed space
\(V_{{\rm prs},N}^{\cl}\).  By Hahn--Banach, extend \(\ell_a^V\) to a bounded
linear functional \(\ell_a\) on \(Y_{\rm prs}\).  Define
\[
    \mathsf P_{{\rm prs},N}^{\cl}q
    :=
    \sum_{a=1}^{d}\ell_a(q)\psi_a .
\]
Then \(\mathsf P_{{\rm prs},N}^{\cl}q\in V_{{\rm prs},N}^{\cl}\), and if
\(q\in V_{{\rm prs},N}^{\cl}\) then the construction gives
\(\mathsf P_{{\rm prs},N}^{\cl}q=q\).  Hence
\((\mathsf P_{{\rm prs},N}^{\cl})^2=\mathsf P_{{\rm prs},N}^{\cl}\).  Finally,
\[
    \|\mathsf P_{{\rm prs},N}^{\cl}q\|_{Y_{\rm prs}}
    \le
    \sum_{a=1}^{d}\|\ell_a\|_{(Y_{\rm prs})^*}\|\psi_a\|_{Y_{\rm prs}}
    \|q\|_{Y_{\rm prs}},
\]
which proves boundedness with
\[
    C_{{\rm proj},N}
    :=
    \sum_{a=1}^{d}\|\ell_a\|_{(Y_{\rm prs})^*}\|\psi_a\|_{Y_{\rm prs}}.
\]
\end{proof}

\begin{definition}[Clean pressure projection residual]
\label{def:clean-pressure-projection-residual}
Given the clean pressure projection model in
\Cref{conv:clean-pressure-projection-model}, define
\[
    \Err_{{\rm proj},N}^{\cl}(q)
    :=
    \|q-\mathsf P_{{\rm prs},N}^{\cl}q\|_{Y_{\rm prs}},
    \qquad q\in Y_{\rm prs}.
\]
\end{definition}

\begin{lemma}[Well-defined clean projection residual]
\label{lem:clean-projection-residual-well-defined}
For every \(q\in Y_{\rm prs}\), the quantity
\(\Err_{{\rm proj},N}^{\cl}(q)\) is a finite, well-defined pressure error.
Moreover,
\[
    \Err_{{\rm proj},N}^{\cl}(q)
    \le
    (1+C_{{\rm proj},N})\|q\|_{Y_{\rm prs}}.
\]
\end{lemma}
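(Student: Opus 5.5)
The argument is a direct norm computation that uses only the structure fixed in \Cref{conv:clean-pressure-projection-model} and the bound of \Cref{lem:bounded-clean-pressure-projection}.

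\textbf{Well-definedness.} Since \(\mathsf P_{{\rm prs},N}^{\cl}\) is a fixed linear map \(Y_{\rm prs}\to V_{{\rm prs},N}^{\cl}\subset Y_{\rm prs}\), the element \(q-\mathsf P_{{\rm prs},N}^{\cl}q\) lies in \(Y_{\rm prs}\) for every \(q\in Y_{\rm prs}\). Its norm \(\|q-\mathsf P_{{\rm prs},N}^{\cl}q\|_{Y_{\rm prs}}\) is therefore a finite nonnegative real number. It depends only on \(q\) and on the projection chosen as part of the clean pressure model, so \(\Err_{{\rm proj},N}^{\cl}(q)\) is well defined once that choice is fixed.

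\textbf{The bound.} Apply the triangle inequality in \(Y_{\rm prs}\):
\[
    \Err_{{\rm proj},N}^{\cl}(q)
    \le
    \|q\|_{Y_{\rm prs}}
    +
    \|\mathsf P_{{\rm prs},N}^{\cl}q\|_{Y_{\rm prs}}.
\]
Then insert the operator bound \(\|\mathsf P_{{\rm prs},N}^{\cl}q\|_{Y_{\rm prs}}\le C_{{\rm proj},N}\|q\|_{Y_{\rm prs}}\) from \Cref{lem:bounded-clean-pressure-projection}. This yields \((1+C_{{\rm proj},N})\|q\|_{Y_{\rm prs}}\).

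The only point needing care is that the constant \(C_{{\rm proj},N}\) must belong to the projection actually fixed in \Cref{conv:clean-pressure-projection-model}, and not merely to some projection whose existence \Cref{lem:bounded-clean-pressure-projection} guarantees. If the fixed projection was not built by the Hahn--Banach construction, we still have what we need. It is a linear map with finite-dimensional range \(V_{{\rm prs},N}^{\cl}\), and we take it to be bounded (continuous); its operator norm is then finite, and we define \(C_{{\rm proj},N}\) to be that operator norm. Note that finite-dimensional range alone does not force boundedness, since a discontinuous linear functional times a fixed vector is unbounded, so continuity of the fixed projection is genuinely used here. Beyond this there is no real obstacle. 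No smallness of the residual is claimed; the residual simply stays as an explicit pressure error in the budget.
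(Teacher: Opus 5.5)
Your proposal is correct and matches the paper's proof: $q-\mathsf P_{{\rm prs},N}^{\cl}q\in Y_{\rm prs}$ gives finiteness, and the triangle inequality with the operator bound of \Cref{lem:bounded-clean-pressure-projection} gives the estimate. Two of your remarks refine points the paper passes over by simply asserting the fixed projection is bounded. Continuity of the fixed projection is genuinely needed for the constant $C_{{\rm proj},N}$ to exist. Finiteness of the residual itself needs no boundedness at all.
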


\begin{proof}
Since \(q\in Y_{\rm prs}\) and
\(\mathsf P_{{\rm prs},N}^{\cl}:Y_{\rm prs}\to Y_{\rm prs}\) is bounded,
\(q-\mathsf P_{{\rm prs},N}^{\cl}q\in Y_{\rm prs}\).  Therefore the norm in
\Cref{def:clean-pressure-projection-residual} is finite.  The displayed bound
follows from the triangle inequality and
\Cref{lem:bounded-clean-pressure-projection}.
\end{proof}

\begin{lemma}[Hilbert clean pressure best approximation]
\label{lem:hilbert-clean-pressure-best-approximation}
Let
\[
    Y_{\rm prs}^{(2)}:=L^2(I;L^2(B_{1/2})).
\]
Suppose \(V_{{\rm prs},N}^{\cl}\subset Y_{\rm prs}^{(2)}\) is finite-dimensional
and \(\mathsf P_{{\rm prs},N}^{\cl,2}\) is the
\(Y_{\rm prs}^{(2)}\)-orthogonal projection onto
\(V_{{\rm prs},N}^{\cl}\).  Then for every \(q\in Y_{\rm prs}^{(2)}\),
\[
    \|q-\mathsf P_{{\rm prs},N}^{\cl,2}q\|_{Y_{\rm prs}^{(2)}}
    =
    \inf_{v\in V_{{\rm prs},N}^{\cl}}
    \|q-v\|_{Y_{\rm prs}^{(2)}}.
\]
\end{lemma}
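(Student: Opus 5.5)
The plan is to repeat, essentially verbatim, the Hilbert-space argument used for \Cref{lem:harmonic-projection-best-approximation}, now in the space \(Y_{\rm prs}^{(2)}=L^2(I;L^2(B_{1/2}))\) with inner product \(\langle q,r\rangle=\int_I\int_{B_{1/2}}q\,r\,dx\,dt\). \(Y_{\rm prs}^{(2)}\) is a Hilbert space, and \(V_{{\rm prs},N}^{\cl}\) is finite-dimensional. Every finite-dimensional subspace of a normed space is complete and hence closed. So the Hilbert projection theorem applies: \(\mathsf P_{{\rm prs},N}^{\cl,2}\) is well defined, and the residual \(q-\mathsf P_{{\rm prs},N}^{\cl,2}q\) is orthogonal to \(V_{{\rm prs},N}^{\cl}\).

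Fix \(q\in Y_{\rm prs}^{(2)}\) and an arbitrary \(v\in V_{{\rm prs},N}^{\cl}\). Write
\[
    q-v=(q-\mathsf P_{{\rm prs},N}^{\cl,2}q)+(\mathsf P_{{\rm prs},N}^{\cl,2}q-v).
\]
The second summand lies in \(V_{{\rm prs},N}^{\cl}\), so it is orthogonal to the first. The Pythagorean identity then gives
\[
    \|q-v\|^2=\|q-\mathsf P_{{\rm prs},N}^{\cl,2}q\|^2+\|\mathsf P_{{\rm prs},N}^{\cl,2}q-v\|^2
    \ge\|q-\mathsf P_{{\rm prs},N}^{\cl,2}q\|^2 .
\]
Taking the infimum over \(v\) gives the inequality ``\(\le\)'' between the two sides of the statement, with the projection residual on the left. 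Choosing \(v=\mathsf P_{{\rm prs},N}^{\cl,2}q\), which lies in \(V_{{\rm prs},N}^{\cl}\), shows that the infimum is attained. This gives equality.

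No step here is a genuine obstacle. The only point to state explicitly is the closedness of \(V_{{\rm prs},N}^{\cl}\), which the projection theorem needs. Closedness follows from finite-dimensionality, exactly as in \Cref{lem:finite-dimensional-harmonic-gauge-space}. I would also remark that the lemma concerns the \(L^2\) model only. It says nothing directly about the \(L^{3/2}\) residual \(\Err_{{\rm proj},N}^{\cl}\). Transferring the \(L^2\) residual to the \(L^{3/2}\) residual would go through \Cref{lem:finite-measure-pressure-observation-embedding}, and is not part of this statement.
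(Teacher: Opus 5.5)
Your proposal is correct and follows the paper's proof essentially line for line. In both, the Hilbert projection theorem gives the orthogonal splitting, Pythagoras gives $\|q-v\|^2=\|q-\mathsf P_{{\rm prs},N}^{\cl,2}q\|^2+\|\mathsf P_{{\rm prs},N}^{\cl,2}q-v\|^2$ for every $v\in V_{{\rm prs},N}^{\cl}$, and the infimum is attained at $v=\mathsf P_{{\rm prs},N}^{\cl,2}q$; your remark that finite-dimensionality supplies the closedness needed for the projection theorem is left implicit in the paper's proof of this lemma.
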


\begin{proof}
The Hilbert-space projection theorem gives the orthogonal decomposition
\[
    q
    =
    \mathsf P_{{\rm prs},N}^{\cl,2}q
    +
    (I-\mathsf P_{{\rm prs},N}^{\cl,2})q,
\]
where the residual is orthogonal to \(V_{{\rm prs},N}^{\cl}\).  For
\(v\in V_{{\rm prs},N}^{\cl}\), Pythagoras gives
\[
    \|q-v\|_{Y_{\rm prs}^{(2)}}^2
    =
    \|(I-\mathsf P_{{\rm prs},N}^{\cl,2})q\|_{Y_{\rm prs}^{(2)}}^2
    +
    \|\mathsf P_{{\rm prs},N}^{\cl,2}q-v\|_{Y_{\rm prs}^{(2)}}^2 .
\]
The infimum is attained at
\(v=\mathsf P_{{\rm prs},N}^{\cl,2}q\), proving the identity.
\end{proof}

\begin{remark}[Banach status of the clean pressure projection residual]
\label{rem:banach-clean-pressure-projection-status}
The default pressure observation norm is
\(Y_{\rm prs}=L^{3/2}(I;L^{3/2}(B_{1/2}))\), which is a Banach norm rather than
the Hilbert norm used in
\Cref{lem:hilbert-clean-pressure-best-approximation}.  In the default
\(Y_{\rm prs}\) setting, the manuscript uses only boundedness of
\(\mathsf P_{{\rm prs},N}^{\cl}\) and the residual norm
\(\Err_{{\rm proj},N}^{\cl}\).  No best-approximation identity, smallness of
\(\Err_{{\rm proj},N}^{\cl}\), or scale-uniform truncation statement is claimed
unless an additional best-approximation hypothesis is explicitly imposed.
\end{remark}

\subsection{Pressure mean normalization residual}

\begin{convention}[Default pressure mean normalization]
\label{conv:default-pressure-mean-normalization}
In the normalized pressure observation space
\[
    Y_{\rm prs}=L^{3/2}(I;L^{3/2}(B_{1/2})),
\]
the default pressure representative is the one with zero spatial mean over the
observation core \(B_{1/2}\) at almost every time.  For
\(q\in Y_{\rm prs}\), write
\[
    \langle q\rangle_{B_{1/2}}(t)
    :=
    \frac{1}{|B_{1/2}|}\int_{B_{1/2}}q(x,t)\,dx
\]
and define the mean-normalization map
\[
    \mathsf Z_{\rm mean}q
    :=
    q-\langle q\rangle_{B_{1/2}}.
\]
Here \(\langle q\rangle_{B_{1/2}}\) is regarded as a function of \((x,t)\) that
is constant in \(x\) on \(B_{1/2}\).
\end{convention}

\begin{lemma}[Bounded pressure mean residual]
\label{lem:bounded-pressure-mean-residual}
For every \(q\in Y_{\rm prs}\), the mean residual
\[
    \Err_{\rm mean}(q)
    :=
    \|\langle q\rangle_{B_{1/2}}\|_{Y_{\rm prs}}
\]
is well defined and satisfies
\[
    \Err_{\rm mean}(q)
    \le
    \|q\|_{Y_{\rm prs}}.
\]
Moreover, \(\mathsf Z_{\rm mean}\) is a bounded projection onto the closed
subspace of \(Y_{\rm prs}\) consisting of functions with zero spatial mean over
\(B_{1/2}\) for almost every \(t\), and
\[
    \|\mathsf Z_{\rm mean}q\|_{Y_{\rm prs}}
    \le
    2\|q\|_{Y_{\rm prs}}.
\]
\end{lemma}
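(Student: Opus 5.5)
The plan is to prove the pointwise-in-time bound $|\langle q\rangle_{B_{1/2}}(t)|\,|B_{1/2}|^{2/3}\le \|q(\cdot,t)\|_{L^{3/2}(B_{1/2})}$ and then integrate in time. For almost every $t\in I$, the function $q(\cdot,t)$ lies in $L^{3/2}(B_{1/2})$ by Fubini, so the mean $\langle q\rangle_{B_{1/2}}(t)$ is finite. The function $t\mapsto\langle q\rangle_{B_{1/2}}(t)$ is measurable, again by Fubini, since $q\in L^1(I\times B_{1/2})$ on this finite-measure set. This settles well-definedness.

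Next apply Hölder's inequality with exponents $3/2$ and $3$ on $B_{1/2}$:
\[
|\langle q\rangle_{B_{1/2}}(t)|\le |B_{1/2}|^{-1}|B_{1/2}|^{1/3}\|q(\cdot,t)\|_{L^{3/2}(B_{1/2})}.
\]
Since the mean is constant in $x$,
\[
\|\langle q\rangle_{B_{1/2}}(t)\|_{L^{3/2}(B_{1/2})}=|B_{1/2}|^{2/3}|\langle q\rangle_{B_{1/2}}(t)|\le\|q(\cdot,t)\|_{L^{3/2}(B_{1/2})}.
\]
Equivalently, averaging is a contraction on $L^{3/2}$ of a probability measure by Jensen's inequality. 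Taking the $L^{3/2}(I)$ norm in $t$ gives $\Err_{\rm mean}(q)\le\|q\|_{Y_{\rm prs}}$.

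For $\mathsf Z_{\rm mean}$, linearity is immediate. The triangle inequality together with the previous bound gives $\|\mathsf Z_{\rm mean}q\|_{Y_{\rm prs}}\le 2\|q\|_{Y_{\rm prs}}$. Idempotence holds because $\langle \mathsf Z_{\rm mean}q\rangle_{B_{1/2}}=\langle q\rangle_{B_{1/2}}-\langle q\rangle_{B_{1/2}}=0$ for almost every $t$. Hence $\mathsf Z_{\rm mean}$ maps into the zero-mean subspace and fixes it pointwise, so its range is exactly that subspace.

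It remains to show the zero-mean subspace is closed. It is the kernel of the bounded linear map $q\mapsto\langle q\rangle_{B_{1/2}}$ from $Y_{\rm prs}$ into itself, which is bounded by the first estimate, and kernels of bounded maps are closed. Alternatively, it is the range of the bounded idempotent $\mathsf Z_{\rm mean}$, which equals $\ker(I-\mathsf Z_{\rm mean})$. The only step needing any care is the measurability and almost-everywhere finiteness of the time-dependent mean; everything else is Hölder's inequality and elementary algebra, so I expect no real obstacle.
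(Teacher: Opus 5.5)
Your proposal is correct and follows essentially the same route as the paper: the pointwise-in-time H\"older bound $|\langle q\rangle_{B_{1/2}}(t)|\le|B_{1/2}|^{-2/3}\|q(\cdot,t)\|_{L^{3/2}(B_{1/2})}$ combined with the constant-in-$x$ norm identity and the $L^{3/2}(I)$ norm, then the triangle inequality for $\mathsf Z_{\rm mean}$, idempotence via $\langle \mathsf Z_{\rm mean}q\rangle_{B_{1/2}}=0$, and closedness of the zero-mean subspace as the kernel of the bounded mean map. Your Fubini remark on the measurability of $t\mapsto\langle q\rangle_{B_{1/2}}(t)$ is a small piece of rigor that the paper leaves implicit.
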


\begin{proof}
For almost every \(t\in I\), Holder's inequality gives
\[
    |\langle q\rangle_{B_{1/2}}(t)|
    \le
    |B_{1/2}|^{-2/3}
    \|q(\cdot,t)\|_{L^{3/2}(B_{1/2})}.
\]
Since \(\langle q\rangle_{B_{1/2}}(t)\) is constant in \(x\), its spatial
\(L^{3/2}(B_{1/2})\) norm is
\[
    \|\langle q\rangle_{B_{1/2}}(t)\|_{L^{3/2}(B_{1/2})}
    =
    |B_{1/2}|^{2/3}|\langle q\rangle_{B_{1/2}}(t)|.
\]
Combining the two displays yields
\[
    \|\langle q\rangle_{B_{1/2}}(t)\|_{L^{3/2}(B_{1/2})}
    \le
    \|q(\cdot,t)\|_{L^{3/2}(B_{1/2})}.
\]
Taking the \(L^{3/2}(I)\) norm proves
\(\Err_{\rm mean}(q)\le\|q\|_{Y_{\rm prs}}\).

The identity
\[
    \langle \mathsf Z_{\rm mean}q\rangle_{B_{1/2}}=0
\]
shows that \(\mathsf Z_{\rm mean}\) maps into the zero-mean subspace, and if
\(q\) already has zero mean then \(\mathsf Z_{\rm mean}q=q\).  Thus
\(\mathsf Z_{\rm mean}^2=\mathsf Z_{\rm mean}\).  The triangle inequality and
the mean estimate give
\[
    \|\mathsf Z_{\rm mean}q\|_{Y_{\rm prs}}
    \le
    \|q\|_{Y_{\rm prs}}+\Err_{\rm mean}(q)
    \le
    2\|q\|_{Y_{\rm prs}}.
\]
Finally, the zero-mean subspace is the kernel of the bounded mean map, and is
therefore closed.
\end{proof}

\begin{lemma}[Vanishing of the mean residual under consistent normalization]
\label{lem:mean-residual-vanishes-under-normalization}
If \(q\in Y_{\rm prs}\) is already normalized so that
\[
    \int_{B_{1/2}}q(x,t)\,dx=0
    \qquad\text{for almost every }t\in I,
\]
then
\[
    \Err_{\rm mean}(q)=0.
\]
\end{lemma}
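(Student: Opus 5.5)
The argument is a direct computation from \Cref{conv:default-pressure-mean-normalization}. First compute the spatial mean \(\langle q\rangle_{B_{1/2}}(t)\) for almost every \(t\in I\). By hypothesis, for almost every \(t\),
\[
    \langle q\rangle_{B_{1/2}}(t)
    =
    \frac{1}{|B_{1/2}|}\int_{B_{1/2}}q(x,t)\,dx
    =0 .
\]
This holds off a null set \(N\subset I\).

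Next, view \(\langle q\rangle_{B_{1/2}}\) as a function of \((x,t)\) on \(B_{1/2}\times I\) that is constant in \(x\). It vanishes on \(B_{1/2}\times(I\setminus N)\), and the set \(B_{1/2}\times N\) has product measure zero. Hence \(\langle q\rangle_{B_{1/2}}=0\) as an element of \(Y_{\rm prs}=L^{3/2}(I;L^{3/2}(B_{1/2}))\). Measurability and finiteness of this norm are already provided by \Cref{lem:bounded-pressure-mean-residual}.

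Finally, by the definition of \(\Err_{\rm mean}\),
\[
    \Err_{\rm mean}(q)=\|\langle q\rangle_{B_{1/2}}\|_{Y_{\rm prs}}=\|0\|_{Y_{\rm prs}}=0 .
\]
As a by-product, \(\mathsf Z_{\rm mean}q=q\), which matches the projection property recorded in \Cref{lem:bounded-pressure-mean-residual}.

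There is no real obstacle here. The only point needing care is the "almost every \(t\)" qualifier, and it is handled by the Fubini-type remark that a null set of times gives a null set in space-time.
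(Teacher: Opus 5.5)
Your proof is correct and follows the same route as the paper. The hypothesis says exactly that $\langle q\rangle_{B_{1/2}}(t)=0$ for almost every $t$, so the constant-in-space mean function is the zero element of $Y_{\rm prs}$ and $\Err_{\rm mean}(q)=0$. Your remark that $B_{1/2}\times N$ is a null set just makes explicit the identification in $L^{3/2}(I;L^{3/2}(B_{1/2}))$ that the paper leaves implicit.
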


\begin{proof}
The stated normalization is precisely
\(\langle q\rangle_{B_{1/2}}(t)=0\) for almost every \(t\).  Hence the
constant-in-space mean function vanishes in \(Y_{\rm prs}\).
\end{proof}

\begin{remark}[Status of the pressure mean residual]
\Cref{lem:bounded-pressure-mean-residual} is a fixed-core normalization
statement in the default pressure observation norm.  It does not estimate any
nonconstant pressure component, does not prove smallness of the pressure, and
does not provide scale-uniform control under dyadic rescaling.  It only records
the cost of changing pressure representatives by a time-dependent spatial
constant on \(B_{1/2}\).
\end{remark}

\subsection{Pressure periodization residual}\label{subsec:pressure-periodization}

\begin{convention}[Default pressure periodization model]
\label{conv:default-pressure-periodization-model}
Fix a unit torus \(\T^3\) containing a copy of the normalized core \(B_{1/2}\)
inside one coordinate chart.  Let
\[
    Y_{\rm prs}^{\rm per}
    :=
    L^{3/2}(I;L^{3/2}(\T^3)).
\]
The default pressure periodization map is the zero-extension operator
\[
    \mathsf{Per}_{\rm prs}:Y_{\rm prs}\to Y_{\rm prs}^{\rm per},
\]
defined by
\[
    (\mathsf{Per}_{\rm prs}q)(x,t)
    =
    \begin{cases}
    q(x,t), & x\in B_{1/2},\\
    0, & x\in \T^3\setminus B_{1/2}.
    \end{cases}
\]
This is only a fixed pressure-observation periodization convention.  It is not
a claim that the zero extension is a smooth periodic pressure or that it solves
a periodic pressure equation.
\end{convention}

\begin{lemma}[Bounded pressure periodization]
\label{lem:bounded-pressure-periodization}
The map
\[
    \mathsf{Per}_{\rm prs}:Y_{\rm prs}\to Y_{\rm prs}^{\rm per}
\]
is linear and bounded, with
\[
    \|\mathsf{Per}_{\rm prs}q\|_{Y_{\rm prs}^{\rm per}}
    =
    \|q\|_{Y_{\rm prs}}.
\]
\end{lemma}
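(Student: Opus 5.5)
The plan is to compute the norm of the zero extension directly, one time slice at a time, and then integrate in time. Linearity is immediate: for each fixed \(t\), the zero extension of \(aq_1+bq_2\) equals \(a\,\mathsf{Per}_{\rm prs}q_1+b\,\mathsf{Per}_{\rm prs}q_2\) pointwise on \(\T^3\), both on the chart copy of \(B_{1/2}\) and on its complement, where every term vanishes. Measurability of \(\mathsf{Per}_{\rm prs}q\) on \(I\times\T^3\) follows because it is the product of (the chart transport of) \(q\) with the indicator of the measurable set \(I\times B_{1/2}\).

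The key step is the spatial identity. By \Cref{conv:default-pressure-periodization-model}, \(B_{1/2}\) sits inside a single coordinate chart of the unit torus. We can take this chart to be the restriction of the quotient map \(\R^3\to\T^3\) to a fundamental cube containing \(B_{1/2}\); the unit cube \([-1/2,1/2)^3\) suffices. That map is injective there and carries Lebesgue measure to the Haar measure on \(\T^3\), so no Jacobian appears. Then, for almost every \(t\in I\),
\[
\int_{\T^3}|(\mathsf{Per}_{\rm prs}q)(x,t)|^{3/2}\,dx
=\int_{B_{1/2}}|q(x,t)|^{3/2}\,dx+\int_{\T^3\setminus B_{1/2}}0\,dx,
\]
and hence \(\|(\mathsf{Per}_{\rm prs}q)(\cdot,t)\|_{L^{3/2}(\T^3)}=\|q(\cdot,t)\|_{L^{3/2}(B_{1/2})}\).

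Raising this to the power \(3/2\) and integrating over \(I\) (by Tonelli's theorem, since the integrand is nonnegative) gives equality of the \(L^{3/2}(I;L^{3/2})\) norms. So \(\mathsf{Per}_{\rm prs}\) is an isometry, and in particular it is bounded with operator norm \(1\).

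The only point needing care is the measure-preserving identification of \(B_{1/2}\) with its copy in \(\T^3\). If the chart were not a translation of a fundamental domain, a Jacobian factor would enter and equality would degrade to a two-sided bound. So I would fix the chart in this way explicitly; the unit cube contains \(B_{1/2}\), so this choice is available.
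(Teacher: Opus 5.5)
Your proposal is correct and follows the paper's proof. Both establish linearity from the definition, then the slice-wise identity $\|\mathsf{Per}_{\rm prs}q(\cdot,t)\|_{L^{3/2}(\T^3)}=\|q(\cdot,t)\|_{L^{3/2}(B_{1/2})}$ for almost every $t$, and then take the $L^{3/2}(I)$ norm in time. Your explicit measure-preserving chart, the fundamental cube $[-1/2,1/2)^3\supset B_{1/2}$, spells out an identification that the paper uses only tacitly.
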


\begin{proof}
Linearity is immediate from the definition.  For almost every \(t\),
zero-extension gives
\[
    \|\mathsf{Per}_{\rm prs}q(\cdot,t)\|_{L^{3/2}(\T^3)}
    =
    \|q(\cdot,t)\|_{L^{3/2}(B_{1/2})}.
\]
Taking the \(L^{3/2}(I)\) norm proves the identity and hence boundedness.
\end{proof}

\begin{definition}[Pressure periodization residual]
\label{def:pressure-periodization-residual}
Given a local pressure observation \(q\in Y_{\rm prs}\) and a clean periodic
pressure representative \(q^{\rm per}\in Y_{\rm prs}^{\rm per}\), define
\[
    \Err_{\rm per}(q,q^{\rm per})
    :=
    \|\mathsf{Per}_{\rm prs}q-q^{\rm per}\|_{Y_{\rm prs}^{\rm per}}.
\]
\end{definition}

\begin{lemma}[Well-defined periodization residual]
\label{lem:periodization-residual-well-defined}
For \(q\in Y_{\rm prs}\) and \(q^{\rm per}\in Y_{\rm prs}^{\rm per}\), the
quantity \(\Err_{\rm per}(q,q^{\rm per})\) is a finite, well-defined pressure
error.  Moreover,
\[
    \Err_{\rm per}(q,q^{\rm per})
    \le
    \|q\|_{Y_{\rm prs}}
    +
    \|q^{\rm per}\|_{Y_{\rm prs}^{\rm per}}.
\]
If the clean periodic representative is chosen to be
\(q^{\rm per}=\mathsf{Per}_{\rm prs}q\), then
\[
    \Err_{\rm per}(q,q^{\rm per})=0.
\]
\end{lemma}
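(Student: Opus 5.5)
The plan is to read all three claims off \Cref{lem:bounded-pressure-periodization} together with the fact that \(Y_{\rm prs}^{\rm per}=L^{3/2}(I;L^{3/2}(\T^3))\) is a normed space; no PDE input is needed.

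\textbf{Finiteness and well-definedness.} Given \(q\in Y_{\rm prs}\), \Cref{lem:bounded-pressure-periodization} places \(\mathsf{Per}_{\rm prs}q\) in \(Y_{\rm prs}^{\rm per}\) with finite norm. Since \(q^{\rm per}\in Y_{\rm prs}^{\rm per}\) by hypothesis and \(Y_{\rm prs}^{\rm per}\) is a vector space, the difference \(\mathsf{Per}_{\rm prs}q-q^{\rm per}\) also lies in \(Y_{\rm prs}^{\rm per}\). Hence its norm, which is \(\Err_{\rm per}(q,q^{\rm per})\), is a finite nonnegative number. It is well defined because \(\mathsf{Per}_{\rm prs}\) is a genuine map on equivalence classes: zero extension respects almost-everywhere equality.

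\textbf{The upper bound.} I will apply the triangle inequality in \(Y_{\rm prs}^{\rm per}\):
\[
\Err_{\rm per}(q,q^{\rm per})\le \|\mathsf{Per}_{\rm prs}q\|_{Y_{\rm prs}^{\rm per}}+\|q^{\rm per}\|_{Y_{\rm prs}^{\rm per}}.
\]
The isometry identity from \Cref{lem:bounded-pressure-periodization} then replaces the first term by \(\|q\|_{Y_{\rm prs}}\), which gives the displayed bound.

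\textbf{Vanishing under the canonical choice.} If \(q^{\rm per}=\mathsf{Per}_{\rm prs}q\), the difference is the zero element of \(Y_{\rm prs}^{\rm per}\), so its norm vanishes.

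There is no real obstacle here. The only point needing care is to note that the isometry is taken from the earlier lemma rather than re-proved. I will also add a brief remark that the lemma gives no smallness of \(\Err_{\rm per}\) for a nontrivial clean periodic pressure solve, consistent with the status remarks elsewhere in the paper.
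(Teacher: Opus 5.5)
Your proposal is correct and matches the paper's proof: membership of the difference in \(Y_{\rm prs}^{\rm per}\) comes from \Cref{lem:bounded-pressure-periodization}, the bound is the triangle inequality plus the isometry identity, and the residual vanishes trivially when \(q^{\rm per}=\mathsf{Per}_{\rm prs}q\). Your extra remark that zero extension respects almost-everywhere equality is a harmless detail the paper leaves implicit.
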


\begin{proof}
Boundedness of \(\mathsf{Per}_{\rm prs}\) gives
\(\mathsf{Per}_{\rm prs}q\in Y_{\rm prs}^{\rm per}\), so the difference
\(\mathsf{Per}_{\rm prs}q-q^{\rm per}\) belongs to \(Y_{\rm prs}^{\rm per}\).
The displayed estimate is the triangle inequality together with
\Cref{lem:bounded-pressure-periodization}.  The vanishing statement follows
directly from the definition when \(q^{\rm per}=\mathsf{Per}_{\rm prs}q\).
\end{proof}

\begin{remark}[Status of the periodization residual]
\Cref{lem:periodization-residual-well-defined} is a norm-level periodization
statement for pressure observations only.  It does not prove smooth
periodization, pressure-equation compatibility, boundary commutator control,
or scale-uniform leakage estimates.  Any mismatch between the zero-extended
observation and the chosen clean periodic pressure remains visible as
\(\Err_{\rm per}\).
\end{remark}

\begin{definition}[Pressure error budget]
\label{def:pressure-error-budget}
For a finite window \(\Lambda\), the pressure error budget associated with a
pressure-transfer datum is
\[
    \Err_{\rm prs}(\mathfrak D)
    :=
    \sum_{k\in\Lambda_{\rm sc}}\omega_{{\rm prs},k}
    \Bigl(
        \Err_{{\rm harm-tail},k}
        +
        \Err_{{\rm comm},k}
        +
        \Err_{{\rm active},k}
        +
        \Err_{{\rm proj},k}
        +
        \Err_{{\rm mean},k}
        +
        \Err_{{\rm per},k}
    \Bigr),
\]
where \(\omega_{{\rm prs},k}\ge0\) are fixed finite-window weights.  The
projection entry is now instantiated by
\[
    \Err_{{\rm proj},k}
    :=
    \Err_{{\rm proj},N}^{\cl}(q_k^{\cl})
    =
    \|q_k^{\cl}-\mathsf P_{{\rm prs},N}^{\cl}q_k^{\cl}\|_{Y_{{\rm prs},k}},
\]
where \(q_k^{\cl}\in Y_{{\rm prs},k}\) is the clean pressure observation being
projected at scale \(k\), and
\(Y_{{\rm prs},k}=L^{3/2}(I_k;L^{3/2}(B_{1/2}))\) in normalized coordinates.
The mean entry is instantiated by
\[
    \Err_{{\rm mean},k}
    :=
    \Err_{\rm mean}(q_k^{\rm prs})
    =
    \|\langle q_k^{\rm prs}\rangle_{B_{1/2}}\|_{Y_{{\rm prs},k}},
\]
where \(q_k^{\rm prs}\in Y_{{\rm prs},k}\) is the pressure representative whose
mean must be reconciled with the default zero-mean convention.  If the same
zero-mean convention is used throughout, then \(\Err_{{\rm mean},k}=0\).  The
periodization entry is instantiated by
\[
    \Err_{{\rm per},k}
    :=
    \Err_{\rm per}(q_k^{\rm prs},q_k^{\rm per})
    =
    \|\mathsf{Per}_{\rm prs}q_k^{\rm prs}-q_k^{\rm per}\|_{Y_{{\rm prs},k}^{\rm per}},
\]
where \(q_k^{\rm per}\in Y_{{\rm prs},k}^{\rm per}\) is the clean periodic
pressure representative.  If no periodization mismatch is introduced, or if
\(q_k^{\rm per}=\mathsf{Per}_{\rm prs}q_k^{\rm prs}\), then
\(\Err_{{\rm per},k}=0\).
\end{definition}

\begin{lemma}[Harmonic-tail budget instantiation in the normalized model]
\label{lem:harmonic-tail-budget-instantiation}
Consider a normalized scale component \(k\) with finite time interval \(I_k\).
Assume that its harmonic pressure tail satisfies
\[
    p_k^{\rm harm}\in L^2(I_k;L^2(B_{3/4}))
\]
and that \(p_k^{\rm harm}(t,\cdot)\) is harmonic in \(B_{3/4}\) for almost every
\(t\in I_k\).  If the harmonic-tail entry in
\Cref{def:pressure-error-budget} is instantiated by
\[
    \Err_{{\rm harm-tail},k}
    :=
    |I_k|^{1/6}|B_{1/2}|^{1/6}
    \left(\frac23\right)^{M+1}
    \|p_k^{\rm harm}\|_{L^2(I_k;L^2(B_{3/4}))},
\]
then
\[
    \|(I-\Pi_{{\rm harm},M})p_k^{\rm harm}\|_{Y_{{\rm prs},k}}
    \le
    \Err_{{\rm harm-tail},k},
\]
where \(Y_{{\rm prs},k}=L^{3/2}(I_k;L^{3/2}(B_{1/2}))\) in the normalized
coordinates.
Consequently, for any finite nonnegative pressure weights
\(\omega_{{\rm prs},k}\),
\[
    \sum_{k\in\Lambda_{\rm sc}}
    \omega_{{\rm prs},k}
    \|(I-\Pi_{{\rm harm},M})p_k^{\rm harm}\|_{Y_{{\rm prs},k}}
    \le
    \sum_{k\in\Lambda_{\rm sc}}
    \omega_{{\rm prs},k}\Err_{{\rm harm-tail},k}.
\]
\end{lemma}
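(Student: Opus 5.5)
The plan is to reduce the statement directly to \Cref{cor:pressure-observation-harmonic-approximation-tail}, applied scale by scale, and then to sum the resulting bounds against the nonnegative weights. No new analysis is needed; the work is to check that each normalized scale component meets the hypotheses of the corollary.

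\textbf{Step 1 (hypotheses).} Fix \(k\in\Lambda_{\rm sc}\). By assumption, \(p_k^{\rm harm}\in L^2(I_k;L^2(B_{3/4}))\) and it is harmonic in \(B_{3/4}\) for almost every \(t\in I_k\). These are exactly the hypotheses of \Cref{lem:fixed-geometry-harmonic-polynomial-approximation} on the finite interval \(I=I_k\). In particular, restricting to \(B_{1/2}\) gives \(p_k^{\rm harm}\in L^2(I_k;L^2(B_{1/2}))\), so \(\Pi_{{\rm harm},M}\) may be applied in the spatial variable for almost every \(t\). Measurability in time of the projected function is automatic: \(\Pi_{{\rm harm},M}\) is a fixed bounded operator with finite-dimensional range.

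\textbf{Step 2 (per-scale bound).} Apply \Cref{cor:pressure-observation-harmonic-approximation-tail} with \(I=I_k\). This gives
\[
\|(I-\Pi_{{\rm harm},M})p_k^{\rm harm}\|_{L^{3/2}(I_k;L^{3/2}(B_{1/2}))}\le |I_k|^{1/6}|B_{1/2}|^{1/6}\left(\tfrac23\right)^{M+1}\|p_k^{\rm harm}\|_{L^2(I_k;L^2(B_{3/4}))}.
\]
By \Cref{conv:default-pressure-observation-norm}, the left-hand side is \(\|\cdot\|_{Y_{{\rm prs},k}}\) in normalized coordinates. The right-hand side is, by definition, \(\Err_{{\rm harm-tail},k}\). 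This proves the first claim.

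\textbf{Step 3 (weighted sum).} Multiply each per-scale inequality by \(\omega_{{\rm prs},k}\ge0\); nonnegativity preserves the direction of the inequality. Then sum over the finite set \(\Lambda_{\rm sc}\). Finiteness of the window makes both sums finite, so no convergence question arises.

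\textbf{Main obstacle.} The only delicate point is bookkeeping, not analysis. One must confirm that the time interval \(I\) in the corollary is the scale-dependent \(I_k\), so that the factor \(|I_k|^{1/6}\) matches the instantiated budget entry. One must also confirm that \(Y_{{\rm prs},k}\) uses the same normalized core \(B_{1/2}\) as the default projection of \Cref{conv:default-harmonic-pressure-gauge}. Once these identifications are checked, the result is a direct specialization of the corollary with no loss of constants.
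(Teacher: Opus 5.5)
Your proposal is correct and matches the paper's proof. The paper applies \Cref{cor:pressure-observation-harmonic-approximation-tail} on each normalized component with \(I=I_k\) to get the single-scale bound, then multiplies by the nonnegative weights \(\omega_{{\rm prs},k}\) and sums over the finite window \(\Lambda_{\rm sc}\), exactly as you do.
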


\begin{proof}
The single-scale estimate is exactly
\Cref{cor:pressure-observation-harmonic-approximation-tail} applied on the
normalized component \(k\).  Multiplying by the nonnegative weights and summing
over the finite window gives the displayed finite-window bound.
\end{proof}

\begin{remark}[Status of the pressure budget]
\Cref{def:pressure-error-budget} is bookkeeping, not a global pressure estimate.
\Cref{lem:harmonic-tail-budget-instantiation} gives a fixed-geometry
instantiation of the harmonic-tail entry when the normalized harmonic pressure
tail is controlled on \(B_{3/4}\).  The commutator, active-source, projection,
mean, and periodization entries remain separate and are not estimated by the
harmonic-tail argument.  The budget definition does not assert that the total
pressure budget is small, absorbable, or uniform across scales.
\end{remark}

\begin{proposition}[Combined normalized pressure-component package]
\label{prop:combined-normalized-pressure-component-package}
Let \(\Lambda_{\rm sc}\) be a finite scale window.  For each \(k\in\Lambda_{\rm sc}\),
work in normalized coordinates on a finite time interval \(I_k\), with cutoff
\(\eta_k\), tensor source \(f_k=(f_{k,ij})\), clean tensor source
\(F_k^{\cl}=(F_{k,ij}^{\cl})\), and harmonic pressure tail \(p_k^{\rm harm}\).
Assume the following fixed-geometry hypotheses hold for each \(k\):
\begin{enumerate}[label=(\roman*)]
    \item \(p_k^{\rm harm}\in L^2(I_k;L^2(B_{3/4}))\), and
    \(p_k^{\rm harm}(t,\cdot)\) is harmonic in \(B_{3/4}\) for almost every
    \(t\in I_k\);
    \item \(g_{k,ij}:=(1-\eta_k)f_{k,ij}\) is supported in \(A_{3/4,1}\) and
    belongs to \(L^{3/2}(I_k;L^{3/2}(A_{3/4,1}))\);
    \item \(G_{k,ij}:=f_{k,ij}-F_{k,ij}^{\cl}\), extended by zero outside
    \(B_1\), belongs to \(L^{3/2}(I_k;L^{3/2}(\R^3))\).
\end{enumerate}
Define
\[
\begin{aligned}
    \mathcal B_{\rm prs}^{\rm norm}(\mathfrak D)
    :=
    \sum_{k\in\Lambda_{\rm sc}}\omega_{{\rm prs},k}
    \Bigg(
        &|I_k|^{1/6}|B_{1/2}|^{1/6}
        \left(\frac23\right)^{M+1}
        \|p_k^{\rm harm}\|_{L^2(I_k;L^2(B_{3/4}))}  \\
        &+
        C_{\rm nlp}\sum_{i,j=1}^3
        \|(1-\eta_k)f_{k,ij}\|_{L^{3/2}(I_k;L^{3/2}(A_{3/4,1}))} \\
        &+
        C_{\rm CZ}\sum_{i,j=1}^3
        \|f_{k,ij}-F_{k,ij}^{\cl}\|_{L^{3/2}(I_k;L^{3/2}(B_1))} \\
        &+
        \Err_{{\rm proj},k}
        +
        \Err_{{\rm mean},k}
        +
        \Err_{{\rm per},k}
    \Bigg).
\end{aligned}
\]
If the pressure budget in \Cref{def:pressure-error-budget} is instantiated by
the normalized harmonic-tail, commutator, and active-source entries above, then
\[
    \Err_{\rm prs}(\mathfrak D)
    \le
    \mathcal B_{\rm prs}^{\rm norm}(\mathfrak D).
\]
\end{proposition}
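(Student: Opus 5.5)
The plan is a termwise comparison inside the finite sum defining \(\Err_{\rm prs}(\mathfrak D)\) in \Cref{def:pressure-error-budget}. Fix \(k\in\Lambda_{\rm sc}\). The weights \(\omega_{{\rm prs},k}\) are nonnegative and the window is finite. It therefore suffices to show that each of the six entries \(\Err_{{\rm harm-tail},k}\), \(\Err_{{\rm comm},k}\), \(\Err_{{\rm active},k}\), \(\Err_{{\rm proj},k}\), \(\Err_{{\rm mean},k}\), \(\Err_{{\rm per},k}\) is bounded by the corresponding summand in \(\mathcal B_{\rm prs}^{\rm norm}(\mathfrak D)\). Multiplying by \(\omega_{{\rm prs},k}\ge0\) and summing over \(k\) then gives the claim.

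For the harmonic-tail entry, I invoke hypothesis (i) and \Cref{lem:harmonic-tail-budget-instantiation}. Under the instantiation, \(\Err_{{\rm harm-tail},k}\) is defined to be exactly the first summand, so the inequality is an equality. That lemma also confirms that this quantity dominates the actual \(Y_{{\rm prs},k}\)-norm of \((I-\Pi_{{\rm harm},M})p_k^{\rm harm}\).

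For the commutator entry, I read the instantiation as
\[
\Err_{{\rm comm},k}:=\|\mathcal C_{\eta_k}(f_k)\|_{L^{3/2}(I_k;L^{3/2}(B_{1/2}))}.
\]
Hypothesis (ii) supplies the support and integrability of \(g_{k,ij}=(1-\eta_k)f_{k,ij}\) required by \Cref{lem:normalized-cutoff-riesz-commutator-estimate}. That lemma bounds \(\Err_{{\rm comm},k}\) by \(C_{\rm nlp}\sum_{i,j}\|(1-\eta_k)f_{k,ij}\|_{L^{3/2}(I_k;L^{3/2}(A_{3/4,1}))}\). If instead the entry is already defined as this right-hand side, there is nothing to prove.

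For the active-source entry, hypothesis (iii) is precisely the hypothesis of \Cref{lem:normalized-active-pressure-source-residual} with \(G=G_k\). Hence
\[
\|R_iR_j(f_{k,ij}-F^{\cl}_{k,ij})\|_{L^{3/2}(I_k;L^{3/2}(B_{1/2}))}\le C_{\rm CZ}\sum_{i,j}\|f_{k,ij}-F^{\cl}_{k,ij}\|_{L^{3/2}(I_k;L^{3/2}(B_1))},
\]
and the right-hand side is the third summand. The projection, mean, and periodization entries appear identically on both sides. \Cref{lem:clean-projection-residual-well-defined}, \Cref{lem:bounded-pressure-mean-residual}, and \Cref{lem:periodization-residual-well-defined} guarantee that these entries are finite, so the sums are meaningful.

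The only delicate point is interpretive rather than analytic. I must fix which quantity each of \(\Err_{{\rm comm},k}\) and \(\Err_{{\rm active},k}\) denotes under ``instantiated by the normalized entries.'' One option is the observation norm of the corresponding term in \Cref{lem:pressure-mismatch-decomposition}; the other is the displayed upper bound itself. I will state explicitly that I take the observation-norm reading, which is the stronger statement, and note that under the alternative reading the inequality holds with equality. A secondary check is that the harmonic-tail entry in the budget uses the \(L^2\)-controlled instantiation of \Cref{lem:harmonic-tail-budget-instantiation} rather than an independent quantity, so the first summands coincide. With these conventions fixed, the proof is a finite sum of the three cited fixed-geometry estimates and three identities.
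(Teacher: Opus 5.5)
Your proposal is correct and follows the paper's proof essentially verbatim: the harmonic-tail, commutator, and active-source entries are bounded via \Cref{lem:harmonic-tail-budget-instantiation}, \Cref{lem:normalized-cutoff-riesz-commutator-estimate}, and \Cref{lem:normalized-active-pressure-source-residual} under hypotheses (i)--(iii), the projection, mean, and periodization residuals are carried over unchanged, and the estimates are multiplied by the nonnegative weights and summed over the finite window. Your explicit choice of the observation-norm reading for the instantiated commutator and active-source entries clarifies a point the paper leaves implicit, and it is harmless because the inequality holds under either reading.
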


\begin{proof}
The harmonic-tail entry is controlled by
\Cref{lem:harmonic-tail-budget-instantiation}.  The commutator entry is
controlled by \Cref{lem:normalized-cutoff-riesz-commutator-estimate}.  The
active-source entry is controlled by
\Cref{lem:normalized-active-pressure-source-residual}.  The projection, mean,
and periodization terms are not estimated; they are retained as the explicit
residuals \(\Err_{{\rm proj},k}\), \(\Err_{{\rm mean},k}\), and
\(\Err_{{\rm per},k}\).  Substituting these component bounds into
\Cref{def:pressure-error-budget} and summing over the finite window with
nonnegative weights gives the claim.
\end{proof}

\begin{remark}[Status of the combined pressure package]
\Cref{prop:combined-normalized-pressure-component-package} is a fixed-geometry
component package.  It combines the pressure estimates that have actually been
proved in the normalized model and leaves the remaining residuals explicit.
It does not prove that \(\mathcal B_{\rm prs}^{\rm norm}\) is small, does not
absorb the pressure budget into the quotient distance, and does not provide
scale-uniform localization, truncation, or periodization control.
\end{remark}

\subsection{Energy--flux localization leakage}\label{subsec:energy-flux-localization}

\begin{convention}[Normalized energy--flux cutoff]
\label{conv:normalized-energy-flux-cutoff}
For the energy--flux localization leakage model, set
\[
    Q_1:=B_1\times(-1,0),
    \qquad
    Q_{1/2}:=B_{1/2}\times(-1/4,0).
\]
Fix a cutoff
\[
    \chi\in C_c^\infty(Q_1),
    \qquad
    0\le \chi\le1,
    \qquad
    \chi\equiv1\ \text{on }Q_{1/2},
\]
and define the fixed cutoff constant
\[
    C_\chi
    :=
    \|\partial_t\chi\|_{L^\infty(Q_1)}
    +
    \|\Delta\chi\|_{L^\infty(Q_1)}
    +
    \|\nabla\chi\|_{L^\infty(Q_1)}.
\]
This convention is fixed-scale and normalized.  No scale-uniform family of
cutoffs is asserted here.
\end{convention}

\begin{definition}[Energy--flux localization leakage errors]
\label{def:energy-flux-localization-leakage-errors}
For a velocity-pressure pair \((u,p)\) on \(Q_1\), define
\[
    \Err_{\rm loc}^{\rm en}(u)
    :=
    \int_{Q_1}|u|^2\bigl(|\partial_t\chi|+|\Delta\chi|\bigr),
\]
\[
    \Err_{\rm loc}^{\rm flux}(u)
    :=
    \int_{Q_1}|u|^2|u\cdot\nabla\chi|,
\]
and
\[
    \Err_{\rm loc}^{\rm prs}(u,p)
    :=
    \int_{Q_1}|p-(p)_{B_1}(t)|\,|u|\,|\nabla\chi|,
\]
where
\[
    (p)_{B_1}(t):=\frac1{|B_1|}\int_{B_1}p(x,t)\,dx.
\]
\end{definition}

\begin{lemma}[Fixed-scale energy--flux leakage estimates]
\label{lem:fixed-scale-energy-flux-leakage-estimates}
Let \(u\in L^3(Q_1)\cap L^2(Q_1)\), and let
\[
    p-(p)_{B_1}(\cdot)\in L^{3/2}(Q_1).
\]
Then
\[
    \Err_{\rm loc}^{\rm en}(u)
    \le
    C_\chi \|u\|_{L^2(Q_1)}^2,
\]
\[
    \Err_{\rm loc}^{\rm flux}(u)
    \le
    C_\chi \|u\|_{L^3(Q_1)}^3,
\]
and
\[
    \Err_{\rm loc}^{\rm prs}(u,p)
    \le
    C_\chi
    \|p-(p)_{B_1}(\cdot)\|_{L^{3/2}(Q_1)}
    \|u\|_{L^3(Q_1)}.
\]
\end{lemma}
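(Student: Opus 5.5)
The three estimates are pointwise bounds on the cutoff derivatives followed by Hölder's inequality on the finite-measure cylinder $Q_1$. By the definition of $C_\chi$ in \Cref{conv:normalized-energy-flux-cutoff}, we have almost everywhere on $Q_1$
\[
|\partial_t\chi|+|\Delta\chi|\le C_\chi
\qquad\text{and}\qquad
|\nabla\chi|\le C_\chi.
\]
Moreover $\chi$ is supported in $Q_1$, so every integrand in \Cref{def:energy-flux-localization-leakage-errors} vanishes outside $Q_1$.

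\textbf{Energy term.} Insert the first pointwise bound into $\Err_{\rm loc}^{\rm en}(u)$. This gives
\[
\Err_{\rm loc}^{\rm en}(u)\le C_\chi\int_{Q_1}|u|^2 = C_\chi\|u\|_{L^2(Q_1)}^2 ,
\]
with no further work.

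\textbf{Flux term.} Use $|u\cdot\nabla\chi|\le |u|\,|\nabla\chi|\le C_\chi|u|$, which is Cauchy--Schwarz in $\R^3$. Then
\[
\Err_{\rm loc}^{\rm flux}(u)\le C_\chi\int_{Q_1}|u|^3 = C_\chi\|u\|_{L^3(Q_1)}^3 .
\]

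\textbf{Pressure term.} After bounding $|\nabla\chi|\le C_\chi$, apply Hölder's inequality on $Q_1$ with exponents $3/2$ and $3$ (conjugate, since $2/3+1/3=1$) to the product $|p-(p)_{B_1}(t)|\,|u|$. This yields
\[
\Err_{\rm loc}^{\rm prs}(u,p)\le C_\chi\,\|p-(p)_{B_1}(\cdot)\|_{L^{3/2}(Q_1)}\,\|u\|_{L^3(Q_1)} .
\]
The mean $(p)_{B_1}(t)$ is well defined for almost every $t$ because it enters only through the function $p-(p)_{B_1}$, which is assumed to lie in $L^{3/2}(Q_1)$.

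\textbf{Remaining checks.} There is no real obstacle; the only care needed is measurability and finiteness. The integrands are nonnegative and measurable, and the right-hand sides are finite under the stated integrability hypotheses, so every quantity is well defined. If one wants to avoid the hypothesis $u\in L^2(Q_1)$, it already follows from $u\in L^3(Q_1)$ because $Q_1$ has finite measure. All constants are fixed-scale, as the lemma itself states, so no uniformity across scales is needed.
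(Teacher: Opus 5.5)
Your proposal is correct and follows essentially the same route as the paper's proof. Both bound $|\partial_t\chi|+|\Delta\chi|$ and $|\nabla\chi|$ pointwise by the constant $C_\chi$ (with Cauchy--Schwarz for $|u\cdot\nabla\chi|$), then integrate, using H\"older with exponents $3/2$ and $3$ for the pressure-transport term.
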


\begin{proof}
The energy leakage estimate follows directly from the definition and the
boundedness of \(\partial_t\chi\) and \(\Delta\chi\):
\[
    \Err_{\rm loc}^{\rm en}(u)
    \le
    \bigl(\|\partial_t\chi\|_{L^\infty}
       +\|\Delta\chi\|_{L^\infty}\bigr)
    \int_{Q_1}|u|^2
    \le
    C_\chi\|u\|_{L^2(Q_1)}^2.
\]
For the flux leakage,
\[
    |u|^2|u\cdot\nabla\chi|
    \le
    \|\nabla\chi\|_{L^\infty}|u|^3,
\]
and integration over \(Q_1\) gives the claimed bound.  Finally, Holder's
inequality with exponents \(3/2\) and \(3\) gives
\[
\begin{aligned}
    \Err_{\rm loc}^{\rm prs}(u,p)
    &\le
    \|\nabla\chi\|_{L^\infty}
    \int_{Q_1}|p-(p)_{B_1}(t)|\,|u|  \\
    &\le
    C_\chi
    \|p-(p)_{B_1}(\cdot)\|_{L^{3/2}(Q_1)}
    \|u\|_{L^3(Q_1)}.
\end{aligned}
\]
\end{proof}

\begin{definition}[Normalized CKN quantities on \(Q_1\)]
\label{def:normalized-ckn-quantities}
On \(Q_1=B_1\times(-1,0)\), set
\[
    A:=\operatorname*{ess\,sup}_{-1<t<0}\int_{B_1}|u(x,t)|^2\,dx,
\]
\[
    C:=\int_{Q_1}|u|^3,
    \qquad
    D:=\int_{Q_1}|p-(p)_{B_1}(t)|^{3/2}.
\]
\end{definition}

\begin{corollary}[CKN-normalized leakage bounds]
\label{cor:ckn-normalized-leakage-bounds}
Under the hypotheses of
\Cref{lem:fixed-scale-energy-flux-leakage-estimates},
\[
    \Err_{\rm loc}^{\rm en}(u)
    \le
    C_\chi A,
\]
\[
    \Err_{\rm loc}^{\rm flux}(u)
    \le
    C_\chi C,
\]
and
\[
    \Err_{\rm loc}^{\rm prs}(u,p)
    \le
    C_\chi C^{1/3}D^{2/3}.
\]
\end{corollary}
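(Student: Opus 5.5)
The plan is to substitute the normalized CKN quantities of \Cref{def:normalized-ckn-quantities} directly into the three right-hand sides of \Cref{lem:fixed-scale-energy-flux-leakage-estimates}. No new analysis is needed: each bound there is already written in terms of the norms \(\|u\|_{L^2(Q_1)}\), \(\|u\|_{L^3(Q_1)}\) and \(\|p-(p)_{B_1}(\cdot)\|_{L^{3/2}(Q_1)}\). It therefore suffices to identify these norms with, or bound them by, \(A\), \(C\) and \(D\).

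For the energy leakage I would use Fubini and the fact that the time interval \((-1,0)\) has length one:
\[
    \|u\|_{L^2(Q_1)}^2
    =
    \int_{-1}^0\int_{B_1}|u(x,t)|^2\,dx\,dt
    \le
    \int_{-1}^0 A\,dt
    =
    A .
\]
Here the inner spatial integral is bounded by its essential supremum in time, which is \(A\). Inserting this into the first estimate of \Cref{lem:fixed-scale-energy-flux-leakage-estimates} gives \(\Err_{\rm loc}^{\rm en}(u)\le C_\chi A\).

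For the flux leakage there is nothing to estimate, since \(\|u\|_{L^3(Q_1)}^3=C\) by definition; the second estimate of the lemma becomes \(\Err_{\rm loc}^{\rm flux}(u)\le C_\chi C\). For the pressure-transport leakage I would use \(\|u\|_{L^3(Q_1)}=C^{1/3}\) and \(\|p-(p)_{B_1}(\cdot)\|_{L^{3/2}(Q_1)}=D^{2/3}\). The latter holds because \(D\) is the \(3/2\)-th power of that norm, and raising to the power \(2/3\) recovers it. Substituting both into the third estimate yields \(\Err_{\rm loc}^{\rm prs}(u,p)\le C_\chi C^{1/3}D^{2/3}\).

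There is essentially no obstacle. The only point requiring care is the energy bound, where the estimate \(\|u\|_{L^2(Q_1)}^2\le A\) relies on the unit length of the normalized time interval. On a general window \((t_0-r^2,t_0)\) it would carry a factor of the interval length, and there \(A\) is further rescaled by \(r^{-1}\). In this fixed normalized geometry the constant is exactly one. The hypotheses of the lemma, namely \(u\in L^2\cap L^3\) and \(p-(p)_{B_1}\in L^{3/2}\), guarantee that \(A\), \(C\) and \(D\) are the relevant finite quantities. If \(A\) happens to be infinite the first bound is trivial.
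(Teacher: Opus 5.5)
Your proposal is correct and follows the paper's proof exactly. You bound $\|u\|_{L^2(Q_1)}^2\le A$ using the unit-length time interval, identify $\|u\|_{L^3(Q_1)}^3=C$ and $\|p-(p)_{B_1}(\cdot)\|_{L^{3/2}(Q_1)}=D^{2/3}$, and then substitute into the lemma's three bounds.
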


\begin{proof}
Since the normalized time interval has length one,
\[
    \|u\|_{L^2(Q_1)}^2
    =
    \int_{-1}^{0}\int_{B_1}|u(x,t)|^2\,dx\,dt
    \le
    A.
\]
The flux estimate is exactly
\[
    \|u\|_{L^3(Q_1)}^3=C.
\]
For the pressure leakage, use
\[
    \|u\|_{L^3(Q_1)}=C^{1/3},
    \qquad
    \|p-(p)_{B_1}(\cdot)\|_{L^{3/2}(Q_1)}=D^{2/3}.
\]
Substitution into
\Cref{lem:fixed-scale-energy-flux-leakage-estimates} proves the three bounds.
\end{proof}

\begin{remark}[Status of the energy--flux leakage bounds]
\Cref{lem:fixed-scale-energy-flux-leakage-estimates} and
\Cref{cor:ckn-normalized-leakage-bounds} are fixed-scale localization estimates
for the energy, flux, and pressure-transport terms generated by the cutoff
\(\chi\).  They do not prove smallness of the leakage terms, do not prove
scale-uniform localization control, and do not address pressure-only or
momentum localization residuals.
\end{remark}

\subsection{Momentum residual localization leakage}

\begin{convention}[Default momentum residual test norm]
\label{conv:default-momentum-residual-test-norm}
For the momentum localization residual, use the same normalized cylinder
\(Q_1\) and cutoff \(\chi\) as in
\Cref{conv:normalized-energy-flux-cutoff}.  The default test norm is
\[
    \|\varphi\|_{\mathcal X_{\rm mom}}
    :=
    \|\varphi\|_{L^2(Q_1)}
    +
    \|\varphi\|_{L^3(Q_1)},
    \qquad
    \varphi\in C_c^\infty(Q_1;\R^3).
\]
For a vector-valued distribution \(T\), define
\[
    \|T\|_{\mathcal X_{\rm mom}^*}
    :=
    \sup_{\substack{\varphi\in C_c^\infty(Q_1;\R^3)\\
    \|\varphi\|_{\mathcal X_{\rm mom}}\le1}}
    |\langle T,\varphi\rangle|.
\]
This is a fixed-scale negative norm.  It is chosen only to record the
distributional size of cutoff-generated momentum errors.
\end{convention}

\begin{definition}[Cutoff momentum residual]
\label{def:cutoff-momentum-residual}
Let
\[
    p^\circ(x,t):=p(x,t)-(p)_{B_1}(t).
\]
For a velocity-pressure pair \((u,p)\) on \(Q_1\), define the cutoff momentum
residual
\[
    \mathcal R_{\chi}^{\rm mom}(u,p)
    :=
    \partial_t(\chi u)
    -
    \Delta(\chi u)
    +
    \nabla\cdot(\chi\,u\otimes u)
    +
    \nabla(\chi p^\circ)
\]
in the sense of distributions on \(Q_1\).  Its localization leakage size is
\[
    \Err_{\rm loc}^{\rm mom}(u,p)
    :=
    \|\mathcal R_{\chi}^{\rm mom}(u,p)\|_{\mathcal X_{\rm mom}^*}.
\]
\end{definition}

\begin{lemma}[Cutoff momentum residual identity]
\label{lem:cutoff-momentum-residual-identity}
Assume that \((u,p)\) satisfies the incompressible Navier--Stokes momentum
equation
\[
    \partial_tu-\Delta u+\nabla\cdot(u\otimes u)+\nabla p=0
\]
distributionally on \(Q_1\), with
\[
    u\in L^2(-1,0;H^1(B_1))\cap L^3(Q_1),
    \qquad
    p^\circ\in L^{3/2}(Q_1).
\]
Then, for every \(\varphi\in C_c^\infty(Q_1;\R^3)\),
\[
\begin{aligned}
    \langle \mathcal R_\chi^{\rm mom}(u,p),\varphi\rangle
    =
    \int_{Q_1}
    &(\partial_t\chi-\Delta\chi)\,u\cdot\varphi
    -
    2\sum_{j=1}^3(\partial_j\chi)\,\partial_j u\cdot\varphi     \\
    &+
    \bigl((u\otimes u)\nabla\chi\bigr)\cdot\varphi
    +
    p^\circ\,\nabla\chi\cdot\varphi .
\end{aligned}
\]
\end{lemma}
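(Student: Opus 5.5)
The plan is to expand each term of \(\mathcal R_\chi^{\rm mom}(u,p)\) by the product rule in the sense of distributions. We then use the Navier--Stokes equation to cancel every term in which \(\chi\) is not differentiated. Concretely, for \(\varphi\in C_c^\infty(Q_1;\R^3)\) we write
\[
\partial_t(\chi u)=\chi\,\partial_tu+(\partial_t\chi)u,
\qquad
\Delta(\chi u)=\chi\Delta u+2\sum_j(\partial_j\chi)\partial_ju+(\Delta\chi)u,
\]
\[
\nabla\cdot(\chi\,u\otimes u)=\chi\,\nabla\cdot(u\otimes u)+(u\otimes u)\nabla\chi,
\qquad
\nabla(\chi p^\circ)=\chi\nabla p^\circ+p^\circ\nabla\chi.
\]
Each identity is justified by testing against \(\varphi\) and moving derivatives: for instance \(\langle\partial_t(\chi u),\varphi\rangle=-\int\chi u\cdot\partial_t\varphi\), and \(\chi\varphi\) is again an admissible test function. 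The regularity hypotheses make every right-hand term a locally integrable function or a well-defined distribution:
\begin{itemize}
\item \(u\in L^2H^1\) gives \(\partial_ju\in L^2\);
\item \(u\otimes u\in L^{3/2}\);
\item \(p^\circ\in L^{3/2}\).
\end{itemize}
The convention \((u\otimes u)\nabla\chi\) is read with components \(u_iu_j\partial_j\chi\), which matches \(\partial_j(\chi u_iu_j)\).

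Summing the four expansions, the undifferentiated-\(\chi\) terms combine into
\[
\chi\bigl(\partial_tu-\Delta u+\nabla\cdot(u\otimes u)+\nabla p^\circ\bigr).
\]
Since \(\nabla p^\circ=\nabla p\) (the subtracted mean depends only on \(t\)), this bracket vanishes distributionally by the momentum equation. Pairing with \(\varphi\) means testing the equation against \(\chi\varphi\in C_c^\infty(Q_1)\), so the whole group contributes zero. What remains is
\[
(\partial_t\chi-\Delta\chi)u-2\sum_j(\partial_j\chi)\partial_ju+(u\otimes u)\nabla\chi+p^\circ\nabla\chi.
\]
All four terms are integrable functions on \(Q_1\), and pairing them with \(\varphi\) gives exactly the displayed integral.

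The main point of care is the second-derivative term. We cannot split \(\Delta(\chi u)\) pointwise, since \(\Delta u\) is only a distribution. Instead we verify it by two integrations by parts:
\[
-\int\chi u\cdot\Delta\varphi=\int\nabla(\chi u):\nabla\varphi,
\]
using \(u\in L^2H^1\). Expanding \(\nabla(\chi u)=\chi\nabla u+u\otimes\nabla\chi\), integrating by parts once more on \(\int u\,\partial_j\chi\,\partial_j\varphi\), and comparing with \(\int\nabla u:\nabla(\chi\varphi)\), we recover the factor \(-2(\partial_j\chi)\partial_ju\) together with \((\Delta\chi)u\). Everything else is routine bookkeeping.
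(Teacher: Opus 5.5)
Your proposal is correct and follows essentially the same route as the paper. You expand $\mathcal R_\chi^{\rm mom}$ by the Leibniz rule, use $\nabla p^\circ=\nabla p$ and the momentum equation tested against $\chi\varphi$ to remove the $\chi(\cdots)$ group, and then pair the remaining integrable terms with $\varphi$. Your extra integration-by-parts check of the $\Delta(\chi u)$ splitting only elaborates the paper's one-line product-rule step, since the Leibniz rule for a smooth function times a distribution already justifies it.
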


\begin{proof}
Since \(p^\circ=p-(p)_{B_1}(t)\), one has
\(\nabla p^\circ=\nabla p\).  Expanding the localized residual by the product
rule gives
\[
\begin{aligned}
    \mathcal R_\chi^{\rm mom}(u,p)
    =
    &\chi\bigl(\partial_tu-\Delta u+\nabla\cdot(u\otimes u)+\nabla p^\circ\bigr)\\
    &+
    (\partial_t\chi-\Delta\chi)u
    -
    2\sum_{j=1}^3(\partial_j\chi)\partial_j u
    +
    (u\otimes u)\nabla\chi
    +
    p^\circ\nabla\chi .
\end{aligned}
\]
The term on the first line vanishes distributionally by the momentum equation.
The remaining terms belong to \(L^1_{\rm loc}\) under the stated hypotheses,
so pairing with \(\varphi\) gives the displayed identity.
\end{proof}

\begin{lemma}[Fixed-scale momentum residual leakage estimate]
\label{lem:fixed-scale-momentum-residual-leakage-estimate}
Under the hypotheses of
\Cref{lem:cutoff-momentum-residual-identity},
\[
\begin{aligned}
    \Err_{\rm loc}^{\rm mom}(u,p)
    \le
    C_\chi\Bigl(
        \|u\|_{L^2(Q_1)}
        +
        \|\nabla u\|_{L^2(Q_1)}
        +
        \|u\|_{L^3(Q_1)}^2
        +
        \|p^\circ\|_{L^{3/2}(Q_1)}
    \Bigr).
\end{aligned}
\]
\end{lemma}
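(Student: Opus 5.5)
We start from the pairing identity of \Cref{lem:cutoff-momentum-residual-identity}. For a test function \(\varphi\in C_c^\infty(Q_1;\R^3)\) with \(\|\varphi\|_{\mathcal X_{\rm mom}}\le1\), we bound each of the four integrals on its right-hand side separately. We put the cutoff derivatives in \(L^\infty\) and control the remaining product by H\"older's inequality. In each case we use exactly the component of \(\|\varphi\|_{\mathcal X_{\rm mom}}\) (the \(L^2\) part or the \(L^3\) part) that matches the integrability of the coefficient. Taking the supremum over \(\varphi\) then gives the bound on \(\Err_{\rm loc}^{\rm mom}\).

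The four terms are handled as follows.

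\begin{enumerate}[label=(\arabic*),leftmargin=2em]
\item The linear term satisfies
\[
\Bigl|\int(\partial_t\chi-\Delta\chi)u\cdot\varphi\Bigr|\le(\|\partial_t\chi\|_\infty+\|\Delta\chi\|_\infty)\|u\|_{L^2}\|\varphi\|_{L^2},
\]
by Cauchy--Schwarz.
\item The gradient term satisfies
\[
\Bigl|2\sum_j\int\partial_j\chi\,\partial_ju\cdot\varphi\Bigr|\le 2\|\nabla\chi\|_\infty\|\nabla u\|_{L^2}\|\varphi\|_{L^2}.
\]
\item For the convective term we use \(|(u\otimes u)\nabla\chi\cdot\varphi|\le\|\nabla\chi\|_\infty|u|^2|\varphi|\). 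H\"older's inequality with exponents \(3/2\) and \(3\) then gives the bound \(\|\nabla\chi\|_\infty\|u\|_{L^3}^2\|\varphi\|_{L^3}\).
\item The pressure term is bounded by H\"older's inequality with exponents \(3/2\) and \(3\), which gives \(\|\nabla\chi\|_\infty\|p^\circ\|_{L^{3/2}}\|\varphi\|_{L^3}\).
\end{enumerate}

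Each of \(\|\varphi\|_{L^2}\) and \(\|\varphi\|_{L^3}\) is at most \(\|\varphi\|_{\mathcal X_{\rm mom}}\le1\), and each cutoff factor is at most \(C_\chi\). Summing the four bounds and taking the supremum therefore gives the claim, with constant \(2C_\chi\) on the gradient term.

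The only real obstacle is this factor \(2\). The definition of \(C_\chi\) in \Cref{conv:normalized-energy-flux-cutoff} sums \(\|\partial_t\chi\|_\infty\), \(\|\Delta\chi\|_\infty\) and \(\|\nabla\chi\|_\infty\), but the gradient term naturally produces \(2\|\nabla\chi\|_\infty\). One cannot borrow from the first bound to absorb this factor, because that bound multiplies \(\|u\|_{L^2}\), not \(\|\nabla u\|_{L^2}\). The statement as written, with constant exactly \(C_\chi\), is therefore not obtained by this argument. It does hold with the factor \(2\) replaced by any absolute constant, namely with \(2C_\chi\) in place of \(C_\chi\). Alternatively, one can read the inequality as holding with a constant \(\lesssim C_\chi\), which is presumably the intended reading since the lemma claims only fixed-scale control. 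I would flag this and state the estimate with \(2C_\chi\), since no finer argument recovers the sharp constant.

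Finally, the hypotheses \(u\in L^2H^1\cap L^3\) and \(p^\circ\in L^{3/2}\) guarantee that every integrand above is integrable. So the pairing identity is legitimate for all test functions, and the supremum defining the negative norm is finite.
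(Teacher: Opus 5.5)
Your route is the paper's route: pair with $\varphi$ via \Cref{lem:cutoff-momentum-residual-identity}, use Cauchy--Schwarz on the two terms that carry $\|\varphi\|_{L^2}$, H\"older with exponents $3/2$ and $3$ on the two terms that carry $\|\varphi\|_{L^3}$, and take the supremum over $\|\varphi\|_{\mathcal X_{\rm mom}}\le1$. You are also right that the gradient term naively produces $2\|\nabla\chi\|_{L^\infty}$. The paper's proof passes over this point when it writes a single factor $C_\chi$ in front of all four terms.

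The problem is your conclusion that no finer argument recovers $C_\chi$, and your resulting proposal to weaken the lemma to $2C_\chi$. The missing observation is that $\|\nabla\chi\|_{L^\infty(Q_1)}\le\|\Delta\chi\|_{L^\infty(Q_1)}$ for every cutoff as in \Cref{conv:normalized-energy-flux-cutoff}. For each $t$ the slice $\chi(\cdot,t)$ lies in $C_c^\infty(B_1)$. With the Newtonian kernel $N(z)=-1/(4\pi|z|)$ one therefore has $\nabla\chi(\cdot,t)=\nabla N*\Delta\chi(\cdot,t)$, and $|\nabla N(z)|=1/(4\pi|z|^2)$. Hence
\[
|\nabla\chi(x,t)|\le\|\Delta\chi(\cdot,t)\|_{L^\infty}\int_{B_1}\frac{dy}{4\pi|x-y|^2}\le\|\Delta\chi(\cdot,t)\|_{L^\infty}\int_{B_1}\frac{dz}{4\pi|z|^2}=\|\Delta\chi(\cdot,t)\|_{L^\infty}.
\]
The middle inequality holds because the integral of the radially decreasing function $|z|^{-2}$ over a unit ball is largest when the ball is centred at the origin.

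Consequently $2\|\nabla\chi\|_{L^\infty}\le\|\nabla\chi\|_{L^\infty}+\|\Delta\chi\|_{L^\infty}\le C_\chi$. Your remark that one cannot ``borrow from the first bound'' is true but beside the point. The absorption happens at the level of the number $C_\chi$, which already contains $\|\Delta\chi\|_{L^\infty}$ as a summand. Each of your four coefficients is then at most $C_\chi$; the first is $\|\partial_t\chi-\Delta\chi\|_{L^\infty}\le\|\partial_t\chi\|_{L^\infty}+\|\Delta\chi\|_{L^\infty}$. So the lemma holds exactly as stated.

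With this one estimate added, your proof is complete and matches the paper's, with the factor-$2$ step made explicit. Without it, you have proved only the $2C_\chi$ bound and wrongly claimed that the stated one is out of reach.
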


\begin{proof}
Let \(\varphi\in C_c^\infty(Q_1;\R^3)\).  By
\Cref{lem:cutoff-momentum-residual-identity},
\[
\begin{aligned}
    |\langle \mathcal R_\chi^{\rm mom}(u,p),\varphi\rangle|
    \le
    C_\chi
    (&\|u\|_{L^2(Q_1)}\|\varphi\|_{L^2(Q_1)}
    +
    \|\nabla u\|_{L^2(Q_1)}\|\varphi\|_{L^2(Q_1)} \\
    &+
    \|u\|_{L^3(Q_1)}^2\|\varphi\|_{L^3(Q_1)}
    +
    \|p^\circ\|_{L^{3/2}(Q_1)}\|\varphi\|_{L^3(Q_1)}).
\end{aligned}
\]
The first two terms use Cauchy's inequality.  The last two terms use Holder's
inequality with exponents \(3/2\) and \(3\).  If
\(\|\varphi\|_{\mathcal X_{\rm mom}}\le1\), the right-hand side is bounded by
the displayed quantity.  Taking the supremum over such \(\varphi\) proves the
estimate.
\end{proof}

\begin{corollary}[CKN-normalized momentum residual leakage bound]
\label{cor:ckn-normalized-momentum-residual-leakage-bound}
In addition to \(A\), \(C\), and \(D\) from
\Cref{def:normalized-ckn-quantities}, set
\[
    E:=\int_{Q_1}|\nabla u|^2.
\]
Then
\[
    \Err_{\rm loc}^{\rm mom}(u,p)
    \le
    C_\chi
    \bigl(
        A^{1/2}
        +
        E^{1/2}
        +
        C^{2/3}
        +
        D^{2/3}
    \bigr).
\]
\end{corollary}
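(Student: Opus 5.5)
The plan is to substitute the normalized CKN quantities into each term of \Cref{lem:fixed-scale-momentum-residual-leakage-estimate}. That lemma already gives
\[
    \Err_{\rm loc}^{\rm mom}(u,p)
    \le
    C_\chi\bigl(\|u\|_{L^2(Q_1)}+\|\nabla u\|_{L^2(Q_1)}+\|u\|_{L^3(Q_1)}^2+\|p^\circ\|_{L^{3/2}(Q_1)}\bigr),
\]
so all that remains is to express the four norms through \(A,E,C,D\).

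For the velocity term, I would argue exactly as in \Cref{cor:ckn-normalized-leakage-bounds}. Since \(Q_1=B_1\times(-1,0)\) has time interval of length one, we have \(\|u\|_{L^2(Q_1)}^2\le A\), hence \(\|u\|_{L^2(Q_1)}\le A^{1/2}\). The gradient term is an identity by the definition of \(E\): \(\|\nabla u\|_{L^2(Q_1)}=E^{1/2}\).

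For the cubic term, \(\|u\|_{L^3(Q_1)}=C^{1/3}\), so \(\|u\|_{L^3(Q_1)}^2=C^{2/3}\). For the pressure term, \(p^\circ=p-(p)_{B_1}(t)\), so \(\|p^\circ\|_{L^{3/2}(Q_1)}=D^{2/3}\).

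Substituting these four relations into the lemma gives the claimed bound, with the same constant \(C_\chi\). There is no real obstacle here. The only points to check are that \(E\) as defined here carries no \(r^{-1}\) prefactor, since we are at unit scale, and that the hypotheses of \Cref{lem:cutoff-momentum-residual-identity} are in force, which the corollary implicitly assumes.
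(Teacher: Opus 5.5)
Your proposal is correct and is the paper's argument: the paper also substitutes $\|u\|_{L^2(Q_1)}^2\le A$, $\|\nabla u\|_{L^2(Q_1)}=E^{1/2}$, $\|u\|_{L^3(Q_1)}^2=C^{2/3}$, and $\|p^\circ\|_{L^{3/2}(Q_1)}=D^{2/3}$ into \Cref{lem:fixed-scale-momentum-residual-leakage-estimate}, keeping the same constant $C_\chi$. Your reading that the hypotheses of \Cref{lem:cutoff-momentum-residual-identity} are implicitly in force is consistent with how the paper uses the corollary.
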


\begin{proof}
On the normalized cylinder, \(\|u\|_{L^2(Q_1)}^2\le A\),
\(\|\nabla u\|_{L^2(Q_1)}=E^{1/2}\),
\(\|u\|_{L^3(Q_1)}^2=C^{2/3}\), and
\(\|p^\circ\|_{L^{3/2}(Q_1)}=D^{2/3}\).  Substituting these identities and
the preceding bound from
\Cref{lem:fixed-scale-momentum-residual-leakage-estimate} proves the claim.
\end{proof}

\begin{remark}[Status of the momentum residual leakage bound]
\Cref{lem:fixed-scale-momentum-residual-leakage-estimate} and
\Cref{cor:ckn-normalized-momentum-residual-leakage-bound} give a fixed-scale
distributional estimate for the cutoff-generated momentum residual.  The
residual uses the localized flux \(\chi u\otimes u\), not the nonlinear flux
\((\chi u)\otimes(\chi u)\); the latter would generate an additional nonlinear
mismatch term.  No smallness, scale-uniform localization control, absorption
into the clean residual norm, or full localized Navier--Stokes transfer theorem
is claimed here.
\end{remark}

\subsection{Finite-window localization budget}\label{subsec:localization-budget}

\begin{definition}[Concrete localization budget]
\label{def:concrete-localization-budget}
Let \(\Lambda_{\rm sc}\) be a finite scale window.  For each
\(k\in\Lambda_{\rm sc}\), let \((u_k,p_k)\) denote the normalized
velocity-pressure component on \(Q_1\), and define
\[
    p_k^\circ:=p_k-(p_k)_{B_1}(t).
\]
Fix nonnegative finite-window weights
\[
    \omega_{{\rm en},k},\quad
    \omega_{{\rm flux},k},\quad
    \omega_{{\rm prs},k}^{\loc},\quad
    \omega_{{\rm mom},k}\ge0.
\]
The concrete localization error budget is
\[
\begin{aligned}
    \Err_{\rm loc}(\mathfrak D)
    :=
    \sum_{k\in\Lambda_{\rm sc}}
    \Bigl(
        &\omega_{{\rm en},k}\Err_{\rm loc}^{\rm en}(u_k)
        +
        \omega_{{\rm flux},k}\Err_{\rm loc}^{\rm flux}(u_k)  \\
        &+
        \omega_{{\rm prs},k}^{\loc}
        \Err_{\rm loc}^{\rm prs}(u_k,p_k)
        +
        \omega_{{\rm mom},k}
        \Err_{\rm loc}^{\rm mom}(u_k,p_k)
    \Bigr).
\end{aligned}
\]
This definition uses only the localization leakage channels already introduced
above.
\end{definition}

\begin{definition}[Normalized localization budget]
\label{def:normalized-localization-budget}
For each \(k\in\Lambda_{\rm sc}\), define the normalized quantities
\[
    A_k:=\operatorname*{ess\,sup}_{-1<t<0}\int_{B_1}|u_k(x,t)|^2\,dx,
\]
\[
    C_k:=\int_{Q_1}|u_k|^3,
    \qquad
    D_k:=\int_{Q_1}|p_k^\circ|^{3/2},
    \qquad
    E_k:=\int_{Q_1}|\nabla u_k|^2 .
\]
The normalized localization budget is
\[
\begin{aligned}
    \mathcal B_{\rm loc}^{\rm norm}(\mathfrak D)
    :=
    C_\chi
    \sum_{k\in\Lambda_{\rm sc}}
    \Bigl(
        &\omega_{{\rm en},k} A_k
        +
        \omega_{{\rm flux},k} C_k
        +
        \omega_{{\rm prs},k}^{\loc} C_k^{1/3}D_k^{2/3} \\
        &+
        \omega_{{\rm mom},k}
        \bigl(
            A_k^{1/2}
            +
            E_k^{1/2}
            +
            C_k^{2/3}
            +
            D_k^{2/3}
        \bigr)
    \Bigr).
\end{aligned}
\]
\end{definition}

\begin{lemma}[Finite-window localization budget]
\label{lem:finite-window-localization-budget}
Assume that each normalized component \((u_k,p_k)\) satisfies the hypotheses of
\Cref{cor:ckn-normalized-leakage-bounds} and
\Cref{cor:ckn-normalized-momentum-residual-leakage-bound}.  Then
\[
    \Err_{\rm loc}(\mathfrak D)
    \le
    \mathcal B_{\rm loc}^{\rm norm}(\mathfrak D).
\]
\end{lemma}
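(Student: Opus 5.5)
The bound is obtained termwise: each leakage entry of $\Err_{\rm loc}(\mathfrak D)$ is controlled by the matching CKN-normalized quantity, and the finite sum with nonnegative weights preserves these inequalities. For each $k\in\Lambda_{\rm sc}$, the normalized component $(u_k,p_k)$ on $Q_1$ satisfies the hypotheses of \Cref{cor:ckn-normalized-leakage-bounds} with $A=A_k$, $C=C_k$, $D=D_k$. These are exactly the quantities of \Cref{def:normalized-ckn-quantities} evaluated on $(u_k,p_k)$, since $p_k^\circ=p_k-(p_k)_{B_1}(t)$ matches the pressure oscillation there. The corollary then gives
\[
    \Err_{\rm loc}^{\rm en}(u_k)\le C_\chi A_k,\qquad
    \Err_{\rm loc}^{\rm flux}(u_k)\le C_\chi C_k,\qquad
    \Err_{\rm loc}^{\rm prs}(u_k,p_k)\le C_\chi C_k^{1/3}D_k^{2/3}.
\]

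Likewise, \Cref{cor:ckn-normalized-momentum-residual-leakage-bound}, with $E=E_k$, yields
\[
    \Err_{\rm loc}^{\rm mom}(u_k,p_k)\le C_\chi\bigl(A_k^{1/2}+E_k^{1/2}+C_k^{2/3}+D_k^{2/3}\bigr).
\]
The only point to check is that the same fixed cutoff $\chi$ from \Cref{conv:normalized-energy-flux-cutoff}, and hence the same constant $C_\chi$, is used in every channel and at every scale. This holds because the budget is defined in normalized coordinates on one fixed $Q_1$.

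Next, multiply each of the four inequalities by its nonnegative weight $\omega_{{\rm en},k}$, $\omega_{{\rm flux},k}$, $\omega^{\loc}_{{\rm prs},k}$, $\omega_{{\rm mom},k}$; nonnegativity preserves the direction of each inequality. Add the four weighted bounds and factor out the common $C_\chi$. Summing over the finite set $\Lambda_{\rm sc}$, where finiteness ensures every sum is a finite real number, reproduces exactly $\mathcal B^{\rm norm}_{\rm loc}(\mathfrak D)$ from \Cref{def:normalized-localization-budget}.

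There is no genuine obstacle here; the step requiring the most care is the identification step. One must make sure the hypotheses assumed componentwise ($u_k\in L^2(-1,0;H^1)\cap L^3$, $p_k^\circ\in L^{3/2}$, and the momentum equation for the momentum bound) are precisely those needed by both corollaries. One must also confirm that $\|u_k\|_{L^2(Q_1)}^2\le A_k$ uses the unit length of the normalized time interval, as in the corollary's proof.
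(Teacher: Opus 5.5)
Your proposal is correct and is essentially the paper's proof. The paper also applies \Cref{cor:ckn-normalized-leakage-bounds} and \Cref{cor:ckn-normalized-momentum-residual-leakage-bound} to each component $(u_k,p_k)$, multiplies the four resulting inequalities by their nonnegative weights, and sums over the finite window $\Lambda_{\rm sc}$ to obtain $\mathcal B_{\rm loc}^{\rm norm}(\mathfrak D)$.
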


\begin{proof}
For each fixed \(k\), \Cref{cor:ckn-normalized-leakage-bounds} gives
\[
    \Err_{\rm loc}^{\rm en}(u_k)\le C_\chi A_k,
    \qquad
    \Err_{\rm loc}^{\rm flux}(u_k)\le C_\chi C_k,
\]
and
\[
    \Err_{\rm loc}^{\rm prs}(u_k,p_k)
    \le
    C_\chi C_k^{1/3}D_k^{2/3}.
\]
Also, \Cref{cor:ckn-normalized-momentum-residual-leakage-bound} gives
\[
    \Err_{\rm loc}^{\rm mom}(u_k,p_k)
    \le
    C_\chi
    \bigl(
        A_k^{1/2}
        +
        E_k^{1/2}
        +
        C_k^{2/3}
        +
        D_k^{2/3}
    \bigr).
\]
Multiplying these four inequalities by the corresponding nonnegative weights
and summing over the finite set \(\Lambda_{\rm sc}\) gives the claimed bound.
\end{proof}

\begin{remark}[Status of the localization budget]
\Cref{lem:finite-window-localization-budget} only instantiates
\(\Err_{\rm loc}\) by summing the already proved fixed-scale localization
leakage components.  It does not introduce a new localization channel, does
not claim that \(\mathcal B_{\rm loc}^{\rm norm}\) is small, does not provide
scale-uniform localization control, and does not absorb \(\Err_{\rm loc}\)
into a term of the form
\(\eta_\Lambda\Dist_{\loc}+\Delta_\Lambda\) without an additional explicit
perturbative assumption.
\end{remark}

\subsection{Truncation leakage}\label{subsec:truncation-leakage}

\begin{convention}[Finite-window truncation datum]
\label{conv:finite-window-truncation-datum}
For each \(k\in\Lambda_{\rm sc}\), fix a Banach observation space
\[
    \calY_{{\rm tr},k}
\]
and a finite-dimensional clean truncation space
\[
    V_{{\rm tr},N,k}^{\cl}\subset \calY_{{\rm tr},k}.
\]
The object to be truncated is denoted by
\[
    z_k(\mathfrak D)\in\calY_{{\rm tr},k}.
\]
This datum is finite-window and norm-level.  It does not specify a Fourier
basis, does not impose a dyadic tail estimate, and does not assert that
truncation errors decay with \(N\).
\end{convention}

\begin{lemma}[Bounded finite-dimensional truncation projection]
\label{lem:bounded-finite-dimensional-truncation-projection}
For each \(k\in\Lambda_{\rm sc}\), there exists a bounded linear projection
\[
    \mathsf T_{{\rm tr},N,k}:\calY_{{\rm tr},k}\to V_{{\rm tr},N,k}^{\cl}.
\]
\end{lemma}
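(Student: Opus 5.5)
The plan is to repeat the Hahn--Banach construction from \Cref{lem:bounded-clean-pressure-projection}, now with \(Y_{\rm prs}\) replaced by \(\calY_{{\rm tr},k}\) and \(V_{{\rm prs},N}^{\cl}\) replaced by \(V_{{\rm tr},N,k}^{\cl}\). Since \(\Lambda_{\rm sc}\) is finite, it suffices to treat one \(k\) at a time. The only structure used is that \(\calY_{{\rm tr},k}\) is a normed space and that \(V_{{\rm tr},N,k}^{\cl}\) is a finite-dimensional subspace of it.

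Fix \(k\). If \(V_{{\rm tr},N,k}^{\cl}=\{0\}\), the zero map is the required projection. Otherwise, choose a basis \(\psi_1,\dots,\psi_d\) of \(V_{{\rm tr},N,k}^{\cl}\), and let \(\ell_a^V\), \(1\le a\le d\), be the corresponding coordinate functionals on \(V_{{\rm tr},N,k}^{\cl}\). These functionals are continuous because every linear functional on a finite-dimensional normed space is bounded; this holds since all norms on such a space are equivalent.

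By Hahn--Banach, extend each \(\ell_a^V\) to a bounded functional \(\ell_a\) on \(\calY_{{\rm tr},k}\), and define
\[
    \mathsf T_{{\rm tr},N,k}z:=\sum_{a=1}^d\ell_a(z)\psi_a .
\]
This map is linear and takes values in \(V_{{\rm tr},N,k}^{\cl}\). On \(V_{{\rm tr},N,k}^{\cl}\) it restricts to the identity, because \(\ell_a=\ell_a^V\) there. Hence \(\mathsf T_{{\rm tr},N,k}^2=\mathsf T_{{\rm tr},N,k}\).

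Boundedness follows from the triangle inequality:
\[
    \|\mathsf T_{{\rm tr},N,k}z\|
    \le
    \Bigl(\sum_{a=1}^d\|\ell_a\|\,\|\psi_a\|\Bigr)\|z\| .
\]
The bracketed sum can be recorded as a constant \(C_{{\rm tr},N,k}\), in parallel with \(C_{{\rm proj},N}\). No step is a genuine obstacle. The only points needing care are the continuity of the coordinate functionals, which is a finite-dimensionality argument, and the remark that the projection is not canonical. Consequently the constant \(C_{{\rm tr},N,k}\) comes with no uniformity in \(N\) or \(k\), which is consistent with \Cref{conv:finite-window-truncation-datum}.
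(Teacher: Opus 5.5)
Your proposal is correct and takes essentially the same approach as the paper: choose a basis of \(V_{{\rm tr},N,k}^{\cl}\), note that the coordinate functionals are continuous because the space is finite-dimensional, extend them by Hahn--Banach, and sum against the basis to obtain a bounded idempotent onto the subspace. Your separate handling of the trivial subspace and your explicit operator-norm constant are harmless additions to that argument.
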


\begin{proof}
Fix \(k\).  Since \(V_{{\rm tr},N,k}^{\cl}\) is finite-dimensional, choose a
basis \(e_{1,k},\dots,e_{m_k,k}\).  The coordinate functionals on
\(V_{{\rm tr},N,k}^{\cl}\) are continuous because all norms on a
finite-dimensional space are equivalent.  By Hahn--Banach, extend these
coordinate functionals to bounded linear functionals
\(\ell_{1,k},\dots,\ell_{m_k,k}\) on \(\calY_{{\rm tr},k}\).  Define
\[
    \mathsf T_{{\rm tr},N,k}y
    :=
    \sum_{\alpha=1}^{m_k}\ell_{\alpha,k}(y)e_{\alpha,k}.
\]
This map is bounded and linear.  If \(y\in V_{{\rm tr},N,k}^{\cl}\), the
coordinate representation gives
\(\mathsf T_{{\rm tr},N,k}y=y\).  Hence \(\mathsf T_{{\rm tr},N,k}\) is a
projection onto \(V_{{\rm tr},N,k}^{\cl}\).
\end{proof}

\begin{definition}[Truncation residual]
\label{def:truncation-residual}
Given the bounded projections from
\Cref{lem:bounded-finite-dimensional-truncation-projection}, define the
single-scale truncation residual
\[
    \Err_{{\rm tr},k}(\mathfrak D)
    :=
    \|z_k(\mathfrak D)-\mathsf T_{{\rm tr},N,k}z_k(\mathfrak D)\|_{\calY_{{\rm tr},k}}.
\]
For nonnegative finite-window truncation weights \(\omega_{{\rm tr},k}\ge0\),
define
\[
    \Err_{\rm tr}(\mathfrak D)
    :=
    \sum_{k\in\Lambda_{\rm sc}}
    \omega_{{\rm tr},k}\Err_{{\rm tr},k}(\mathfrak D).
\]
\end{definition}

\begin{lemma}[Finite-window truncation leakage bound]
\label{lem:finite-window-truncation-leakage-bound}
Let
\[
    \mathcal B_{\rm tr}^{\rm norm}(\mathfrak D)
    :=
    \sum_{k\in\Lambda_{\rm sc}}
    \omega_{{\rm tr},k}
    \bigl(1+\|\mathsf T_{{\rm tr},N,k}\|_{\calY_{{\rm tr},k}\to\calY_{{\rm tr},k}}\bigr)
    \|z_k(\mathfrak D)\|_{\calY_{{\rm tr},k}}.
\]
Then
\[
    \Err_{\rm tr}(\mathfrak D)
    \le
    \mathcal B_{\rm tr}^{\rm norm}(\mathfrak D).
\]
\end{lemma}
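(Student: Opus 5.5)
The bound is proved one scale at a time, exactly as for the clean pressure projection residual in \Cref{lem:clean-projection-residual-well-defined}, and then summed over the finite window. Fix \(k\in\Lambda_{\rm sc}\). By \Cref{lem:bounded-finite-dimensional-truncation-projection} the projection \(\mathsf T_{{\rm tr},N,k}\) is a bounded linear operator on \(\calY_{{\rm tr},k}\). Its operator norm
\[
    \|\mathsf T_{{\rm tr},N,k}\|_{\calY_{{\rm tr},k}\to\calY_{{\rm tr},k}}
\]
is therefore finite. This makes \(\mathcal B_{\rm tr}^{\rm norm}(\mathfrak D)\) a finite quantity, given that \(z_k(\mathfrak D)\in\calY_{{\rm tr},k}\) by \Cref{conv:finite-window-truncation-datum}.

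For the single-scale estimate, apply the triangle inequality in \(\calY_{{\rm tr},k}\) to the residual in \Cref{def:truncation-residual}:
\[
    \Err_{{\rm tr},k}(\mathfrak D)
    \le
    \|z_k(\mathfrak D)\|_{\calY_{{\rm tr},k}}
    +
    \|\mathsf T_{{\rm tr},N,k}z_k(\mathfrak D)\|_{\calY_{{\rm tr},k}} .
\]
Bounding the second term by the operator norm times \(\|z_k(\mathfrak D)\|_{\calY_{{\rm tr},k}}\) gives
\[
    \Err_{{\rm tr},k}(\mathfrak D)
    \le
    \bigl(1+\|\mathsf T_{{\rm tr},N,k}\|\bigr)\|z_k(\mathfrak D)\|_{\calY_{{\rm tr},k}} .
\]

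Finally, multiply the single-scale estimate by the nonnegative weight \(\omega_{{\rm tr},k}\) and sum over the finite set \(\Lambda_{\rm sc}\). Nonnegativity of the weights preserves the inequalities, and finiteness of the window means no convergence issue arises. The resulting sum is precisely \(\mathcal B_{\rm tr}^{\rm norm}(\mathfrak D)\).

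There is no genuine obstacle. The only point to watch is that the operator norm refers to the particular projection fixed in \Cref{lem:bounded-finite-dimensional-truncation-projection}. That projection is non-canonical, being built from Hahn--Banach extensions, so the budget depends on this choice. The plan should also record that nothing here asserts smallness or any decay of the bound in \(N\).
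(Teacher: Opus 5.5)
Your proposal is correct and follows the paper's proof: the triangle inequality gives the single-scale bound by \((1+\|\mathsf T_{{\rm tr},N,k}\|)\|z_k(\mathfrak D)\|_{\calY_{{\rm tr},k}}\), and you then multiply by the nonnegative weights \(\omega_{{\rm tr},k}\) and sum over the finite window. Your remarks on the finiteness of the operator norm and the non-canonical Hahn--Banach choice of projection are accurate, though the argument does not need them.
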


\begin{proof}
For each \(k\),
\[
\begin{aligned}
    \Err_{{\rm tr},k}(\mathfrak D)
    &=
    \|z_k(\mathfrak D)-\mathsf T_{{\rm tr},N,k}z_k(\mathfrak D)\|_{\calY_{{\rm tr},k}} \\
    &\le
    \|z_k(\mathfrak D)\|_{\calY_{{\rm tr},k}}
    +
    \|\mathsf T_{{\rm tr},N,k}z_k(\mathfrak D)\|_{\calY_{{\rm tr},k}} \\
    &\le
    \bigl(1+\|\mathsf T_{{\rm tr},N,k}\|_{\calY_{{\rm tr},k}\to\calY_{{\rm tr},k}}\bigr)
    \|z_k(\mathfrak D)\|_{\calY_{{\rm tr},k}}.
\end{aligned}
\]
Multiplying by \(\omega_{{\rm tr},k}\ge0\) and summing over the finite window
gives the result.
\end{proof}

\begin{remark}[Status of truncation leakage]
\Cref{lem:finite-window-truncation-leakage-bound} is a norm-level
finite-window truncation estimate.  It proves that the truncation residual is
well defined and bounded by the untruncated observation size and the projection
operator norm.  It does not prove that \(\Err_{\rm tr}\) is small, does not
prove any decay as \(N\to\infty\), and does not provide scale-uniform tail
control.  Any such approximation theorem must be supplied as an additional
input in the chosen truncation space.
\end{remark}

\subsection{Nonlinear cutoff-flux mismatch}\label{subsec:nonlinear-cutoff}

\begin{convention}[Default nonlinear mismatch test norm]
\label{conv:default-nonlinear-mismatch-test-norm}
For the nonlinear cutoff-flux mismatch, use the fixed test norm
\[
    \|\varphi\|_{\mathcal X_{\rm nl}}
    :=
    \|\varphi\|_{L^3(Q_1)}
    +
    \|\nabla\varphi\|_{L^3(Q_1)},
    \qquad
    \varphi\in C_c^\infty(Q_1;\R^3).
\]
For a vector-valued distribution \(T\), set
\[
    \|T\|_{\mathcal X_{\rm nl}^*}
    :=
    \sup_{\substack{\varphi\in C_c^\infty(Q_1;\R^3)\\
    \|\varphi\|_{\mathcal X_{\rm nl}}\le1}}
    |\langle T,\varphi\rangle|.
\]
The gradient term is part of the norm because the mismatch is measured after
taking a divergence.
\end{convention}

\begin{definition}[Nonlinear cutoff-flux mismatch]
\label{def:nonlinear-cutoff-flux-mismatch}
Define
\[
    \mathcal R_\chi^{\rm nl}(u)
    :=
    \nabla\cdot\left((\chi u)\otimes(\chi u)-\chi\,u\otimes u\right)
\]
as a distribution on \(Q_1\).  Its fixed-scale mismatch size is
\[
    \Err_{\rm nl}^{\rm cut}(u)
    :=
    \|\mathcal R_\chi^{\rm nl}(u)\|_{\mathcal X_{\rm nl}^*}.
\]
\end{definition}

\begin{lemma}[Fixed-scale nonlinear cutoff-flux mismatch]
\label{lem:fixed-scale-nonlinear-cutoff-flux-mismatch}
If \(u\in L^3(Q_1)\), then
\[
    \Err_{\rm nl}^{\rm cut}(u)
    \le
    C_\chi\|u\|_{L^3(Q_1)}^2.
\]
In normalized CKN variables, this is
\[
    \Err_{\rm nl}^{\rm cut}(u)
    \le
    C_\chi C^{2/3}.
\]
\end{lemma}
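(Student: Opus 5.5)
The proof works directly from the definition of the negative norm in \Cref{conv:default-nonlinear-mismatch-test-norm}: we pair the distribution \(\mathcal R_\chi^{\rm nl}(u)\) with a test function and integrate by parts once to move the divergence onto \(\varphi\). For \(\varphi\in C_c^\infty(Q_1;\R^3)\), the distributional definition gives
\[
    \langle \mathcal R_\chi^{\rm nl}(u),\varphi\rangle
    =
    -\int_{Q_1}\bigl((\chi u)\otimes(\chi u)-\chi\,u\otimes u\bigr):\nabla\varphi
    =
    \int_{Q_1}\chi(1-\chi)\,(u\otimes u):\nabla\varphi .
\]
Here we use that the tensor \((\chi^2-\chi)u\otimes u\) lies in \(L^{3/2}(Q_1)\) because \(u\in L^3(Q_1)\). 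No boundary terms arise, since \(\varphi\) has compact support in \(Q_1\).

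Next we bound the integrand pointwise. Since \(0\le\chi\le1\), we have \(0\le\chi(1-\chi)\le 1/4\le1\). Hölder's inequality with exponents \(3/2\) and \(3\) then gives
\[
    |\langle \mathcal R_\chi^{\rm nl}(u),\varphi\rangle|
    \le
    \|u\|_{L^3(Q_1)}^2\,\|\nabla\varphi\|_{L^3(Q_1)}
    \le
    \|u\|_{L^3(Q_1)}^2\,\|\varphi\|_{\mathcal X_{\rm nl}} .
\]
Taking the supremum over \(\|\varphi\|_{\mathcal X_{\rm nl}}\le1\) yields \(\Err_{\rm nl}^{\rm cut}(u)\le\|u\|_{L^3(Q_1)}^2\).

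The stated constant \(C_\chi\) needs a small comment. The bound above holds with constant \(1\), or even \(1/4\), so the claimed inequality follows as long as \(C_\chi\ge1\). If one does not want to assume this, there is an alternative route. Expand the divergence by the product rule to get \(\chi(\chi-1)\nabla\cdot(u\otimes u)+(2\chi-1)(u\otimes u)\nabla\chi\), and pair with \(\varphi\) without integrating by parts fully. The gradient-of-\(\chi\) term is bounded by \(\|\nabla\chi\|_{L^\infty}\|u\|_{L^3}^2\|\varphi\|_{L^3}\le C_\chi\|u\|_{L^3}^2\|\varphi\|_{L^3}\). The remaining term is handled by integrating by parts once more, which puts the derivative back on \(\varphi\) and on \(\chi(\chi-1)\). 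This produces bounds involving only \(\|\nabla\chi\|_{L^\infty}\) and the factor \(1/4\).

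The main obstacle is exactly this matching of the displayed constant. The estimate itself is immediate after one integration by parts, and the proof should record that \(C_\chi\) can be replaced by \(\max(1,C_\chi)\), or simply by an absolute constant. The CKN form then follows from the identity \(\|u\|_{L^3(Q_1)}^2=C^{2/3}\), exactly as in \Cref{cor:ckn-normalized-leakage-bounds}.
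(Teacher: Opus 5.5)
Your proof is correct and follows the paper's argument: the identity \((\chi u)\otimes(\chi u)-\chi\,u\otimes u=\chi(\chi-1)\,u\otimes u\), one integration by parts onto \(\nabla\varphi\), H\"older with exponents \(3/2\) and \(3\), and a supremum over \(\|\varphi\|_{\mathcal X_{\rm nl}}\le1\). The constant needs no detour, and your product-rule alternative would still leave a factor \(1/4\) to compare with \(C_\chi\): for each \(t\in(-1/4,0)\), \(\chi(\cdot,t)\) equals \(1\) on \(B_{1/2}\) and vanishes near \(\partial B_1\), so the mean value theorem gives \(C_\chi\ge\|\nabla\chi\|_{L^\infty}\ge2>1/4\ge\|\chi(\chi-1)\|_{L^\infty}\), which is the inequality the paper's proof uses without comment.
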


\begin{proof}
Let \(\varphi\in C_c^\infty(Q_1;\R^3)\).  Since
\[
    (\chi u)\otimes(\chi u)-\chi\,u\otimes u
    =
    \chi(\chi-1)u\otimes u,
\]
distributional integration by parts gives
\[
\begin{aligned}
    |\langle \mathcal R_\chi^{\rm nl}(u),\varphi\rangle|
    &=
    \left|
    \int_{Q_1}
    \chi(\chi-1)(u\otimes u):\nabla\varphi
    \right|  \\
    &\le
    \|\chi(\chi-1)\|_{L^\infty(Q_1)}
    \|u\otimes u\|_{L^{3/2}(Q_1)}
    \|\nabla\varphi\|_{L^3(Q_1)}       \\
    &\le
    C_\chi \|u\|_{L^3(Q_1)}^2
    \|\varphi\|_{\mathcal X_{\rm nl}}.
\end{aligned}
\]
Taking the supremum over
\(\|\varphi\|_{\mathcal X_{\rm nl}}\le1\) proves the first estimate.  The
normalized form follows from \(\|u\|_{L^3(Q_1)}^2=C^{2/3}\).
\end{proof}

\begin{definition}[Finite-window nonlinear mismatch budget]
\label{def:finite-window-nonlinear-mismatch-budget}
For nonnegative finite-window weights \(\omega_{{\rm nl},k}^{\rm cut}\ge0\),
define
\[
    \Err_{\rm nl}^{\rm cut}(\mathfrak D)
    :=
    \sum_{k\in\Lambda_{\rm sc}}
    \omega_{{\rm nl},k}^{\rm cut}\Err_{\rm nl}^{\rm cut}(u_k),
\]
and
\[
    \mathcal B_{\rm nl}^{\rm cut}(\mathfrak D)
    :=
    C_\chi\sum_{k\in\Lambda_{\rm sc}}
    \omega_{{\rm nl},k}^{\rm cut}C_k^{2/3}.
\]
\end{definition}

\begin{lemma}[Finite-window nonlinear cutoff budget]
\label{lem:finite-window-nonlinear-cutoff-budget}
If every normalized component satisfies \(u_k\in L^3(Q_1)\), then
\[
    \Err_{\rm nl}^{\rm cut}(\mathfrak D)
    \le
    \mathcal B_{\rm nl}^{\rm cut}(\mathfrak D).
\]
\end{lemma}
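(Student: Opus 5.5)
The plan is to argue scale by scale and then sum, in the same way as \Cref{lem:finite-window-localization-budget} and \Cref{lem:finite-window-truncation-leakage-bound}. Fix \(k\in\Lambda_{\rm sc}\). Since \(u_k\in L^3(Q_1)\) by hypothesis, \Cref{lem:fixed-scale-nonlinear-cutoff-flux-mismatch} applies to \(u=u_k\). Its normalized form gives
\[
    \Err_{\rm nl}^{\rm cut}(u_k)\le C_\chi C_k^{2/3},
\]
where \(C_k=\int_{Q_1}|u_k|^3=\|u_k\|_{L^3(Q_1)}^3\) is the normalized quantity from \Cref{def:normalized-localization-budget}. This identification is what lets the single-scale bound be written in the same variable \(C_k\) that appears in \(\mathcal B_{\rm nl}^{\rm cut}\).

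Next, multiply each single-scale inequality by the nonnegative weight \(\omega_{{\rm nl},k}^{\rm cut}\). Nonnegativity of the weights preserves the direction of each inequality. Then sum over the finite set \(\Lambda_{\rm sc}\), so no convergence issue arises. The left-hand side becomes \(\Err_{\rm nl}^{\rm cut}(\mathfrak D)\) by \Cref{def:finite-window-nonlinear-mismatch-budget}. On the right, pulling the common constant \(C_\chi\) out of the sum gives exactly \(\mathcal B_{\rm nl}^{\rm cut}(\mathfrak D)\).

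There is no genuine obstacle here; the step needing any care is bookkeeping. The cutoff \(\chi\) and hence \(C_\chi\) must be the same fixed normalized cutoff from \Cref{conv:normalized-energy-flux-cutoff} at every scale. The mismatch distribution \(\mathcal R_\chi^{\rm nl}(u_k)\) must be well defined for each \(k\), which again follows from \(u_k\in L^3(Q_1)\), since then \(\chi(\chi-1)u_k\otimes u_k\in L^{3/2}(Q_1)\). The result is fixed-scale: no smallness or scale-uniformity is claimed.
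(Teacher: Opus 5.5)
Your proposal is correct and matches the paper's proof: apply \Cref{lem:fixed-scale-nonlinear-cutoff-flux-mismatch} to each $u_k$, multiply by the nonnegative weights $\omega_{{\rm nl},k}^{\rm cut}$, and sum over the finite window $\Lambda_{\rm sc}$. Your remark identifying $C_k=\|u_k\|_{L^3(Q_1)}^3$ with the normalized quantity of \Cref{def:normalized-localization-budget} is bookkeeping that the paper leaves implicit.
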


\begin{proof}
Apply \Cref{lem:fixed-scale-nonlinear-cutoff-flux-mismatch} to each
normalized component, multiply by the nonnegative weights, and sum over the
finite scale window.
\end{proof}

\begin{remark}[Status of the nonlinear cutoff-flux mismatch]
The estimates in this subsection only account for the algebraic mismatch
between the flux \(\chi u\otimes u\) used in the cutoff momentum residual and
the flux \((\chi u)\otimes(\chi u)\) associated with the localized velocity
\(\chi u\).  They do not prove smallness, scale-uniform control, or absorption
into the clean nonlinear residual.  The test norm \(\mathcal X_{\rm nl}\) is
stronger than the previous momentum residual norm because a divergence is being
estimated.
\end{remark}

\begin{definition}[Finite-window nonlinear remainder model]
\label{def:finite-window-nonlinear-remainder-model}
For each \(k\in\Lambda_{\rm sc}\), let
\(\mathcal Z_{{\rm nl},k}\) be a finite-dimensional normed residual space and
let
\[
    N_{{\rm nl},k}^{\rm rem}:
    \calD_k^{\loc}\to\mathcal Z_{{\rm nl},k}
\]
be a bounded linear map representing the nonlinear residual not accounted for
by the cutoff-flux mismatch.  For nonnegative finite-window weights
\(\omega_{{\rm nl},k}^{\rm rem}\ge0\), define
\[
    \Err_{\rm nl}^{\rm rem}(\mathfrak D)
    :=
    \left(
    \sum_{k\in\Lambda_{\rm sc}}
    (\omega_{{\rm nl},k}^{\rm rem})^2
    \|N_{{\rm nl},k}^{\rm rem}D_k^{\loc}\|_{\mathcal Z_{{\rm nl},k}}^2
    \right)^{1/2}.
\]
This models only the finite-window remainder left after the already estimated
cutoff-flux mismatch.
\end{definition}

\begin{lemma}[Finite-dimensional nonlinear remainder bound]
\label{lem:finite-dimensional-nonlinear-remainder-bound}
Under \Cref{def:finite-window-nonlinear-remainder-model}, set
\[
    C_{\rm nl}^{\rm rem}
    :=
    \max_{k\in\Lambda_{\rm sc}}
    \omega_{{\rm nl},k}^{\rm rem}
    \|N_{{\rm nl},k}^{\rm rem}\|_{\calD_k^{\loc}\to\mathcal Z_{{\rm nl},k}},
\]
with \(C_{\rm nl}^{\rm rem}=0\) if \(\Lambda_{\rm sc}\) is empty.  Then
\[
    \Err_{\rm nl}^{\rm rem}(\mathfrak D)
    \le
    C_{\rm nl}^{\rm rem}
    \left(
    \sum_{k\in\Lambda_{\rm sc}}
    \|D_k^{\loc}\|_{\calD_k^{\loc}}^2
    \right)^{1/2}.
\]
\end{lemma}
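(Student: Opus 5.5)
The bound is a direct consequence of the operator-norm inequality for each bounded linear map $N_{{\rm nl},k}^{\rm rem}$, followed by pulling a uniform constant out of a finite $\ell^2$ sum. No PDE input is needed, so the argument is purely finite-dimensional bookkeeping.

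First, for each fixed $k\in\Lambda_{\rm sc}$, boundedness of $N_{{\rm nl},k}^{\rm rem}:\calD_k^{\loc}\to\mathcal Z_{{\rm nl},k}$ gives
\[
    \omega_{{\rm nl},k}^{\rm rem}\|N_{{\rm nl},k}^{\rm rem}D_k^{\loc}\|_{\mathcal Z_{{\rm nl},k}}
    \le
    \omega_{{\rm nl},k}^{\rm rem}\|N_{{\rm nl},k}^{\rm rem}\|_{\calD_k^{\loc}\to\mathcal Z_{{\rm nl},k}}
    \|D_k^{\loc}\|_{\calD_k^{\loc}}
    \le
    C_{\rm nl}^{\rm rem}\|D_k^{\loc}\|_{\calD_k^{\loc}}.
\]
The second inequality uses $\omega_{{\rm nl},k}^{\rm rem}\ge0$ and the definition of $C_{\rm nl}^{\rm rem}$ as a maximum over the finite set $\Lambda_{\rm sc}$. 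That maximum is finite because each operator norm is finite, by boundedness (automatic on finite-dimensional spaces in any case).

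Second, both sides of this inequality are nonnegative. Squaring preserves it, so I will square, sum over the finite window, and take square roots:
\[
    \Err_{\rm nl}^{\rm rem}(\mathfrak D)
    \le
    \Bigl(\sum_{k}(C_{\rm nl}^{\rm rem})^2\|D_k^{\loc}\|_{\calD_k^{\loc}}^2\Bigr)^{1/2}
    =
    C_{\rm nl}^{\rm rem}\Bigl(\sum_{k}\|D_k^{\loc}\|_{\calD_k^{\loc}}^2\Bigr)^{1/2}.
\]
The equality uses $C_{\rm nl}^{\rm rem}\ge0$.

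The only points needing care are degenerate ones. If $\Lambda_{\rm sc}$ is empty, both sides are empty sums equal to zero, which is consistent with the convention $C_{\rm nl}^{\rm rem}=0$. Otherwise the maximum is attained, so the constant is well defined. There is no substantive obstacle in this argument.
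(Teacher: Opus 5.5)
Your proposal is correct and follows the paper's proof: a per-scale operator-norm bound $\omega_{{\rm nl},k}^{\rm rem}\|N_{{\rm nl},k}^{\rm rem}D_k^{\loc}\|\le C_{\rm nl}^{\rm rem}\|D_k^{\loc}\|$, then squaring, summing over the finite window, and taking square roots. Your remarks on nonnegativity of the weights and the empty-window convention are harmless additions to the same argument.
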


\begin{proof}
For each \(k\in\Lambda_{\rm sc}\), boundedness of
\(N_{{\rm nl},k}^{\rm rem}\) gives
\[
    \omega_{{\rm nl},k}^{\rm rem}
    \|N_{{\rm nl},k}^{\rm rem}D_k^{\loc}\|_{\mathcal Z_{{\rm nl},k}}
    \le
    C_{\rm nl}^{\rm rem}
    \|D_k^{\loc}\|_{\calD_k^{\loc}}.
\]
Squaring, summing over the finite scale window, and taking the square root
proves the claim.
\end{proof}

\begin{remark}[Status of the nonlinear remainder model]
\Cref{lem:finite-dimensional-nonlinear-remainder-bound} is only a
finite-dimensional boundedness statement for the explicitly chosen remainder
operators \(N_{{\rm nl},k}^{\rm rem}\).  It does not identify the full
Navier--Stokes nonlinear residual, does not prove smallness, does not prove
scale-uniform control, and does not absorb
\(\Err_{\rm nl}^{\rm rem}\) into
\(\eta_\Lambda\Dist_{\loc}+\Delta_\Lambda\).
\end{remark}

\subsection{Reproduction drift budget}\label{subsec:reproduction-drift}

\begin{definition}[Abstract finite-window reproduction maps]
\label{def:abstract-finite-window-reproduction-maps}
Let
\[
    \Lambda_{\rm adj}
    :=
    \{k\in\Lambda_{\rm sc}: k+1\in\Lambda_{\rm sc}\}
\]
be the adjacent-scale index set.  For each \(k\in\Lambda_{\rm adj}\), fix
finite-dimensional normed local and clean component spaces
\[
    \calD_k^{\loc},\quad \calD_{k+1}^{\loc},
    \qquad
    \calD_k^{\cl},\quad \calD_{k+1}^{\cl},
\]
bounded linear local-to-clean component charts
\[
    \Theta_k:\calD_k^{\loc}\to\calD_k^{\cl},
    \qquad
    \Theta_{k+1}:\calD_{k+1}^{\loc}\to\calD_{k+1}^{\cl},
\]
and bounded linear reproduction maps
\[
    R_k^{\loc}:\calD_k^{\loc}\to\calD_{k+1}^{\loc},
    \qquad
    R_k^{\cl}:\calD_k^{\cl}\to\calD_{k+1}^{\cl}.
\]
This is the default abstract finite-window reproduction model.  The bounded
linearity of the component charts is part of the datum and is used in the drift
identity below.  The definition does not assert that either reproduction map is
generated by dyadic Navier--Stokes rescaling.
\end{definition}

\begin{definition}[Localized and clean reproduction residuals]
\label{def:localized-clean-reproduction-residuals}
For a finite-window localized package
\[
    \mathfrak D^{\loc}=(D_k^{\loc})_{k\in\Lambda_{\rm sc}},
    \qquad
    D_k^{\loc}\in\calD_k^{\loc},
\]
define
\[
    \Rep_{\rm loc}(\mathfrak D)^2
    :=
    \sum_{k\in\Lambda_{\rm adj}}
    \|D_{k+1}^{\loc}-R_k^{\loc}D_k^{\loc}\|_{\calD_{k+1}^{\loc}}^2.
\]
The clean reproduction residual of the charted package is
\[
    \Rep_{\rm cl}(\Theta\mathfrak D)^2
    :=
    \sum_{k\in\Lambda_{\rm adj}}
    \|\Theta_{k+1}D_{k+1}^{\loc}
      -R_k^{\cl}\Theta_kD_k^{\loc}\|_{\calD_{k+1}^{\cl}}^2.
\]
\end{definition}

\begin{definition}[Chart-commutation drift]
\label{def:chart-commutation-drift}
For \(k\in\Lambda_{\rm adj}\), define
\[
\begin{aligned}
    {\rm Drift}_k(\mathfrak D)
    &:=
    \Theta_{k+1}D_{k+1}^{\loc}
    -
    R_k^{\cl}\Theta_kD_k^{\loc}
    -
    \Theta_{k+1}\bigl(D_{k+1}^{\loc}-R_k^{\loc}D_k^{\loc}\bigr) \\
    &=
    \Theta_{k+1}R_k^{\loc}D_k^{\loc}
    -
    R_k^{\cl}\Theta_kD_k^{\loc}.
\end{aligned}
\]
The reproduction drift budget is
\[
    \Err_{\rm rep}(\mathfrak D)
    :=
    \left(
    \sum_{k\in\Lambda_{\rm adj}}
    \|{\rm Drift}_k(\mathfrak D)\|_{\calD_{k+1}^{\cl}}^2
    \right)^{1/2}.
\]
\end{definition}

\begin{lemma}[Finite-window reproduction residual comparison]
\label{lem:finite-window-reproduction-residual-comparison}
Let
\[
    C_{\rm rep}
    :=
    \max_{k\in\Lambda_{\rm adj}}
    \|\Theta_{k+1}\|_{\calD_{k+1}^{\loc}\to\calD_{k+1}^{\cl}},
\]
with \(C_{\rm rep}=0\) if \(\Lambda_{\rm adj}\) is empty.  Then
\[
    \Rep_{\rm cl}(\Theta\mathfrak D)
    \le
    C_{\rm rep}\Rep_{\rm loc}(\mathfrak D)
    +
    \Err_{\rm rep}(\mathfrak D).
\]
\end{lemma}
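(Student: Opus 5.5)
The plan is to decompose each clean reproduction increment by means of the drift identity in \Cref{def:chart-commutation-drift}, and then apply the triangle inequality in the \(\ell^2\)-sum over \(\Lambda_{\rm adj}\). If \(\Lambda_{\rm adj}\) is empty, both \(\Rep_{\rm cl}\) and \(\Err_{\rm rep}\) are empty sums, so the inequality reads \(0\le 0\). From now on we assume \(\Lambda_{\rm adj}\neq\emptyset\).

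Fix \(k\in\Lambda_{\rm adj}\). Because \(\Theta_{k+1}\) is linear, the definition of the drift gives the exact identity
\[
    \Theta_{k+1}D_{k+1}^{\loc}-R_k^{\cl}\Theta_kD_k^{\loc}
    =
    \Theta_{k+1}\bigl(D_{k+1}^{\loc}-R_k^{\loc}D_k^{\loc}\bigr)
    +{\rm Drift}_k(\mathfrak D).
\]
Here linearity is used in the form \(\Theta_{k+1}D_{k+1}^{\loc}-\Theta_{k+1}R_k^{\loc}D_k^{\loc}=\Theta_{k+1}(D_{k+1}^{\loc}-R_k^{\loc}D_k^{\loc})\). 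Taking the \(\calD_{k+1}^{\cl}\) norm and using boundedness of \(\Theta_{k+1}\), we get
\[
    a_k:=\|\Theta_{k+1}D_{k+1}^{\loc}-R_k^{\cl}\Theta_kD_k^{\loc}\|
    \le C_{\rm rep}\,b_k+d_k,
\]
where \(b_k:=\|D_{k+1}^{\loc}-R_k^{\loc}D_k^{\loc}\|_{\calD_{k+1}^{\loc}}\) and \(d_k:=\|{\rm Drift}_k(\mathfrak D)\|_{\calD_{k+1}^{\cl}}\). The constant is legitimate because \(\|\Theta_{k+1}\|\le C_{\rm rep}\) by the definition of \(C_{\rm rep}\) as a maximum over the finite set \(\Lambda_{\rm adj}\).

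Next we sum over \(k\). The quantities \(\Rep_{\rm cl}(\Theta\mathfrak D)\), \(\Rep_{\rm loc}(\mathfrak D)\), and \(\Err_{\rm rep}(\mathfrak D)\) are exactly the Euclidean \(\ell^2(\Lambda_{\rm adj})\) norms of the nonnegative sequences \((a_k)\), \((b_k)\), and \((d_k)\). Since \(0\le a_k\le C_{\rm rep}b_k+d_k\) componentwise, monotonicity of the \(\ell^2\) norm on nonnegative sequences gives \(\|a\|_{\ell^2}\le\|C_{\rm rep}b+d\|_{\ell^2}\). Minkowski's inequality in the finite-dimensional space \(\ell^2(\Lambda_{\rm adj})\) then yields \(\|a\|_{\ell^2}\le C_{\rm rep}\|b\|_{\ell^2}+\|d\|_{\ell^2}\), which is the claim.

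There is no real obstacle here. The only point needing care is to invoke Minkowski rather than squaring the pointwise bound, since squaring would introduce a spurious factor of \(\sqrt2\). One should also note explicitly that linearity of the chart, which is part of the datum in \Cref{def:abstract-finite-window-reproduction-maps}, is what makes the drift decomposition exact.
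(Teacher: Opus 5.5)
Your proposal is correct and follows essentially the same route as the paper: the same per-$k$ decomposition of the clean residual into $\Theta_{k+1}(D_{k+1}^{\rm loc}-R_k^{\rm loc}D_k^{\rm loc})$ plus the drift, Minkowski's inequality in $\ell^2(\Lambda_{\rm adj})$, and the bound $\|\Theta_{k+1}\|\le C_{\rm rep}$. The only differences are the order of two steps (the paper applies the $\ell^2$ triangle inequality before the operator-norm bound) and a small point about linearity: the decomposition already follows from the first line of the drift definition, so linearity is needed only for the operator-norm bound and for the second form of the drift.
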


\begin{proof}
For each \(k\in\Lambda_{\rm adj}\), the definition of the drift gives
\[
\begin{aligned}
    \Theta_{k+1}D_{k+1}^{\loc}
    -
    R_k^{\cl}\Theta_kD_k^{\loc}
    =
    \Theta_{k+1}\bigl(D_{k+1}^{\loc}-R_k^{\loc}D_k^{\loc}\bigr)
    +
    {\rm Drift}_k(\mathfrak D).
\end{aligned}
\]
Taking the \(\ell^2(\Lambda_{\rm adj})\) norm of these clean residuals and
using the triangle inequality gives
\[
\begin{aligned}
    \Rep_{\rm cl}(\Theta\mathfrak D)
    &\le
    \left(
    \sum_{k\in\Lambda_{\rm adj}}
    \|\Theta_{k+1}(D_{k+1}^{\loc}-R_k^{\loc}D_k^{\loc})\|_{\calD_{k+1}^{\cl}}^2
    \right)^{1/2}
    +
    \Err_{\rm rep}(\mathfrak D) \\
    &\le
    C_{\rm rep}
    \left(
    \sum_{k\in\Lambda_{\rm adj}}
    \|D_{k+1}^{\loc}-R_k^{\loc}D_k^{\loc}\|_{\calD_{k+1}^{\loc}}^2
    \right)^{1/2}
    +
    \Err_{\rm rep}(\mathfrak D).
\end{aligned}
\]
The last displayed expression is
\(C_{\rm rep}\Rep_{\rm loc}(\mathfrak D)+\Err_{\rm rep}(\mathfrak D)\).
\end{proof}

\begin{remark}[Status of reproduction drift]
\Cref{lem:finite-window-reproduction-residual-comparison} is an algebraic
finite-window comparison for abstract reproduction maps.  It does not assert
that \(R_k^{\loc}\) or \(R_k^{\cl}\) is the dyadic Navier--Stokes rescaling
map, does not prove that the chart-commutation drift is small, and does not
provide scale-uniform reproduction control.  Any dyadic reproduction model or
small-drift theorem is a separate input.
\end{remark}

\subsection{Dyadic shift reproduction specialization}

\begin{convention}[Normalized adjacent dyadic rescaling]
\label{conv:normalized-adjacent-dyadic-rescaling}
In this subsection only, fix adjacent radii
\[
    r_{k+1}=2^{-1}r_k .
\]
If \((y,s)\) denotes normalized coordinates at scale \(k+1\), then the
corresponding normalized coordinates at scale \(k\) are
\[
    \iota_{k\to k+1}(y,s)
    =
    (2^{-1}y,2^{-2}s).
\]
Equivalently, the inverse expansion from the \(k\)-window coordinates to the
\((k+1)\)-window coordinates is \((Y,S)\mapsto(2Y,4S)\).  For a
velocity--pressure representative \((u,p)\) at scale \(k\), the normalized
dyadic zoom into the adjacent scale is
\[
    \mathcal Z_{k\to k+1}(u,p)(y,s)
    =
    \left(
    2^{-1}u(2^{-1}y,2^{-2}s),
    2^{-2}p(2^{-1}y,2^{-2}s)
    \right).
\]
This convention fixes only the model-level adjacent rescaling.  It does not
assert that the finite-window defect coordinates are generated by an actual
Navier--Stokes solution across scales.
\end{convention}

\begin{definition}[Local and clean dyadic shift operators]
\label{def:local-clean-dyadic-shift-operators}
A dyadic shift reproduction datum is the abstract reproduction datum of
\Cref{def:abstract-finite-window-reproduction-maps} together with the
rescaling convention of \Cref{conv:normalized-adjacent-dyadic-rescaling} and
bounded linear operators
\[
    S_k^{\loc}:\calD_k^{\loc}\to\calD_{k+1}^{\loc},
    \qquad
    S_k^{\cl}:\calD_k^{\cl}\to\calD_{k+1}^{\cl},
    \qquad k\in\Lambda_{\rm adj}.
\]
These operators are interpreted as the finite-window local and clean coordinate
realizations of the normalized adjacent dyadic zoom, after whatever projection
or coordinate extraction is part of the chosen local and clean models.  In the
dyadic specialization we set
\[
    R_k^{\loc}=S_k^{\loc},
    \qquad
    R_k^{\cl}=S_k^{\cl}.
\]
Thus this subsection specializes the abstract reproduction maps; it does not
replace the abstract finite-window framework.
\end{definition}

\begin{definition}[Dyadic reproduction residual and drift]
\label{def:dyadic-reproduction-residual-and-drift}
For a finite-window localized package
\(\mathfrak D^{\loc}=(D_k^{\loc})_{k\in\Lambda_{\rm sc}}\), define the local
dyadic reproduction residual by
\[
    \Rep_{\rm loc}^{\rm dy}(\mathfrak D)^2
    :=
    \sum_{k\in\Lambda_{\rm adj}}
    \|D_{k+1}^{\loc}
      -S_k^{\loc}D_k^{\loc}\|_{\calD_{k+1}^{\loc}}^2.
\]
The corresponding clean dyadic residual of the charted package is
\[
    \Rep_{\rm cl}^{\rm dy}(\Theta\mathfrak D)^2
    :=
    \sum_{k\in\Lambda_{\rm adj}}
    \|\Theta_{k+1}D_{k+1}^{\loc}
      -S_k^{\cl}\Theta_kD_k^{\loc}\|_{\calD_{k+1}^{\cl}}^2.
\]
The dyadic chart-commutation drift is
\[
    {\rm Drift}_k^{\rm dy}(\mathfrak D)
    :=
    \Theta_{k+1}S_k^{\loc}D_k^{\loc}
    -
    S_k^{\cl}\Theta_kD_k^{\loc}
    =
    (\Theta_{k+1}S_k^{\loc}-S_k^{\cl}\Theta_k)D_k^{\loc}.
\]
Set
\[
    \Err_{\rm rep}^{\rm dy}(\mathfrak D)
    :=
    \left(
    \sum_{k\in\Lambda_{\rm adj}}
    \|{\rm Drift}_k^{\rm dy}(\mathfrak D)\|_{\calD_{k+1}^{\cl}}^2
    \right)^{1/2}.
\]
\end{definition}

\begin{lemma}[Finite-window dyadic reproduction comparison]
\label{lem:finite-window-dyadic-reproduction-comparison}
For a fixed dyadic shift reproduction datum, let
\[
    C_{\Theta,{\rm dy}}
    :=
    \max_{k\in\Lambda_{\rm adj}}
    \|\Theta_{k+1}\|_{\calD_{k+1}^{\loc}\to\calD_{k+1}^{\cl}},
\]
with \(C_{\Theta,{\rm dy}}=0\) if \(\Lambda_{\rm adj}\) is empty.  Then
\[
    \Rep_{\rm cl}^{\rm dy}(\Theta\mathfrak D)
    \le
    C_{\Theta,{\rm dy}}\Rep_{\rm loc}^{\rm dy}(\mathfrak D)
    +
    \Err_{\rm rep}^{\rm dy}(\mathfrak D).
\]
\end{lemma}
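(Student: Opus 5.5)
The plan is to reduce this lemma to \Cref{lem:finite-window-reproduction-residual-comparison}, or, equivalently, to repeat its two-line argument with the dyadic shifts in place of the abstract reproduction maps. By \Cref{def:local-clean-dyadic-shift-operators}, a dyadic shift reproduction datum is an abstract reproduction datum in the sense of \Cref{def:abstract-finite-window-reproduction-maps} with $R_k^{\loc}=S_k^{\loc}$ and $R_k^{\cl}=S_k^{\cl}$. Under this identification the following quantities coincide term by term: $\Rep_{\rm loc}^{\rm dy}$ with $\Rep_{\rm loc}$, $\Rep_{\rm cl}^{\rm dy}$ with $\Rep_{\rm cl}$, ${\rm Drift}_k^{\rm dy}$ with ${\rm Drift}_k$, $\Err_{\rm rep}^{\rm dy}$ with $\Err_{\rm rep}$, and $C_{\Theta,{\rm dy}}$ with $C_{\rm rep}$. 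The claim is then literally the earlier lemma.

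For a self-contained argument, I would fix $k\in\Lambda_{\rm adj}$ and use linearity of $\Theta_{k+1}$ to write
\[
    \Theta_{k+1}D_{k+1}^{\loc}-S_k^{\cl}\Theta_kD_k^{\loc}
    =
    \Theta_{k+1}\bigl(D_{k+1}^{\loc}-S_k^{\loc}D_k^{\loc}\bigr)
    +
    (\Theta_{k+1}S_k^{\loc}-S_k^{\cl}\Theta_k)D_k^{\loc}.
\]
The second term is exactly ${\rm Drift}_k^{\rm dy}(\mathfrak D)$. Next I take the $\ell^2(\Lambda_{\rm adj})$ norm of the family of vectors in $\calD_{k+1}^{\cl}$. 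The Minkowski inequality in $\ell^2$ separates the two contributions. I then bound $\|\Theta_{k+1}(D_{k+1}^{\loc}-S_k^{\loc}D_k^{\loc})\|$ by $C_{\Theta,{\rm dy}}\|D_{k+1}^{\loc}-S_k^{\loc}D_k^{\loc}\|$, which is valid because $C_{\Theta,{\rm dy}}$ is a maximum over a finite set of finite operator norms. This yields the stated inequality.

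There is no genuine obstacle. The only points needing care are bookkeeping ones. First, the empty case $\Lambda_{\rm adj}=\emptyset$: here every sum vanishes and the inequality reads $0\le0$. Second, all operator norms must be finite, which holds because the charts and shifts are bounded linear maps between finite-dimensional spaces.
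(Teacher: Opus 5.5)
Your proposal is correct and matches the paper's proof. The paper adds and subtracts $\Theta_{k+1}S_k^{\loc}D_k^{\loc}$, takes the $\ell^2(\Lambda_{\rm adj})$ norm, and applies the triangle inequality with the maximal chart norm $C_{\Theta,{\rm dy}}$, exactly as in your self-contained argument; your shortcut via \Cref{lem:finite-window-reproduction-residual-comparison} with $R_k^{\loc}=S_k^{\loc}$, $R_k^{\cl}=S_k^{\cl}$ is also valid, since the paper presents the dyadic datum as a specialization of the abstract one.
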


\begin{proof}
For each \(k\in\Lambda_{\rm adj}\), add and subtract
\(\Theta_{k+1}S_k^{\loc}D_k^{\loc}\).  This gives
\[
\begin{aligned}
    \Theta_{k+1}D_{k+1}^{\loc}
    -
    S_k^{\cl}\Theta_kD_k^{\loc}
    &=
    \Theta_{k+1}
    \bigl(D_{k+1}^{\loc}-S_k^{\loc}D_k^{\loc}\bigr)
    +
    {\rm Drift}_k^{\rm dy}(\mathfrak D).
\end{aligned}
\]
Taking the \(\ell^2(\Lambda_{\rm adj})\) norm and using the triangle
inequality yields
\[
\begin{aligned}
    \Rep_{\rm cl}^{\rm dy}(\Theta\mathfrak D)
    &\le
    \left(
    \sum_{k\in\Lambda_{\rm adj}}
    \|\Theta_{k+1}(D_{k+1}^{\loc}
      -S_k^{\loc}D_k^{\loc})\|_{\calD_{k+1}^{\cl}}^2
    \right)^{1/2}
    +
    \Err_{\rm rep}^{\rm dy}(\mathfrak D) \\
    &\le
    C_{\Theta,{\rm dy}}
    \left(
    \sum_{k\in\Lambda_{\rm adj}}
    \|D_{k+1}^{\loc}
      -S_k^{\loc}D_k^{\loc}\|_{\calD_{k+1}^{\loc}}^2
    \right)^{1/2}
    +
    \Err_{\rm rep}^{\rm dy}(\mathfrak D).
\end{aligned}
\]
The last expression is the asserted bound.
\end{proof}

\begin{remark}[Status of the dyadic specialization]
\Cref{lem:finite-window-dyadic-reproduction-comparison} is a conditional
model specialization of the abstract reproduction comparison.  It proves only
a finite-window algebraic estimate after the local and clean dyadic shift
operators have been fixed.  It does not prove scale-uniform reproduction, does
not prove that the dyadic drift is small, and does not imply any
Navier--Stokes regularity statement.
\end{remark}

\subsection{Finite-dimensional dyadic drift bound}

\begin{definition}[Dyadic commutator operator]
\label{def:dyadic-commutator-operator}
In the linear finite-window dyadic shift datum, define
\[
    K_k^{\rm dy}
    :=
    \Theta_{k+1}S_k^{\loc}
    -
    S_k^{\cl}\Theta_k
    :
    \calD_k^{\loc}\to\calD_{k+1}^{\cl},
    \qquad k\in\Lambda_{\rm adj}.
\]
Thus the dyadic drift from
\Cref{def:dyadic-reproduction-residual-and-drift} is
\[
    {\rm Drift}_k^{\rm dy}(\mathfrak D)
    =
    K_k^{\rm dy}D_k^{\loc}.
\]
\end{definition}

\begin{lemma}[Finite-dimensional dyadic drift bound]
\label{lem:finite-dimensional-dyadic-drift-bound}
Assume that the component spaces
\(\calD_k^{\loc}\) and \(\calD_{k+1}^{\cl}\) are finite-dimensional normed
spaces and that the maps
\[
    \Theta_k,\quad \Theta_{k+1},\quad S_k^{\loc},\quad S_k^{\cl}
\]
appearing in the dyadic shift datum are bounded linear maps on their stated
domains.  Then, for every \(k\in\Lambda_{\rm adj}\),
\[
    \|K_k^{\rm dy}\|_{\calD_k^{\loc}\to\calD_{k+1}^{\cl}}<\infty .
\]
Consequently, if
\[
    C_{\rm rep}^{\rm dy}
    :=
    \max_{k\in\Lambda_{\rm adj}}
    \|K_k^{\rm dy}\|_{\calD_k^{\loc}\to\calD_{k+1}^{\cl}},
\]
with \(C_{\rm rep}^{\rm dy}=0\) when \(\Lambda_{\rm adj}\) is empty, then
\[
    \Err_{\rm rep}^{\rm dy}(\mathfrak D)
    \le
    C_{\rm rep}^{\rm dy}
    \left(
    \sum_{k\in\Lambda_{\rm adj}}
    \|D_k^{\loc}\|_{\calD_k^{\loc}}^2
    \right)^{1/2}.
\]
\end{lemma}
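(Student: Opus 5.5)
The argument splits into a finiteness statement for each commutator and then an \(\ell^2\) summation, in parallel with the proof of \Cref{lem:finite-dimensional-nonlinear-remainder-bound}.

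\textbf{Finiteness of each \(\|K_k^{\rm dy}\|\).} Fix \(k\in\Lambda_{\rm adj}\). By \Cref{def:dyadic-commutator-operator},
\[
K_k^{\rm dy}=\Theta_{k+1}S_k^{\loc}-S_k^{\cl}\Theta_k .
\]
It is a difference of two compositions of bounded linear maps, so it is linear. The operator-norm triangle inequality and submultiplicativity give
\[
\|K_k^{\rm dy}\|
\le
\|\Theta_{k+1}\|\,\|S_k^{\loc}\|
+
\|S_k^{\cl}\|\,\|\Theta_k\|
<\infty .
\]
The finite-dimensionality hypothesis is not needed for this bound. It only serves as an alternative justification: every linear map out of a finite-dimensional normed space is automatically bounded, so \(K_k^{\rm dy}\) would have finite norm even without tracking the factors.

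\textbf{The summed drift bound.} Since \(\Lambda_{\rm adj}\subset\Lambda_{\rm sc}\) is finite, the maximum \(C_{\rm rep}^{\rm dy}\) is a maximum of finitely many finite numbers, hence finite. For each \(k\in\Lambda_{\rm adj}\) write
\[
{\rm Drift}_k^{\rm dy}(\mathfrak D)=K_k^{\rm dy}D_k^{\loc},
\]
so that
\[
\|{\rm Drift}_k^{\rm dy}(\mathfrak D)\|_{\calD_{k+1}^{\cl}}
\le
C_{\rm rep}^{\rm dy}\,\|D_k^{\loc}\|_{\calD_k^{\loc}} .
\]
Squaring, summing over \(\Lambda_{\rm adj}\), and taking the square root gives
\[
\Err_{\rm rep}^{\rm dy}(\mathfrak D)
\le
C_{\rm rep}^{\rm dy}\Bigl(\sum_{k\in\Lambda_{\rm adj}}\|D_k^{\loc}\|_{\calD_k^{\loc}}^2\Bigr)^{1/2}.
\]
If \(\Lambda_{\rm adj}\) is empty, both sides are zero by convention, so the bound holds trivially.

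No step is a real obstacle. The only point to check is that the norms line up: the drift is measured in \(\calD_{k+1}^{\cl}\), and the operator norm of \(K_k^{\rm dy}\) is taken from \(\calD_k^{\loc}\) to \(\calD_{k+1}^{\cl}\).
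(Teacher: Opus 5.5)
Your proposal is correct and takes the same route as the paper's proof. In both, $K_k^{\rm dy}$ is a difference of compositions of bounded linear maps and hence bounded, and the drift estimate follows by applying the uniform constant $C_{\rm rep}^{\rm dy}$ at each scale, squaring, summing over $\Lambda_{\rm adj}$, and taking square roots. Two points in your write-up are accurate refinements of that argument: the explicit bound $\|\Theta_{k+1}\|\,\|S_k^{\loc}\|+\|S_k^{\cl}\|\,\|\Theta_k\|$, and your observation that finite-dimensionality is redundant for the finiteness claim.
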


\begin{proof}
The operator \(K_k^{\rm dy}\) is the difference of the two linear maps
\[
    \Theta_{k+1}S_k^{\loc},
    \qquad
    S_k^{\cl}\Theta_k
    :
    \calD_k^{\loc}\to\calD_{k+1}^{\cl}.
\]
Each composition is bounded because it is a composition of bounded linear maps.
Hence \(K_k^{\rm dy}\) is a bounded linear map.  Since the spaces are
finite-dimensional, its operator norm is finite.

For each \(k\in\Lambda_{\rm adj}\), the definition of
\(C_{\rm rep}^{\rm dy}\) gives
\[
    \|{\rm Drift}_k^{\rm dy}(\mathfrak D)\|_{\calD_{k+1}^{\cl}}
    =
    \|K_k^{\rm dy}D_k^{\loc}\|_{\calD_{k+1}^{\cl}}
    \le
    C_{\rm rep}^{\rm dy}\|D_k^{\loc}\|_{\calD_k^{\loc}}.
\]
Squaring this inequality, summing over \(k\in\Lambda_{\rm adj}\), and taking
the square root gives the asserted finite-window bound for
\(\Err_{\rm rep}^{\rm dy}\).
\end{proof}

\begin{remark}[Status of the finite-dimensional dyadic drift bound]
\Cref{lem:finite-dimensional-dyadic-drift-bound} proves only that the dyadic
drift is bounded by the size of the finite-window localized package with a
constant depending on the chosen finite window, norms, charts, and dyadic shift
operators.  It does not prove that \(C_{\rm rep}^{\rm dy}\) is small, does not
prove scale-uniform control, and does not absorb \(\Err_{\rm rep}^{\rm dy}\)
into \(\eta_\Lambda\Dist_{\loc}+\Delta_\Lambda\).
\end{remark}

\subsection{Residual, cleaning, observability, reproduction, and profit}

The localized residual map \(\calE_\Lambda^{\loc}\) measures projected
divergence, momentum, pressure, ledger, and gluing compatibility.  The cleaning
map
\[
    G_\Lambda^{\loc}:\calC_\Lambda^{\loc}\to\calD_\Lambda^{\loc}
\]
removes only chosen cutoff, harmonic-gauge, recentring, coarse-graining, and
truncation artifacts.  The quotient distance is
\[
    \Dist_{\loc}(\mathfrak D,\Image G_\Lambda^{\loc})
    =
    \inf_{a\in\calC_\Lambda^{\loc}}
    \|\mathfrak D-G_\Lambda^{\loc}a\|_{\calD_\Lambda^{\loc}}.
\]
The reproduction residual and profit functional are denoted by
\(\Rep_\Lambda^{\loc}\) and \(\Prof_\Lambda^{\loc}\).

\section{Local-to-Clean Charts}\label{sec:chart}

\subsection{Chart datum}

\begin{definition}[Local-to-clean chart datum]
A local-to-clean chart datum consists of the following finite-window choices:
\begin{enumerate}[label=(\roman*),leftmargin=2em]
    \item a core restriction operator;
    \item a cutoff-flattening convention;
    \item a harmonic-gauge projection;
    \item a periodization operator;
    \item a clean Fourier/time projection;
    \item coordinate extraction maps into \(\calD_\Lambda^{\cl}\).
\end{enumerate}
A chart datum also fixes a chart domain
\[
    \calU_\Lambda^{\loc}\subseteq\calD_\Lambda^{\loc}
\]
and finite-dimensional normed intermediate spaces so that these operations
compose.  We write the successive maps schematically as
\[
\calU_\Lambda^{\loc}
\xrightarrow{\mathsf R_\Lambda}
X_1
\xrightarrow{\mathsf F_\Lambda}
X_2
\xrightarrow{\mathsf H_\Lambda}
X_3
\xrightarrow{\mathsf{Per}_\Lambda}
X_4
\xrightarrow{\mathsf P_\Lambda^{\cl}}
X_5
\xrightarrow{\mathsf E_\Lambda}
\calD_\Lambda^{\cl}.
\]
Here \(\mathsf R_\Lambda\) denotes core restriction,
\(\mathsf F_\Lambda\) cutoff flattening, \(\mathsf H_\Lambda\) harmonic-gauge
projection, \(\mathsf{Per}_\Lambda\) periodization,
\(\mathsf P_\Lambda^{\cl}\) clean finite-window projection, and
\(\mathsf E_\Lambda\) clean coordinate extraction.  Each map is part of the
chosen datum and is assumed continuous on its stated domain.  The resulting
chart is
\[
    \Theta_\Lambda:
    \calU_\Lambda^{\loc}
    \longrightarrow
    \calD_\Lambda^{\cl},
    \qquad
    \Theta_\Lambda
    =
    \mathsf E_\Lambda
    \mathsf P_\Lambda^{\cl}
    \mathsf{Per}_\Lambda
    \mathsf H_\Lambda
    \mathsf F_\Lambda
    \mathsf R_\Lambda .
\]
The datum is called linear if all six component maps are linear.
\end{definition}

\begin{lemma}[Bounded chart map]
\label[lemma]{lem:bounded-chart-map}
For a fixed local-to-clean chart datum, the map
\(\Theta_\Lambda:\calU_\Lambda^{\loc}\to\calD_\Lambda^{\cl}\) is a
well-defined finite-dimensional map on its chart domain.  If the chart datum
is linear and \(\calU_\Lambda^{\loc}=\calD_\Lambda^{\loc}\), then
\(\Theta_\Lambda\) is a bounded linear map.  More precisely, there is a
finite constant \(C_{\Theta,\Lambda}<\infty\), depending only on the fixed
chart datum and the chosen norms, such that
\[
    \|\Theta_\Lambda\mathfrak D\|_{\cl}
    \le
    C_{\Theta,\Lambda}
    \|\mathfrak D\|_{\loc}
    \qquad
    \text{for all }\mathfrak D\in\calD_\Lambda^{\loc}.
\]
\end{lemma}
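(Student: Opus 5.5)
The plan is to treat \Cref{lem:bounded-chart-map} as a pure composition statement and reduce everything to standard facts about linear maps between finite-dimensional normed spaces. First, well-definedness: by the chart datum, each component map $\mathsf R_\Lambda,\mathsf F_\Lambda,\mathsf H_\Lambda,\mathsf{Per}_\Lambda,\mathsf P_\Lambda^{\cl},\mathsf E_\Lambda$ is defined on the space produced by the previous one ($\calU_\Lambda^{\loc}\to X_1\to\cdots\to X_5\to\calD_\Lambda^{\cl}$). Hence the composite $\Theta_\Lambda$ is defined on all of $\calU_\Lambda^{\loc}$ and takes values in the finite-dimensional space $\calD_\Lambda^{\cl}$. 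It is also continuous, being a composition of continuous maps. Nothing beyond matching domains and codomains is needed here.

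For the linear case with $\calU_\Lambda^{\loc}=\calD_\Lambda^{\loc}$, I would first record that each component map is a linear map between finite-dimensional normed spaces. Any such map is bounded, since all norms on a finite-dimensional space are equivalent. Concretely, fix a basis $e_1,\dots,e_m$ of the domain. Equivalence with the $\ell^1$ coordinate norm gives $\sum|c_i|\le C\|\sum c_ie_i\|$, and then $\|Tx\|\le C\max_i\|Te_i\|\,\|x\|$. Alternatively, one can simply use that the datum assumes each map is continuous, and a continuous linear map is bounded. Either route yields finite operator norms $\|\mathsf R_\Lambda\|,\|\mathsf F_\Lambda\|,\dots,\|\mathsf E_\Lambda\|$ with respect to the chosen norms on $\calD_\Lambda^{\loc},X_1,\dots,X_5,\calD_\Lambda^{\cl}$.

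Next, the composite is linear, because compositions of linear maps are linear. Operator norms are submultiplicative under composition, so
\[
\|\Theta_\Lambda\mathfrak D\|_{\cl}\le \|\mathsf E_\Lambda\|\,\|\mathsf P_\Lambda^{\cl}\|\,\|\mathsf{Per}_\Lambda\|\,\|\mathsf H_\Lambda\|\,\|\mathsf F_\Lambda\|\,\|\mathsf R_\Lambda\|\,\|\mathfrak D\|_{\loc}.
\]
I would take $C_{\Theta,\Lambda}$ to be this product. It is finite and depends only on the six maps and the chosen norms, which is the claimed dependence.

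There is essentially no obstacle. The only point needing care is that finite-dimensionality of $\calD_\Lambda^{\loc}$ and of the intermediate spaces is part of the setting: the chart datum stipulates finite-dimensional intermediate spaces, and the localized defect space is finite-dimensional by \Cref{ass:localized-package-realization}. In the nonlinear case only well-definedness and continuity are claimed, so no norm bound should be asserted there.
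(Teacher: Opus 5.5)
Your proposal is correct and follows the paper's proof essentially step for step. Well-definedness comes from the matching domains and codomains in the chain $\calU_\Lambda^{\loc}\to X_1\to\cdots\to\calD_\Lambda^{\cl}$, each linear component has finite operator norm by finite-dimensionality, and $C_{\Theta,\Lambda}$ is the product of the six operator norms by submultiplicativity. Your alternative justification (the datum already assumes each component is continuous, and a continuous linear map is bounded) is a slightly more robust route than the paper's. It does not need finite-dimensionality of $\calD_\Lambda^{\loc}$, and that property is only loosely supported by your citation of the package-realization assumption.
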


\begin{proof}
The chart datum includes the chart domain, the intermediate spaces, and the
component maps with compatible domains and codomains.  Therefore the
composition
\[
    \Theta_\Lambda
    =
    \mathsf E_\Lambda
    \mathsf P_\Lambda^{\cl}
    \mathsf{Per}_\Lambda
    \mathsf H_\Lambda
    \mathsf F_\Lambda
    \mathsf R_\Lambda
\]
is defined for every \(\mathfrak D\in\calU_\Lambda^{\loc}\), and its values
lie in \(\calD_\Lambda^{\cl}\).  This proves well-definedness.

Assume now that the datum is linear and that
\(\calU_\Lambda^{\loc}=\calD_\Lambda^{\loc}\).  Since all spaces in the
finite-window chart are finite-dimensional normed spaces, every linear
component map has a finite operator norm.  Let
\[
    M_R=\|\mathsf R_\Lambda\|,
    \quad
    M_F=\|\mathsf F_\Lambda\|,
    \quad
    M_H=\|\mathsf H_\Lambda\|,
    \quad
    M_{\rm Per}=\|\mathsf{Per}_\Lambda\|,
    \quad
    M_P=\|\mathsf P_\Lambda^{\cl}\|,
    \quad
    M_E=\|\mathsf E_\Lambda\|.
\]
Then, for every \(\mathfrak D\in\calD_\Lambda^{\loc}\),
\[
\begin{aligned}
    \|\Theta_\Lambda\mathfrak D\|_{\cl}
    &=
    \|
    \mathsf E_\Lambda
    \mathsf P_\Lambda^{\cl}
    \mathsf{Per}_\Lambda
    \mathsf H_\Lambda
    \mathsf F_\Lambda
    \mathsf R_\Lambda
    \mathfrak D
    \|_{\cl}  \\
    &\le
    M_E M_P M_{\rm Per} M_H M_F M_R
    \|\mathfrak D\|_{\loc}.
\end{aligned}
\]
Thus the estimate holds with
\[
    C_{\Theta,\Lambda}
    =
    M_E M_P M_{\rm Per} M_H M_F M_R<\infty .
\]
The composition of linear maps is linear, so \(\Theta_\Lambda\) is a bounded
linear map.  This proves only boundedness of the fixed chart; it does not
prove quotient-distance stability, near-isometry, or small localized error.
\end{proof}

\subsection{Stable chart hypotheses}

\begin{assumption}[Stable chart bound]
\label[assumption]{ass:stable-chart-bound}
There are constants \(C_{\Theta,\Lambda}<\infty\),
\(\varepsilon_{\rm chart}\ge0\), and \(\Delta_{\rm chart}\ge0\) such that, for
all localized packages in the chart domain,
\[
    \|\Theta_\Lambda\mathfrak D\|_{\cl}
    \le
    C_{\Theta,\Lambda}\|\mathfrak D\|_{\loc}.
\]
When a perturbative normalization is available, this may be strengthened to
\[
    \|\Theta_\Lambda\mathfrak D\|_{\cl}
    \le
    (1+\varepsilon_{\rm chart})\|\mathfrak D\|_{\loc}
    +
    \Delta_{\rm chart}.
\]
\end{assumption}

\begin{lemma}[Bounded chart estimates and perturbative normalization]
\label[lemma]{lem:chart-bound-versus-normalization}
Assume the chart datum is linear and
\(\calU_\Lambda^{\loc}=\calD_\Lambda^{\loc}\).  Then the first estimate in
\Cref{ass:stable-chart-bound} holds with the constant
\(C_{\Theta,\Lambda}\) obtained in \Cref{lem:bounded-chart-map}.  The sharper
perturbative estimate
\[
    \|\Theta_\Lambda\mathfrak D\|_{\cl}
    \le
    (1+\varepsilon_{\rm chart})\|\mathfrak D\|_{\loc}
    +
    \Delta_{\rm chart}
\]
is an additional structural input unless it is proved from the concrete chart
geometry.  In particular, on a full vector-space chart domain, if the above
estimate holds for all \(\mathfrak D\), then
\[
    \|\Theta_\Lambda\|_{\loc\to\cl}
    \le
    1+\varepsilon_{\rm chart}.
\]
\end{lemma}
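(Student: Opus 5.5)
The lemma has two parts, and I would treat them separately.

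\emph{First estimate.} Under the linearity hypothesis and $\calU_\Lambda^{\loc}=\calD_\Lambda^{\loc}$, \Cref{lem:bounded-chart-map} already produces a finite constant
\[
C_{\Theta,\Lambda}=M_EM_PM_{\rm Per}M_HM_FM_R
\]
with $\|\Theta_\Lambda\mathfrak D\|_{\cl}\le C_{\Theta,\Lambda}\|\mathfrak D\|_{\loc}$ for all $\mathfrak D$. That is exactly the first inequality in \Cref{ass:stable-chart-bound}, so this part needs only the citation.

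\emph{Status of the perturbative estimate.} The claim that the sharper bound is an additional structural input is not a mathematical assertion to prove. I would simply note that nothing in \Cref{lem:bounded-chart-map} forces $C_{\Theta,\Lambda}\le 1+\varepsilon_{\rm chart}$. For instance, rescaling any one component map by a large factor keeps the datum linear and bounded but makes the operator norm of $\Theta_\Lambda$ arbitrarily large. So the near-isometric bound must either come from concrete chart geometry or be assumed.

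\emph{Operator-norm consequence.} This is the only genuine step, and I expect it to be the main point needing care. The idea is to use homogeneity of the linear map to eliminate the additive constant $\Delta_{\rm chart}$.

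Fix $\mathfrak D$ and $t>0$. Since the domain is the full vector space, $t\mathfrak D$ lies in it, and the hypothesis applied to $t\mathfrak D$ gives
\[
t\|\Theta_\Lambda\mathfrak D\|_{\cl}\le(1+\varepsilon_{\rm chart})\,t\|\mathfrak D\|_{\loc}+\Delta_{\rm chart}.
\]
Dividing by $t$ and letting $t\to\infty$ yields
\[
\|\Theta_\Lambda\mathfrak D\|_{\cl}\le(1+\varepsilon_{\rm chart})\|\mathfrak D\|_{\loc}.
\]
Taking the supremum over $\|\mathfrak D\|_{\loc}\le1$ then gives $\|\Theta_\Lambda\|_{\loc\to\cl}\le1+\varepsilon_{\rm chart}$.

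The subtlety is that this argument genuinely requires both linearity (so that $\Theta_\Lambda(t\mathfrak D)=t\,\Theta_\Lambda\mathfrak D$) and a domain closed under scaling. I would state explicitly that on a restricted chart domain $\calU_\Lambda^{\loc}$ the additive constant $\Delta_{\rm chart}$ cannot be removed this way.
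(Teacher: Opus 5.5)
Your proposal is correct and matches the paper's proof: you cite \Cref{lem:bounded-chart-map} for the first estimate, then apply the perturbative bound to $t\mathfrak D$, divide by $t$, and let $t\to\infty$ to eliminate $\Delta_{\rm chart}$ before taking the supremum over unit vectors. Your rescaling example (which needs $\Theta_\Lambda\neq 0$) is a small concrete addition to the paper's remark that finite-dimensional boundedness alone does not force $C_{\Theta,\Lambda}$ to be close to $1$.
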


\begin{proof}
The bounded estimate is precisely the conclusion of
\Cref{lem:bounded-chart-map}.  It gives
\[
    \|\Theta_\Lambda\mathfrak D\|_{\cl}
    \le
    C_{\Theta,\Lambda}\|\mathfrak D\|_{\loc}
\]
for all \(\mathfrak D\in\calD_\Lambda^{\loc}\).

Now suppose that the perturbative estimate with additive constant
\(\Delta_{\rm chart}\) holds on the full vector space
\(\calD_\Lambda^{\loc}\).  Fix any nonzero direction
\(\mathfrak D\in\calD_\Lambda^{\loc}\) and apply the estimate to
\(t\mathfrak D\), \(t>0\).  Since the chart is linear,
\[
    t\|\Theta_\Lambda\mathfrak D\|_{\cl}
    =
    \|\Theta_\Lambda(t\mathfrak D)\|_{\cl}
    \le
    (1+\varepsilon_{\rm chart})t\|\mathfrak D\|_{\loc}
    +
    \Delta_{\rm chart}.
\]
Dividing by \(t\) and sending \(t\to\infty\) gives
\[
    \|\Theta_\Lambda\mathfrak D\|_{\cl}
    \le
    (1+\varepsilon_{\rm chart})\|\mathfrak D\|_{\loc}.
\]
Taking the supremum over \(\|\mathfrak D\|_{\loc}=1\) yields
\[
    \|\Theta_\Lambda\|_{\loc\to\cl}
    \le
    1+\varepsilon_{\rm chart}.
\]
Finite-dimensional boundedness alone only gives the finite constant
\(C_{\Theta,\Lambda}\); it does not imply that this constant is close to
\(1\).  Thus the perturbative normalization is a genuine structural
hypothesis unless supplied by a separate argument.
\end{proof}

\subsection{Gauge compatibility defect}

The chart must also be compared with the chosen local and clean cleaning
spaces.  Fix a clean finite-window cleaning map
\[
    G_\Lambda^{\cl}:\calC_\Lambda^{\cl}\to\calD_\Lambda^{\cl}.
\]
For every localized cleaning direction \(a\in\calC_\Lambda^{\loc}\) such that
\(G_\Lambda^{\loc}a\in\calU_\Lambda^{\loc}\), define the gauge compatibility
defect by
\begin{equation}\label{eq:gauge-compatibility-defect}
    \Gamma_\Lambda(a)
    :=
    \Dist_{\cl}
    \left(
    \Theta_\Lambda G_\Lambda^{\loc}a,
    \Image G_\Lambda^{\cl}
    \right).
\end{equation}
Thus \(\Gamma_\Lambda(a)\) measures how much of a localized cleaning direction
survives as a non-clean direction after applying the local-to-clean chart.  A
perfectly compatible chart would satisfy \(\Gamma_\Lambda(a)=0\) for all such
\(a\).

Define the unit-scale gauge compatibility constant
\begin{equation}\label{eq:gauge-compatibility-constant}
    \delta_{\rm gauge,\Lambda}
    :=
    \sup_{\substack{a\in\calC_\Lambda^{\loc}\\
    \|a\|_{\calC_\Lambda^{\loc}}\le1\\
    G_\Lambda^{\loc}a\in\calU_\Lambda^{\loc}}}
    \Gamma_\Lambda(a).
\end{equation}

\begin{lemma}[Finite gauge compatibility defect]
\label[lemma]{lem:gauge-compatibility-finite}
Assume that the chart datum is continuous on its chart domain, that
\(G_\Lambda^{\loc}\) and \(G_\Lambda^{\cl}\) are finite-dimensional linear
cleaning maps, and that the set
\[
    B_{\rm gauge}
    :=
    \left\{
    a\in\calC_\Lambda^{\loc}:
    \|a\|_{\calC_\Lambda^{\loc}}\le1,\ 
    G_\Lambda^{\loc}a\in\calU_\Lambda^{\loc}
    \right\}
\]
is closed in \(\calC_\Lambda^{\loc}\).  Then
\(\delta_{\rm gauge,\Lambda}<\infty\).  If \(B_{\rm gauge}\) is nonempty,
the supremum in \eqref{eq:gauge-compatibility-constant} is attained.
\end{lemma}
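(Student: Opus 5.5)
The plan is to show that \(\Gamma_\Lambda\) is a continuous real-valued function on the set \(B_{\rm gauge}\), and that \(B_{\rm gauge}\) is compact. Boundedness of the supremum, and its attainment, then follow from the extreme value theorem.

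\textbf{Step 1 (compactness).} By hypothesis \(B_{\rm gauge}\) is a closed subset of the closed unit ball of the finite-dimensional normed space \(\calC_\Lambda^{\loc}\). The closed unit ball is compact by Heine--Borel, since all norms on a finite-dimensional space are equivalent. Hence \(B_{\rm gauge}\) is compact.

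\textbf{Step 2 (continuity of \(\Gamma_\Lambda\)).} Factor \(\Gamma_\Lambda\) as
\[
a\mapsto G_\Lambda^{\loc}a\mapsto \Theta_\Lambda G_\Lambda^{\loc}a\mapsto \Dist_{\cl}\bigl(\Theta_\Lambda G_\Lambda^{\loc}a,\Image G_\Lambda^{\cl}\bigr).
\]
\begin{enumerate}[label=(\roman*)]
\item The first map is linear on a finite-dimensional space, hence continuous. On \(B_{\rm gauge}\) it takes values in the chart domain \(\calU_\Lambda^{\loc}\).
\item The second map is continuous on \(\calU_\Lambda^{\loc}\). This is because the chart datum makes each of the six component maps continuous on its stated domain, and \(\Theta_\Lambda\) is their composition.
\item The third map is the distance to a fixed subset. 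It is \(1\)-Lipschitz by the triangle inequality:
\[
|\Dist_{\cl}(x,S)-\Dist_{\cl}(y,S)|\le\|x-y\|_{\cl}.
\]
Here \(S=\Image G_\Lambda^{\cl}\) is a linear subspace of the finite-dimensional space \(\calD_\Lambda^{\cl}\), so it is nonempty (it contains \(0\)) and closed. The distance is therefore finite, with \(0\le\Dist_{\cl}(x,S)\le\|x\|_{\cl}\).
\end{enumerate}
Composing the three maps, \(\Gamma_\Lambda|_{B_{\rm gauge}}\) is continuous with values in \([0,\infty)\).

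\textbf{Step 3 (conclusion).} If \(B_{\rm gauge}\) is empty, the supremum in \eqref{eq:gauge-compatibility-constant} is over the empty set. Since \(\Gamma_\Lambda\ge0\), the natural convention is \(\delta_{\rm gauge,\Lambda}=0\) (or \(-\infty\)), which is finite in the sense required; I will state this convention explicitly. If \(B_{\rm gauge}\) is nonempty, a continuous real function on a nonempty compact set is bounded and attains its maximum. This gives \(\delta_{\rm gauge,\Lambda}<\infty\) and the attainment claim.

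\textbf{Main obstacle.} The only delicate point is Step 2(ii), since \(\Theta_\Lambda\) need not be linear or defined on all of \(\calD_\Lambda^{\loc}\). The proof must therefore rely only on the continuity built into the chart datum, not on \Cref{lem:bounded-chart-map}. The closedness hypothesis on \(B_{\rm gauge}\) is exactly what substitutes for a globally defined chart. Without it, the supremum could be approached along points exiting \(\calU_\Lambda^{\loc}\), and the maximum might fail to be attained.
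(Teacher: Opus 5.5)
Your proposal is correct and follows essentially the same argument as the paper: compactness of \(B_{\rm gauge}\) as a closed subset of the compact unit ball, continuity of \(\Gamma_\Lambda\) as a composition of \(G_\Lambda^{\loc}\), the continuous chart and the distance to a fixed subspace, then the extreme value theorem, with the empty case handled by the convention \(\delta_{\rm gauge,\Lambda}=0\). Your observation that the distance function is \(1\)-Lipschitz is slightly sharper than the paper's appeal to closedness of \(\Image G_\Lambda^{\cl}\). Either justification suffices.
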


\begin{proof}
The space \(\calC_\Lambda^{\loc}\) is finite-dimensional.  Therefore the
closed unit ball is compact, and the closed subset \(B_{\rm gauge}\) is compact.
The localized cleaning map \(G_\Lambda^{\loc}\) is continuous, the chart
\(\Theta_\Lambda\) is continuous on its chart domain, and the clean cleaning
image \(\Image G_\Lambda^{\cl}\) is a finite-dimensional linear subspace of
\(\calD_\Lambda^{\cl}\), hence closed.  The distance to a closed subspace is a
continuous function.  Consequently \(a\mapsto\Gamma_\Lambda(a)\) is continuous
on \(B_{\rm gauge}\).

If \(B_{\rm gauge}\) is empty, the supremum is taken over the empty set and the
constant may be set to \(0\) by convention.  If \(B_{\rm gauge}\) is nonempty,
the extreme value theorem gives a finite maximum of \(\Gamma_\Lambda\) on
\(B_{\rm gauge}\).  This proves finiteness and attainment.
\end{proof}

\begin{remark}[Status of gauge compatibility]
The constant \(\delta_{\rm gauge,\Lambda}\) is only a finite-window defect
constant.  The lemma does not prove that it is small, scale-uniform, or
absorbable into the localized transfer budget.  Smallness of this quantity is a
separate structural input.
\end{remark}

\begin{definition}[Finite-dimensional gauge mismatch operator]
\label{def:finite-dimensional-gauge-mismatch-operator}
In the linear finite-window gauge datum, assume
\(\calU_\Lambda^{\loc}=\calD_\Lambda^{\loc}\), the chart is linear, and the
clean image \(\Image G_\Lambda^{\cl}\) is equipped with the quotient norm on
\[
    \calD_\Lambda^{\cl}/\Image G_\Lambda^{\cl}.
\]
Let
\[
    Q_\Lambda^{\cl}:
    \calD_\Lambda^{\cl}
    \to
    \calD_\Lambda^{\cl}/\Image G_\Lambda^{\cl}
\]
be the quotient map.  Define
\[
    K_{\rm gauge}
    :=
    Q_\Lambda^{\cl}\Theta_\Lambda G_\Lambda^{\loc}
    :
    \calC_\Lambda^{\loc}
    \to
    \calD_\Lambda^{\cl}/\Image G_\Lambda^{\cl}.
\]
Then, for each localized cleaning direction \(a\in\calC_\Lambda^{\loc}\),
\[
    \Gamma_\Lambda(a)
    =
    \|K_{\rm gauge}a\|_{\calD_\Lambda^{\cl}/\Image G_\Lambda^{\cl}}.
\]
\end{definition}

\begin{lemma}[Finite-dimensional selected gauge mismatch bound]
\label{lem:finite-dimensional-selected-gauge-mismatch-bound}
Assume the linear finite-window gauge datum of
\Cref{def:finite-dimensional-gauge-mismatch-operator}.  Then
\[
    \|K_{\rm gauge}\|_{\calC_\Lambda^{\loc}
    \to\calD_\Lambda^{\cl}/\Image G_\Lambda^{\cl}}<\infty .
\]
If, in addition, a bounded linear selected-cleaning map
\[
    A_\Lambda:\calD_\Lambda^{\loc}\to\calC_\Lambda^{\loc}
\]
is fixed and
\[
    \Err_{\rm gauge}^{\rm sel}(\mathfrak D)
    :=
    \Gamma_\Lambda(A_\Lambda\mathfrak D),
\]
then
\[
    \Err_{\rm gauge}^{\rm sel}(\mathfrak D)
    \le
    C_{\rm gauge}^{\rm fd}\|\mathfrak D\|_{\loc},
    \qquad
    C_{\rm gauge}^{\rm fd}
    :=
    \|K_{\rm gauge}\|\,\|A_\Lambda\|.
\]
\end{lemma}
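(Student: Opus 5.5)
The plan is to treat $K_{\rm gauge}$ as a composition of linear maps between finite-dimensional normed spaces, and then read off the selected bound from the operator-norm inequality. I will use the identity $\Gamma_\Lambda(a)=\|K_{\rm gauge}a\|$ recorded in \Cref{def:finite-dimensional-gauge-mismatch-operator}.

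\textbf{Step 1: finiteness of $\|K_{\rm gauge}\|$.} The map $G_\Lambda^{\loc}:\calC_\Lambda^{\loc}\to\calD_\Lambda^{\loc}$ is linear between finite-dimensional spaces, so it is bounded. By \Cref{lem:bounded-chart-map}, the linear chart on the full domain $\calU_\Lambda^{\loc}=\calD_\Lambda^{\loc}$ satisfies $\|\Theta_\Lambda\|\le C_{\Theta,\Lambda}$. The image $\Image G_\Lambda^{\cl}$ is a finite-dimensional subspace, hence closed, so the quotient seminorm is a genuine norm. The quotient map then satisfies $\|Q_\Lambda^{\cl}d\|=\Dist_{\cl}(d,\Image G_\Lambda^{\cl})\le\|d\|_{\cl}$ (take $0$ in the infimum), so $\|Q_\Lambda^{\cl}\|\le1$. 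Composing these three maps gives
\[
\|K_{\rm gauge}\|\le C_{\Theta,\Lambda}\,\|G_\Lambda^{\loc}\|<\infty .
\]
Alternatively, one can simply note that $K_{\rm gauge}$ is a linear map on a finite-dimensional space.

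\textbf{Step 2: the selected bound.} Set $a=A_\Lambda\mathfrak D$. Using the identity from the definition, I will chain
\[
\Err_{\rm gauge}^{\rm sel}(\mathfrak D)=\|K_{\rm gauge}A_\Lambda\mathfrak D\|\le\|K_{\rm gauge}\|\,\|A_\Lambda\mathfrak D\|_{\calC_\Lambda^{\loc}}\le\|K_{\rm gauge}\|\,\|A_\Lambda\|\,\|\mathfrak D\|_{\loc}.
\]
Note that $\|A_\Lambda\|<\infty$ because $A_\Lambda$ is bounded by hypothesis.

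\textbf{Main obstacle.} The only delicate point is justifying the identity $\Gamma_\Lambda(a)=\|K_{\rm gauge}a\|$, which the definition asserts. It rests on the quotient norm of a coset being exactly the distance to the subspace, and this requires the subspace to be closed. That closedness is supplied by finite-dimensionality. I will state this explicitly and not treat it as automatic, since the rest of the argument is routine operator-norm bookkeeping.
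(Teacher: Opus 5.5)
Your proposal is correct and follows essentially the same route as the paper. It obtains finiteness of $\|K_{\rm gauge}\|$ from linearity of $Q_\Lambda^{\cl}\Theta_\Lambda G_\Lambda^{\loc}$ on finite-dimensional spaces; your explicit bound $\|K_{\rm gauge}\|\le C_{\Theta,\Lambda}\|G_\Lambda^{\loc}\|$ via $\|Q_\Lambda^{\cl}\|\le 1$ is a harmless sharpening. It then uses the identity $\Gamma_\Lambda(a)=\|K_{\rm gauge}a\|$, coming from the quotient norm being the distance to $\Image G_\Lambda^{\cl}$, and finishes with the operator-norm chain at $a=A_\Lambda\mathfrak D$.

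One small correction to your ``main obstacle'': that identity holds by the very definition of the quotient seminorm. Closedness of the finite-dimensional image is needed only to make the seminorm a genuine norm, not for the identity itself.
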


\begin{proof}
The maps \(G_\Lambda^{\loc}\), \(\Theta_\Lambda\), and \(Q_\Lambda^{\cl}\) are
linear on finite-dimensional normed spaces in the stated datum.  Therefore the
composition
\[
    K_{\rm gauge}=Q_\Lambda^{\cl}\Theta_\Lambda G_\Lambda^{\loc}
\]
is a linear map between finite-dimensional normed spaces, and hence has finite
operator norm.

The quotient norm is by definition the distance to the clean gauge image:
\[
    \|Q_\Lambda^{\cl}d\|_{\calD_\Lambda^{\cl}/\Image G_\Lambda^{\cl}}
    =
    \Dist_{\cl}(d,\Image G_\Lambda^{\cl}).
\]
Taking \(d=\Theta_\Lambda G_\Lambda^{\loc}a\) gives
\[
    \|K_{\rm gauge}a\|
    =
    \Dist_{\cl}(\Theta_\Lambda G_\Lambda^{\loc}a,\Image G_\Lambda^{\cl})
    =
    \Gamma_\Lambda(a).
\]
For the selected cleaning \(a=A_\Lambda\mathfrak D\), boundedness of
\(K_{\rm gauge}\) and \(A_\Lambda\) gives
\[
    \Err_{\rm gauge}^{\rm sel}(\mathfrak D)
    =
    \|K_{\rm gauge}A_\Lambda\mathfrak D\|
    \le
    \|K_{\rm gauge}\|\,\|A_\Lambda\|\,\|\mathfrak D\|_{\loc}.
\]
This is the claimed finite-dimensional selected-gauge bound.
\end{proof}

\begin{remark}[Status of the selected gauge bound]
\Cref{lem:finite-dimensional-selected-gauge-mismatch-bound} bounds only the
gauge mismatch generated by a chosen selected-cleaning map.  It does not prove
that the full detection error \(\Err_{\rm gauge}\) is equal to
\(\Err_{\rm gauge}^{\rm sel}\), does not prove smallness, does not give
scale-uniform control, and does not absorb the gauge term into
\(\eta_\Lambda\Dist_{\loc}+\Delta_\Lambda\).
\end{remark}

The relation between the finite gauge defect and the later error budget is not
automatic.  The following elementary absorption lemma records one clean way in
which the constant \(\delta_{\rm gauge,\Lambda}\) may enter the normalized
gauge error.  It is included only to keep the bookkeeping honest; the
hypotheses below are structural inputs, not consequences of finite-dimensionality.

\begin{lemma}[Gauge defect absorption into the normalized budget]
\label[lemma]{lem:gauge-defect-absorption}
Assume that the chart domain is compatible with localized cleanings and that
there are constants \(L_{\rm gauge,\Lambda},M_{\rm gauge,\Lambda}\ge0\) and a
choice of localized cleaning parameter \(a_{\mathfrak D}\in\calC_\Lambda^{\loc}\)
for every \(\mathfrak D\in\calU_\Lambda^{\loc}\) such that
\[
    G_\Lambda^{\loc}a_{\mathfrak D}\in\calU_\Lambda^{\loc},
    \qquad
    \|a_{\mathfrak D}\|_{\calC_\Lambda^{\loc}}
    \le
    L_{\rm gauge,\Lambda}
    \Dist_{\loc}(\mathfrak D,\Image G_\Lambda^{\loc})
    +
    M_{\rm gauge,\Lambda}.
\]
Assume also that the gauge contribution in the detection comparison satisfies
\[
    \Err_{\rm gauge}(\mathfrak D)
    \le
    \Gamma_\Lambda(a_{\mathfrak D})
    +
    \eta_{\rm gauge}^{0}
    \Dist_{\loc}(\mathfrak D,\Image G_\Lambda^{\loc})
    +
    \Delta_{\rm gauge}^{0} .
\]
If, in addition, the gauge defect is homogeneous along the selected cleaning
parameters, so that
\[
    \Gamma_\Lambda(a_{\mathfrak D})
    \le
    \delta_{\rm gauge,\Lambda}
    \|a_{\mathfrak D}\|_{\calC_\Lambda^{\loc}},
\]
then
\[
    \Err_{\rm gauge}(\mathfrak D)
    \le
    \eta_{\rm gauge}
    \Dist_{\loc}(\mathfrak D,\Image G_\Lambda^{\loc})
    +
    \Delta_{\rm gauge},
\]
where
\[
    \eta_{\rm gauge}
    =
    \delta_{\rm gauge,\Lambda}L_{\rm gauge,\Lambda}
    +
    \eta_{\rm gauge}^{0},
    \qquad
    \Delta_{\rm gauge}
    =
    \delta_{\rm gauge,\Lambda}M_{\rm gauge,\Lambda}
    +
    \Delta_{\rm gauge}^{0}.
\]
\end{lemma}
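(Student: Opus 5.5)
The claim follows by chaining the three displayed hypotheses. No compactness or finite-dimensional argument is needed beyond what is already packaged into \(\delta_{\rm gauge,\Lambda}\).

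Fix \(\mathfrak D\in\calU_\Lambda^{\loc}\) and abbreviate \(\delta:=\Dist_{\loc}(\mathfrak D,\Image G_\Lambda^{\loc})\). The steps are:

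\begin{enumerate}[label=(\arabic*)]
\item \textbf{Start from the detection comparison hypothesis.} This gives
\[
    \Err_{\rm gauge}(\mathfrak D)\le\Gamma_\Lambda(a_{\mathfrak D})+\eta^0_{\rm gauge}\delta+\Delta^0_{\rm gauge}.
\]
\item \textbf{Insert the homogeneity hypothesis.} Use \(\Gamma_\Lambda(a_{\mathfrak D})\le\delta_{\rm gauge,\Lambda}\|a_{\mathfrak D}\|_{\calC_\Lambda^{\loc}}\).
\item \textbf{Apply the size bound on the selected cleaning parameter.} Use \(\|a_{\mathfrak D}\|\le L_{\rm gauge,\Lambda}\delta+M_{\rm gauge,\Lambda}\).
\item \textbf{Collect coefficients.} The coefficient of \(\delta\) is \(\delta_{\rm gauge,\Lambda}L_{\rm gauge,\Lambda}+\eta^0_{\rm gauge}=\eta_{\rm gauge}\). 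The constant term is \(\delta_{\rm gauge,\Lambda}M_{\rm gauge,\Lambda}+\Delta^0_{\rm gauge}=\Delta_{\rm gauge}\). This is exactly the asserted bound.
\end{enumerate}

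Step (3) multiplies an inequality by \(\delta_{\rm gauge,\Lambda}\), so this constant must be a finite nonnegative number. That is the only point needing care. Nonnegativity is immediate, since \(\Gamma_\Lambda\) is a distance. Finiteness is guaranteed by \Cref{lem:gauge-compatibility-finite} under its closedness hypothesis. Alternatively, the estimate can be stated with the convention that it is vacuous when \(\delta_{\rm gauge,\Lambda}=\infty\).

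The requirement \(G_\Lambda^{\loc}a_{\mathfrak D}\in\calU_\Lambda^{\loc}\) is used only to guarantee that \(\Gamma_\Lambda(a_{\mathfrak D})\) is defined via \eqref{eq:gauge-compatibility-defect}.

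I do not expect a genuine obstacle, since the lemma is purely algebraic. The substantive content lies entirely in verifying the hypotheses, namely the linear control of \(a_{\mathfrak D}\) by the quotient distance and the homogeneity along selected cleanings. The lemma itself does not address either.
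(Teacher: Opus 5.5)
Your proposal is correct and matches the paper's proof. Both chain the detection-comparison bound, the homogeneity bound $\Gamma_\Lambda(a_{\mathfrak D})\le\delta_{\rm gauge,\Lambda}\|a_{\mathfrak D}\|$, and the linear size bound on $a_{\mathfrak D}$, then collect the coefficients of $\Dist_{\loc}$ and the constant terms. Your remark that $\delta_{\rm gauge,\Lambda}$ must be finite and nonnegative before multiplying the size bound by it is a legitimate point of care, which the paper leaves implicit.
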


\begin{proof}
The assumptions give
\[
\begin{aligned}
    \Err_{\rm gauge}(\mathfrak D)
    &\le
    \Gamma_\Lambda(a_{\mathfrak D})
    +
    \eta_{\rm gauge}^{0}
    \Dist_{\loc}(\mathfrak D,\Image G_\Lambda^{\loc})
    +
    \Delta_{\rm gauge}^{0}  \\
    &\le
    \delta_{\rm gauge,\Lambda}
    \|a_{\mathfrak D}\|_{\calC_\Lambda^{\loc}}
    +
    \eta_{\rm gauge}^{0}
    \Dist_{\loc}(\mathfrak D,\Image G_\Lambda^{\loc})
    +
    \Delta_{\rm gauge}^{0}  \\
    &\le
    (\delta_{\rm gauge,\Lambda}L_{\rm gauge,\Lambda}
    +
    \eta_{\rm gauge}^{0})
    \Dist_{\loc}(\mathfrak D,\Image G_\Lambda^{\loc})
    +
    \delta_{\rm gauge,\Lambda}M_{\rm gauge,\Lambda}
    +
    \Delta_{\rm gauge}^{0}.
\end{aligned}
\]
This is exactly the claimed normalized gauge-error bound.
\end{proof}

\begin{remark}[Gauge constants versus quotient lifting]
The constants \(\delta_{\rm gauge,\Lambda}\), \(\eta_{\rm gauge}\), and
\(\delta_G\) play different roles.  The first measures how localized cleaning
directions survive under the chart; the second is the part of the detection
error budget assigned to that mismatch; the third is an additive loss in the
quotient-distance comparison.  None of them is automatically small.  In a
concrete localized Navier--Stokes model one must prove separate estimates
showing that they are either absorbable in \(\eta_\Lambda\) or harmless inside
\(\Delta_\Lambda'\).
\end{remark}

\begin{assumption}[Quotient-lifting stability]
\label[assumption]{ass:quotient-lifting-stability}
There are constants \(0\le\varepsilon_G<1\) and \(\delta_G\ge0\) such that the
following lifting property holds.  For every
\(\mathfrak D\in\calU_\Lambda^{\loc}\) and every
\(b\in\calC_\Lambda^{\cl}\), there exists \(a\in\calC_\Lambda^{\loc}\) with
\[
    \|\mathfrak D-G_\Lambda^{\loc}a\|_{\loc}
    \le
    \frac{1}{1-\varepsilon_G}
    \|\Theta_\Lambda\mathfrak D-G_\Lambda^{\cl}b\|_{\cl}
    +
    \frac{\delta_G}{1-\varepsilon_G}.
\]
\end{assumption}

\begin{lemma}[Quotient-distance comparison from lifting]
\label[lemma]{lem:quotient-distance-comparison}
Assume \Cref{ass:quotient-lifting-stability}.  Then every
\(\mathfrak D\in\calU_\Lambda^{\loc}\) satisfies
\[
    \Dist_{\cl}
    (
    \Theta_\Lambda\mathfrak D,\Image G_\Lambda^{\cl}
    )
    \ge
    (1-\varepsilon_G)
    \Dist_{\loc}
    (
    \mathfrak D,\Image G_\Lambda^{\loc}
    )
    -
    \delta_G .
\]
\end{lemma}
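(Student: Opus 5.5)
The argument is a direct infimum manipulation in \Cref{ass:quotient-lifting-stability}. Fix \(\mathfrak D\in\calU_\Lambda^{\loc}\) and an arbitrary clean cleaning parameter \(b\in\calC_\Lambda^{\cl}\). The lifting assumption produces some \(a\in\calC_\Lambda^{\loc}\) with
\[
    \|\mathfrak D-G_\Lambda^{\loc}a\|_{\loc}
    \le
    \frac{1}{1-\varepsilon_G}
    \|\Theta_\Lambda\mathfrak D-G_\Lambda^{\cl}b\|_{\cl}
    +
    \frac{\delta_G}{1-\varepsilon_G}.
\]

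By definition of the localized quotient distance as an infimum over \(\calC_\Lambda^{\loc}\), the left-hand side dominates \(\Dist_{\loc}(\mathfrak D,\Image G_\Lambda^{\loc})\). Since \(1-\varepsilon_G>0\), we may multiply through without reversing the inequality, which gives
\[
    (1-\varepsilon_G)\Dist_{\loc}(\mathfrak D,\Image G_\Lambda^{\loc})
    -\delta_G
    \le
    \|\Theta_\Lambda\mathfrak D-G_\Lambda^{\cl}b\|_{\cl}
\]
for every \(b\in\calC_\Lambda^{\cl}\).

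The left-hand side does not depend on \(b\). Taking the infimum over \(b\) on the right therefore yields \(\Dist_{\cl}(\Theta_\Lambda\mathfrak D,\Image G_\Lambda^{\cl})\), where the clean quotient distance is defined analogously to the localized one as \(\inf_{b}\|\cdot-G_\Lambda^{\cl}b\|_{\cl}\). This is exactly the claimed bound.

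There is no genuine obstacle here. The only points to check are that \(1-\varepsilon_G>0\), so the multiplication preserves the inequality, and that the clean distance is the infimum over \(b\) (consistent with \eqref{eq:gauge-compatibility-defect}). If \(\calC_\Lambda^{\cl}\) were empty the infimum convention would make the statement vacuous, but \(0\in\calC_\Lambda^{\cl}\) for a linear space, so this case does not arise.
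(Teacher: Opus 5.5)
Your proposal is correct and follows essentially the same argument as the paper: fix \(b\), apply the lifting assumption, bound \(\Dist_{\loc}\) by the lifted residual, multiply by \(1-\varepsilon_G>0\), and take the infimum over \(b\in\calC_\Lambda^{\cl}\). Your side remarks on the definition of the clean distance and the nonempty parameter space are harmless additions.
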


\begin{proof}
Fix \(\mathfrak D\in\calU_\Lambda^{\loc}\) and
\(b\in\calC_\Lambda^{\cl}\).  By
\Cref{ass:quotient-lifting-stability}, there is
\(a\in\calC_\Lambda^{\loc}\) such that
\[
    \|\mathfrak D-G_\Lambda^{\loc}a\|_{\loc}
    \le
    \frac{1}{1-\varepsilon_G}
    \|\Theta_\Lambda\mathfrak D-G_\Lambda^{\cl}b\|_{\cl}
    +
    \frac{\delta_G}{1-\varepsilon_G}.
\]
Since the localized quotient distance is the infimum over all localized
cleaning corrections,
\[
    \Dist_{\loc}(\mathfrak D,\Image G_\Lambda^{\loc})
    \le
    \frac{1}{1-\varepsilon_G}
    \|\Theta_\Lambda\mathfrak D-G_\Lambda^{\cl}b\|_{\cl}
    +
    \frac{\delta_G}{1-\varepsilon_G}.
\]
Multiplying by \(1-\varepsilon_G>0\) gives
\[
    (1-\varepsilon_G)
    \Dist_{\loc}(\mathfrak D,\Image G_\Lambda^{\loc})
    -
    \delta_G
    \le
    \|\Theta_\Lambda\mathfrak D-G_\Lambda^{\cl}b\|_{\cl}.
\]
Taking the infimum over \(b\in\calC_\Lambda^{\cl}\) proves the stated
quotient-distance comparison.
\end{proof}

\begin{remark}[Status of quotient lifting]
The lifting assumption is the central structural input behind quotient-distance
stability.  It is not a consequence of boundedness of
\(\Theta_\Lambda\) or of finiteness of the gauge defect.  It asserts that clean
gauge corrections can be represented, up to the explicit loss
\((\varepsilon_G,\delta_G)\), by localized cleaning corrections.  Concrete
Navier--Stokes estimates for this lifting remain outside the finite-window
argument.
\end{remark}

\section{Detection Comparisons and Error Budgets}\label{sec:detection-error}

\subsection{Pressure component comparison}

\begin{assumption}[Pressure observation compatibility]
\label{ass:pressure-observation-compatibility}
For each localized package \(\mathfrak D\in\calU_\Lambda^{\loc}\), the
pressure observation channel is compatible with the pressure-transfer datum in
the following sense:
\[
    \|O_{\Lambda,{\rm prs}}^{\loc}\mathfrak D
      -
      O_{\Lambda,{\rm prs}}^{\cl}\Theta_\Lambda\mathfrak D\|
    \le
    \Err_{\rm prs}(\mathfrak D),
\]
where \(\Err_{\rm prs}\) is the budget from
\Cref{def:pressure-error-budget}.  This is a structural compatibility
assumption between the selected pressure observations, pressure normalization,
harmonic cleaning convention, and local-to-clean chart.
\end{assumption}

\begin{lemma}[Pressure detection comparison]
\label{lem:pressure-detection-comparison}
Assume \Cref{ass:pressure-observation-compatibility}.  Then each
\(\mathfrak D\in\calU_\Lambda^{\loc}\) satisfies
\[
    \|O_{\Lambda,{\rm prs}}^{\loc}\mathfrak D\|
    \ge
    \|O_{\Lambda,{\rm prs}}^{\cl}\Theta_\Lambda\mathfrak D\|
    -
    \Err_{\rm prs}(\mathfrak D).
\]
\end{lemma}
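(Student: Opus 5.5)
The claim is the reverse triangle inequality in the pressure observation norm, applied to the compatibility bound of \Cref{ass:pressure-observation-compatibility}. No PDE input is needed beyond that assumption. The budget \(\Err_{\rm prs}\) enters only through the inequality the assumption provides.

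Fix \(\mathfrak D\in\calU_\Lambda^{\loc}\) and abbreviate
\[
    x:=O_{\Lambda,{\rm prs}}^{\loc}\mathfrak D,
    \qquad
    y:=O_{\Lambda,{\rm prs}}^{\cl}\Theta_\Lambda\mathfrak D.
\]
Both live in the same pressure observation space, since the assumption already compares them in a common norm. That common space is \(Y_{\rm prs}\) in the default convention, possibly after the zero-extension \(\mathsf{Per}_{\rm prs}\), which is an isometry by \Cref{lem:bounded-pressure-periodization}. The triangle inequality gives
\[
    \|y\|\le\|x\|+\|x-y\|.
\]
Inserting the bound \(\|x-y\|\le\Err_{\rm prs}(\mathfrak D)\) from \Cref{ass:pressure-observation-compatibility} and rearranging yields the stated lower bound.

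The only point that needs checking is that the norms on the two sides of the statement are the norms used in the assumption. In particular, when the clean observation is read through the periodized space, one should confirm it is measured by the same (isometrically identified) norm. If the clean and local observation spaces differ, I would state the identification map explicitly and apply the same argument. The bound is also trivially true when \(\|y\|\le\Err_{\rm prs}(\mathfrak D)\), since the right-hand side is then nonpositive, so no case split is needed.
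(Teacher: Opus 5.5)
Your proof is correct and is the paper's argument: apply the triangle inequality \(\|O^{\cl}_{\Lambda,{\rm prs}}\Theta_\Lambda\mathfrak D\|\le\|O^{\loc}_{\Lambda,{\rm prs}}\mathfrak D\|+\|O^{\loc}_{\Lambda,{\rm prs}}\mathfrak D-O^{\cl}_{\Lambda,{\rm prs}}\Theta_\Lambda\mathfrak D\|\), then insert the compatibility bound \(\Err_{\rm prs}(\mathfrak D)\) and rearrange. The aside about identifying spaces through \(\mathsf{Per}_{\rm prs}\) is unnecessary, because the assumption already presupposes that the difference is measured in a common norm.
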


\begin{proof}
The triangle inequality gives
\[
    \|O_{\Lambda,{\rm prs}}^{\cl}\Theta_\Lambda\mathfrak D\|
    \le
    \|O_{\Lambda,{\rm prs}}^{\loc}\mathfrak D\|
    +
    \|O_{\Lambda,{\rm prs}}^{\loc}\mathfrak D
      -
      O_{\Lambda,{\rm prs}}^{\cl}\Theta_\Lambda\mathfrak D\|.
\]
Using \Cref{ass:pressure-observation-compatibility} and rearranging proves the
claim.
\end{proof}

\begin{assumption}[Pressure component normalized bounds]
\label{ass:pressure-component-normalized-bounds}
For every pressure component
\[
    r\in
    \{
    {\rm harm\mbox{-}tail},
    {\rm comm},
    {\rm active},
    {\rm proj},
    {\rm mean},
    {\rm per}
    \}
\]
and every \(k\in\Lambda_{\rm sc}\), there are constants
\(\eta_{{\rm prs},r,k}\ge0\) and \(\Delta_{{\rm prs},r,k}\ge0\) such that
\[
    \omega_{{\rm prs},k}\Err_{r,k}(\mathfrak D)
    \le
    \eta_{{\rm prs},r,k}
    \Dist_{\loc}(\mathfrak D,\Image G_\Lambda^{\loc})
    +
    \Delta_{{\rm prs},r,k}.
\]
\end{assumption}

\begin{lemma}[Pressure error absorption]
\label{lem:pressure-error-absorption}
Assume \Cref{ass:pressure-component-normalized-bounds}.  Define
\[
    \eta_{\rm prs}
    :=
    \sum_{k\in\Lambda_{\rm sc}}\sum_r\eta_{{\rm prs},r,k},
    \qquad
    \Delta_{\rm prs}
    :=
    \sum_{k\in\Lambda_{\rm sc}}\sum_r\Delta_{{\rm prs},r,k}.
\]
Then
\[
    \Err_{\rm prs}(\mathfrak D)
    \le
    \eta_{\rm prs}
    \Dist_{\loc}(\mathfrak D,\Image G_\Lambda^{\loc})
    +
    \Delta_{\rm prs}.
\]
\end{lemma}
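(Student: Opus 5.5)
The plan is a direct summation argument, since the lemma is pure bookkeeping once \Cref{ass:pressure-component-normalized-bounds} is in force. I will start from \Cref{def:pressure-error-budget}, which writes
\[
    \Err_{\rm prs}(\mathfrak D)
    =
    \sum_{k\in\Lambda_{\rm sc}}\sum_{r}\omega_{{\rm prs},k}\Err_{r,k}(\mathfrak D).
\]
Here the inner sum runs over the six components
\(r\in\{{\rm harm\mbox{-}tail},{\rm comm},{\rm active},{\rm proj},{\rm mean},{\rm per}\}\).
Distributing the nonnegative weight \(\omega_{{\rm prs},k}\) over the finite inner sum turns the budget into a finite double sum of the weighted component entries. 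This step is only linearity of finite sums.

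Next I apply \Cref{ass:pressure-component-normalized-bounds} termwise. Each weighted entry \(\omega_{{\rm prs},k}\Err_{r,k}(\mathfrak D)\) is bounded by
\(\eta_{{\rm prs},r,k}\Dist_{\loc}(\mathfrak D,\Image G_\Lambda^{\loc})+\Delta_{{\rm prs},r,k}\).
Because \(\Lambda_{\rm sc}\) is finite and there are six components, summing these at most \(6|\Lambda_{\rm sc}|\) inequalities preserves the inequality. The common factor \(\Dist_{\loc}(\mathfrak D,\Image G_\Lambda^{\loc})\) does not depend on \((r,k)\), so it factors out of the \(\eta\)-sum. The result is exactly \(\eta_{\rm prs}\Dist_{\loc}+\Delta_{\rm prs}\), with the constants defined in the statement.

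I expect no real obstacle. The only point needing care is consistency of instantiation: the component entries \(\Err_{r,k}\) appearing in \Cref{ass:pressure-component-normalized-bounds} must be the same ones used to define \(\Err_{\rm prs}\) in \Cref{def:pressure-error-budget}. This includes the concrete projection, mean, and periodization entries specified there.

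I would also note one convention. If some entries are instead bounded through \Cref{prop:combined-normalized-pressure-component-package}, the assumption should be read for the entries actually entering \(\Err_{\rm prs}\), and the conclusion is unchanged.

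Finally, I would remark on what the lemma does not claim. It asserts no smallness of \(\eta_{\rm prs}\); whether \(\eta_{\rm prs}\) fits below the clean gap margin is a separate matter for the total absorption criterion.
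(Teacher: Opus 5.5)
Your proposal is correct and follows the paper's proof: insert the pressure budget from \Cref{def:pressure-error-budget}, distribute the weight $\omega_{{\rm prs},k}$ over the finite sum, apply \Cref{ass:pressure-component-normalized-bounds} to each weighted entry, and sum with the common quotient distance factored out. Your remarks on using the same component entries in both places, and on the lemma making no claim that $\eta_{\rm prs}$ is small, are accurate but not needed for the argument.
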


\begin{proof}
Insert the pressure budget from \Cref{def:pressure-error-budget} and apply
\Cref{ass:pressure-component-normalized-bounds} to each weighted component.
Summing the resulting inequalities gives the displayed bound.
\end{proof}

\begin{remark}[Pressure obstruction alternatives]
The pressure module leaves three honest possibilities.  First, harmonic and
commutator directions may be compatible with the chosen cleaning space and
enter the budget as absorbable errors.  Second, they may be controlled but not
small, in which case they remain in \(\Delta_{\rm prs}\) rather than in an
absorbed coefficient.  Third, they may produce pressure-observable near-kernel
directions; those directions should then be treated as genuine pressure
phantom candidates, not hidden as gauge.
\end{remark}

\subsection{Componentwise comparison}

For a localized package in the chart domain, set
\[
    \Err_\Lambda(\mathfrak D)
    =
    \Err_{\rm prs}
    +
    \Err_{\rm loc}
    +
    \Err_{\rm tr}
    +
    \Err_{\rm nl}
    +
    \Err_{\rm rep}
    +
    \Err_{\rm gauge}
    +
    \Err_{\rm prof}.
\]
The individual terms record, respectively, pressure splitting error,
localization leakage, trace loss, nonlinear or residual mismatch, reproduction
drift, gauge mismatch, and profit discrepancy.

\begin{definition}[Concrete residual-budget instantiation]
\label{def:concrete-residual-budget-instantiation}
In the concrete finite-window model developed above, instantiate the nonlinear
entry as
\[
    \Err_{\rm nl}(\mathfrak D)
    :=
    \Err_{\rm nl}^{\rm cut}(\mathfrak D)
    +
    \Err_{\rm nl}^{\rm rem}(\mathfrak D),
\]
where \(\Err_{\rm nl}^{\rm rem}\ge0\) records nonlinear residuals not accounted
for by the cutoff-flux mismatch.  Define the concrete normalized residual
budget by
\[
\begin{aligned}
    \mathcal B_\Lambda^{\rm conc}(\mathfrak D)
    :=
    &\mathcal B_{\rm prs}^{\rm norm}(\mathfrak D)
    +
    \mathcal B_{\rm loc}^{\rm norm}(\mathfrak D)
    +
    \mathcal B_{\rm tr}^{\rm norm}(\mathfrak D)
    +
    \mathcal B_{\rm nl}^{\rm cut}(\mathfrak D) \\
    &+
    \Err_{\rm nl}^{\rm rem}(\mathfrak D)
    +
    \Err_{\rm rep}(\mathfrak D)
    +
    \Err_{\rm gauge}(\mathfrak D)
    +
    \Err_{\rm prof}(\mathfrak D).
\end{aligned}
\]
Here \(\mathcal B_{\rm prs}^{\rm norm}\),
\(\mathcal B_{\rm loc}^{\rm norm}\), \(\mathcal B_{\rm tr}^{\rm norm}\), and
\(\mathcal B_{\rm nl}^{\rm cut}\) are the pressure, localization,
truncation, and cutoff-nonlinear budgets defined in the preceding sections.
\end{definition}

\begin{lemma}[Concrete residual-budget synchronization]
\label{lem:concrete-residual-budget-synchronization}
Assume that the pressure, localization, truncation, nonlinear cutoff, and
reproduction entries are instantiated by
\[
    \mathcal B_{\rm prs}^{\rm norm},\qquad
    \mathcal B_{\rm loc}^{\rm norm},\qquad
    \mathcal B_{\rm tr}^{\rm norm},\qquad
    \mathcal B_{\rm nl}^{\rm cut},\qquad
    \Err_{\rm rep},
\]
as in
\Cref{prop:combined-normalized-pressure-component-package},
\Cref{lem:finite-window-localization-budget},
\Cref{lem:finite-window-truncation-leakage-bound},
\Cref{lem:finite-window-nonlinear-cutoff-budget}, and
\Cref{lem:finite-window-reproduction-residual-comparison}.  Then
\[
    \Err_\Lambda(\mathfrak D)
    \le
    \mathcal B_\Lambda^{\rm conc}(\mathfrak D).
\]
\end{lemma}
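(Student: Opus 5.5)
The plan is a term-by-term comparison of the seven-term sum defining \(\Err_\Lambda(\mathfrak D)\) with the eight-term sum defining \(\mathcal B_\Lambda^{\rm conc}(\mathfrak D)\) in \Cref{def:concrete-residual-budget-instantiation}, followed by addition of the resulting inequalities. All entries are nonnegative and the window \(\Lambda_{\rm sc}\) is finite, so summing componentwise inequalities is legitimate.

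I would go through the entries as follows.

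\begin{enumerate}[label=(\roman*)]
\item \textbf{Pressure.} \Cref{prop:combined-normalized-pressure-component-package} gives \(\Err_{\rm prs}(\mathfrak D)\le\mathcal B_{\rm prs}^{\rm norm}(\mathfrak D)\) under the stated instantiation hypothesis.
\item \textbf{Localization.} \Cref{lem:finite-window-localization-budget} gives \(\Err_{\rm loc}(\mathfrak D)\le\mathcal B_{\rm loc}^{\rm norm}(\mathfrak D)\), where \(\Err_{\rm loc}\) is the concrete budget of \Cref{def:concrete-localization-budget}.
\item \textbf{Truncation.} \Cref{lem:finite-window-truncation-leakage-bound} gives \(\Err_{\rm tr}\le\mathcal B_{\rm tr}^{\rm norm}\).
\item \textbf{Nonlinear.} By \Cref{def:concrete-residual-budget-instantiation}, \(\Err_{\rm nl}=\Err_{\rm nl}^{\rm cut}+\Err_{\rm nl}^{\rm rem}\). \Cref{lem:finite-window-nonlinear-cutoff-budget} bounds the first summand by \(\mathcal B_{\rm nl}^{\rm cut}\). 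The second summand appears verbatim in \(\mathcal B_\Lambda^{\rm conc}\), so it matches trivially.
\item \textbf{Reproduction, gauge, profit.} \(\Err_{\rm rep}\), \(\Err_{\rm gauge}\) and \(\Err_{\rm prof}\) occur unchanged on both sides, so each contributes an equality.
\end{enumerate}

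Adding the seven relations yields \(\Err_\Lambda\le\mathcal B_\Lambda^{\rm conc}\).

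The one point needing care is the reproduction entry. The cited \Cref{lem:finite-window-reproduction-residual-comparison} is not itself a bound on \(\Err_{\rm rep}\); it bounds the clean reproduction residual in terms of \(\Err_{\rm rep}\). The plan is therefore to read "instantiated by \(\Err_{\rm rep}\)" as fixing the reproduction ledger entry to be the drift budget of \Cref{def:chart-commutation-drift}. Under that reading the entry is matched identically, and the cited lemma serves only to justify that this entry is the correct discrepancy to carry.

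A similar bookkeeping check applies to the hypotheses. I would make explicit that the hypotheses of the cited propositions hold for each normalized component: the harmonic-tail, support and integrability conditions in the pressure proposition, and the \(L^2\), \(L^3\) and \(H^1\) conditions in the localization corollaries. I take these to be part of the phrase "instantiated as in". With that settled, the proof is pure addition and has no genuine analytic obstacle.
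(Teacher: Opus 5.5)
Your proposal is correct and matches the paper's proof: the paper also bounds the pressure, localization, truncation and cutoff-nonlinear entries by the cited results, splits \(\Err_{\rm nl}=\Err_{\rm nl}^{\rm cut}+\Err_{\rm nl}^{\rm rem}\), and leaves the gauge and profit entries explicit. Your reading of the reproduction entry is also the paper's: it treats that entry as identically the drift budget \(\Err_{\rm rep}\), not as a quantity bounded by \Cref{lem:finite-window-reproduction-residual-comparison}.
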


\begin{proof}
The pressure component is bounded by
\(\mathcal B_{\rm prs}^{\rm norm}\) by
\Cref{prop:combined-normalized-pressure-component-package}.  The localization,
truncation, and cutoff-nonlinear components are bounded by
\Cref{lem:finite-window-localization-budget},
\Cref{lem:finite-window-truncation-leakage-bound}, and
\Cref{lem:finite-window-nonlinear-cutoff-budget}, respectively.  The
reproduction entry is the drift budget appearing in
\Cref{lem:finite-window-reproduction-residual-comparison}.  By definition,
\[
    \Err_{\rm nl}
    =
    \Err_{\rm nl}^{\rm cut}
    +
    \Err_{\rm nl}^{\rm rem}.
\]
Substituting these bounds into the definition of
\(\Err_\Lambda(\mathfrak D)\) and leaving the gauge and profit entries
explicit gives exactly
\[
    \Err_\Lambda(\mathfrak D)
    \le
    \mathcal B_\Lambda^{\rm conc}(\mathfrak D).
\]
\end{proof}

\begin{remark}[Status of the concrete residual budget]
\Cref{lem:concrete-residual-budget-synchronization} is only a synchronization
of already defined error components.  It does not prove that
\(\mathcal B_\Lambda^{\rm conc}\) is small, does not absorb it into the
anti-phantom gap, and does not remove the remaining nonlinear, gauge, or profit
errors.  Any estimate of
\(\mathcal B_\Lambda^{\rm conc}\le
\eta_\Lambda\Dist_{\loc}+\Delta_\Lambda\) is an additional perturbative input.
\end{remark}

\begin{assumption}[Bounded finite-window package]
\label{ass:bounded-finite-window-package}
There is a finite constant \(R_\Lambda<\infty\) such that every localized
package under consideration satisfies the package-size bounds
\[
    \|\mathfrak D\|_{\loc}\le R_\Lambda,
    \qquad
    \left(
    \sum_{k\in\Lambda_{\rm sc}}\|D_k^{\loc}\|_{\calD_k^{\loc}}^2
    \right)^{1/2}
    \le R_\Lambda,
\]
and
\[
    \left(
    \sum_{k\in\Lambda_{\rm adj}}\|D_k^{\loc}\|_{\calD_k^{\loc}}^2
    \right)^{1/2}
    \le R_\Lambda .
\]
\end{assumption}

\begin{definition}[Finite-dimensional additive residual budget]
\label{def:finite-dimensional-additive-residual-budget}
In the finite-dimensional residual model where the reproduction, gauge,
profit, and nonlinear leftover terms are represented by
\[
    \Err_{\rm rep}^{\rm dy},\qquad
    \Err_{\rm gauge}^{\rm sel},\qquad
    \Err_{\rm prof},\qquad
    \Err_{\rm nl}^{\rm rem},
\]
define
\[
    \Delta_\Lambda^{\rm fd}
    :=
    R_\Lambda
    \left(
    C_{\rm rep}^{\rm dy}
    +
    C_{\rm gauge}^{\rm fd}
    +
    C_{\rm prof}^{\rm fd}
    +
    C_{\rm nl}^{\rm rem}
    \right).
\]
\end{definition}

\begin{lemma}[Bounded-window additive residual audit]
\label{lem:bounded-window-additive-residual-audit}
Assume \Cref{ass:bounded-finite-window-package} and use the finite-dimensional
residual models from
\Cref{lem:finite-dimensional-dyadic-drift-bound},
\Cref{lem:finite-dimensional-selected-gauge-mismatch-bound},
\Cref{lem:finite-dimensional-profit-discrepancy-bound}, and
\Cref{lem:finite-dimensional-nonlinear-remainder-bound}.  Then
\[
    \Err_{\rm rep}^{\rm dy}(\mathfrak D)
    +
    \Err_{\rm gauge}^{\rm sel}(\mathfrak D)
    +
    \Err_{\rm prof}(\mathfrak D)
    +
    \Err_{\rm nl}^{\rm rem}(\mathfrak D)
    \le
    \Delta_\Lambda^{\rm fd}.
\]
\end{lemma}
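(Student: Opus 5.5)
The plan is to bound each of the four residual entries separately by a finite-dimensional operator constant times a package-size quantity that \Cref{ass:bounded-finite-window-package} caps by \(R_\Lambda\). Summing the four bounds should then reproduce exactly the constant \(\Delta_\Lambda^{\rm fd}\) of \Cref{def:finite-dimensional-additive-residual-budget}. No absorption into \(\Dist_{\loc}\) is needed, since the entire right-hand side is additive.

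\textbf{Step 1: reproduction drift.} \Cref{lem:finite-dimensional-dyadic-drift-bound} gives
\[
    \Err_{\rm rep}^{\rm dy}(\mathfrak D)\le C_{\rm rep}^{\rm dy}\Bigl(\sum_{k\in\Lambda_{\rm adj}}\|D_k^{\loc}\|^2\Bigr)^{1/2}.
\]
The third bound of \Cref{ass:bounded-finite-window-package} caps the bracket by \(R_\Lambda\), so \(\Err_{\rm rep}^{\rm dy}\le C_{\rm rep}^{\rm dy}R_\Lambda\).

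\textbf{Step 2: gauge mismatch.} \Cref{lem:finite-dimensional-selected-gauge-mismatch-bound} gives \(\Err_{\rm gauge}^{\rm sel}(\mathfrak D)\le C_{\rm gauge}^{\rm fd}\|\mathfrak D\|_{\loc}\). The first package bound then yields \(C_{\rm gauge}^{\rm fd}R_\Lambda\).

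\textbf{Step 3: nonlinear remainder.} \Cref{lem:finite-dimensional-nonlinear-remainder-bound} controls \(\Err_{\rm nl}^{\rm rem}\) by \(C_{\rm nl}^{\rm rem}\) times the \(\ell^2(\Lambda_{\rm sc})\) package norm. The second package bound caps this by \(C_{\rm nl}^{\rm rem}R_\Lambda\).

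\textbf{Step 4: profit discrepancy.} This is the one step that depends on a result I have not yet seen, namely \Cref{lem:finite-dimensional-profit-discrepancy-bound}. I expect it to read \(\Err_{\rm prof}(\mathfrak D)\le C_{\rm prof}^{\rm fd}\|\mathfrak D\|_{\loc}\), or a bound of the same shape in one of the two \(\ell^2\) package norms. Whichever norm it uses, the matching clause of \Cref{ass:bounded-finite-window-package} gives \(C_{\rm prof}^{\rm fd}R_\Lambda\). The assumption was evidently stated with all three size bounds precisely so that this works regardless of the form.

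\textbf{Conclusion.} Adding the four inequalities and factoring out \(R_\Lambda\) gives the claim. Two bookkeeping points need care: the edge conventions \(C_{\rm rep}^{\rm dy}=0\) when \(\Lambda_{\rm adj}\) is empty and \(C_{\rm nl}^{\rm rem}=0\) when \(\Lambda_{\rm sc}\) is empty, which keep the sum well defined, and checking that each lemma is applied under the linear finite-dimensional datum it requires. The argument is purely algebraic and has no analytic obstacle; the only real risk is the norm used in the profit lemma, which Step 4 handles.
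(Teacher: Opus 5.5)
Your proposal is correct and follows the paper's proof essentially verbatim: each of the four residuals is bounded by its finite-dimensional constant times the matching package-size quantity from \Cref{ass:bounded-finite-window-package}, and the four bounds are then summed. Your guess in Step~4 is also right, since \Cref{lem:finite-dimensional-profit-discrepancy-bound} gives $\Err_{\rm prof}(\mathfrak D)\le C_{\rm prof}^{\rm fd}\|\mathfrak D\|_{\loc}$, so the first clause of the assumption applies.
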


\begin{proof}
By \Cref{lem:finite-dimensional-dyadic-drift-bound} and
\Cref{ass:bounded-finite-window-package},
\[
    \Err_{\rm rep}^{\rm dy}(\mathfrak D)
    \le
    C_{\rm rep}^{\rm dy}R_\Lambda .
\]
By \Cref{lem:finite-dimensional-selected-gauge-mismatch-bound},
\[
    \Err_{\rm gauge}^{\rm sel}(\mathfrak D)
    \le
    C_{\rm gauge}^{\rm fd}R_\Lambda .
\]
By \Cref{lem:finite-dimensional-profit-discrepancy-bound},
\[
    \Err_{\rm prof}(\mathfrak D)
    \le
    C_{\rm prof}^{\rm fd}R_\Lambda .
\]
Finally, by \Cref{lem:finite-dimensional-nonlinear-remainder-bound} and
\Cref{ass:bounded-finite-window-package},
\[
    \Err_{\rm nl}^{\rm rem}(\mathfrak D)
    \le
    C_{\rm nl}^{\rm rem}R_\Lambda .
\]
Summing these four estimates gives the displayed bound.
\end{proof}

\begin{remark}[Status of the additive residual audit]
\Cref{lem:bounded-window-additive-residual-audit} places the selected
finite-dimensional residuals into an additive finite-window budget under the
explicit bounded-package hypothesis.  It does not prove that these terms are
small, does not make the constants scale-uniform, and does not absorb any term
into \(\eta_\Lambda\Dist_{\loc}+\Delta_\Lambda\).  In particular, replacing
the selected gauge term by the full \(\Err_{\rm gauge}\) or replacing the
model nonlinear remainder by the full Navier--Stokes nonlinear residual would
require additional hypotheses.
\end{remark}

\begin{assumption}[Componentwise detection estimates]
\label[assumption]{ass:componentwise-detection-estimates}
For each localized package \(\mathfrak D\in\calU_\Lambda^{\loc}\), the clean
and localized detection channels satisfy
\[
    \|O_\Lambda^{\loc}\mathfrak D\|
    \ge
    \|O_\Lambda^{\cl}\Theta_\Lambda\mathfrak D\|
    -
    (
    \Err_{\rm prs}
    +
    \Err_{\rm loc}
    +
    \Err_{\rm tr}
    +
    \Err_{\rm gauge}
    ),
\]
\[
    C_E\|\calE_\Lambda^{\loc}(\mathfrak D)\|
    \ge
    C_E\|\calE_\Lambda^{\cl}(\Theta_\Lambda\mathfrak D)\|
    -
    \Err_{\rm nl},
\]
\[
    C_R\Rep_\Lambda^{\loc}(\mathfrak D)
    \ge
    C_R\Rep_\Lambda^{\cl}(\Theta_\Lambda\mathfrak D)
    -
    \Err_{\rm rep},
\]
and
\[
    [\Prof_\Lambda^{\loc}(\mathfrak D)]_+
    \ge
    [\Prof_\Lambda^{\cl}(\Theta_\Lambda\mathfrak D)]_+
    -
    \Err_{\rm prof}.
\]
\end{assumption}

\begin{lemma}[Detection-map comparison]
\label[lemma]{lem:detection-map-comparison}
Assume \Cref{ass:componentwise-detection-estimates}.  Then each
\(\mathfrak D\in\calU_\Lambda^{\loc}\) satisfies
\[
    \|\calF_\Lambda^{\loc}(\mathfrak D)\|_{\loc}
    \ge
    \|\calF_\Lambda^{\cl}(\Theta_\Lambda\mathfrak D)\|_{\cl}
    -
    \Err_\Lambda(\mathfrak D).
\]
\end{lemma}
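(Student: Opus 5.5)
The proof will be a direct summation of the four componentwise inequalities in \Cref{ass:componentwise-detection-estimates}, using the weighted sum-norm convention for the detection maps fixed in \Cref{sec:lct-preliminaries}. By that convention,
\[
    \|\calF_\Lambda^{\loc}(\mathfrak D)\|_{\loc}
    =
    \|O^{\loc}_\Lambda\mathfrak D\|
    +C_E\|\calE^{\loc}_\Lambda(\mathfrak D)\|
    +C_R\Rep^{\loc}_\Lambda(\mathfrak D)
    +[\Prof^{\loc}_\Lambda(\mathfrak D)]_+ .
\]
The clean norm \(\|\calF_\Lambda^{\cl}(\Theta_\Lambda\mathfrak D)\|_{\cl}\) has the same form, with every localized channel replaced by the clean channel evaluated at \(\Theta_\Lambda\mathfrak D\). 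The statement requires \(\mathfrak D\in\calU_\Lambda^{\loc}\), so \(\Theta_\Lambda\mathfrak D\) is defined and the assumption applies.

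First, I add the four inequalities of \Cref{ass:componentwise-detection-estimates} term by term. The left-hand sides sum exactly to \(\|\calF_\Lambda^{\loc}(\mathfrak D)\|_{\loc}\). The leading right-hand terms sum exactly to \(\|\calF_\Lambda^{\cl}(\Theta_\Lambda\mathfrak D)\|_{\cl}\). The subtracted errors sum to
\[
    \Err_{\rm prs}+\Err_{\rm loc}+\Err_{\rm tr}+\Err_{\rm gauge}
    +\Err_{\rm nl}+\Err_{\rm rep}+\Err_{\rm prof}.
\]
This is precisely \(\Err_\Lambda(\mathfrak D)\) as defined at the start of the componentwise comparison subsection. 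Each error appears exactly once: the observation channel carries pressure, localization, truncation and gauge, and the residual, reproduction and profit channels carry one error each.

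The only point needing care is bookkeeping. The constants \(C_E\) and \(C_R\) must already be built into the second and third inequalities of the assumption, which they are as stated. The individual error terms need not be nonnegative for the addition itself, but nonnegativity holds anyway, since every entry is a norm or a weighted sum of norms. I do not expect a genuine obstacle here: the lemma is the algebraic assembly step, and all analytic content sits in \Cref{ass:componentwise-detection-estimates}.
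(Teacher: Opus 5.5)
Your proposal is correct and is exactly the paper's argument: expand both detection norms via the weighted sum-norm convention, add the four inequalities of \Cref{ass:componentwise-detection-estimates}, and observe that the subtracted errors add up to \(\Err_\Lambda(\mathfrak D)\). Your aside about nonnegativity of the error terms is not needed, since adding inequalities does not depend on the signs of the subtracted terms, as you yourself note.
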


\begin{proof}
The detection norm is the weighted sum of the observability, residual,
reproduction, and positive-profit components appearing in
\(\calF_\Lambda\).  Summing the four inequalities in
\Cref{ass:componentwise-detection-estimates} gives
\[
\begin{aligned}
    \|\calF_\Lambda^{\loc}(\mathfrak D)\|_{\loc}
    &\ge
    \|\calF_\Lambda^{\cl}(\Theta_\Lambda\mathfrak D)\|_{\cl}
    -
    \Err_{\rm prs}
    -
    \Err_{\rm loc}
    -
    \Err_{\rm tr}
    -
    \Err_{\rm gauge}      \\
    &\qquad
    -
    \Err_{\rm nl}
    -
    \Err_{\rm rep}
    -
    \Err_{\rm prof}.
\end{aligned}
\]
The error on the right-hand side is exactly
\(\Err_\Lambda(\mathfrak D)\).  This proves the comparison.
\end{proof}

\begin{remark}[Status of the detection comparison]
\Cref{lem:detection-map-comparison} is an algebraic consequence of the
component estimates.  It does not prove the pressure, localization, trace,
nonlinear, reproduction, gauge, or profit estimates themselves.
\end{remark}

\subsection{Profit comparison}\label{subsec:profit-comparison}

\begin{definition}[Profit discrepancy residual]
\label{def:profit-discrepancy-residual}
For a localized package \(\mathfrak D\in\calU_\Lambda^{\loc}\), define the
finite-window profit discrepancy by
\[
    \Err_{\rm prof}(\mathfrak D)
    :=
    |\Prof_\Lambda^{\loc}(\mathfrak D)
    -
    \Prof_\Lambda^{\cl}(\Theta_\Lambda\mathfrak D)|.
\]
This is the exact scalar mismatch between the localized profit assigned to
\(\mathfrak D\) and the clean profit assigned to its charted image.
\end{definition}

\begin{lemma}[Positive-part profit comparison]
\label[lemma]{lem:profit-positive-part}
With \(\Err_{\rm prof}\) defined as in
\Cref{def:profit-discrepancy-residual},
\[
    |[\Prof_\Lambda^{\loc}(\mathfrak D)]_+
    -
    [\Prof_\Lambda^{\cl}(\Theta_\Lambda\mathfrak D)]_+|
    \le
    \Err_{\rm prof}(\mathfrak D).
\]
\end{lemma}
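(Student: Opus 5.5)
The inequality follows from the fact that the positive-part map \(x\mapsto[x]_+=\max\{x,0\}\) is \(1\)-Lipschitz on \(\R\), combined with \Cref{def:profit-discrepancy-residual}. I would first isolate the scalar claim
\[
    |[x]_+-[y]_+|\le|x-y|\qquad\text{for all }x,y\in\R,
\]
and then substitute \(x=\Prof_\Lambda^{\loc}(\mathfrak D)\) and \(y=\Prof_\Lambda^{\cl}(\Theta_\Lambda\mathfrak D)\). By definition, the right-hand side then equals \(\Err_{\rm prof}(\mathfrak D)\).

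For the scalar claim, I would use the identity \([x]_+=\tfrac12(x+|x|)\). It gives
\[
    [x]_+-[y]_+=\tfrac12\bigl((x-y)+(|x|-|y|)\bigr).
\]
The reverse triangle inequality \(\bigl||x|-|y|\bigr|\le|x-y|\) then yields the bound. A case split works equally well. If \(x,y\ge0\), the difference is \(x-y\). If \(x,y\le0\), it is \(0\). If \(x\ge0>y\), it is \(x\le x-y\), and the case \(y\ge0>x\) is symmetric.

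No hypotheses are needed beyond \(\mathfrak D\in\calU_\Lambda^{\loc}\), which makes both profits well-defined real numbers. There is no real obstacle here. The only care required is to note that the bound is two-sided. This means it supplies in particular the one-sided inequality
\[
    [\Prof_\Lambda^{\loc}(\mathfrak D)]_+\ge[\Prof_\Lambda^{\cl}(\Theta_\Lambda\mathfrak D)]_+-\Err_{\rm prof}(\mathfrak D),
\]
which is exactly the profit entry of \Cref{ass:componentwise-detection-estimates}.
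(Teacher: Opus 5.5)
Your proposal is correct and follows the paper's proof: both apply the $1$-Lipschitz property of $x\mapsto[x]_+$ to the two scalar profit values and identify the right-hand side with $\Err_{\rm prof}(\mathfrak D)$ by definition. Your verification of the Lipschitz bound, via $[x]_+=\tfrac12(x+|x|)$ or a case split, fills in a step the paper treats as standard.
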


\begin{proof}
The map \(x\mapsto[x]_+=\max\{x,0\}\) is \(1\)-Lipschitz on \(\R\).  Applying
this to the two scalar profit values gives the claim.
\end{proof}

\begin{remark}[Status of the profit discrepancy]
\Cref{def:profit-discrepancy-residual} only instantiates the profit error as
the exact finite-window scalar mismatch.  It does not prove that this mismatch
is small, scale-uniform, or absorbable into the normalized error budget.  Any
bound of the form
\[
    \Err_{\rm prof}(\mathfrak D)
    \le
    \eta_{\rm prof}\Dist_{\loc}(\mathfrak D,\Image G_\Lambda^{\loc})
    +
    \Delta_{\rm prof}
\]
is a separate structural or PDE-facing estimate.
\end{remark}

\begin{definition}[Finite-dimensional profit discrepancy operator]
\label{def:finite-dimensional-profit-discrepancy-operator}
In the linear finite-window profit datum, assume
\(\calU_\Lambda^{\loc}=\calD_\Lambda^{\loc}\), the chart
\(\Theta_\Lambda:\calD_\Lambda^{\loc}\to\calD_\Lambda^{\cl}\) is linear, and
the localized and clean profit maps are bounded linear scalar functionals
\[
    \Prof_\Lambda^{\loc}:\calD_\Lambda^{\loc}\to\R,
    \qquad
    \Prof_\Lambda^{\cl}:\calD_\Lambda^{\cl}\to\R .
\]
Define the profit discrepancy operator
\[
    K_{\rm prof}
    :=
    \Prof_\Lambda^{\loc}
    -
    \Prof_\Lambda^{\cl}\Theta_\Lambda
    :
    \calD_\Lambda^{\loc}\to\R.
\]
Then
\[
    \Err_{\rm prof}(\mathfrak D)
    =
    |K_{\rm prof}\mathfrak D|.
\]
\end{definition}

\begin{lemma}[Finite-dimensional profit discrepancy bound]
\label{lem:finite-dimensional-profit-discrepancy-bound}
Under the linear finite-window profit datum of
\Cref{def:finite-dimensional-profit-discrepancy-operator},
\[
    \|K_{\rm prof}\|_{\calD_\Lambda^{\loc}\to\R}<\infty .
\]
Consequently, with
\[
    C_{\rm prof}^{\rm fd}
    :=
    \|K_{\rm prof}\|_{\calD_\Lambda^{\loc}\to\R},
\]
every localized package \(\mathfrak D\in\calD_\Lambda^{\loc}\) satisfies
\[
    \Err_{\rm prof}(\mathfrak D)
    \le
    C_{\rm prof}^{\rm fd}\|\mathfrak D\|_{\loc}.
\]
In particular,
\[
    C_{\rm prof}^{\rm fd}
    \le
    \|\Prof_\Lambda^{\loc}\|_{\calD_\Lambda^{\loc}\to\R}
    +
    \|\Prof_\Lambda^{\cl}\|_{\calD_\Lambda^{\cl}\to\R}
    \|\Theta_\Lambda\|_{\calD_\Lambda^{\loc}\to\calD_\Lambda^{\cl}} .
\]
\end{lemma}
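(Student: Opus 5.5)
The plan is to argue exactly as in \Cref{lem:finite-dimensional-selected-gauge-mismatch-bound} and \Cref{lem:finite-dimensional-dyadic-drift-bound}, using only linearity and finite-dimensionality. First, I will observe that \(K_{\rm prof}=\Prof_\Lambda^{\loc}-\Prof_\Lambda^{\cl}\Theta_\Lambda\) is a linear functional on \(\calD_\Lambda^{\loc}\). It is the difference of a bounded linear functional and the composition of the bounded linear functional \(\Prof_\Lambda^{\cl}\) with the linear chart \(\Theta_\Lambda\). Boundedness of \(\Theta_\Lambda\) comes from \Cref{lem:bounded-chart-map}, since the datum is linear with full chart domain. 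Alternatively, it follows from the finite-dimensionality of \(\calD_\Lambda^{\loc}\), on which every linear map has finite operator norm. Either route gives \(\|K_{\rm prof}\|<\infty\).

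Second, I will derive the pointwise bound. By \Cref{def:finite-dimensional-profit-discrepancy-operator}, \(\Err_{\rm prof}(\mathfrak D)=|K_{\rm prof}\mathfrak D|\). The definition of the operator norm then gives
\[
\Err_{\rm prof}(\mathfrak D)\le\|K_{\rm prof}\|\,\|\mathfrak D\|_{\loc}=C_{\rm prof}^{\rm fd}\|\mathfrak D\|_{\loc}
\]
for every \(\mathfrak D\in\calD_\Lambda^{\loc}\).

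Third, for the explicit constant I will apply the triangle inequality together with submultiplicativity of operator norms:
\[
|K_{\rm prof}\mathfrak D|\le|\Prof_\Lambda^{\loc}\mathfrak D|+|\Prof_\Lambda^{\cl}(\Theta_\Lambda\mathfrak D)|\le\bigl(\|\Prof_\Lambda^{\loc}\|+\|\Prof_\Lambda^{\cl}\|\,\|\Theta_\Lambda\|\bigr)\|\mathfrak D\|_{\loc}.
\]
Taking the supremum over \(\|\mathfrak D\|_{\loc}\le1\) yields the stated upper bound on \(C_{\rm prof}^{\rm fd}\).

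There is no real obstacle here. The only point needing care is that \(\|\Theta_\Lambda\|_{\calD_\Lambda^{\loc}\to\calD_\Lambda^{\cl}}\) must be finite, and this is guaranteed by \Cref{lem:bounded-chart-map} under the linear full-domain hypothesis. As in the analogous remarks, I will note that the lemma asserts neither smallness nor scale-uniformity of \(C_{\rm prof}^{\rm fd}\).
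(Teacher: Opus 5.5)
Your proposal is correct and matches the paper's proof: you bound $K_{\rm prof}$ as the difference of the bounded functional $\Prof_\Lambda^{\loc}$ and the bounded composition $\Prof_\Lambda^{\cl}\Theta_\Lambda$ on a finite-dimensional domain, read off the pointwise estimate from the operator norm, and get the explicit constant from the triangle inequality and submultiplicativity. The only difference is that you apply the triangle inequality pointwise and then take the supremum, whereas the paper applies it at the level of operator norms; the content is the same.
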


\begin{proof}
The map \(K_{\rm prof}\) is the difference of the two linear scalar maps
\[
    \Prof_\Lambda^{\loc},
    \qquad
    \Prof_\Lambda^{\cl}\Theta_\Lambda:
    \calD_\Lambda^{\loc}\to\R .
\]
The first is bounded by assumption.  The second is bounded because it is the
composition of the bounded linear chart \(\Theta_\Lambda\) and the bounded
linear clean profit functional \(\Prof_\Lambda^{\cl}\).  Hence
\(K_{\rm prof}\) is a bounded linear scalar functional.  Since the domain is
finite-dimensional, its operator norm is finite.

For every \(\mathfrak D\in\calD_\Lambda^{\loc}\),
\[
    \Err_{\rm prof}(\mathfrak D)
    =
    |K_{\rm prof}\mathfrak D|
    \le
    \|K_{\rm prof}\|_{\calD_\Lambda^{\loc}\to\R}
    \|\mathfrak D\|_{\loc}.
\]
This proves the displayed bound with
\(C_{\rm prof}^{\rm fd}=\|K_{\rm prof}\|\).  The triangle inequality and
submultiplicativity of operator norms also give
\[
\begin{aligned}
    \|K_{\rm prof}\|
    &\le
    \|\Prof_\Lambda^{\loc}\|
    +
    \|\Prof_\Lambda^{\cl}\Theta_\Lambda\|  \\
    &\le
    \|\Prof_\Lambda^{\loc}\|
    +
    \|\Prof_\Lambda^{\cl}\|\,\|\Theta_\Lambda\|.
\end{aligned}
\]
\end{proof}

\begin{remark}[Status of the finite-dimensional profit bound]
\Cref{lem:finite-dimensional-profit-discrepancy-bound} is a boundedness result
for a fixed finite-window linear profit datum.  It does not prove that
\(C_{\rm prof}^{\rm fd}\) is small, does not prove scale-uniform control, and
does not absorb \(\Err_{\rm prof}\) into
\(\eta_\Lambda\Dist_{\loc}+\Delta_\Lambda\).  It also does not identify the
profit discrepancy with a Navier--Stokes energy-production mechanism; that
would require a separate PDE-facing definition of the profit functional.
\end{remark}

\subsection{Normalized error budget}

\begin{assumption}[Component normalized error bounds]
\label[assumption]{ass:component-normalized-error-bounds}
For each component
\[
    q\in
    \{
    {\rm prs},{\rm loc},{\rm tr},{\rm nl},{\rm rep},{\rm gauge},{\rm prof}
    \},
\]
there are constants \(\eta_q\ge0\) and \(\Delta_q\ge0\) such that
\[
    \Err_q(\mathfrak D)
    \le
    \eta_q
    \Dist_{\loc}(\mathfrak D,\Image G_\Lambda^{\loc})
    +
    \Delta_q
\]
for every localized package in the chart domain.  Define
\[
    \eta_\Lambda
    :=
    \sum_q\eta_q,
    \qquad
    \Delta_\Lambda
    :=
    \sum_q\Delta_q .
\]
\end{assumption}

\begin{lemma}[Error-budget normalization]
\label[lemma]{lem:error-budget-normalization}
Assume \Cref{ass:component-normalized-error-bounds}.  Then
\[
    \Err_\Lambda(\mathfrak D)
    \le
    \eta_\Lambda
    \Dist_{\loc}
    (
    \mathfrak D,\Image G_\Lambda^{\loc}
    )
    +
    \Delta_\Lambda .
\]
\end{lemma}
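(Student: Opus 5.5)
The plan is to argue termwise and then sum. By the definition of the componentwise ledger in the subsection on componentwise comparison,
\[
    \Err_\Lambda(\mathfrak D)
    =
    \sum_{q}\Err_q(\mathfrak D),
    \qquad
    q\in\{{\rm prs},{\rm loc},{\rm tr},{\rm nl},{\rm rep},{\rm gauge},{\rm prof}\},
\]
so the statement is a finite sum of seven scalar inequalities. First, fix a localized package \(\mathfrak D\) in the chart domain \(\calU_\Lambda^{\loc}\), since \Cref{ass:component-normalized-error-bounds} is only asserted there. Abbreviate \(\delta:=\Dist_{\loc}(\mathfrak D,\Image G_\Lambda^{\loc})\). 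This quantity is a finite nonnegative number, because it is an infimum of norms over a nonempty set containing \(a=0\).

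Second, for each of the seven components, invoke \Cref{ass:component-normalized-error-bounds} to get \(\Err_q(\mathfrak D)\le\eta_q\delta+\Delta_q\). All seven bounds share the same \(\delta\), since every one is normalized against the same localized quotient distance. Summing over the seven components, the right-hand sides collect as \(\bigl(\sum_q\eta_q\bigr)\delta+\sum_q\Delta_q\). By the definitions in the assumption, this is exactly \(\eta_\Lambda\delta+\Delta_\Lambda\), which gives the claim.

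There is essentially no obstacle; the argument is pure bookkeeping. The only point to check is that the entries entering \(\Err_\Lambda\) are the same objects as those named in \Cref{ass:component-normalized-error-bounds}. For example, if the nonlinear entry is instantiated as \(\Err_{\rm nl}^{\rm cut}+\Err_{\rm nl}^{\rm rem}\) per \Cref{def:concrete-residual-budget-instantiation}, the assumed bound must be for that sum. I would add a sentence noting that no smallness of \(\eta_\Lambda\) is used here: the comparison with \(c^{\cl}_\Lambda(1-\varepsilon_G)\) enters only in the main transfer theorem.
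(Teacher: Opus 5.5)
Your proposal is correct and follows the paper's own proof. Both expand \(\Err_\Lambda(\mathfrak D)\) as the sum of the seven component errors, apply \Cref{ass:component-normalized-error-bounds} to each component, and collect the coefficients into \(\eta_\Lambda=\sum_q\eta_q\) and \(\Delta_\Lambda=\sum_q\Delta_q\); your side remarks on the finiteness of the quotient distance, the consistency of the nonlinear instantiation, and the threshold \(\eta_\Lambda<c^{\cl}_\Lambda(1-\varepsilon_G)\) not being used here are accurate, though the argument does not need them.
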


\begin{proof}
By definition,
\[
    \Err_\Lambda(\mathfrak D)
    =
    \sum_q\Err_q(\mathfrak D),
    \qquad
    q\in
    \{
    {\rm prs},{\rm loc},{\rm tr},{\rm nl},{\rm rep},{\rm gauge},{\rm prof}
    \}.
\]
Applying \Cref{ass:component-normalized-error-bounds} to each component and
summing gives
\[
\begin{aligned}
    \Err_\Lambda(\mathfrak D)
    &\le
    \left(\sum_q\eta_q\right)
    \Dist_{\loc}(\mathfrak D,\Image G_\Lambda^{\loc})
    +
    \sum_q\Delta_q  \\
    &=
    \eta_\Lambda
    \Dist_{\loc}(\mathfrak D,\Image G_\Lambda^{\loc})
    +
    \Delta_\Lambda .
\end{aligned}
\]
This is the claimed normalized error budget.
\end{proof}

\begin{remark}[Status of error normalization]
The normalization lemma is algebraic once the component bounds are assumed.
The genuine localized PDE work is the proof of those component bounds in
concrete norms.  In particular, no scale-uniform pressure-tail, cutoff leakage,
truncation-tail, nonlinear-remainder, reproduction-drift, gauge-mismatch, or
profit estimate is proved here.
\end{remark}

\subsection{Normalized absorption audit}\label{sec:absorption-audit}

\begin{proposition}[Concrete-budget absorption criterion]
\label{prop:concrete-budget-absorption-criterion}
Assume the hypotheses of
\Cref{thm:main-local-to-clean-transfer} except replace
\Cref{ass:component-normalized-error-bounds} by the concrete residual-budget
synchronization hypotheses of
\Cref{lem:concrete-residual-budget-synchronization} and by the following
concrete-budget condition: there are constants \(\eta_{\rm conc}\ge0\) and
\(\Delta_{\rm conc}\ge0\) such that every localized package in the chart domain
satisfies
\[
    \mathcal B_\Lambda^{\rm conc}(\mathfrak D)
    \le
    \eta_{\rm conc}
    \Dist_{\loc}(\mathfrak D,\Image G_\Lambda^{\loc})
    +
    \Delta_{\rm conc}.
\]
Assume also that
\[
    \eta_{\rm conc}<c_\Lambda^{\cl}(1-\varepsilon_G).
\]
Then the algebraic local-to-clean anti-phantom transfer theorem applies with
\[
    \eta_\Lambda=\eta_{\rm conc},
    \qquad
    \Delta_\Lambda=\Delta_{\rm conc}.
\]
In particular,
\[
    c_\Lambda^{\loc}
    =
    c_\Lambda^{\cl}(1-\varepsilon_G)-\eta_{\rm conc}
    >0,
    \qquad
    \Delta_\Lambda'
    =
    \Delta_{\rm conc}+c_\Lambda^{\cl}\delta_G .
\]
\end{proposition}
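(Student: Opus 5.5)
The plan is to show that the concrete-budget condition yields exactly the normalized error budget of \Cref{lem:error-budget-normalization}, the only form in which \Cref{ass:component-normalized-error-bounds} enters \Cref{thm:main-local-to-clean-transfer}. Then the transfer theorem (or rather its proof, which is algebraic) runs verbatim with \(\eta_\Lambda=\eta_{\rm conc}\) and \(\Delta_\Lambda=\Delta_{\rm conc}\).

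\textbf{Step 1: normalized bound for the total residual.}
By \Cref{lem:concrete-residual-budget-synchronization}, whose hypotheses are assumed, every localized package in the chart domain satisfies \(\Err_\Lambda(\mathfrak D)\le\mathcal B_\Lambda^{\rm conc}(\mathfrak D)\). Chaining this with the assumed concrete-budget condition gives
\[
\Err_\Lambda(\mathfrak D)\le \eta_{\rm conc}\Dist_{\loc}(\mathfrak D,\Image G_\Lambda^{\loc})+\Delta_{\rm conc}.
\]
This is precisely the conclusion of \Cref{lem:error-budget-normalization}, with the stated identification of constants. The smallness condition \(\eta_{\rm conc}<c^{\cl}_\Lambda(1-\varepsilon_G)\) is then the margin condition of the main theorem, so \(c^{\loc}_\Lambda>0\).

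\textbf{Step 2: the transfer chain.}
For safety I would redo the short argument explicitly rather than only quote the theorem. First, \Cref{lem:detection-map-comparison} gives \(\|\calF^{\loc}_\Lambda(\mathfrak D)\|_{\loc}\ge\|\calF^{\cl}_\Lambda(\Theta_\Lambda\mathfrak D)\|_{\cl}-\Err_\Lambda(\mathfrak D)\). Next, apply the clean gap at \(d=\Theta_\Lambda\mathfrak D\) to get \(\|\calF^{\cl}_\Lambda(\Theta_\Lambda\mathfrak D)\|_{\cl}\ge c^{\cl}_\Lambda\Dist_{\cl}(\Theta_\Lambda\mathfrak D,\Image G^{\cl}_\Lambda)\). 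Then use \Cref{lem:quotient-distance-comparison} to bound the clean distance below by \((1-\varepsilon_G)\Dist_{\loc}-\delta_G\). Inserting Step 1 yields
\[
\|\calF^{\loc}_\Lambda(\mathfrak D)\|_{\loc}\ge \bigl(c^{\cl}_\Lambda(1-\varepsilon_G)-\eta_{\rm conc}\bigr)\Dist_{\loc}(\mathfrak D,\Image G^{\loc}_\Lambda)-\bigl(\Delta_{\rm conc}+c^{\cl}_\Lambda\delta_G\bigr).
\]
Unpacking the weighted detection norm gives the displayed conclusion with \(c^{\loc}_\Lambda\) and \(\Delta'_\Lambda\) as claimed.

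\textbf{Main obstacle.}
The proof itself is algebraic, so the only delicate point is bookkeeping: checking that the main theorem uses \Cref{ass:component-normalized-error-bounds} solely through the aggregated inequality, and not componentwise. I expect that checking this dependency will be the step needing care.

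Two further checks belong here. The reproduction entry in \(\mathcal B_\Lambda^{\rm conc}\) is the same \(\Err_{\rm rep}\) as in \Cref{ass:componentwise-detection-estimates}. The nonlinear entry is \(\Err_{\rm nl}=\Err_{\rm nl}^{\rm cut}+\Err_{\rm nl}^{\rm rem}\) by \Cref{def:concrete-residual-budget-instantiation}. If some component bound were used separately, the fallback is to route every use through \(\Err_\Lambda\), which the proof of \Cref{lem:detection-map-comparison} already does.
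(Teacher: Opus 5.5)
Your proposal is correct and follows the paper's route. \Cref{lem:concrete-residual-budget-synchronization} chained with the concrete-budget condition gives the aggregated bound $\Err_\Lambda\le\eta_{\rm conc}\Dist_{\loc}+\Delta_{\rm conc}$, and the transfer argument then yields $c_\Lambda^{\loc}$ and $\Delta_\Lambda'$ as stated. The only difference is that you re-run the chain (detection comparison, clean gap, quotient comparison) instead of citing \Cref{thm:main-local-to-clean-transfer}. This is slightly more careful, because that theorem is formally stated under the componentwise \Cref{ass:component-normalized-error-bounds}, while its proof uses only the aggregated inequality --- exactly the dependency you flagged.
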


\begin{proof}
By \Cref{lem:concrete-residual-budget-synchronization},
\[
    \Err_\Lambda(\mathfrak D)
    \le
    \mathcal B_\Lambda^{\rm conc}(\mathfrak D).
\]
The concrete-budget condition therefore gives
\[
    \Err_\Lambda(\mathfrak D)
    \le
    \eta_{\rm conc}
    \Dist_{\loc}(\mathfrak D,\Image G_\Lambda^{\loc})
    +
    \Delta_{\rm conc}.
\]
Thus the normalized error-budget input required by
\Cref{thm:main-local-to-clean-transfer} holds with
\(\eta_\Lambda=\eta_{\rm conc}\) and
\(\Delta_\Lambda=\Delta_{\rm conc}\).  The strict inequality
\(\eta_{\rm conc}<c_\Lambda^{\cl}(1-\varepsilon_G)\) is exactly the
threshold condition in the main transfer theorem.  Applying
\Cref{thm:main-local-to-clean-transfer} gives the stated constants.
\end{proof}

\begin{remark}[Absorption audit of the concrete components]
\Cref{prop:concrete-budget-absorption-criterion} is an audit criterion, not a
proof of normalized absorption.  The current status of the concrete components
is as follows.
\begin{enumerate}[label=(\arabic*),leftmargin=2em]
    \item \emph{Pressure budget.}  The pressure budget has fixed-scale
    commutator, active-source, harmonic-tail, projection, mean, and
    periodization controls.  Absorption into
    \(\eta_{\rm conc}\Dist_{\loc}+\Delta_{\rm conc}\) remains conditional.
    \item \emph{Localization budget.}  The localization budget is bounded by
    normalized CKN quantities such as \(A\), \(C\), \(D\), and \(E\).
    Absorption remains conditional.
    \item \emph{Truncation leakage.}  The truncation leakage is bounded at
    finite-window level by a projection residual norm.  Decay in the
    truncation dimension \(N\) remains conditional.
    \item \emph{Nonlinear cutoff mismatch.}  The cutoff-flux mismatch is
    bounded by \(C^{2/3}\) in the fixed normalized model.  Absorption remains
    conditional.
    \item \emph{Dyadic reproduction drift.}  The dyadic reproduction drift is
    bounded by a finite-dimensional operator norm.  Smallness of that drift
    remains conditional.
    \item \emph{Gauge mismatch.}  A finite-dimensional selected-gauge bound is
    available.  Identifying it with the full gauge error and proving smallness
    remain conditional.
    \item \emph{Profit discrepancy.}  The profit discrepancy is an exact scalar
    residual, and the positive-part Lipschitz comparison and finite-dimensional
    profit bound are proved.  Absorption remains conditional.
\end{enumerate}
No component in this list is currently proved small, scale-uniform, or
absorbable.
\end{remark}

\begin{remark}[Pressure target selected from the audit]
The next honest theorem target is a concrete sufficient condition for one
component of the absorption criterion.  The target selected from the audit is
the pressure budget: prove that, under an explicit pressure-tail compatibility
hypothesis,
\[
    \mathcal B_{\rm prs}^{\rm norm}(\mathfrak D)
    \le
    \eta_{\rm prs}^{\rm conc}
    \Dist_{\loc}(\mathfrak D,\Image G_\Lambda^{\loc})
    +
    \Delta_{\rm prs}^{\rm conc}.
\]
This would not prove the full normalized absorption criterion, but it would
turn the largest current pressure placeholder into a precise sufficient
condition.
\end{remark}

\subsection{Pressure-budget sufficient condition}

The pressure audit can now be converted into a precise conditional statement.
No pressure smallness is asserted.  The point is only to identify a concrete
finite-window hypothesis under which the already defined pressure budget has
the normalized absorbed-plus-additive form required by the transfer theorem.

\begin{definition}[Concrete pressure-budget entries]
\label{def:concrete-pressure-budget-entries}
For each \(k\in\Lambda_{\rm sc}\), write the summands in
\(\mathcal B_{\rm prs}^{\rm norm}\) from
\Cref{prop:combined-normalized-pressure-component-package} as
\[
\begin{aligned}
    T_{{\rm harm},k}(\mathfrak D)
    &:=
    |I_k|^{1/6}|B_{1/2}|^{1/6}
    \left(\frac23\right)^{M+1}
    \|p_k^{\rm harm}\|_{L^2(I_k;L^2(B_{3/4}))},\\
    T_{{\rm comm},k}(\mathfrak D)
    &:=
    C_{\rm nlp}\sum_{i,j=1}^3
    \|(1-\eta_k)f_{k,ij}\|_{L^{3/2}(I_k;L^{3/2}(A_{3/4,1}))},\\
    T_{{\rm active},k}(\mathfrak D)
    &:=
    C_{\rm CZ}\sum_{i,j=1}^3
    \|f_{k,ij}-F_{k,ij}^{\cl}\|_{L^{3/2}(I_k;L^{3/2}(B_1))},\\
    T_{{\rm proj},k}(\mathfrak D)
    &:=
    \Err_{{\rm proj},k},\\
    T_{{\rm mean},k}(\mathfrak D)
    &:=
    \Err_{{\rm mean},k},\\
    T_{{\rm per},k}(\mathfrak D)
    &:=
    \Err_{{\rm per},k}.
\end{aligned}
\]
Thus
\[
    \mathcal B_{\rm prs}^{\rm norm}(\mathfrak D)
    =
    \sum_{k\in\Lambda_{\rm sc}}\omega_{{\rm prs},k}
    \sum_{\rho\in\mathcal R_{\rm prs}}
    T_{\rho,k}(\mathfrak D),
    \qquad
    \mathcal R_{\rm prs}
    :=
    \{
    {\rm harm},{\rm comm},{\rm active},{\rm proj},{\rm mean},{\rm per}
    \}.
\]
\end{definition}

\begin{assumption}[Pressure-tail compatibility with the localized quotient]
\label{ass:pressure-tail-quotient-compatibility}
For every pressure entry
\(\rho\in\mathcal R_{\rm prs}\) and every scale
\(k\in\Lambda_{\rm sc}\), there are constants
\(\eta_{{\rm prs},\rho,k}^{\rm conc}\ge0\) and
\(\Delta_{{\rm prs},\rho,k}^{\rm conc}\ge0\) such that every localized package
in the chart domain satisfies
\[
    \omega_{{\rm prs},k}T_{\rho,k}(\mathfrak D)
    \le
    \eta_{{\rm prs},\rho,k}^{\rm conc}
    \Dist_{\loc}(\mathfrak D,\Image G_\Lambda^{\loc})
    +
    \Delta_{{\rm prs},\rho,k}^{\rm conc}.
\]
\end{assumption}

\begin{proposition}[Pressure-budget sufficient condition for absorption]
\label{prop:pressure-budget-sufficient-condition}
Assume the fixed-geometry hypotheses of
\Cref{prop:combined-normalized-pressure-component-package} and the pressure
compatibility hypothesis
\Cref{ass:pressure-tail-quotient-compatibility}.  Define
\[
    \eta_{\rm prs}^{\rm conc}
    :=
    \sum_{k\in\Lambda_{\rm sc}}
    \sum_{\rho\in\mathcal R_{\rm prs}}
    \eta_{{\rm prs},\rho,k}^{\rm conc},
    \qquad
    \Delta_{\rm prs}^{\rm conc}
    :=
    \sum_{k\in\Lambda_{\rm sc}}
    \sum_{\rho\in\mathcal R_{\rm prs}}
    \Delta_{{\rm prs},\rho,k}^{\rm conc}.
\]
Then
\[
    \mathcal B_{\rm prs}^{\rm norm}(\mathfrak D)
    \le
    \eta_{\rm prs}^{\rm conc}
    \Dist_{\loc}(\mathfrak D,\Image G_\Lambda^{\loc})
    +
    \Delta_{\rm prs}^{\rm conc}.
\]
Consequently, whenever the pressure budget is instantiated as in
\Cref{prop:combined-normalized-pressure-component-package},
\[
    \Err_{\rm prs}(\mathfrak D)
    \le
    \eta_{\rm prs}^{\rm conc}
    \Dist_{\loc}(\mathfrak D,\Image G_\Lambda^{\loc})
    +
    \Delta_{\rm prs}^{\rm conc}.
\]
\end{proposition}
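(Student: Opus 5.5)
The plan is a direct summation argument, parallel to the proof of \Cref{lem:pressure-error-absorption}, with the concrete entries of \Cref{def:concrete-pressure-budget-entries} in place of the abstract components. First I will record the decomposition
\[
    \mathcal B_{\rm prs}^{\rm norm}(\mathfrak D)
    =
    \sum_{k\in\Lambda_{\rm sc}}\sum_{\rho\in\mathcal R_{\rm prs}}
    \omega_{{\rm prs},k}T_{\rho,k}(\mathfrak D).
\]
This is a relabelling of the defining sum in \Cref{prop:combined-normalized-pressure-component-package}. The weight \(\omega_{{\rm prs},k}\) multiplies the whole bracket, so distributing it over the six summands is legitimate; no sign issue arises because \(\omega_{{\rm prs},k}\ge0\) and each \(T_{\rho,k}\ge0\).

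Next, for each fixed pair \((\rho,k)\) I will apply \Cref{ass:pressure-tail-quotient-compatibility} to the term \(\omega_{{\rm prs},k}T_{\rho,k}(\mathfrak D)\). This bounds it by \(\eta_{{\rm prs},\rho,k}^{\rm conc}\Dist_{\loc}+\Delta_{{\rm prs},\rho,k}^{\rm conc}\). Since \(\Lambda_{\rm sc}\) and \(\mathcal R_{\rm prs}\) are finite, summing these inequalities gives finite totals. The coefficient of \(\Dist_{\loc}(\mathfrak D,\Image G_\Lambda^{\loc})\) collects exactly into \(\eta_{\rm prs}^{\rm conc}\), and the additive constants collect into \(\Delta_{\rm prs}^{\rm conc}\). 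This proves the first displayed inequality.

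For the consequence, I will invoke \Cref{prop:combined-normalized-pressure-component-package}. Its fixed-geometry hypotheses are assumed here, and it gives \(\Err_{\rm prs}(\mathfrak D)\le\mathcal B_{\rm prs}^{\rm norm}(\mathfrak D)\) whenever the pressure budget is instantiated by the normalized harmonic-tail, commutator and active-source entries. Chaining this with the bound just obtained yields the second inequality.

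I do not expect a genuine obstacle. The only point needing care is bookkeeping: the weights \(\omega_{{\rm prs},k}\) must be absorbed into the left side of \Cref{ass:pressure-tail-quotient-compatibility} exactly as stated, with no extra weight left when summing. The projection, mean and periodization entries are treated on the same footing as the estimated entries via \(T_{{\rm proj},k},T_{{\rm mean},k},T_{{\rm per},k}\), so no separate argument is needed for them.
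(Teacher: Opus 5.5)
Your proposal is correct and follows the paper's proof essentially step for step. You rewrite $\mathcal B_{\rm prs}^{\rm norm}$ as the finite double sum of weighted entries $\omega_{{\rm prs},k}T_{\rho,k}$, apply the compatibility assumption to each weighted summand, and sum over $k$ and $\rho$. For the consequence, you chain with $\Err_{\rm prs}\le\mathcal B_{\rm prs}^{\rm norm}$ from the combined normalized pressure-component package.
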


\begin{proof}
By \Cref{def:concrete-pressure-budget-entries},
\[
    \mathcal B_{\rm prs}^{\rm norm}(\mathfrak D)
    =
    \sum_{k\in\Lambda_{\rm sc}}\omega_{{\rm prs},k}
    \sum_{\rho\in\mathcal R_{\rm prs}}
    T_{\rho,k}(\mathfrak D).
\]
Applying
\Cref{ass:pressure-tail-quotient-compatibility} to each weighted summand gives
\[
\begin{aligned}
    \mathcal B_{\rm prs}^{\rm norm}(\mathfrak D)
    &\le
    \sum_{k\in\Lambda_{\rm sc}}
    \sum_{\rho\in\mathcal R_{\rm prs}}
    \left(
    \eta_{{\rm prs},\rho,k}^{\rm conc}
    \Dist_{\loc}(\mathfrak D,\Image G_\Lambda^{\loc})
    +
    \Delta_{{\rm prs},\rho,k}^{\rm conc}
    \right)\\
    &=
    \eta_{\rm prs}^{\rm conc}
    \Dist_{\loc}(\mathfrak D,\Image G_\Lambda^{\loc})
    +
    \Delta_{\rm prs}^{\rm conc}.
\end{aligned}
\]
The final pressure-error estimate follows from
\Cref{prop:combined-normalized-pressure-component-package}, which gives
\(\Err_{\rm prs}\le\mathcal B_{\rm prs}^{\rm norm}\) under the stated
fixed-geometry pressure instantiation.
\end{proof}

\begin{definition}[Non-pressure concrete remainder budget]
\label{def:non-pressure-concrete-remainder-budget}
The concrete budget left after the pressure component is
\[
\begin{aligned}
    \mathcal B_{\Lambda,{\rm nonprs}}^{\rm conc}(\mathfrak D)
    :=
    &\mathcal B_{\rm loc}^{\rm norm}(\mathfrak D)
    +
    \mathcal B_{\rm tr}^{\rm norm}(\mathfrak D)
    +
    \mathcal B_{\rm nl}^{\rm cut}(\mathfrak D)\\
    &+
    \Err_{\rm nl}^{\rm rem}(\mathfrak D)
    +
    \Err_{\rm rep}(\mathfrak D)
    +
    \Err_{\rm gauge}(\mathfrak D)
    +
    \Err_{\rm prof}(\mathfrak D).
\end{aligned}
\]
Thus
\[
    \mathcal B_\Lambda^{\rm conc}
    =
    \mathcal B_{\rm prs}^{\rm norm}
    +
    \mathcal B_{\Lambda,{\rm nonprs}}^{\rm conc}.
\]
\end{definition}

\begin{corollary}[Pressure-isolated concrete-budget criterion]
\label{cor:pressure-isolated-concrete-budget-criterion}
Assume the hypotheses of
\Cref{prop:pressure-budget-sufficient-condition}.  Suppose, in addition, that
the non-pressure concrete remainder satisfies
\[
    \mathcal B_{\Lambda,{\rm nonprs}}^{\rm conc}(\mathfrak D)
    \le
    \eta_{\rm nonprs}^{\rm conc}
    \Dist_{\loc}(\mathfrak D,\Image G_\Lambda^{\loc})
    +
    \Delta_{\rm nonprs}^{\rm conc}.
\]
If
\[
    \eta_{\rm prs}^{\rm conc}
    +
    \eta_{\rm nonprs}^{\rm conc}
    <
    c_\Lambda^{\cl}(1-\varepsilon_G),
\]
then the concrete-budget absorption criterion
\Cref{prop:concrete-budget-absorption-criterion} applies with
\[
    \eta_{\rm conc}
    =
    \eta_{\rm prs}^{\rm conc}
    +
    \eta_{\rm nonprs}^{\rm conc},
    \qquad
    \Delta_{\rm conc}
    =
    \Delta_{\rm prs}^{\rm conc}
    +
    \Delta_{\rm nonprs}^{\rm conc}.
\]
\end{corollary}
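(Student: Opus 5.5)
The plan is to reduce the corollary to \Cref{prop:concrete-budget-absorption-criterion} by checking its concrete-budget condition with the stated constants. The argument is purely additive and uses the splitting of \Cref{def:non-pressure-concrete-remainder-budget}.

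First, I would fix a localized package \(\mathfrak D\) in the chart domain and invoke \Cref{prop:pressure-budget-sufficient-condition}, whose hypotheses are assumed. This gives
\[
    \mathcal B_{\rm prs}^{\rm norm}(\mathfrak D)
    \le
    \eta_{\rm prs}^{\rm conc}\Dist_{\loc}(\mathfrak D,\Image G_\Lambda^{\loc})
    +\Delta_{\rm prs}^{\rm conc}.
\]
Second, I would add this to the assumed non-pressure bound. The identity \(\mathcal B_\Lambda^{\rm conc}=\mathcal B_{\rm prs}^{\rm norm}+\mathcal B_{\Lambda,{\rm nonprs}}^{\rm conc}\) from \Cref{def:non-pressure-concrete-remainder-budget} then yields
\[
    \mathcal B_\Lambda^{\rm conc}(\mathfrak D)
    \le
    (\eta_{\rm prs}^{\rm conc}+\eta_{\rm nonprs}^{\rm conc})
    \Dist_{\loc}(\mathfrak D,\Image G_\Lambda^{\loc})
    +\Delta_{\rm prs}^{\rm conc}+\Delta_{\rm nonprs}^{\rm conc}.
\]
This is exactly the concrete-budget condition with the displayed \(\eta_{\rm conc}\) and \(\Delta_{\rm conc}\). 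Both constants are nonnegative because they are sums of nonnegative constants.

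Third, I would observe that the assumed strict inequality \(\eta_{\rm prs}^{\rm conc}+\eta_{\rm nonprs}^{\rm conc}<c_\Lambda^{\cl}(1-\varepsilon_G)\) is precisely the threshold \(\eta_{\rm conc}<c_\Lambda^{\cl}(1-\varepsilon_G)\) required there. The criterion therefore applies.

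The only real point of care is bookkeeping rather than analysis. The remaining hypotheses of \Cref{prop:concrete-budget-absorption-criterion} must be in force: the hypotheses of the main transfer theorem other than the normalized error bounds, and the synchronization hypotheses of \Cref{lem:concrete-residual-budget-synchronization}. I would read these as implicitly part of the corollary's setting, since it refers to that criterion applying. I would also check that the pressure instantiation used in \Cref{prop:pressure-budget-sufficient-condition} is the same one entering \(\mathcal B_\Lambda^{\rm conc}\), so that the decomposition is consistent.
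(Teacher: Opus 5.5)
Your proposal is correct and follows essentially the same route as the paper: split $\mathcal B_\Lambda^{\rm conc}=\mathcal B_{\rm prs}^{\rm norm}+\mathcal B_{\Lambda,{\rm nonprs}}^{\rm conc}$, bound the pressure summand by \Cref{prop:pressure-budget-sufficient-condition} and the remainder by the assumed estimate, sum the two, and note that the strict coefficient inequality is exactly the threshold in \Cref{prop:concrete-budget-absorption-criterion}. Your remark that the other hypotheses of that criterion (the transfer-theorem inputs other than the normalized error bounds, and the synchronization hypotheses) must be in force is a fair reading of what the paper leaves implicit.
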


\begin{proof}
By \Cref{def:non-pressure-concrete-remainder-budget},
\[
    \mathcal B_\Lambda^{\rm conc}
    =
    \mathcal B_{\rm prs}^{\rm norm}
    +
    \mathcal B_{\Lambda,{\rm nonprs}}^{\rm conc}.
\]
Use \Cref{prop:pressure-budget-sufficient-condition} on the pressure term and
the displayed hypothesis on the non-pressure remainder.  Summing the two
estimates gives
\[
    \mathcal B_\Lambda^{\rm conc}(\mathfrak D)
    \le
    \left(
    \eta_{\rm prs}^{\rm conc}
    +
    \eta_{\rm nonprs}^{\rm conc}
    \right)
    \Dist_{\loc}(\mathfrak D,\Image G_\Lambda^{\loc})
    +
    \Delta_{\rm prs}^{\rm conc}
    +
    \Delta_{\rm nonprs}^{\rm conc}.
\]
The strict coefficient inequality is exactly the threshold needed in
\Cref{prop:concrete-budget-absorption-criterion}.
\end{proof}

\begin{remark}[Status of pressure-budget compatibility]
\Cref{prop:pressure-budget-sufficient-condition} proves only that the explicit
pressure entries imply a pressure-level absorbed-plus-additive bound once the
component compatibility estimates are supplied.  It does not prove any of
those compatibility estimates, does not make
\(\eta_{\rm prs}^{\rm conc}\) small, and does not establish scale-uniform
pressure-tail decay.  The remaining honest absorption work is now localized:
one must prove component compatibility for the pressure entries or move to the
next component of
\(\mathcal B_{\Lambda,{\rm nonprs}}^{\rm conc}\).
\end{remark}

\subsection{Localization-budget sufficient condition}

The next component in the concrete budget is the localization leakage budget.
The fixed-scale estimates above already show that the localization leakage is
bounded by CKN-normalized quantities.  The remaining absorption question is
whether those normalized quantities are compatible with the localized quotient
distance.  This subsection records that compatibility as an explicit
hypothesis and proves only the resulting finite-window summation statement.

\begin{definition}[Concrete localization-budget entries]
\label{def:concrete-localization-budget-entries}
For each \(k\in\Lambda_{\rm sc}\), write the summands in
\(\mathcal B_{\rm loc}^{\rm norm}\) from
\Cref{def:normalized-localization-budget} as
\[
\begin{aligned}
    L_{{\rm en},k}(\mathfrak D)
    &:=
    C_\chi\omega_{{\rm en},k} A_k,\\
    L_{{\rm flux},k}(\mathfrak D)
    &:=
    C_\chi\omega_{{\rm flux},k} C_k,\\
    L_{{\rm prs},k}(\mathfrak D)
    &:=
    C_\chi\omega_{{\rm prs},k}^{\loc} C_k^{1/3}D_k^{2/3},\\
    L_{{\rm mom},k}(\mathfrak D)
    &:=
    C_\chi\omega_{{\rm mom},k}
    \bigl(
        A_k^{1/2}
        +
        E_k^{1/2}
        +
        C_k^{2/3}
        +
        D_k^{2/3}
    \bigr).
\end{aligned}
\]
Thus
\[
    \mathcal B_{\rm loc}^{\rm norm}(\mathfrak D)
    =
    \sum_{k\in\Lambda_{\rm sc}}
    \sum_{\sigma\in\mathcal R_{\rm loc}}
    L_{\sigma,k}(\mathfrak D),
    \qquad
    \mathcal R_{\rm loc}
    :=
    \{
    {\rm en},{\rm flux},{\rm prs},{\rm mom}
    \}.
\]
\end{definition}

\begin{assumption}[Localization--CKN compatibility with the localized quotient]
\label{ass:localization-ckn-quotient-compatibility}
For every localization entry
\(\sigma\in\mathcal R_{\rm loc}\) and every scale
\(k\in\Lambda_{\rm sc}\), there are constants
\(\eta_{{\rm loc},\sigma,k}^{\rm conc}\ge0\) and
\(\Delta_{{\rm loc},\sigma,k}^{\rm conc}\ge0\) such that every localized
package in the chart domain satisfies
\[
    L_{\sigma,k}(\mathfrak D)
    \le
    \eta_{{\rm loc},\sigma,k}^{\rm conc}
    \Dist_{\loc}(\mathfrak D,\Image G_\Lambda^{\loc})
    +
    \Delta_{{\rm loc},\sigma,k}^{\rm conc}.
\]
\end{assumption}

\begin{proposition}[Localization-budget sufficient condition for absorption]
\label{prop:localization-budget-sufficient-condition}
Assume the hypotheses of
\Cref{lem:finite-window-localization-budget} and the compatibility hypothesis
\Cref{ass:localization-ckn-quotient-compatibility}.  Define
\[
    \eta_{\rm loc}^{\rm conc}
    :=
    \sum_{k\in\Lambda_{\rm sc}}
    \sum_{\sigma\in\mathcal R_{\rm loc}}
    \eta_{{\rm loc},\sigma,k}^{\rm conc},
    \qquad
    \Delta_{\rm loc}^{\rm conc}
    :=
    \sum_{k\in\Lambda_{\rm sc}}
    \sum_{\sigma\in\mathcal R_{\rm loc}}
    \Delta_{{\rm loc},\sigma,k}^{\rm conc}.
\]
Then
\[
    \mathcal B_{\rm loc}^{\rm norm}(\mathfrak D)
    \le
    \eta_{\rm loc}^{\rm conc}
    \Dist_{\loc}(\mathfrak D,\Image G_\Lambda^{\loc})
    +
    \Delta_{\rm loc}^{\rm conc}.
\]
Consequently,
\[
    \Err_{\rm loc}(\mathfrak D)
    \le
    \eta_{\rm loc}^{\rm conc}
    \Dist_{\loc}(\mathfrak D,\Image G_\Lambda^{\loc})
    +
    \Delta_{\rm loc}^{\rm conc}.
\]
\end{proposition}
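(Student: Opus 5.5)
The argument mirrors the proof of \Cref{prop:pressure-budget-sufficient-condition}. It is a finite summation followed by one appeal to the already proved localization budget; no new PDE estimate is needed.

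First, rewrite \(\mathcal B_{\rm loc}^{\rm norm}(\mathfrak D)\) using \Cref{def:concrete-localization-budget-entries} as the finite double sum of the entries \(L_{\sigma,k}(\mathfrak D)\) over \(k\in\Lambda_{\rm sc}\) and \(\sigma\in\mathcal R_{\rm loc}=\{{\rm en},{\rm flux},{\rm prs},{\rm mom}\}\). This is a bookkeeping identity: it only distributes the factor \(C_\chi\) and the nonnegative weights from \Cref{def:normalized-localization-budget} into the four summands, and that should be checked term by term. Next, apply \Cref{ass:localization-ckn-quotient-compatibility} to each of the finitely many entries. Summing the resulting inequalities over \(k\) and \(\sigma\) and collecting coefficients gives
\[
\mathcal B_{\rm loc}^{\rm norm}(\mathfrak D)\le \eta_{\rm loc}^{\rm conc}\Dist_{\loc}(\mathfrak D,\Image G_\Lambda^{\loc})+\Delta_{\rm loc}^{\rm conc},
\]
where \(\eta_{\rm loc}^{\rm conc}\) and \(\Delta_{\rm loc}^{\rm conc}\) are the double sums defined in the statement. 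Finiteness of \(\Lambda_{\rm sc}\) and of \(\mathcal R_{\rm loc}\) ensures both constants are finite. Because the quotient distance appears linearly and is nonnegative, the coefficients add with no further care.

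For the consequence, invoke \Cref{lem:finite-window-localization-budget}. Under its hypotheses, namely those of \Cref{cor:ckn-normalized-leakage-bounds} and \Cref{cor:ckn-normalized-momentum-residual-leakage-bound} for each component, it yields \(\Err_{\rm loc}(\mathfrak D)\le\mathcal B_{\rm loc}^{\rm norm}(\mathfrak D)\). Chaining this with the bound above gives the stated estimate for \(\Err_{\rm loc}\).

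There is no genuine obstacle, since all the analytic content sits in the compatibility hypothesis. The only point needing care is that the compatibility hypothesis and the lemma apply to the same packages. The lemma holds for packages whose normalized components satisfy the fixed-scale hypotheses, while the compatibility bound is stated on the chart domain. I would therefore prove the proposition for packages in the chart domain whose components satisfy the hypotheses of \Cref{lem:finite-window-localization-budget}, which is the intended reading of the statement.
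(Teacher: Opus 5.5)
Your proposal is correct and matches the paper's proof: it writes $\mathcal B_{\rm loc}^{\rm norm}$ as the finite double sum of the entries $L_{\sigma,k}$, applies \Cref{ass:localization-ckn-quotient-compatibility} termwise and sums, then chains with \Cref{lem:finite-window-localization-budget} to get $\Err_{\rm loc}\le\mathcal B_{\rm loc}^{\rm norm}$. Your caveat about domains is a fair reading of what the paper leaves implicit. The final estimate is meant for packages in the chart domain whose normalized components also satisfy that lemma's fixed-scale hypotheses.
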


\begin{proof}
By \Cref{def:concrete-localization-budget-entries},
\[
    \mathcal B_{\rm loc}^{\rm norm}(\mathfrak D)
    =
    \sum_{k\in\Lambda_{\rm sc}}
    \sum_{\sigma\in\mathcal R_{\rm loc}}
    L_{\sigma,k}(\mathfrak D).
\]
Applying
\Cref{ass:localization-ckn-quotient-compatibility} to each summand and then
summing gives
\[
\begin{aligned}
    \mathcal B_{\rm loc}^{\rm norm}(\mathfrak D)
    &\le
    \sum_{k\in\Lambda_{\rm sc}}
    \sum_{\sigma\in\mathcal R_{\rm loc}}
    \left(
    \eta_{{\rm loc},\sigma,k}^{\rm conc}
    \Dist_{\loc}(\mathfrak D,\Image G_\Lambda^{\loc})
    +
    \Delta_{{\rm loc},\sigma,k}^{\rm conc}
    \right)\\
    &=
    \eta_{\rm loc}^{\rm conc}
    \Dist_{\loc}(\mathfrak D,\Image G_\Lambda^{\loc})
    +
    \Delta_{\rm loc}^{\rm conc}.
\end{aligned}
\]
The final estimate follows from
\Cref{lem:finite-window-localization-budget}, which gives
\(\Err_{\rm loc}\le\mathcal B_{\rm loc}^{\rm norm}\) under the stated
fixed-scale localization hypotheses.
\end{proof}

\begin{definition}[Post-pressure-localization concrete remainder budget]
\label{def:post-pressure-localization-remainder-budget}
After the pressure and localization components have been isolated, define
\[
\begin{aligned}
    \mathcal B_{\Lambda,{\rm rem2}}^{\rm conc}(\mathfrak D)
    :=
    &\mathcal B_{\rm tr}^{\rm norm}(\mathfrak D)
    +
    \mathcal B_{\rm nl}^{\rm cut}(\mathfrak D)
    +
    \Err_{\rm nl}^{\rm rem}(\mathfrak D)\\
    &+
    \Err_{\rm rep}(\mathfrak D)
    +
    \Err_{\rm gauge}(\mathfrak D)
    +
    \Err_{\rm prof}(\mathfrak D).
\end{aligned}
\]
Hence
\[
    \mathcal B_\Lambda^{\rm conc}
    =
    \mathcal B_{\rm prs}^{\rm norm}
    +
    \mathcal B_{\rm loc}^{\rm norm}
    +
    \mathcal B_{\Lambda,{\rm rem2}}^{\rm conc}.
\]
\end{definition}

\begin{corollary}[Pressure-localization concrete-budget criterion]
\label{cor:pressure-localization-concrete-budget-criterion}
Assume the hypotheses of
\Cref{prop:pressure-budget-sufficient-condition} and
\Cref{prop:localization-budget-sufficient-condition}.  Suppose, in addition,
that the remaining concrete budget satisfies
\[
    \mathcal B_{\Lambda,{\rm rem2}}^{\rm conc}(\mathfrak D)
    \le
    \eta_{\rm rem2}^{\rm conc}
    \Dist_{\loc}(\mathfrak D,\Image G_\Lambda^{\loc})
    +
    \Delta_{\rm rem2}^{\rm conc}.
\]
If
\[
    \eta_{\rm prs}^{\rm conc}
    +
    \eta_{\rm loc}^{\rm conc}
    +
    \eta_{\rm rem2}^{\rm conc}
    <
    c_\Lambda^{\cl}(1-\varepsilon_G),
\]
then the concrete-budget absorption criterion applies with
\[
    \eta_{\rm conc}
    =
    \eta_{\rm prs}^{\rm conc}
    +
    \eta_{\rm loc}^{\rm conc}
    +
    \eta_{\rm rem2}^{\rm conc},
    \qquad
    \Delta_{\rm conc}
    =
    \Delta_{\rm prs}^{\rm conc}
    +
    \Delta_{\rm loc}^{\rm conc}
    +
    \Delta_{\rm rem2}^{\rm conc}.
\]
\end{corollary}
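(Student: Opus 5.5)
The plan is to reduce the corollary to \Cref{prop:concrete-budget-absorption-criterion}, exactly as \Cref{cor:pressure-isolated-concrete-budget-criterion} was reduced, but with the concrete budget split into three pieces instead of two. The starting point is the identity recorded in \Cref{def:post-pressure-localization-remainder-budget},
\[
    \mathcal B_\Lambda^{\rm conc}
    =
    \mathcal B_{\rm prs}^{\rm norm}
    +
    \mathcal B_{\rm loc}^{\rm norm}
    +
    \mathcal B_{\Lambda,{\rm rem2}}^{\rm conc},
\]
which holds by direct comparison with \Cref{def:concrete-residual-budget-instantiation}, since the summands of \(\mathcal B_\Lambda^{\rm conc}\) are partitioned among the three groups.

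Next I bound each piece separately, pointwise for every localized package \(\mathfrak D\) in the chart domain. The pressure piece is controlled by \Cref{prop:pressure-budget-sufficient-condition}, which gives \(\mathcal B_{\rm prs}^{\rm norm}\le\eta_{\rm prs}^{\rm conc}\Dist_{\loc}+\Delta_{\rm prs}^{\rm conc}\). The localization piece is controlled by \Cref{prop:localization-budget-sufficient-condition}, using its first conclusion on \(\mathcal B_{\rm loc}^{\rm norm}\) rather than the consequence for \(\Err_{\rm loc}\). The remaining piece is controlled directly by the hypothesis on \(\mathcal B_{\Lambda,{\rm rem2}}^{\rm conc}\). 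Summing the three inequalities yields
\[
    \mathcal B_\Lambda^{\rm conc}(\mathfrak D)
    \le
    \eta_{\rm conc}\Dist_{\loc}(\mathfrak D,\Image G_\Lambda^{\loc})
    +
    \Delta_{\rm conc},
\]
with \(\eta_{\rm conc}\) and \(\Delta_{\rm conc}\) exactly as stated in the corollary.

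Finally, the assumed strict inequality on \(\eta_{\rm prs}^{\rm conc}+\eta_{\rm loc}^{\rm conc}+\eta_{\rm rem2}^{\rm conc}\) is precisely the threshold \(\eta_{\rm conc}<c_\Lambda^{\cl}(1-\varepsilon_G)\). Together with the concrete-budget condition just derived, this verifies every hypothesis of \Cref{prop:concrete-budget-absorption-criterion}, which then applies.

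There is no genuine obstacle; the argument is pure bookkeeping. The one point needing care is that the phrase ``applies'' implicitly carries over the remaining hypotheses of \Cref{prop:concrete-budget-absorption-criterion}. These are the hypotheses of the main transfer theorem apart from the normalized error bounds, namely the clean gap, quotient lifting and componentwise detection estimates, together with the synchronization hypotheses of \Cref{lem:concrete-residual-budget-synchronization}. I would read the corollary, as in \Cref{cor:pressure-isolated-concrete-budget-criterion}, as assuming these implicitly, and state that reading explicitly in the proof.
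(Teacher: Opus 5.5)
Your proposal is correct and matches the paper's proof: split $\mathcal B_\Lambda^{\rm conc}$ via \Cref{def:post-pressure-localization-remainder-budget}, bound the pressure, localization and remaining pieces by \Cref{prop:pressure-budget-sufficient-condition}, \Cref{prop:localization-budget-sufficient-condition} and the displayed hypothesis, sum the three bounds, and invoke the threshold of \Cref{prop:concrete-budget-absorption-criterion}. Your explicit note that the other hypotheses of that criterion are carried over implicitly is a helpful clarification. Those hypotheses are the clean gap, quotient lifting, componentwise detection and synchronization, and the paper leaves this point unstated.
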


\begin{proof}
Use the decomposition in
\Cref{def:post-pressure-localization-remainder-budget}.  Apply
\Cref{prop:pressure-budget-sufficient-condition} to the pressure component,
\Cref{prop:localization-budget-sufficient-condition} to the localization
component, and the displayed hypothesis to the remaining component.  Summing
the three inequalities gives the concrete-budget bound with the stated
\(\eta_{\rm conc}\) and \(\Delta_{\rm conc}\).  The strict coefficient
condition is the threshold in
\Cref{prop:concrete-budget-absorption-criterion}.
\end{proof}

\begin{remark}[Status of localization-budget compatibility]
\Cref{prop:localization-budget-sufficient-condition} proves only that the
already derived CKN-normalized localization entries yield an
absorbed-plus-additive localization bound once those entries are compatible
with the localized quotient distance.  It does not prove smallness of
\(A_k\), \(C_k\), \(D_k\), or \(E_k\), does not give scale-uniform
localization control, and does not absorb the remaining truncation, nonlinear,
reproduction, gauge, or profit terms.
\end{remark}

\subsection{Truncation-budget sufficient condition}

The truncation residual is already a finite-window projection residual in
Banach observation spaces.  The bounded projection lemma makes the residual
well defined, but it does not imply approximation decay in \(N\).  The honest
absorption input is therefore a truncation-tail compatibility hypothesis,
recorded below.

\begin{definition}[Concrete truncation-budget entries]
\label{def:concrete-truncation-budget-entries}
For each \(k\in\Lambda_{\rm sc}\), define
\[
    U_{{\rm tr},k}(\mathfrak D)
    :=
    \omega_{{\rm tr},k}
    \bigl(
        1+
        \|\mathsf T_{{\rm tr},N,k}\|_{\calY_{{\rm tr},k}\to\calY_{{\rm tr},k}}
    \bigr)
    \|z_k(\mathfrak D)\|_{\calY_{{\rm tr},k}} .
\]
Then
\[
    \mathcal B_{\rm tr}^{\rm norm}(\mathfrak D)
    =
    \sum_{k\in\Lambda_{\rm sc}}U_{{\rm tr},k}(\mathfrak D).
\]
\end{definition}

\begin{assumption}[Truncation-tail compatibility with the localized quotient]
\label{ass:truncation-tail-quotient-compatibility}
For every scale \(k\in\Lambda_{\rm sc}\), there are constants
\(\eta_{{\rm tr},k}^{\rm conc}\ge0\) and
\(\Delta_{{\rm tr},k}^{\rm conc}\ge0\) such that every localized package in the
chart domain satisfies
\[
    U_{{\rm tr},k}(\mathfrak D)
    \le
    \eta_{{\rm tr},k}^{\rm conc}
    \Dist_{\loc}(\mathfrak D,\Image G_\Lambda^{\loc})
    +
    \Delta_{{\rm tr},k}^{\rm conc}.
\]
\end{assumption}

\begin{proposition}[Truncation-budget sufficient condition for absorption]
\label{prop:truncation-budget-sufficient-condition}
Assume the finite-window truncation datum of
\Cref{conv:finite-window-truncation-datum}, the bounded projections of
\Cref{lem:bounded-finite-dimensional-truncation-projection}, and the
compatibility hypothesis
\Cref{ass:truncation-tail-quotient-compatibility}.  Define
\[
    \eta_{\rm tr}^{\rm conc}
    :=
    \sum_{k\in\Lambda_{\rm sc}}\eta_{{\rm tr},k}^{\rm conc},
    \qquad
    \Delta_{\rm tr}^{\rm conc}
    :=
    \sum_{k\in\Lambda_{\rm sc}}\Delta_{{\rm tr},k}^{\rm conc}.
\]
Then
\[
    \mathcal B_{\rm tr}^{\rm norm}(\mathfrak D)
    \le
    \eta_{\rm tr}^{\rm conc}
    \Dist_{\loc}(\mathfrak D,\Image G_\Lambda^{\loc})
    +
    \Delta_{\rm tr}^{\rm conc}.
\]
Consequently,
\[
    \Err_{\rm tr}(\mathfrak D)
    \le
    \eta_{\rm tr}^{\rm conc}
    \Dist_{\loc}(\mathfrak D,\Image G_\Lambda^{\loc})
    +
    \Delta_{\rm tr}^{\rm conc}.
\]
\end{proposition}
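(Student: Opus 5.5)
The argument follows the same pattern as \Cref{prop:pressure-budget-sufficient-condition} and \Cref{prop:localization-budget-sufficient-condition}: it is a finite summation, and the only analytic content is the norm-level truncation bound already proved.

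First, I would recall from \Cref{def:concrete-truncation-budget-entries} that
\[
    \mathcal B_{\rm tr}^{\rm norm}(\mathfrak D)=\sum_{k\in\Lambda_{\rm sc}}U_{{\rm tr},k}(\mathfrak D).
\]
This is meaningful because the projections \(\mathsf T_{{\rm tr},N,k}\) exist and have finite operator norm by \Cref{lem:bounded-finite-dimensional-truncation-projection}. Each \(U_{{\rm tr},k}\) is therefore a well-defined finite nonnegative number.

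Next, I would apply \Cref{ass:truncation-tail-quotient-compatibility} to each \(k\) in the finite set \(\Lambda_{\rm sc}\) and sum. The coefficients of \(\Dist_{\loc}(\mathfrak D,\Image G_\Lambda^{\loc})\) collect into \(\eta_{\rm tr}^{\rm conc}\), and the additive constants collect into \(\Delta_{\rm tr}^{\rm conc}\). This gives the first displayed inequality.

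For the consequence, I would invoke \Cref{lem:finite-window-truncation-leakage-bound}, which gives \(\Err_{\rm tr}(\mathfrak D)\le\mathcal B_{\rm tr}^{\rm norm}(\mathfrak D)\), and chain the two inequalities.

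I do not expect a genuine obstacle. The only point needing care is bookkeeping: the weights \(\omega_{{\rm tr},k}\) are already built into \(U_{{\rm tr},k}\), so they must not be inserted a second time. Also, \(\Err_{\rm tr}\) must be the instantiation from \Cref{def:truncation-residual} with the same projections \(\mathsf T_{{\rm tr},N,k}\) that appear in \(U_{{\rm tr},k}\). All the substantive difficulty is contained in the compatibility hypothesis, which is assumed rather than proved here.
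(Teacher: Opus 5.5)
Your proposal is correct and follows the paper's own proof: you rewrite $\mathcal B_{\rm tr}^{\rm norm}$ as the finite sum of the entries $U_{{\rm tr},k}$, apply the compatibility hypothesis to each term and sum, and then chain with $\Err_{\rm tr}\le\mathcal B_{\rm tr}^{\rm norm}$ from \Cref{lem:finite-window-truncation-leakage-bound}. Your bookkeeping remarks are accurate: the weights are already inside $U_{{\rm tr},k}$, and the projections must match those used in $\Err_{\rm tr}$. The paper leaves both points implicit.
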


\begin{proof}
By \Cref{def:concrete-truncation-budget-entries},
\[
    \mathcal B_{\rm tr}^{\rm norm}(\mathfrak D)
    =
    \sum_{k\in\Lambda_{\rm sc}}U_{{\rm tr},k}(\mathfrak D).
\]
Applying \Cref{ass:truncation-tail-quotient-compatibility} to each summand
and summing over the finite window gives
\[
\begin{aligned}
    \mathcal B_{\rm tr}^{\rm norm}(\mathfrak D)
    &\le
    \sum_{k\in\Lambda_{\rm sc}}
    \left(
    \eta_{{\rm tr},k}^{\rm conc}
    \Dist_{\loc}(\mathfrak D,\Image G_\Lambda^{\loc})
    +
    \Delta_{{\rm tr},k}^{\rm conc}
    \right)\\
    &=
    \eta_{\rm tr}^{\rm conc}
    \Dist_{\loc}(\mathfrak D,\Image G_\Lambda^{\loc})
    +
    \Delta_{\rm tr}^{\rm conc}.
\end{aligned}
\]
The final estimate follows from
\Cref{lem:finite-window-truncation-leakage-bound}, which gives
\(\Err_{\rm tr}\le\mathcal B_{\rm tr}^{\rm norm}\).
\end{proof}

\begin{definition}[Post-truncation concrete remainder budget]
\label{def:post-truncation-concrete-remainder-budget}
After the pressure, localization, and truncation components have been
isolated, define
\[
\begin{aligned}
    \mathcal B_{\Lambda,{\rm rem3}}^{\rm conc}(\mathfrak D)
    :=
    &\mathcal B_{\rm nl}^{\rm cut}(\mathfrak D)
    +
    \Err_{\rm nl}^{\rm rem}(\mathfrak D)
    +
    \Err_{\rm rep}(\mathfrak D)\\
    &+
    \Err_{\rm gauge}(\mathfrak D)
    +
    \Err_{\rm prof}(\mathfrak D).
\end{aligned}
\]
Hence
\[
    \mathcal B_\Lambda^{\rm conc}
    =
    \mathcal B_{\rm prs}^{\rm norm}
    +
    \mathcal B_{\rm loc}^{\rm norm}
    +
    \mathcal B_{\rm tr}^{\rm norm}
    +
    \mathcal B_{\Lambda,{\rm rem3}}^{\rm conc}.
\]
\end{definition}

\begin{corollary}[Pressure-localization-truncation concrete-budget criterion]
\label{cor:pressure-localization-truncation-concrete-budget-criterion}
Assume the hypotheses of
\Cref{prop:pressure-budget-sufficient-condition},
\Cref{prop:localization-budget-sufficient-condition}, and
\Cref{prop:truncation-budget-sufficient-condition}.  Suppose, in addition,
that
\[
    \mathcal B_{\Lambda,{\rm rem3}}^{\rm conc}(\mathfrak D)
    \le
    \eta_{\rm rem3}^{\rm conc}
    \Dist_{\loc}(\mathfrak D,\Image G_\Lambda^{\loc})
    +
    \Delta_{\rm rem3}^{\rm conc}.
\]
If
\[
    \eta_{\rm prs}^{\rm conc}
    +
    \eta_{\rm loc}^{\rm conc}
    +
    \eta_{\rm tr}^{\rm conc}
    +
    \eta_{\rm rem3}^{\rm conc}
    <
    c_\Lambda^{\cl}(1-\varepsilon_G),
\]
then the concrete-budget absorption criterion applies with
\[
    \eta_{\rm conc}
    =
    \eta_{\rm prs}^{\rm conc}
    +
    \eta_{\rm loc}^{\rm conc}
    +
    \eta_{\rm tr}^{\rm conc}
    +
    \eta_{\rm rem3}^{\rm conc},
\]
and
\[
    \Delta_{\rm conc}
    =
    \Delta_{\rm prs}^{\rm conc}
    +
    \Delta_{\rm loc}^{\rm conc}
    +
    \Delta_{\rm tr}^{\rm conc}
    +
    \Delta_{\rm rem3}^{\rm conc}.
\]
\end{corollary}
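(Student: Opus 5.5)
The plan is to mirror the proof of \Cref{cor:pressure-localization-concrete-budget-criterion}, adding one more isolated component. The first step is the decomposition in \Cref{def:post-truncation-concrete-remainder-budget},
\[
    \mathcal B_\Lambda^{\rm conc}
    =
    \mathcal B_{\rm prs}^{\rm norm}
    +
    \mathcal B_{\rm loc}^{\rm norm}
    +
    \mathcal B_{\rm tr}^{\rm norm}
    +
    \mathcal B_{\Lambda,{\rm rem3}}^{\rm conc}.
\]
I would check this against \Cref{def:concrete-residual-budget-instantiation}: the terms $\mathcal B_{\rm nl}^{\rm cut}$, $\Err_{\rm nl}^{\rm rem}$, $\Err_{\rm rep}$, $\Err_{\rm gauge}$ and $\Err_{\rm prof}$ are exactly those collected in the rem3 remainder, so the identity holds term by term.

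Next, each summand gets an absorbed-plus-additive bound. \Cref{prop:pressure-budget-sufficient-condition} controls $\mathcal B_{\rm prs}^{\rm norm}$ by $\eta_{\rm prs}^{\rm conc}\Dist_{\loc}+\Delta_{\rm prs}^{\rm conc}$. \Cref{prop:localization-budget-sufficient-condition} gives the same form for $\mathcal B_{\rm loc}^{\rm norm}$, and \Cref{prop:truncation-budget-sufficient-condition} gives it for $\mathcal B_{\rm tr}^{\rm norm}$. The displayed hypothesis of the corollary covers $\mathcal B_{\Lambda,{\rm rem3}}^{\rm conc}$. Adding the four inequalities, with the same $\Dist_{\loc}(\mathfrak D,\Image G_\Lambda^{\loc})$ in each, yields
\[
    \mathcal B_\Lambda^{\rm conc}(\mathfrak D)
    \le
    \eta_{\rm conc}\Dist_{\loc}(\mathfrak D,\Image G_\Lambda^{\loc})
    +
    \Delta_{\rm conc},
\]
where $\eta_{\rm conc}$ and $\Delta_{\rm conc}$ are the stated sums.

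Finally, the assumed strict inequality is exactly $\eta_{\rm conc}<c_\Lambda^{\cl}(1-\varepsilon_G)$, so \Cref{prop:concrete-budget-absorption-criterion} applies with these constants. The argument is purely additive. The only point to watch is a bookkeeping one: all four bounds must hold on the same chart domain, and the synchronization hypotheses of \Cref{lem:concrete-residual-budget-synchronization} must be included among the assumed hypotheses. Both are inherited from the cited propositions, so I expect no genuine obstacle.
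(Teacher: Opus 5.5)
Your proposal is correct and matches the paper's proof essentially verbatim: decompose $\mathcal B_\Lambda^{\rm conc}$ via \Cref{def:post-truncation-concrete-remainder-budget}, bound the pressure, localization and truncation pieces by the three sufficient conditions and the rem3 piece by the displayed hypothesis, sum, and invoke the threshold of \Cref{prop:concrete-budget-absorption-criterion}. One small correction to your closing remark: that criterion's other inputs (quotient lifting, componentwise detection comparison, and the clean gap) are not supplied by the three cited propositions, so, like the paper, you are treating them as standing assumptions, and the corollary only certifies the concrete-budget condition and the threshold.
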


\begin{proof}
Use the decomposition in
\Cref{def:post-truncation-concrete-remainder-budget}.  Apply the pressure,
localization, and truncation sufficient conditions to the first three
components and the displayed hypothesis to the remaining component.  Summing
the four inequalities gives the concrete-budget bound with the stated
\(\eta_{\rm conc}\) and \(\Delta_{\rm conc}\).  The strict coefficient
condition is the threshold in
\Cref{prop:concrete-budget-absorption-criterion}.
\end{proof}

\begin{remark}[Status of truncation-budget compatibility]
\Cref{prop:truncation-budget-sufficient-condition} proves only that a stated
truncation-tail compatibility hypothesis implies the normalized
absorbed-plus-additive truncation bound.  The bounded projection construction
does not by itself imply decay as \(N\to\infty\), and no such decay,
smallness, or scale-uniform truncation control is claimed here.
\end{remark}

\subsection{Nonlinear-budget sufficient condition}

The nonlinear part of the concrete budget has two pieces: the fixed-scale
cutoff-flux mismatch already bounded by \(C_\chi C_k^{2/3}\), and the
finite-dimensional nonlinear remainder model.  These are kept separate because
the first is a concrete cutoff calculation, while the second is only a chosen
finite-window residual model.

\begin{definition}[Concrete nonlinear-budget entries]
\label{def:concrete-nonlinear-budget-entries}
For each \(k\in\Lambda_{\rm sc}\), define the cutoff nonlinear entry
\[
    W_{{\rm cut},k}(\mathfrak D)
    :=
    C_\chi\omega_{{\rm nl},k}^{\rm cut}C_k^{2/3}.
\]
The concrete nonlinear budget used in this subsection is
\[
    \mathcal B_{\rm nl}^{\rm conc}(\mathfrak D)
    :=
    \mathcal B_{\rm nl}^{\rm cut}(\mathfrak D)
    +
    \Err_{\rm nl}^{\rm rem}(\mathfrak D)
    =
    \sum_{k\in\Lambda_{\rm sc}}W_{{\rm cut},k}(\mathfrak D)
    +
    \Err_{\rm nl}^{\rm rem}(\mathfrak D).
\]
\end{definition}

\begin{assumption}[Nonlinear compatibility with the localized quotient]
\label{ass:nonlinear-quotient-compatibility}
For each \(k\in\Lambda_{\rm sc}\), there are constants
\(\eta_{{\rm cut},k}^{\rm conc}\ge0\) and
\(\Delta_{{\rm cut},k}^{\rm conc}\ge0\) such that
\[
    W_{{\rm cut},k}(\mathfrak D)
    \le
    \eta_{{\rm cut},k}^{\rm conc}
    \Dist_{\loc}(\mathfrak D,\Image G_\Lambda^{\loc})
    +
    \Delta_{{\rm cut},k}^{\rm conc}.
\]
Assume also that there are constants
\(\eta_{\rm rem,nl}^{\rm conc}\ge0\) and
\(\Delta_{\rm rem,nl}^{\rm conc}\ge0\) such that
\[
    \Err_{\rm nl}^{\rm rem}(\mathfrak D)
    \le
    \eta_{\rm rem,nl}^{\rm conc}
    \Dist_{\loc}(\mathfrak D,\Image G_\Lambda^{\loc})
    +
    \Delta_{\rm rem,nl}^{\rm conc}.
\]
\end{assumption}

\begin{proposition}[Nonlinear-budget sufficient condition for absorption]
\label{prop:nonlinear-budget-sufficient-condition}
Assume the hypotheses of
\Cref{lem:finite-window-nonlinear-cutoff-budget}, the finite-window nonlinear
remainder model of \Cref{def:finite-window-nonlinear-remainder-model}, and
\Cref{ass:nonlinear-quotient-compatibility}.  Define
\[
    \eta_{\rm nl}^{\rm conc}
    :=
    \sum_{k\in\Lambda_{\rm sc}}\eta_{{\rm cut},k}^{\rm conc}
    +
    \eta_{\rm rem,nl}^{\rm conc},
    \qquad
    \Delta_{\rm nl}^{\rm conc}
    :=
    \sum_{k\in\Lambda_{\rm sc}}\Delta_{{\rm cut},k}^{\rm conc}
    +
    \Delta_{\rm rem,nl}^{\rm conc}.
\]
Then
\[
    \mathcal B_{\rm nl}^{\rm conc}(\mathfrak D)
    \le
    \eta_{\rm nl}^{\rm conc}
    \Dist_{\loc}(\mathfrak D,\Image G_\Lambda^{\loc})
    +
    \Delta_{\rm nl}^{\rm conc}.
\]
Consequently,
\[
    \Err_{\rm nl}^{\rm cut}(\mathfrak D)
    +
    \Err_{\rm nl}^{\rm rem}(\mathfrak D)
    \le
    \eta_{\rm nl}^{\rm conc}
    \Dist_{\loc}(\mathfrak D,\Image G_\Lambda^{\loc})
    +
    \Delta_{\rm nl}^{\rm conc}.
\]
\end{proposition}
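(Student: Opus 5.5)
The plan mirrors the pressure, localization, and truncation sufficient conditions: the statement is a finite summation of the assumed componentwise compatibility bounds, followed by one application of the already proved fixed-scale nonlinear budget lemma.

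First, unpack \(\mathcal B_{\rm nl}^{\rm conc}\) using \Cref{def:concrete-nonlinear-budget-entries}. It equals \(\sum_{k\in\Lambda_{\rm sc}}W_{{\rm cut},k}(\mathfrak D)+\Err_{\rm nl}^{\rm rem}(\mathfrak D)\). The identity \(\mathcal B_{\rm nl}^{\rm cut}=\sum_k W_{{\rm cut},k}\) follows directly from \Cref{def:finite-window-nonlinear-mismatch-budget}, since \(C_\chi\) can be pulled inside the finite sum.

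Second, apply the first part of \Cref{ass:nonlinear-quotient-compatibility} to each summand \(W_{{\rm cut},k}\). Apply its second part to \(\Err_{\rm nl}^{\rm rem}\); this term is well defined and finite by \Cref{def:finite-window-nonlinear-remainder-model}. Adding the resulting \(|\Lambda_{\rm sc}|+1\) inequalities, the coefficients of the common quantity \(\Dist_{\loc}(\mathfrak D,\Image G_\Lambda^{\loc})\) collect into \(\eta_{\rm nl}^{\rm conc}\), and the additive constants collect into \(\Delta_{\rm nl}^{\rm conc}\). This gives the first displayed bound.

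Third, for the consequence, invoke \Cref{lem:finite-window-nonlinear-cutoff-budget}. Its hypotheses are assumed, namely \(u_k\in L^3(Q_1)\) for each \(k\), and it gives \(\Err_{\rm nl}^{\rm cut}(\mathfrak D)\le\mathcal B_{\rm nl}^{\rm cut}(\mathfrak D)\). Adding \(\Err_{\rm nl}^{\rm rem}\) to both sides yields \(\Err_{\rm nl}^{\rm cut}+\Err_{\rm nl}^{\rm rem}\le\mathcal B_{\rm nl}^{\rm conc}\), and chaining this with the first bound finishes the proof.

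There is no genuine analytic obstacle here; all the PDE content lies in the assumed compatibility estimates. The only step needing care is the bookkeeping: the remainder term must be counted exactly once in both \(\eta_{\rm nl}^{\rm conc}\) and \(\Delta_{\rm nl}^{\rm conc}\), and the weights \(\omega_{{\rm nl},k}^{\rm cut}\) must already sit inside \(W_{{\rm cut},k}\) so that they are not applied twice. Nonnegativity of all constants and weights ensures the summation preserves the inequalities.
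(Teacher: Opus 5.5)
Your proposal is correct and follows the paper's proof essentially step for step. You unpack $\mathcal B_{\rm nl}^{\rm conc}$ as $\sum_k W_{{\rm cut},k}+\Err_{\rm nl}^{\rm rem}$, apply both parts of the nonlinear compatibility assumption and sum, and then chain with $\Err_{\rm nl}^{\rm cut}\le\mathcal B_{\rm nl}^{\rm cut}$ from the finite-window nonlinear cutoff budget lemma; your explicit check that $\mathcal B_{\rm nl}^{\rm cut}=\sum_k W_{{\rm cut},k}$ is a detail the paper leaves implicit.
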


\begin{proof}
By \Cref{def:concrete-nonlinear-budget-entries},
\[
    \mathcal B_{\rm nl}^{\rm conc}(\mathfrak D)
    =
    \sum_{k\in\Lambda_{\rm sc}}W_{{\rm cut},k}(\mathfrak D)
    +
    \Err_{\rm nl}^{\rm rem}(\mathfrak D).
\]
Applying \Cref{ass:nonlinear-quotient-compatibility} to each cutoff entry and
to the nonlinear remainder gives
\[
\begin{aligned}
    \mathcal B_{\rm nl}^{\rm conc}(\mathfrak D)
    &\le
    \sum_{k\in\Lambda_{\rm sc}}
    \left(
    \eta_{{\rm cut},k}^{\rm conc}
    \Dist_{\loc}(\mathfrak D,\Image G_\Lambda^{\loc})
    +
    \Delta_{{\rm cut},k}^{\rm conc}
    \right)\\
    &\qquad
    +
    \eta_{\rm rem,nl}^{\rm conc}
    \Dist_{\loc}(\mathfrak D,\Image G_\Lambda^{\loc})
    +
    \Delta_{\rm rem,nl}^{\rm conc}\\
    &=
    \eta_{\rm nl}^{\rm conc}
    \Dist_{\loc}(\mathfrak D,\Image G_\Lambda^{\loc})
    +
    \Delta_{\rm nl}^{\rm conc}.
\end{aligned}
\]
The final estimate follows from
\Cref{lem:finite-window-nonlinear-cutoff-budget}, which gives
\(\Err_{\rm nl}^{\rm cut}\le\mathcal B_{\rm nl}^{\rm cut}\), together with the
definition of \(\mathcal B_{\rm nl}^{\rm conc}\).
\end{proof}

\begin{definition}[Post-nonlinear concrete remainder budget]
\label{def:post-nonlinear-concrete-remainder-budget}
After the pressure, localization, truncation, and nonlinear components have
been isolated, define
\[
    \mathcal B_{\Lambda,{\rm rem4}}^{\rm conc}(\mathfrak D)
    :=
    \Err_{\rm rep}(\mathfrak D)
    +
    \Err_{\rm gauge}(\mathfrak D)
    +
    \Err_{\rm prof}(\mathfrak D).
\]
Then
\[
    \mathcal B_\Lambda^{\rm conc}
    =
    \mathcal B_{\rm prs}^{\rm norm}
    +
    \mathcal B_{\rm loc}^{\rm norm}
    +
    \mathcal B_{\rm tr}^{\rm norm}
    +
    \mathcal B_{\rm nl}^{\rm conc}
    +
    \mathcal B_{\Lambda,{\rm rem4}}^{\rm conc}.
\]
\end{definition}

\begin{corollary}[Pressure-localization-truncation-nonlinear criterion]
\label{cor:pressure-localization-truncation-nonlinear-criterion}
Assume the hypotheses of
\Cref{prop:pressure-budget-sufficient-condition},
\Cref{prop:localization-budget-sufficient-condition},
\Cref{prop:truncation-budget-sufficient-condition}, and
\Cref{prop:nonlinear-budget-sufficient-condition}.  Suppose, in addition, that
\[
    \mathcal B_{\Lambda,{\rm rem4}}^{\rm conc}(\mathfrak D)
    \le
    \eta_{\rm rem4}^{\rm conc}
    \Dist_{\loc}(\mathfrak D,\Image G_\Lambda^{\loc})
    +
    \Delta_{\rm rem4}^{\rm conc}.
\]
If
\[
    \eta_{\rm prs}^{\rm conc}
    +
    \eta_{\rm loc}^{\rm conc}
    +
    \eta_{\rm tr}^{\rm conc}
    +
    \eta_{\rm nl}^{\rm conc}
    +
    \eta_{\rm rem4}^{\rm conc}
    <
    c_\Lambda^{\cl}(1-\varepsilon_G),
\]
then the concrete-budget absorption criterion applies with the corresponding
sum of the four displayed component coefficients and the remaining coefficient
\(\eta_{\rm rem4}^{\rm conc}\), and with the corresponding sum of the
\(\Delta\)-terms.
\end{corollary}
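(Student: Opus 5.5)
The plan follows the template of the three earlier corollaries (\Cref{cor:pressure-isolated-concrete-budget-criterion}, \Cref{cor:pressure-localization-concrete-budget-criterion}, \Cref{cor:pressure-localization-truncation-concrete-budget-criterion}). The first step is to invoke the five-term decomposition from \Cref{def:post-nonlinear-concrete-remainder-budget}:
\[
    \mathcal B_\Lambda^{\rm conc}
    =
    \mathcal B_{\rm prs}^{\rm norm}
    +
    \mathcal B_{\rm loc}^{\rm norm}
    +
    \mathcal B_{\rm tr}^{\rm norm}
    +
    \mathcal B_{\rm nl}^{\rm conc}
    +
    \mathcal B_{\Lambda,{\rm rem4}}^{\rm conc}.
\]
This identity should be checked directly against \Cref{def:concrete-residual-budget-instantiation}. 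There, \(\mathcal B_\Lambda^{\rm conc}\) lists the terms \(\mathcal B_{\rm nl}^{\rm cut}\) and \(\Err_{\rm nl}^{\rm rem}\) separately. By \Cref{def:concrete-nonlinear-budget-entries}, their sum is exactly \(\mathcal B_{\rm nl}^{\rm conc}\), and the remaining three terms are \(\mathcal B_{\Lambda,{\rm rem4}}^{\rm conc}\). So the decomposition is a regrouping of a finite sum.

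Next, each of the four isolated components gets its normalized bound from the corresponding earlier result. The pressure term is bounded using \Cref{prop:pressure-budget-sufficient-condition}, the localization term using \Cref{prop:localization-budget-sufficient-condition}, the truncation term using \Cref{prop:truncation-budget-sufficient-condition}, and the nonlinear term \(\mathcal B_{\rm nl}^{\rm conc}\) using \Cref{prop:nonlinear-budget-sufficient-condition}. Each bound has the form \(\eta^{\rm conc}_\bullet\Dist_{\loc}(\mathfrak D,\Image G_\Lambda^{\loc})+\Delta^{\rm conc}_\bullet\). The fifth component, \(\mathcal B_{\Lambda,{\rm rem4}}^{\rm conc}\), is bounded by the displayed hypothesis in the statement. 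All five inequalities hold for the same packages in the chart domain, so they can be added pointwise in \(\mathfrak D\). This gives
\[
    \mathcal B_\Lambda^{\rm conc}(\mathfrak D)
    \le
    \eta_{\rm conc}\Dist_{\loc}(\mathfrak D,\Image G_\Lambda^{\loc})
    +
    \Delta_{\rm conc},
\]
where \(\eta_{\rm conc}\) is the sum of the five coefficients and \(\Delta_{\rm conc}\) is the sum of the five additive terms.

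Finally, the assumed strict inequality on the summed coefficients is exactly the threshold \(\eta_{\rm conc}<c_\Lambda^{\cl}(1-\varepsilon_G)\). Since the other hypotheses of \Cref{thm:main-local-to-clean-transfer} are carried along unchanged, \Cref{prop:concrete-budget-absorption-criterion} applies with these values of \(\eta_{\rm conc}\) and \(\Delta_{\rm conc}\).

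I expect no real obstacle here. The only point that needs care is the bookkeeping of the nonlinear term: the components must be counted once, not twice. \(\mathcal B_{\rm nl}^{\rm conc}\) already contains \(\Err_{\rm nl}^{\rm rem}\), and \(\mathcal B_{\Lambda,{\rm rem4}}^{\rm conc}\) does not, so the five-term decomposition is an exact partition of \(\mathcal B_\Lambda^{\rm conc}\).
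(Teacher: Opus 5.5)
Your proposal is correct and follows the paper's proof essentially verbatim. Like the paper, you regroup $\mathcal B_\Lambda^{\rm conc}$ via the post-nonlinear remainder decomposition, bound the four isolated components by the pressure, localization, truncation, and nonlinear sufficient conditions and the fifth by the displayed hypothesis, sum the bounds, and invoke the threshold in the concrete-budget absorption criterion; your explicit check that $\mathcal B_{\rm nl}^{\rm cut}+\Err_{\rm nl}^{\rm rem}=\mathcal B_{\rm nl}^{\rm conc}$, so nothing is double-counted, is a useful detail the paper leaves implicit.
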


\begin{proof}
Use the decomposition in
\Cref{def:post-nonlinear-concrete-remainder-budget}.  Apply the pressure,
localization, truncation, and nonlinear sufficient conditions to the first four
components and the displayed hypothesis to the remaining component.  Summing
these inequalities gives the concrete-budget bound.  The strict coefficient
condition is the threshold in
\Cref{prop:concrete-budget-absorption-criterion}.
\end{proof}

\begin{remark}[Status of nonlinear-budget compatibility]
\Cref{prop:nonlinear-budget-sufficient-condition} does not identify the
finite-dimensional nonlinear remainder with the full Navier--Stokes nonlinear
residual.  It proves only that the cutoff mismatch budget and the chosen
remainder model are absorbed once their explicit compatibility estimates are
assumed.  No nonlinear smallness, scale-uniform closure, or full localized
Navier--Stokes transfer is claimed.
\end{remark}

\section{Main local-to-clean transfer theorem}\label{sec:main-reduction}

\subsection{Clean finite-window gap}

\begin{assumption}[Clean finite-window gap]
\label[assumption]{ass:clean-finite-window-gap}
The clean detection map satisfies
\[
    \|\calF_\Lambda^{\cl}(d)\|_{\cl}
    \ge
    c_\Lambda^{\cl}
    \Dist_{\cl}(d,\Image G_\Lambda^{\cl}),
    \qquad
    c_\Lambda^{\cl}>0,
\]
for all clean packages in the chart image.
\end{assumption}

\begin{remark}[Gauge compatibility of the clean gap]
The proof of the transfer theorem uses only the displayed lower bound in
\Cref{ass:clean-finite-window-gap}.  When this bound is interpreted as a
quotient anti-phantom gap, one should additionally require that the clean
detection map be compatible with the clean gauge directions, for example that
\(\calF_\Lambda^{\cl}\) descends to
\(\calD_\Lambda^{\cl}/\Image G_\Lambda^{\cl}\), or at least that the displayed
coercivity is certified after passing to the quotient.  This prevents the
``clean gap'' from measuring a gauge artifact rather than a genuine non-gauge
defect.
\end{remark}

\subsection{Split transfer inputs}

\begin{proposition}[Split hypotheses imply the transfer inputs]
\label[proposition]{prop:split-transfer-inputs}
Assume
\Cref{ass:quotient-lifting-stability,ass:componentwise-detection-estimates,ass:component-normalized-error-bounds}.
Then every localized package \(\mathfrak D\in\calU_\Lambda^{\loc}\) satisfies
the following three estimates:
\[
    \Dist_{\cl}
    (
    \Theta_\Lambda\mathfrak D,\Image G_\Lambda^{\cl}
    )
    \ge
    (1-\varepsilon_G)
    \Dist_{\loc}
    (
    \mathfrak D,\Image G_\Lambda^{\loc}
    )
    -
    \delta_G ,
\]
\[
    \|\calF_\Lambda^{\loc}(\mathfrak D)\|_{\loc}
    \ge
    \|\calF_\Lambda^{\cl}(\Theta_\Lambda\mathfrak D)\|_{\cl}
    -
    \Err_\Lambda(\mathfrak D),
\]
and
\[
    \Err_\Lambda(\mathfrak D)
    \le
    \eta_\Lambda
    \Dist_{\loc}
    (
    \mathfrak D,\Image G_\Lambda^{\loc}
    )
    +
    \Delta_\Lambda .
\]
\end{proposition}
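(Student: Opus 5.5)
The proposition simply collects three estimates that have already been proved separately. The plan is to verify each one by citing the lemma that derives it from the corresponding assumption. No new estimate is required, and all three hold pointwise for each fixed \(\mathfrak D\in\calU_\Lambda^{\loc}\).

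\emph{First estimate.} The quotient-distance comparison follows from \Cref{ass:quotient-lifting-stability} by \Cref{lem:quotient-distance-comparison}.

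Recall how that lemma runs. For each \(b\in\calC_\Lambda^{\cl}\), the lifting assumption produces a localized cleaning \(a\) with
\[
    \|\mathfrak D-G_\Lambda^{\loc}a\|_{\loc}
    \le
    (1-\varepsilon_G)^{-1}\|\Theta_\Lambda\mathfrak D-G_\Lambda^{\cl}b\|_{\cl}
    +
    (1-\varepsilon_G)^{-1}\delta_G .
\]
Bounding \(\Dist_{\loc}\) by the left side and multiplying by \(1-\varepsilon_G>0\) gives a bound for every \(b\). Taking the infimum over \(b\) then yields the displayed inequality.

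\emph{Second estimate.} The detection comparison is \Cref{lem:detection-map-comparison}, which uses only \Cref{ass:componentwise-detection-estimates}.

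The underlying argument is short. The detection norms are the weighted sums defined in the preliminaries. Adding the four componentwise lower bounds therefore produces exactly the clean weighted sum, minus the seven error entries. By the definition of \(\Err_\Lambda\), those seven entries sum to \(\Err_\Lambda(\mathfrak D)\).

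\emph{Third estimate.} The normalized budget is \Cref{lem:error-budget-normalization}, which follows from \Cref{ass:component-normalized-error-bounds}. One sums the seven component bounds and uses \(\eta_\Lambda=\sum_q\eta_q\) and \(\Delta_\Lambda=\sum_q\Delta_q\).

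I expect no genuine obstacle. The one point to check is that all three lemmas are stated on the same domain \(\calU_\Lambda^{\loc}\). It also matters that the weights \(C_E,C_R\) in \Cref{ass:componentwise-detection-estimates} are the same as in the detection norm, so that the summation in \Cref{lem:detection-map-comparison} matches. Both conditions hold by the conventions fixed earlier in the paper, so the proof will just cite the three lemmas in turn.
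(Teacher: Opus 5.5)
Your proposal is correct and matches the paper's proof: the paper likewise obtains the three estimates by citing \Cref{lem:quotient-distance-comparison}, \Cref{lem:detection-map-comparison}, and \Cref{lem:error-budget-normalization} in turn. Each lemma uses exactly one of the three split hypotheses on the common domain \(\calU_\Lambda^{\loc}\).
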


\begin{proof}
The first estimate is \Cref{lem:quotient-distance-comparison}.  The second is
\Cref{lem:detection-map-comparison}.  The third is
\Cref{lem:error-budget-normalization}.  These results use exactly the three
split hypotheses listed in the statement.
\end{proof}

\subsection{Algebraic transfer theorem}

\begin{theorem}[Algebraic local-to-clean anti-phantom transfer]
\label[theorem]{thm:main-local-to-clean-transfer}
Assume
\Cref{ass:quotient-lifting-stability,ass:componentwise-detection-estimates,ass:component-normalized-error-bounds,ass:clean-finite-window-gap}.
If
\[
    \eta_\Lambda<c_\Lambda^{\cl}(1-\varepsilon_G),
\]
then every \(\mathfrak D\in\calU_\Lambda^{\loc}\) satisfies
\[
\begin{aligned}
    &\|O_\Lambda^{\loc}\mathfrak D\|
    +
    C_E\|\calE_\Lambda^{\loc}(\mathfrak D)\|
    +
    C_R\Rep_\Lambda^{\loc}(\mathfrak D)
    +
    [\Prof_\Lambda^{\loc}(\mathfrak D)]_+  \\
    &\qquad\ge
    c_\Lambda^{\loc}
    \Dist_{\loc}
    (
    \mathfrak D,\Image G_\Lambda^{\loc}
    )
    -
    \Delta_\Lambda',
\end{aligned}
\]
where
\[
    c_\Lambda^{\loc}
    =
    c_\Lambda^{\cl}(1-\varepsilon_G)-\eta_\Lambda>0,
    \qquad
    \Delta_\Lambda'
    =
    \Delta_\Lambda+c_\Lambda^{\cl}\delta_G.
\]
Equivalently,
\[
\begin{aligned}
    &\|O_\Lambda^{\loc}\mathfrak D\|
    +
    C_E\|\calE_\Lambda^{\loc}(\mathfrak D)\|
    +
    C_R\Rep_\Lambda^{\loc}(\mathfrak D)  \\
    &\qquad\ge
    c_\Lambda^{\loc}
    \Dist_{\loc}
    (
    \mathfrak D,\Image G_\Lambda^{\loc}
    )
    -
    [\Prof_\Lambda^{\loc}(\mathfrak D)]_+
    -
    \Delta_\Lambda'.
\end{aligned}
\]
\end{theorem}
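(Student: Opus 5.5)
The proof is a direct chaining of the three estimates packaged in \Cref{prop:split-transfer-inputs} with the clean gap of \Cref{ass:clean-finite-window-gap}. First, I would invoke \Cref{prop:split-transfer-inputs}, which under \Cref{ass:quotient-lifting-stability,ass:componentwise-detection-estimates,ass:component-normalized-error-bounds} gives, for every \(\mathfrak D\in\calU_\Lambda^{\loc}\), three facts: the quotient-distance comparison, the detection-map comparison, and the normalized error bound. Write \(\delta:=\Dist_{\loc}(\mathfrak D,\Image G_\Lambda^{\loc})\) for brevity. By the definition of the weighted sum norm, the left-hand side of the theorem is exactly \(\|\calF_\Lambda^{\loc}(\mathfrak D)\|_{\loc}\). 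So the target is a lower bound on this single quantity.

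Second, I would chain the inequalities in order. The detection comparison gives
\(\|\calF_\Lambda^{\loc}(\mathfrak D)\|_{\loc}\ge\|\calF_\Lambda^{\cl}(\Theta_\Lambda\mathfrak D)\|_{\cl}-\Err_\Lambda(\mathfrak D)\).
Next I apply \Cref{ass:clean-finite-window-gap} to \(d=\Theta_\Lambda\mathfrak D\), which lies in the chart image. This bounds the clean term below by \(c_\Lambda^{\cl}\Dist_{\cl}(\Theta_\Lambda\mathfrak D,\Image G_\Lambda^{\cl})\). Because \(c_\Lambda^{\cl}>0\), I can multiply the quotient-distance comparison of \Cref{lem:quotient-distance-comparison} by \(c_\Lambda^{\cl}\) and obtain the lower bound \(c_\Lambda^{\cl}(1-\varepsilon_G)\delta-c_\Lambda^{\cl}\delta_G\). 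Finally, the normalized budget lets me replace \(-\Err_\Lambda(\mathfrak D)\) by \(-\eta_\Lambda\delta-\Delta_\Lambda\). Collecting terms gives
\[
\|\calF_\Lambda^{\loc}(\mathfrak D)\|_{\loc}\ge\bigl(c_\Lambda^{\cl}(1-\varepsilon_G)-\eta_\Lambda\bigr)\delta-\bigl(\Delta_\Lambda+c_\Lambda^{\cl}\delta_G\bigr),
\]
which is the claim with the stated \(c_\Lambda^{\loc}\) and \(\Delta'_\Lambda\). Positivity of \(c_\Lambda^{\loc}\) is exactly the threshold hypothesis \(\eta_\Lambda<c_\Lambda^{\cl}(1-\varepsilon_G)\). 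The equivalent form follows by moving the nonnegative term \([\Prof_\Lambda^{\loc}(\mathfrak D)]_+\), which is finite, to the right-hand side.

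There is no genuine analytic obstacle, since all the work sits in the assumptions. The only points needing care are bookkeeping. One must check that the sign is preserved when multiplying by \(c_\Lambda^{\cl}>0\). One must check that the clean gap is applied only to points of the chart image. One must also check that the quotient comparison holds even when its right side is negative; in that case the resulting bound is trivially true, so no case split is required.
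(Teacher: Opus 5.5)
Your proposal is correct and follows essentially the same route as the paper's proof. Both chain the detection-map comparison, the clean gap applied at \(\Theta_\Lambda\mathfrak D\), the quotient-distance comparison scaled by \(c_\Lambda^{\cl}>0\), and the normalized error budget, then expand the weighted detection norm and move the nonnegative term \([\Prof_\Lambda^{\loc}(\mathfrak D)]_+\) to the right for the equivalent form.
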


\begin{proof}
Fix a localized package \(\mathfrak D\in\calU_\Lambda^{\loc}\).  By
\Cref{prop:split-transfer-inputs},
\[
    \|\calF_\Lambda^{\loc}(\mathfrak D)\|_{\loc}
    \ge
    \|\calF_\Lambda^{\cl}(\Theta_\Lambda\mathfrak D)\|_{\cl}
    -
    \Err_\Lambda(\mathfrak D).
\]
The clean finite-window gap gives
\[
    \|\calF_\Lambda^{\cl}(\Theta_\Lambda\mathfrak D)\|_{\cl}
    \ge
    c_\Lambda^{\cl}
    \Dist_{\cl}
    (
    \Theta_\Lambda\mathfrak D,\Image G_\Lambda^{\cl}
    ).
\]
Using again \Cref{prop:split-transfer-inputs} for the quotient comparison,
\[
    \Dist_{\cl}
    (
    \Theta_\Lambda\mathfrak D,\Image G_\Lambda^{\cl}
    )
    \ge
    (1-\varepsilon_G)
    \Dist_{\loc}
    (
    \mathfrak D,\Image G_\Lambda^{\loc}
    )
    -
    \delta_G .
\]
Combining these three inequalities gives
\[
\begin{aligned}
    \|\calF_\Lambda^{\loc}(\mathfrak D)\|_{\loc}
    &\ge
    c_\Lambda^{\cl}(1-\varepsilon_G)
    \Dist_{\loc}
    (
    \mathfrak D,\Image G_\Lambda^{\loc}
    )
    -
    c_\Lambda^{\cl}\delta_G
    -
    \Err_\Lambda(\mathfrak D).
\end{aligned}
\]
The normalized error estimate in \Cref{prop:split-transfer-inputs} implies
\[
    \Err_\Lambda(\mathfrak D)
    \le
    \eta_\Lambda
    \Dist_{\loc}
    (
    \mathfrak D,\Image G_\Lambda^{\loc}
    )
    +
    \Delta_\Lambda .
\]
Therefore
\[
\begin{aligned}
    \|\calF_\Lambda^{\loc}(\mathfrak D)\|_{\loc}
    &\ge
    \bigl(c_\Lambda^{\cl}(1-\varepsilon_G)-\eta_\Lambda\bigr)
    \Dist_{\loc}
    (
    \mathfrak D,\Image G_\Lambda^{\loc}
    )
    -
    \Delta_\Lambda
    -
    c_\Lambda^{\cl}\delta_G  \\
    &=
    c_\Lambda^{\loc}
    \Dist_{\loc}
    (
    \mathfrak D,\Image G_\Lambda^{\loc}
    )
    -
    \Delta_\Lambda' .
\end{aligned}
\]
The smallness condition gives \(c_\Lambda^{\loc}>0\).  Expanding the weighted
localized detection norm yields the first displayed inequality in the theorem.
Moving the nonnegative positive-profit term
\([\Prof_\Lambda^{\loc}(\mathfrak D)]_+\) to the right-hand side gives the
equivalent controlled ledger-profit form.
\end{proof}

\begin{remark}[Status of the theorem]
The theorem is a conditional finite-window perturbative transfer statement.
It is algebraic: it applies to any element of the localized finite-window chart
domain satisfying the stated structural hypotheses.  It does not prove
Navier--Stokes regularity, does not exclude singularities, and does not by
itself construct a Navier--Stokes-derived package.  The latter step is separated
in \Cref{cor:ns-derived-transfer}.
\end{remark}

\begin{corollary}[NS-derived localized packages]
\label[corollary]{cor:ns-derived-transfer}
Assume, in addition to the hypotheses of
\Cref{thm:main-local-to-clean-transfer}, the localized package realization
hypothesis \Cref{ass:localized-package-realization}.  Let \((u,p)\) be a
suitable weak solution whose associated localized finite-window package
\(\mathfrak D_\Lambda^{\loc}\) lies in the chart domain
\(\calU_\Lambda^{\loc}\).  Then the two inequalities of
\Cref{thm:main-local-to-clean-transfer} hold with
\(\mathfrak D=\mathfrak D_\Lambda^{\loc}\).
\end{corollary}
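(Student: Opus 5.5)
The corollary is a direct specialization of \Cref{thm:main-local-to-clean-transfer}. That theorem is stated for an arbitrary element \(\mathfrak D\in\calU_\Lambda^{\loc}\) satisfying its structural hypotheses, so the plan is to produce such an element from the suitable weak solution and then invoke the theorem with nothing further to prove.

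First, I will apply \Cref{ass:localized-package-realization} to \((u,p)\). This gives a well-defined finite-dimensional package \(\mathfrak D_\Lambda^{\loc}\in\calD_\Lambda^{\loc}\), built from the rescaled fields, the localized pressure splitting, and the ledger variables.

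Second, I will use the stated hypothesis that this package lies in the chart domain \(\calU_\Lambda^{\loc}\). Then \(\Theta_\Lambda\mathfrak D_\Lambda^{\loc}\) is defined by \Cref{lem:bounded-chart-map}.

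Third, I will check that the structural hypotheses of the theorem hold at this particular point:
\begin{itemize}
\item quotient lifting (\Cref{ass:quotient-lifting-stability}),
\item componentwise detection estimates (\Cref{ass:componentwise-detection-estimates}),
\item normalized error bounds (\Cref{ass:component-normalized-error-bounds}),
\item the clean gap (\Cref{ass:clean-finite-window-gap}).
\end{itemize}
The first three are quantified over every package in \(\calU_\Lambda^{\loc}\), so they specialize immediately. The clean gap is quantified over clean packages in the chart image, and \(\Theta_\Lambda\mathfrak D_\Lambda^{\loc}\) belongs to that image by construction.

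Fourth, with the threshold \(\eta_\Lambda<c_\Lambda^{\cl}(1-\varepsilon_G)\) inherited from the theorem's hypotheses, I will run the chain of \Cref{prop:split-transfer-inputs} at \(\mathfrak D=\mathfrak D_\Lambda^{\loc}\). This yields both displayed inequalities with the same constants \(c_\Lambda^{\loc}\) and \(\Delta_\Lambda'\). The second form follows, as before, by moving \([\Prof_\Lambda^{\loc}]_+\) across.

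There is no genuine obstacle; the only point needing care is bookkeeping. The constants \(\eta_q,\Delta_q,\varepsilon_G,\delta_G,c_\Lambda^{\cl}\) are fixed by the finite-window datum and do not depend on the solution. Realization therefore adds no new quantifier dependence, and no estimate beyond membership in \(\calU_\Lambda^{\loc}\) is required. I will state explicitly that membership in the chart domain is assumed, not proved, so that the corollary makes no PDE claim beyond \Cref{ass:localized-package-realization}.
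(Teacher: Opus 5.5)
Your proposal is correct and matches the paper's proof. You realize the package via \Cref{ass:localized-package-realization}, use the assumed membership in \(\calU_\Lambda^{\loc}\), and apply \Cref{thm:main-local-to-clean-transfer} at that package; your hypothesis-by-hypothesis re-check and re-run of \Cref{prop:split-transfer-inputs} are harmless but unnecessary, since the theorem already holds for every element of the chart domain.
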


\begin{proof}
By \Cref{ass:localized-package-realization}, the suitable weak solution
produces a finite-dimensional localized package
\(\mathfrak D_\Lambda^{\loc}\in\calD_\Lambda^{\loc}\).  The extra hypothesis
places this package in the chart domain.  Applying
\Cref{thm:main-local-to-clean-transfer} to
\(\mathfrak D_\Lambda^{\loc}\) gives the claim.
\end{proof}

\section{Final synthesis and closure targets}\label{sec:final-synthesis}

We finish by combining the conceptual audit with the local-to-clean transfer
result.  This final statement is best read as a synthesis corollary: the main
finite-window coercive estimate is \Cref{thm:main-local-to-clean-transfer}, while
the corollary below records how that estimate fits into the larger singularity
audit.  A persistent non-CKN branch must pass the payment test, the
combined-observability test, the NS-realizability test, the reproduction test,
and the localized residual-budget test.  If any of these tests closes uniformly,
the branch must enter a CKN scale.  If none closes, the remaining object is no
longer a generic concentration of energy or pressure; it is an NS-realizable,
cleaned, combined-invisible, profitable recurrence.

\begin{corollary}[Unified singularity-audit and transfer alternative]\label[corollary]{cor:unified-audit-transfer}
Let \((u,p)\) be a suitable weak solution near a point \(z_0\).  Suppose a dyadic
branch based at \(z_0\) remains outside the CKN smallness regime.  Assume dyadic
defect extraction, the finite-scale ledger, observable depletion or visible-supply
taxation with summable errors and a lower-bounded depletion budget, moving-window
growth control, local-to-clean quotient lifting, componentwise detection
comparison, concrete residual-budget absorption, and a near-kernel extraction
principle that converts persistent cleaned near-invisibility with small
reproduction residual into an NS-realizable PRV cascade.  Then one of the following
alternatives occurs:
\begin{enumerate}[label=(\alph*),leftmargin=2em]
\item accumulated localization/window leakage is non-negligible;
\item positive average untaxed supply persists after the available taxes are
charged;
\item moving-window observability constants or cleaning constants grow too fast
to close the argument;
\item a nonzero NS-realizable, cleaned, scale-critical, combined-invisible,
profitable, reproducible defect cascade exists;
\item in a one-component degeneration, the obstruction appears as a vertical-duality
or anti-phantom failure for the active residual quotient.
\end{enumerate}
Consequently, if leakage is negligible, visible supply is depleted or taxed,
moving-window constants are controlled, normalized absorption holds, and such
PRV cascades are excluded, then the branch must enter a CKN scale and \(z_0\) is
regular.  The theorem is conditional in precisely the hypotheses listed above;
it does not assert that the depletion, absorption, or near-kernel extraction
inputs have been proved in this paper.
\end{corollary}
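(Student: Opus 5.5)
The plan is to reduce \Cref{cor:unified-audit-transfer} to the conditional audit alternative \Cref{thm:singularity-audit-alternative}, with \Cref{thm:main-local-to-clean-transfer} (through \Cref{prop:concrete-budget-absorption-criterion}) used in the middle step as the coercivity mechanism on each localized window. The argument is by contradiction on the alternatives. Assume the branch stays outside the CKN regime and that (a), (b), (c), (e) all fail. We then show that (d) must occur, and for the final regularity statement we show that excluding (d) as well leads to a contradiction.

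\textbf{Step 1 (payment).} Non-CKN persistence gives $B_k\ge\eps$ along an admissible chain. \Cref{thm:finite-scale-survival-alternative} then yields the inequality \eqref{eq:survival-alt}. If the leakage sum is not $o(N)$, alternative (a) holds. Otherwise positive-density untaxed supply must occur. If this supply is not charged to the taxes or to observable depletion, alternative (b) holds. So we may assume the supply is visible.

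\textbf{Step 2 (visibility forces near-invisibility of the localized packages).} By dyadic defect extraction (\Cref{ass:dyadic-defect-extraction}) and \Cref{ass:localized-package-realization}, each window produces localized packages $\mathfrak D_n\in\calU^{\loc}_{\Lambda_n}$. The concrete absorption hypothesis, together with \Cref{lem:concrete-residual-budget-synchronization}, gives $\Err_{\Lambda_n}\le\eta_{\rm conc}\Dist_{\loc}+\Delta_{\rm conc}$ with $\eta_{\rm conc}<c^{\cl}(1-\varepsilon_G)$. \Cref{prop:concrete-budget-absorption-criterion} and \Cref{cor:ns-derived-transfer} then give
\[
\|\calF^{\loc}_{\Lambda_n}(\mathfrak D_n)\|_{\loc}\ge c^{\loc}_{\Lambda_n}\Dist_{\loc}(\mathfrak D_n,\Image G^{\loc}_{\Lambda_n})-\Delta'_{\Lambda_n}.
\]
The observable-depletion inequality of \Cref{ass:observable-depletion}, combined with the lower bound on the budget $\mathcal B_n$ and $\sum e_n<\infty$, forces $\sum_n\omega_n\mathfrak O_n(d_n)<\infty$. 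Since the weights $\omega_n$ are not summably lost, $\mathfrak O_n\to0$ along a subsequence. Moving-window growth control (the negation of (c)) keeps $c^{\loc}_{\Lambda_n}$ bounded below and $\Delta'_{\Lambda_n}$ small. The display then forces $\Dist_{\loc}(\mathfrak D_n,\Image G^{\loc})$ to be controlled by observation strength plus errors. Hence the cleaned normalized packages approach the combined-invisible kernel, with reproduction residual controlled by the $\Rep$ term.

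\textbf{Step 3 (extraction).} The near-kernel extraction principle in the hypothesis converts this subsequence into an NS-realizable, cleaned, scale-critical, combined-invisible cascade. It is reproducible because the reproduction residual is small, and profitable because the untaxed supply from Step 1 has positive density. This is alternative (d). In the one-component degeneration channel, any failure of the closure is recorded as (e), exactly as in \Cref{thm:singularity-audit-alternative}. The consequence clause follows: if (a)--(e) are all excluded, the assumed non-CKN persistence is contradicted, so some $\Psi(z_0,r_k)\le\eps_{\CKN}$ and $z_0$ is regular by the CKN criterion.

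\textbf{Main obstacle.} The delicate step is Step 2, the passage from the windowwise transfer estimate to convergence toward the kernel. The constants $c^{\loc}_{\Lambda_n}$ and $\Delta'_{\Lambda_n}$ depend on $n$. Uniformity must therefore come entirely from the growth-control hypothesis, and the observation strength $\mathfrak O_n$ must be matched with the detection norm $\|\calF^{\loc}\|$. My first attempt would be to \emph{define} the failure of this matching (for example $\Delta'_{\Lambda_n}\not\to0$, or $c^{\loc}_{\Lambda_n}\to0$) as an instance of alternative (c). This keeps the argument a pure bookkeeping of hypotheses, which is consistent with the corollary's stated conditional status.
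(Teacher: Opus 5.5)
Your skeleton is the paper's own: the ledger gives (a)/(b), depletion plus growth control gives near-invisibility or (c), near-kernel extraction gives (d), and (e) is recorded separately. The gap is in Step 2, which draws the wrong conclusion from \Cref{thm:main-local-to-clean-transfer}. Rearranged, the transfer estimate reads
\[
    \Dist_{\loc}(\mathfrak D_n,\Image G^{\loc}_{\Lambda_n})
    \le
    \frac{\|\calF^{\loc}_{\Lambda_n}(\mathfrak D_n)\|_{\loc}+\Delta'_{\Lambda_n}}{c^{\loc}_{\Lambda_n}}.
\]
So small detection pushes a package toward the cleaning (gauge) image $\Image G^{\loc}_{\Lambda_n}$, not toward a nonzero combined-invisible direction. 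Suppose, as you assert, that $c^{\loc}_{\Lambda_n}$ is bounded below and $\Delta'_{\Lambda_n}\to0$. Then this would force normalized cleaned packages to have vanishing quotient distance, contradicting their normalization. It can never manufacture the object in (d).

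It is also inconsistent with your Step 3. The detection norm contains $[\Prof^{\loc}_{\Lambda_n}(\mathfrak D_n)]_+$, which is bounded below in the profitable regime you invoke for (d). There the transfer inequality holds trivially and says nothing about $\Dist_{\loc}$. Moreover, \Cref{ass:moving-window-growth-control} concerns only observability and cleaning constants. Nothing makes $\Delta'_{\Lambda_n}=\Delta_{\Lambda_n}+c^{\cl}_{\Lambda_n}\delta_G$ small; the paper deliberately keeps $\Delta_\Lambda$ as an unabsorbed additive budget. Declaring $\Delta'_{\Lambda_n}\not\to0$ to be an instance of (c) therefore changes the statement.

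The near-kernel conclusion has to come from the depletion argument alone, as in the paper. Argue by contradiction. Suppose no subsequence of $d_n$ approaches the combined-invisible NS-realizable kernel. Then the inequality of \Cref{ass:observable-depletion} holds for all large $n$. This framing is needed because that inequality is only assumed away from the kernel, so your unconditional summation is not justified. Telescoping against the lower-bounded budget, with $\sum e_n<\infty$, gives $\sum_n\omega_n\mathfrak O_n(d_n)<\infty$. On the other hand, controlled observability constants (the negation of (c)) bound $\mathfrak O_n(d_n)$ from below for directions a fixed distance from the kernel. Since the weights are not summably lost, this gives $\sum_n\omega_n\mathfrak O_n(d_n)=\infty$, a contradiction.

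The transfer theorem then enters only in the role the paper assigns it. For the near-invisible, verified, reproducible localized packages obtained this way, any surviving non-gauge content (a lower bound on $\Dist_{\loc}$) must be paid for by $[\Prof^{\loc}]_+$ or by $\Delta'_\Lambda$. That is exactly the profitable, invisible signature recorded in (d).
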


\begin{proof}
The proof combines the earlier conditional alternatives.  The finite-scale ledger gives the payment alternative: persistent reservoir badness forces untaxed supply or leakage.  If leakage is non-negligible, (a) holds.  If positive average supply remains after the available taxes are charged, (b) holds.  Otherwise the visible part of the supply is taxed through the pressure, flux, energy, or trace channels.  The observable-depletion input, its lower-bounded budget, and the summability of the error terms forbid an infinite controlled branch with a non-summable amount of visible observation.  If the moving-window constants lose effectiveness before this conclusion is reached, (c) holds.  If not, the branch has a subsequence of cleaned localized packages approaching the combined-invisible NS-realizable kernel.  The local-to-clean transfer theorem supplies the localized coercive comparison under normalized absorption, and the reproduction comparison supplies the scale-to-scale residual control.  The near-kernel extraction principle then produces the PRV cascade in (d), with profitability inherited from the ledger.  In the one-component channel, failure of the required vertical-duality closure is recorded as (e).  If all alternatives are excluded, the original non-CKN branch cannot persist, and therefore a CKN scale occurs.
\end{proof}

\begin{problem}[Localized quotient compatibility]
The concrete theorem target left by this paper is to prove component
compatibility estimates of the form
\[
    \Err_{\bullet}(D)
    \le
    \eta_{\bullet}\dist_{\mathrm{loc}}(D,\operatorname{Im}G^{\mathrm{loc}}_\Lambda)
    +\Delta_{\bullet},
\]
with coefficients summing below the clean gap margin.  This requires either a
sharper localized quotient distance or a structural observability theorem.
\end{problem}

\begin{problem}[NS-realizable anti-phantom closure]
Prove that the combined-invisible finite-window kernel contains no nonzero
direction in the closure of cleaned NS-realizable packages, or classify the
remaining kernel directions as genuine PRV mechanisms.
\end{problem}

\begin{problem}[Moving-window growth control]
Control the scale dependence of observability, cleaning, pressure-transfer,
truncation, and reproduction constants strongly enough to prevent non-summable
escape along selected moving windows.
\end{problem}

\section{Conclusion}

The paper develops a finite-window language for separating genuine
Navier--Stokes obstructions from artifacts of cutoff, pressure normalization,
finite projection, and clean-model representation.  The unconditional PDE part
is the critical ledger: persistent reservoir badness must be paid for by
untaxed supply or leakage.  The main algebraic part is the local-to-clean
anti-phantom transfer theorem: a clean quotient gap becomes a localized coercive
estimate once quotient lifting, componentwise detection comparison, and residual
absorption are available. The remaining blocker is therefore sharply localized.  One must either prove
component compatibility estimates against the localized quotient distance, with
coefficients below the clean gap margin, or construct an NS-realizable recurrence
that survives those tests.  In this sense a possible singularity, within the
present framework, is forced to be highly organized: it must be cleaned,
combined-invisible, profitable, reproducible, and compatible with the momentum,
pressure, flux, gauge, and residual ledgers.  Closing or realizing that final
object is the next theorem-level step.

\end{document}